                \pgfextractx{\pgf@xa}{\pgfpointanchor{\tikztostart}{east}}
                \pgfextractx{\pgf@xb}{\pgfpointanchor{\tikztotarget}{west}}
                \pgfextracty{\pgf@ya}{\pgfpointanchor{\tikztostart}{center}}
                \pgfextracty{\pgf@yb}{\pgfpointanchor{\tikztotarget}{center}}
                \edef\tikzstartx{\the\pgf@xa}
                \edef\tikzendx{\the\pgf@xb}
                \edef\midy{\the\dimexpr0.5\dimexpr\pgf@ya\relax +0.5\dimexpr\pgf@yb\relax}
\newcommand{\cal}{\mathcal}
\newcommand{\stkout}[1]{\ifmmode\text{\sout{\ensuremath{#1}}}\else\sout{#1}\fi}
\newcommand{\B}{\mr{B}}
\pretocmd{\maketitle}{%
    \edef\@title{\unexpanded{\protect\Large}\unexpanded\expandafter{\@title}}%
    \edef\authors{\unexpanded{\protect\normalsize}\unexpanded\expandafter{\authors}}%
}{}{\error}
\def\addlabeltolink#1{\addlabeltolink@#1}
\def\addlabeltolink@#1#2#3#4{#1{#2}{#3}{\thecontentslabel. #4}}
\titleformat{\section}[block]{\centering\bfseries\Large}{\thetitle. }{0pt}{}
\def\@secnumpunct{. }
\xpatchcmd{\proof}{\topsep6\p@\@plus6\p@\relax}{\topsep0pt\relax}{}{\error}
\newcommand\Einfty{\mathbb{E}_{\infty}}
\newcommand{\Ho}{\mr{Ho}}
\newcommand{\ull}[1]{\underline{#1}}
\newcommand{\oll}[1]{\overline{#1}}
\newcommand{\mr}[1]{\mathrm{#1}}
\newcommand{\abs}[1]{\lvert #1 \rvert}
\newcommand{\td}[1]{\widetilde{#1}}
\newcommand{\wh}[1]{\widehat{#1}}
\renewcommand{\psi}{\uppsi}
\newcommand{\PP}{\mathbb{P}}
\newcommand\Ocal{\mathcal{O}}
\newcommand{\ku}{{\sf ku}}
\newcommand{\R}{\mr{R}}
\newcommand{\EO}{\mr{EO}}
\newcommand{\sfu}{{\sf u}}
\newcommand{\RO}{\mr{RO}}
\newcommand*{\rom}[1]{\expandafter\@slowromancap\romannumeral #1@} \makeatother
\definecolor{limegreen}{rgb}{0.2, 0.8, 0.2}
\definecolor{darkmagenta}{rgb}{0.55, 0.0, 0.55}
\definecolor{lavenderrose}{rgb}{0.91, 0.33, 0.5}
\definecolor{goldenpoppy}{rgb}{0.99, 0.76, 0.0}
\definecolor{seagreen}{rgb}{0.1, 0.4, 0.1}
\definecolor{maroon}{RGB}{128,0,0}
\definecolor{darkviolet}{RGB}{148,0,211}
\definecolor{darkbrown}{rgb}{.55,.3,.2}
\definecolor{snowblue}{rgb}{0.3, 0.4, 0.8}
\definecolor{DarkBlue}{rgb}{.1, 0.35, 0.6} 
\definecolor{DarkBrown}{rgb}{.5, 0.2, 0.2} 
\def\@url#1{{\tt\def~{\lower3.5pt\hbox{\char'176}}\def\_{\char'137}#1}}
\newtheorem{main}{Main Theorem}
\let\c@lemma\c@theorem
\newtheorem{thm}{Theorem}[section]
\newtheorem{cor}{Corollary}[section]
\newtheorem{lem}{Lemma}[section]
\newtheorem{prop}{Proposition}[section]
\newtheorem{conj}{Conjecture}[section]
\theoremstyle{definition}
\newtheorem{defn}{Definition}[section]
\newtheorem{ex}{Example}[section]
\newtheorem{rmk}{Remark}[section]
\newtheorem{notn}{Notation}[section]
\newtheorem{question}{Question}[section]
\newtheorem{assump}{Assumption}[section]
\newtheorem*{thm*}{Theorem}
\newtheorem*{cor*}{Corollary}
\newtheorem*{lem*}{Lemma}
\newtheorem*{prop*}{Proposition}
\newtheorem*{defn*}{Definition}
\newtheorem*{ex*}{Example}
\newtheorem*{exs*}{Examples}
\newtheorem*{rmk*}{Remark}
\newtheorem*{claim*}{Claim}
\newtheorem*{not*}{Notation}
\newtheorem*{guess*}{Guess}
\newtheorem*{exer*}{Exercise}
\newtheorem*{conv}{Convention}
\numberwithin{equation}{section}
\numberwithin{figure}{section}
\let\c@thm\c@equation
\let\c@cor\c@equation
\let\c@lem\c@equation
\let\c@prop\c@equation
\let\c@defn\c@equation
\let\c@ex\c@equation
\let\c@exs\c@equation
\let\c@rmk\c@equation
\let\c@prob\c@equation
\let\c@obs\c@equation
\let\c@conj\c@equation
\let\c@claim\c@equation
\let\c@question\c@equation
\let\c@warn\c@equation
\let\c@notn\c@equation
\let\c@construction\c@equation
\let\c@formula\c@equation
\let\c@assump\c@equation
\newif\ifkp@upRm
\DeclareSymbolFont{Letters}{OML}{jkp}{m}{n}
\DeclareMathSymbol{\partialup}{\mathord}{Letters}{128}
\DeclareMathOperator{\Map}{Map}
\DeclareMathOperator{\Sp}{\mathcal{S}p}
\DeclareMathOperator{\Top}{\mathcal{T}op}
\DeclareMathOperator{\Ind}{Ind}
\DeclareMathOperator{\Th}{{\sf Th}} 
\DeclareMathOperator{\res}{{\sf res}}
\DeclareMathOperator{\tr}{{\sf tr}}
\newcommand{\All}{\cal{A}\ell \ell}
\renewcommand\H{\mathrm{H}}
\def\makecommands#1#2#3{
    \bgroup
    \def\tempcmdname##1{#1}
    \def\tempcmdbody##1{#2}
    \def\\##1{\expandafter\xdef\csname\tempcmdname{##1}\endcsname{\unexpanded\expandafter{\tempcmdbody{##1}}}}
    #3
    \egroup
}
\def\upperalphabet{\\A\\B\\C\\D\\E\\F\\G\\H\\I\\J\\K\\L\\M\\N\\O\\P\\Q\\R\\S\\T\\U\\V\\W\\X\\Y\\Z}
\def\loweralphabet{\\a\\b\\c\\d\\e\\f\\g\\h\\i\\j\\k\\l\\m\\n\\o\\p\\q\\r\\s\\t\\u\\v\\w\\x\\y\\z}
\def\lowergreekalphabet{\\\alpha\\\beta\\\gamma\\\delta\\\epsilon\\\zeta\\\eta\\\theta\\\kappa\\\lambda\\\mu\\\nu
    \\\xi\\\pi\\\rho\\\sigma\\\tau\\\upsilon\\\psi\\\chi\\\phi\\\omega}
\DeclareMathOperator{\Sq}{Sq}
\newcommand{\sma}{\wedge}
\newcommand{\sm}{\wedge}
\def\tikzcdequalsignoffset{0.1em}
\def\findedgesourcetarget#1#2{
    \let\sourcecoordinate\pgfutil@empty
    \ifx\tikzcd@startanchor\pgfutil@empty 
        \def\tempa{\pgfpointanchor{#1}{center}}
    \else
        \edef\tempa{\noexpand\pgfpointanchor{#1}{\expandafter\@gobble\tikzcd@startanchor}} 
        \let\sourcecoordinate\tempa
    \fi
    \ifx\tikzcd@endanchor\pgfutil@empty 
        \def\tempb{\pgfpointshapeborder{#2}{\tempa}}
    \else
        \edef\tempb{\noexpand\pgfpointanchor{#2}{\expandafter\@gobble\tikzcd@endanchor}}
    \fi
    \let\targetcoordinate\tempb
    \ifx\sourcecoordinate\pgfutil@empty%
        \def\sourcecoordinate{\pgfpointshapeborder{#1}{\tempb}}%
    \fi
}
\tikzset{/tikz/commutative diagrams/equal/.style=equals,
    /tikz/commutative diagrams/equals/.style = {
    -,
    to path={\pgfextra{
        \findedgesourcetarget{\tikzcd@ar@start}{\tikzcd@ar@target} 
        \ifx\tikzcd@startanchor\pgfutil@empty
            \def\tikzcd@startanchor{.center}
        \fi
        \ifx\tikzcd@endanchor\pgfutil@empty
            \def\tikzcd@endanchor{.center}
        \fi
        \pgfmathanglebetweenpoints{\pgfpointanchor{\tikzcd@ar@start}{\expandafter\@gobble\tikzcd@startanchor}}{\pgfpointanchor{\tikzcd@ar@target}{\expandafter\@gobble\tikzcd@endanchor}}
        \pgftransformrotate{\pgfmathresult}
        \pgfpathmoveto{\pgfpointadd{\sourcecoordinate}{\pgfpoint{0}{\tikzcdequalsignoffset}}}
        \pgfpathlineto{\pgfpointadd{\targetcoordinate}{\pgfpoint{0}{\tikzcdequalsignoffset}}}
        \pgfpathmoveto{\pgfpointadd{\sourcecoordinate}{\pgfpoint{0}{-\tikzcdequalsignoffset}}}
        \pgfpathlineto{\pgfpointadd{\targetcoordinate}{\pgfpoint{0}{-\tikzcdequalsignoffset}}}
        \pgfusepath{draw}
}}}}
\crefname{equation}{}{}
\crefname{thm}{Theorem}{Theorems}
\Crefname{thm}{Theorem}{Theorems}
\crefname{lem}{Lemma}{Lemmas}
\Crefname{lem}{Lemma}{Lemmas}
\crefname{main}{Main Theorem}{Main Theorems}
\Crefname{main}{Main Theorem}{Main Theorems}
\crefname{cor}{Corollary}{Corollaries}
\Crefname{cor}{Corollary}{Corollaries}
\crefname{notn}{Notation}{Notations}
\Crefname{notn}{Notation}{Notations}
\crefname{prop}{Proposition}{Propositions}
\Crefname{prop}{Proposition}{Propositions}
\crefname{defn}{Definition}{Definitions}
\Crefname{defn}{Definition}{Definitions}
\crefname{ex}{Example}{Examples}
\Crefname{ex}{Example}{Examples}
\crefname{conj}{Conjecture}{Conjectures}
\Crefname{conj}{Conjecture}{Conjectures}
\crefname{rmk}{Remark}{Remarks}
\Crefname{rmk}{Remark}{Remarks}
\crefname{assump}{Assumption}{Assumptions}
\Crefname{assump}{Assumption}{Assumptions}
\crefname{diagram}{diagram}{diagrams}
\crefname{sseq}{}{}
\subjclass[2020]{Primary 55S91; Secondary 55N91, 55P43, 55P92, 55S05, 55P91}
\keywords{Equivariant Steenrod operations, $\mathcal{N}_\infty$-ring spectra, Eulerian sequences, power operations, composition law.}
\title{Equivariant Steenrod Operations}
\author{P. Bhattacharya}\address{New Mexico State University}\email{prasit@nmsu.edu}
\author{A. Waugh}\address{ University of Washington}\email{ajw48@uw.edu}
\author{M. Zeng}\address{Max Plank Institute for Mathematics}\email{mingcongzeng@gmail.com}
\author{F. Zou}\address{Chinese Academy of Sciences}\email{zoufoling@amss.ac.cn}
\thanks{}
\begin{document}

\begin{abstract} We introduce the notion of $\mathrm{R}$-Eulerian sequences for any $\mathcal{N}_\infty$-ring spectrum $\mathrm{R}$ of finite orientation order. We prove that each $\mathrm{R}$-Eulerian sequence determines a stable $\mathrm{R}$-cohomology operation. Furthermore, we show that the collection of $\mathrm{R}$-Eulerian sequences carries a natural additive and a multiplicative structure which is linear over the coefficient ring. As an application, we specialize to equivariant ordinary cohomology with coefficients in finite fields and construct genuine equivariant  Steenrod operations for all finite groups.
 \end{abstract}

\maketitle	
\tableofcontents

\section{Introduction} \label{Sec:intro}
In the 1950s, stable homotopy theory underwent a period of rapid development, driven by profound applications in geometry \cite{SteenWhite, ThomSt, ThomCob}. Key breakthroughs included Steenrod and Whitehead's work on the vector fields on spheres problem \cite{SteenWhite, SteenBook}, the calculation of cobordism rings \cite{ThomSt, ThomCob, MilCob}, the resolution of the Hopf invariant one problem \cite{AdamsHopf}, among many others. These results relied crucially on Steenrod operations, which Norman Steenrod introduced in 1947 \cite{SteenSq}. The usefulness of Steenrod operations  extends to differential topology via Wu's reformulation of Stiefel-Whitney classes \cite{Wu}, and in homotopy theory through the Adams spectral sequence \cite{AdamsSS}.  Over the years, the applications of Steenrod operations have only expanded,
establishing them as one of the most formidable tools in current homotopy theory research.

Equivariant homotopy theory is an extension of classical homotopy theory that is sensitive to symmetries. In 1967, Bredon introduced an equivariant cohomology theory that refines Borel cohomology by incorporating fixed-point data for all subgroups \cite{Bredon}. The coefficients for Bredon cohomology are provided by Mackey functors, which Dress introduced \cite{Dress}.  In 1981, Lewis-May-McClure \cite{LMM} extended the indexing system of a $\Gmr$-equivariant Bredon cohomology from integers to the real representation ring, usually denoted by $\RO(\Gmr)$, for any compact Lie group $\Gmr$. Equivariant stable homotopy theory was developed in the 1980s to address problems in equivariant geometry.  However, despite a few striking applications \cite{TomBordism, HK, KW, HHR, Pin}, its geometric utility has remained limited and sporadic. This constraint is largely due to the fact that equivariant Steenrod operations are not known beyond the group of order $2$ \cite{HK, VRed}.

The  $\RO(\Gmr)$-graded  Bredon cohomology with coefficients in $\ull{\FF}_p$ (the constant Tambara functor at the field of order $p$) is represented by a genuine $\Gmr$-equivariant $\EE_\infty^\Gmr$-ring spectrum $\Hmr \ull{\FF}_p$.  The {\bf  $\Gmr$-equivariant mod $p$ Steenrod algebra} is defined as the stable homotopy class of $\mr{G}$-equivariant self-maps  of $\Hmr\ull{\FF}_p$: 
\[ 
\Acal_{\Gmr, p}^{\star} := [\H\ull{\FF}_p, \H\ull{\FF}_p]^{\mr{G}}_{-\star}. 
\]
This Steenrod algebra is a module over its {\bf coefficient ring},  the $\RO(\Gmr)$-graded cohomology of a point: 
\[ 
\MM_{p}^{\Gmr} := \Hmr^{\star}_{\Gmr}(\mr{pt}_+; \ull{\FF}_p) \cong \pi_{-\star}^\Gmr \Hmr\ull{\FF}_p. 
\]
The calculation of this coefficient ring is a notoriously difficult problem.  After Stong's calculation of $\RO(\Cmr_2)$-graded cohomology of a point (see \cite[$\mathsection$2]{StongCalc}), progress was stalled for many years. However, a series of recent breakthroughs have revitalized the field: In  2017, Holler and Kriz \cite{HolK} calculated the coefficient ring for  $\Gmr = \Cmr_2^{\times n}$, soon after the third author identified the coefficient ring for  $\Gmr = \Cmr_{p^2}$ \cite{Zeng},  G. Yan \cite{Guoqi1}  for $\Gmr = \Cmr_{2^n}$, and Kriz-Lu \cite{KrizLu} and G. Yan \cite{Guoqi2} for the dihedral groups.

In 2001, Hu and Kriz, in their seminal work \cite{HK}, determined the structure of  $\Cmr_2$-equivariant dual Steenrod algebra. This was followed in 2003 by Voevodsky \cite{VRed}, who determined the $\RR$-motivic Steenrod algebra, whose Betti realization provided a complete description of the $\Cmr_2$-equivariant Steenrod algebra. The case $\Gmr= \Cmr_2$ (as well as the nonequivariant case) is often considered special, as its Steenrod algebra is free over its coefficient ring, a property that is crucial for its determination.  In fact, recent work by Sankar-Wilson \cite{WS} as well as Hu-Kriz-Somberg-Zou \cite{HKSZ} determines the $\Cmr_p$-equivariant mod $p$ dual Steenrod algebra as a module over the coefficient ring and shows that it is not free. This non-freeness may explain why the methods in \cite{SteenSq, VRed} did not adapt to identify the equivariant Steenrod algebra for other groups (also see \cite{HKSZ}).

\bigskip  
 In this paper, we introduce a general \emph{theoretical framework} to construct $\Gmr$-equivariant Steenrod operations for  $\Gmr$-equivariant cohomology theories. This framework requires the following assumptions:
 \begin{itemize}
 \item $\Gmr$ is a finite group.
   \item The cohomology theory is represented by a homotopy $\Ocal$-ring $\Rmr$ (as in \Cref{defn:EquivHinfty}),  for some $\Ncal_\infty$  $\Gmr$-operad  $\Ocal$ (in the sense of  \cite{Ninfty}). 
   \item The $\Gmr$-equivariant vector bundle of  \eqref{bundle:TautVF} admits an $\Rmr$-orientation (in the sense of \cite{BZ}). 
  \end{itemize}  
  Building on these conditions, we introduce the concept of a {\bf $\Vmr$-stable $\Rmr$-Eulerian sequence} (see \Cref{defn:Eulerian}), where $\Vmr$ is a finite orthogonal $\Gmr$-representation. Our main result is the following theorem:
\begin{main}[\Cref{thm:stable}] \label{main1} For  every  $\Vmr$-stable $\Rmr$-Eulerian sequence $\upchi$, there exists an $\Rmr$-cohomology operation 
\[ 
\begin{tikzcd}
\Sfrak^{\upchi} : \Rmr^{\star}(-) \rar & \Rmr^{\star + \| \upchi\|} (-),
\end{tikzcd}
\] 
of degree  $\| \upchi\|$ (see \Cref{defn:Eulerian}), where  $\star \in \RO(\Gmr, \Vmr)$ (see \Cref{notn:main}), which commutes with the $\Vmr$-suspension isomorphism 
 \begin{equation} \label{eqn:eqsusp}
\upsigma_{\Vmr}: \Rmr^{\star}(- ) \overset{\cong}\longrightarrow \Rmr^{\star + \Vmr}(\Sigma^{\Vmr}(-) ), 
\end{equation}
i.e., 
$\upsigma_\Vmr ( \Sfrak^{\upchi}(x))  = \Sfrak^{\upchi} ( \upsigma_\Vmr(x)) $
for any $\Rmr$-cohomology class $x$.  
\end{main}
\begin{defn} For a $\Gmr$-spectrum $\Rmr$, we say an $\Rmr$-cohomology operation is {\bf genuine stable} if it commutes with $\upsigma_{\uprho}$, where  $\uprho$ is the real regular representation of $\Gmr$. 
\end{defn}

In the nonequivariant case,  when $\Gmr$ is the trivial group  and $\Rmr$ is $\Hmr\FF_2$---the ordinary cohomology with $\FF_2$-coefficients, the vector bundle  \eqref{bundle:TautVF} can be chosen to be the tautological line bundle over $\B\Sigma_2 \simeq \RR\PP^\infty$.  We recall the graded module structure on the mod 2 homology of the classifying space: 
\[ \Hmr_* ((\Bmr\Sigma_2)_+; \FF_2) \cong \FF_2\{ {\bf b}_0, {\bf b}_1, \dots \},\]
where ${\bf b}_i$ are generators is degree $i$. As shown in  \Cref{rmk:classicalsl}, the $\Hmr\FF_2$-Eulerian sequences
\[ \upbeta[k] = (\overbrace{0, \dots, 0}^{\text{$k$}}, {\bf b}_0, {\bf b}_1, \dots )\] 
 generate the classical $k$-th Steenrod squaring operation $\Sq^k$ by \Cref{main1}. We also show that our framework recovers all classical odd primary Steenrod operations as well as $\Cmr_2$-equivariant Steenrod operations, as detailed in \Cref{rmk:classicalsl}  and \Cref{rmk:ctwoicalsl}.  In \Cref{sec:new}, we demonstrate the strength of our theory by constructing new $\Gmr$-equivariant Steenrod operations for all finite group $\Gmr$: 
\begin{main} \label{main2} Suppose $\Gmr$ is a finite group. Then for every $k \in \NN$  there exist genuine stable cohomology operations:
\[ 
\begin{tikzcd}
\Sq^{k \uprho_{\Gmr}}_{\uplambda}:  \Hmr^{\star}_\Gmr(- ; \ull{\FF}_2) \rar & \Hmr^{\star + k \uprho_{\Gmr}}_\Gmr(- ; \ull{\FF}_2)
\end{tikzcd}
\]
\[ 
\begin{tikzcd}
\Sq^{k \uprho_{\Gmr}+1}:  \Hmr^{\star}_\Gmr(- ; \ull{\FF}_2) \rar & \Hmr^{\star + k \uprho_{\Gmr} +1}_\Gmr(- ; \ull{\FF}_2)
\end{tikzcd}
\]
where $\lambda \in \mr{Irr}_1(\Gmr)$---the isomorphism classes of $1$-dimensional orthogonal $\Gmr$-representation. 

When $p$ is  an odd prime,  there exist genuine stable cohomology operations:
\[ 
\begin{tikzcd}
\Pmr^{2\epsilon k \uprho_{\Gmr}}_{\uplambda}:  \Hmr^{\star}_\Gmr(- ; \ull{\FF}_p) \rar & \Hmr^{\star + 2 \epsilon k \uprho_{\Gmr}}_\Gmr(- ; \ull{\FF}_p)
\end{tikzcd}
\]
\[ 
\begin{tikzcd}
\Pmr^{2\epsilon k \uprho_{\Gmr}+1}:  \Hmr^{\star}_\Gmr(- ; \ull{\FF}_p) \rar & \Hmr^{\star + 2 \epsilon k \uprho_{\Gmr} +1}_\Gmr(- ; \ull{\FF}_p)
\end{tikzcd}
\]
where $k \in \NN$, $\lambda \in \td{\mr{Irr}}_1(\Gmr)$---the isomorphism classes of complex $1$-dimensional orthogonal $\Gmr$-representation whose character factors through $\mr{C}_p \subset \mr{S}^1 \subset \CC^{\times}$, and 
\[ 
\epsilon = \left\lbrace \begin{array}{ccc}
(p-1)/2 & \text{if $|\Gmr|$ is even} \\
(p-1) & \text{if $|\Gmr|$ is odd.} 
\end{array} \right.
\]
\end{main}
\begin{rmk}  When $\Gmr $ is the trivial subgroup $\sfe$ in \Cref{main2}, then  the operations $ \Sq^{k \uprho_\sfe}_{1}$, $\Sq^{k \uprho_{\sfe} +1}$, $\Pmr^{2 \epsilon k \uprho_\sfe}_{1}$, $\Pmr^{2 \epsilon \uprho_{\sfe} +1 }$ are the classical Steenrod operations $\Sq^{k}$, $\Sq^{k+1}$, $ \Pmr^{k}$, $\beta \Pmr^{k}$ in the notation of \cite{SteenBook}, respectively. 
\end{rmk}
 An $\Rmr$-Eulerian sequence $\upchi$  is a sequence of homology classes. By restricting the action to a subgroup $\Kmr \subset \Gmr$, we define its restriction $\upiota_\Kmr(\upchi)$, which is an $\upiota_\Kmr\Rmr$-Eulerian sequence (see \Cref{defn:restrEulerianSeq}). The stable $\upiota_\Kmr \Rmr$-cohomology operation $\Sfrak^{\upiota_{\Kmr}(\upchi)}$ is then the restriction of the $\Sfrak^{\upchi}$ (see \Cref{thm:opsRestrAndGeoFixedPts}). From this we  notice that the underlying nonequivariant operations of the equivariant operations from \Cref{main2} are precisely the classical Steenrod operations:
\begin{main} \label{main3} Suppose $\Kmr$ is a subgroup of a finite group  $\Gmr$, and let $x \in \Hmr^{\star}_\Gmr(\Xmr; \ull{\FF}_p)$ denote an arbitrary cohomology class  for a $\Gmr$-space (or $\Gmr$-spectrum) $\Xmr$. Then: 
\begin{enumerate}
\item When $p =2$
\begin{enumerate}
\item $\upiota_{\Kmr*}(\Sq^{k \uprho_{\Gmr}}_{\lambda} (x))  = \Sq^{ |\Gmr/\Kmr| k \uprho_\Kmr}_{\upiota_\Kmr\lambda}( \upiota_{\Kmr \ast}(x)).$
\item $\upiota_{\Kmr*}(\Sq^{k \uprho_{\Gmr} +1} (x))  = \Sq^{ |\Gmr/\Kmr| k \uprho_\Kmr + 1}( \upiota_{\Kmr \ast}(x)) $. 
\end{enumerate}
\item When $p$ is odd 
\begin{enumerate}
\item $\upiota_{\Kmr*}(\Pmr^{2 \epsilon k \uprho_{\Gmr}}_{\lambda} (x))  = \Pmr^{ 2 \epsilon  |\Gmr/\Kmr| k \uprho_\Kmr}_{\upiota_\Kmr\lambda}( \upiota_{\Kmr \ast}(x)).$
\item $\upiota_{\Kmr*}(\Pmr^{2 \epsilon  k \uprho_{\Gmr} +1} (x))  = \Pmr^{ 2 \epsilon  |\Gmr/\Kmr| k \uprho_\Kmr + 1}( \upiota_{\Kmr \ast}(x)) $. 
\end{enumerate}
\end{enumerate}
\end{main}
\begin{notn} Given a subgroup $\Kmr$ of $\Gmr$, we let $\Nmr(\Kmr)$ denote the normalizer subgroup of $\Kmr$, and 
$\Wmr(\Kmr) = \Nmr(\Kmr)/\Kmr$ denote the Weyl group of $\Kmr$. 
\end{notn}
In \Cref{defn:geofixEulerSeq}, we define geometric fixed-points $\varphi^{\Kmr}(\upchi)$ of an $\Rmr$-Eulerian sequences $\upchi$. In \Cref{thm:opsRestrAndGeoFixedPts}, we  show that the geometric fixed-point of  $\Sfrak^{\upchi}$ on  a given cohomology class   is equal to  $\Sfrak^{\varphi^\Kmr}(\upchi)$ on the geometric fixed-point of that class. While this result is satisfying,  it does not quite compare the operations introduced in \Cref{main2} across  geometric fixed-point functors. This is because $\Kmr$-geometric fixed-point of  $\Hmr\ull{\FF}_p \in \Sp^\Gmr$, denote it by $\Phi^{\Kmr}(\Hmr\ull{\FF}_p)$,   is \emph{not} equivalent to   $\Hmr\ull{\FF}_p \in \Sp^{\Wmr(\Kmr)}$. However, $\Hmr\ull{\FF}_p$ is a split summand of $\Phi^\Kmr(\Hmr\ull{\FF}_p)$  as an $\EE^{\Wmr(\Kmr)}_\infty$-ring spectrum.  This leads us to consider a modified $\Kmr$-geometric fixed-point functor: 
\[ 
\begin{tikzcd} 
\td{\varphi}^\Kmr: \Hmr^{\Gmr}_{\star}(-; \ull{\FF}_p) \rar &  \Hmr^{\Wmr(\Kmr)}_{\star}(-; \ull{\FF}_p). 
\end{tikzcd}
\]
In \Cref{defn:modgeoES}, we define the modified geometric $\Kmr$-fixed-point $\td{\varphi}^\Kmr(\upchi)$ of an $\Hmr\ull{\FF}_p$-Eulerian sequence $\upchi$. We summarize the relation between $\Sfrak^{\upchi}$ and $\Sfrak^{\td{\varphi}^\Kmr(\upchi)}$ in \Cref{thm:modgeoES}. From this result we conclude: 
\begin{main} \label{main4}  Let $\Kmr$ be a subgroup of a finite group $\Gmr$.  For a $\Gmr$-space (or $\Gmr$-spectrum) $\Xmr$, let $x \in \Hmr^{\star}_\Gmr(\Xmr; \ull{\FF}_p)$ be a cohomology class. Then:
\begin{enumerate}
\item $\td{\varphi}^{\Kmr}\left( \Sq^{k \uprho_\Gmr}_{\lambda}(x) \right) = \Sq^{k |\Gmr/\Nmr(\Kmr)| \uprho_{\Wmr(\Kmr)}}_{\lambda^\Kmr}\left(\td{\varphi}^\Kmr(x) \right)$ when $p=2$
\item $\td{\varphi}^{\Kmr}\left( \Pmr^{ 2 \epsilon k \uprho_\Gmr}_{\lambda}(x) \right) = \Pmr^{2 \epsilon k |\Gmr/\Nmr(\Kmr)| \uprho_{\Wmr(\Kmr)}}_{\lambda^\Kmr}\left(\td{\varphi}^\Kmr(x) \right)$ when $p$ is odd
\end{enumerate}
where it is assumed that $\Sq^{k \uprho_{\Wmr(\Kmr)}}_{\lambda^\Kmr}$ and $\Pmr^{2 \epsilon k \uprho_{\Wmr(\Kmr)}}_{\lambda^\Kmr}$ are trivial operations when $\lambda^\Kmr = {\bf 0}$. 
\end{main}
The operations in \Cref{main2} are derived from $\H\ull{\FF}_p$-Eulerian sequences in the homology of $\Bmr_{\Gmr}\Sigma_p$. Calculation of $\Hmr_{\star}^\Gmr(\Bmr_{\Gmr}\Sigma_p; \ull{\FF}_p)$ is an extremely difficult problem  and is largely unsolved for groups larger than $\Cmr_2$. The technical part of this paper identifies infinite families of homology classes in $\Hmr_{\star}^\Gmr(\Bmr_{\Gmr}\Sigma_p; \ull{\FF}_p)$ that are specifically designed to form $\Hmr\ull{\FF}_p$-Eulerian sequences. These homology classes can be tracked along restrictions and geometric fixed-points  which leads  to the results in \Cref{main3} and \Cref{main4}. When $\Gmr = \Cmr_2$, the homology groups $\Hmr_{\star}^{\Cmr_2}(\Bmr_{\Cmr_2}\Sigma_2; \ull{\FF}_2)$ are fully known \cite{HK} (also see \eqref{eqn:cohHK} and \Cref{rmk:ctwoicalsl}) and  we show that our list of $\H\ull{\FF}_2$-Eulerian sequences in $\Hmr_{\star}^{\Cmr_2}(\Bmr_{\Cmr_2}\Sigma_2; \ull{\FF}_2)$ is complete. However, recent unpublished calculations of 
$\Hmr_{\star}^{\Cmr_4}(\Bmr_{\Cmr_4}\Sigma_2; \ull{\FF}_2)$ \cite{NickGProj} reveal that our list of  $\Hmr\ull{\FF}_2$-Eulerian sequences for $\Gmr = \Cmr_4$ is far from complete. Consequently, we do not expect the cohomology operations of \Cref{main2} to generate the full set of $\Gmr$-equivariant Steenrod operations for groups larger than $\Cmr_2$. 

The structural properties of  classical and $\Cmr_2$-equivariant Steenrod \linebreak operations---namely, the Cartan formula, the Adem relations, and the total squaring operation---make them a potent tool. Given the difficult nature of calculating $\Hmr_{\star}^{\Gmr}(\Bmr_{\Gmr}\Sigma_p; \ull{\FF}_p)$, our strategy is to pursue an abstract formulation of the Cartan formula and Adem relations solely through the Eulerian sequence framework. This will ensure that any new $\Gmr$-equivariant Eulerian sequence that is discovered will provide direct insight into the structural properties of the $\Gmr$-equivariant Steenrod algebra.

 In \Cref{thm:GenCartan}, we establish a generalized Cartan formula for Eulerian sequences that remains applicable even without a K\"unneth isomorphism. Furthermore, we develop the framework for defining Adem relations purely in terms of Eulerian sequences. Our \Cref{defn:Eulerian} provides a very general definition for Eulerian sequences, which, among other applications, allows us to define Eulerian sequences in $\Hmr_{\star}^{\Gmr}(\Bmr_{\Gmr}\Sigma_n; \ull{\FF}_p)$ for all $n \in \NN$.   We refer to these as $\uprho_\Gmr$-stable $\Hmr\ull{\FF}_p$-Eulerian sequences of weight $n$, and denote the collection of such integral sequences by $\Ecal^{(n)}_{\Gmr, p}$ (see \Cref{defn:intES}). We then define a strictly associative product
  \[ 
 \begin{tikzcd}
\odot :\Ecal^{(n)}_{\Gmr, p} \times \Ecal^{(m)}_{\Gmr, p} \rar & \Ecal^{(nm)}_{\Gmr, p}
 \end{tikzcd}
 \]
 and show that $\Sfrak^{\upchi_1 \odot \upchi_2} = \Sfrak^{\upchi_1} \circ \Sfrak^{\upchi_2}$ (see \Cref{thm:composeES}). This product thus realizes the composition of genuine stable $\Hmr\ull{\FF}_p$-cohomology operations.  The Adem relations arise from the fact that the map
 \[ 
 \begin{tikzcd} 
 \Sigma_{n} \times \Sigma_n \rar & \Sigma_n \wr \Sigma_n \rar & \Sigma_{n^2}
 \end{tikzcd}
 \]
 and its composition with the twist map on $\Sigma_n \times \Sigma_n$ are conjugates.   We are currently investigating if we can use this fact  to describe Adem relations abstractly in terms of Eulerian sequences avoiding explicit calculations of $\Hmr_{\star}^{\Gmr}(\Bmr_{\Gmr}\Sigma_n; \ull{\FF}_p)$. 
 
 Our theory, which is sensitive to $\Ncal_\infty$-ring structures, applies to a wide range of equivariant and nonequivariant cohomology theories. Nonequivariantly, it is applicable to any cohomology theory represented by an $\EE_\infty$-ring spectrum with finite  orientation order (as defined in \cite{BC}). This includes complex oriented theories such as  $\Hmr\ZZ/p^i$,$\Hmr\ZZ$, $\ku$ and Morava $\Emr$-theory, along with real $\Kmr$-theory, topological modular forms, Johnson-Wilson theories and $\EO$-theories.  While we do not know if every stable cohomology operation for an $\Ncal_\infty$-ring $\Rmr$ can be obtained from an $\Rmr$-Eulerian sequence using \Cref{main1}, this is the case for $\Hmr\ull{\FF}_p$ when $\Gmr = \sfe$ and $\Gmr = \Cmr_2$.  We therefore conjecture:  
 \begin{conj} \label{conj:genall} The collection of genuine stable $\uprho_\Gmr$-stable cohomology operations 
 \[  {\bf S}_{\Gmr, p} := \{ \Sfrak^{\upchi} : \upchi \in \bigsqcup_i \Ecal^{(p^i)}_{\Gmr, p}\} \]
 generate $ \Acal_{\Gmr, p}^{\star}$ for all finite group $\Gmr$ at all prime $p$. 
 \end{conj}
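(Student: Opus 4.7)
The plan is to split the conjecture into two parts. First, (A): every stable cohomology operation on $\Hmr\ull{\FF}_p$ factors through some extended $\Gmr$-equivariant power $\mr{D}_n^{\Gmr}(\Hmr\ull{\FF}_p) := (\Bmr_\Gmr \Sigma_n)_+ \wedge_{\Sigma_n} (\Hmr\ull{\FF}_p)^{\wedge n}$, and $n$ may be reduced to a prime power $p^i$. Second, (B): every operation arising from $\mr{D}_{p^i}^{\Gmr}(\Hmr\ull{\FF}_p)$ is realized by some $\uprho_\Gmr$-stable $\Hmr\ull{\FF}_p$-Eulerian sequence of weight $p^i$, and the algebra generated under composition and the $\MM_p^\Gmr$-action is correctly captured by the composition product $\odot$ of \Cref{thm:composeES} and the generalized Cartan formula of \Cref{thm:GenCartan}.

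I would attack (A) by adapting the nonequivariant argument of Bruner--May--McClure--Steinberger to the $\Ncal_\infty$-equivariant setting: detect a stable operation $f$ on the free $\EE_\infty^\Gmr$-$\Hmr\ull{\FF}_p$-algebra on a single $\star$-graded cell, whose symmetric-product filtration has graded pieces $\mr{D}_n^\Gmr(\Hmr\ull{\FF}_p)$, and use a stable transfer along the Sylow inclusion $\Sigma_{p^i} \hookrightarrow \Sigma_n$ to reduce $n$ to a prime power after smashing with $\Hmr\ull{\FF}_p$, since prime-to-$p$ indices become invertible mod $p$. The passage between weights $p^i$ and $p^{i+1}$ is controlled by the wreath-product composition $\Sigma_{p^i} \wr \Sigma_p \hookrightarrow \Sigma_{p^{i+1}}$, which on the Eulerian side is precisely $\odot$. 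Claim (B) is already known for $\Gmr = \sfe, \Cmr_2$; in general it amounts to surjectivity of the map $\upchi \mapsto \Sfrak^\upchi$ onto the Hurewicz image in $\Hmr_\star^\Gmr(\Bmr_\Gmr \Sigma_{p^i}; \ull{\FF}_p)$ coming from $\pi_\star^\Gmr \mr{D}_{p^i}^\Gmr(\Hmr\ull{\FF}_p)$.

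The main obstacle is that $\Acal_{\Gmr,p}^{\star}$ fails to be free over $\MM_p^\Gmr$ for $\Gmr \ne \sfe, \Cmr_2$ (see \cite{WS, HKSZ}), so one cannot verify generation by pairing against a polynomial dual basis as in the classical or $\Cmr_2$-equivariant settings. A promising workaround is an induction on the subgroup lattice of $\Gmr$: \Cref{main3} and \Cref{main4} say that an operation is detected, in a precise sense, by its family of restrictions and modified geometric fixed-points, so if every operation on each proper subquotient is Eulerian by induction, what remains is a lifting problem---constructing a genuine $\Gmr$-equivariant Eulerian sequence whose restrictions and modified geometric fixed-points realize a prescribed coherent family, and excluding operations that are invisible on every proper subquotient. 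This lifting step, rather than the reduction to prime powers, is where the bulk of the difficulty will sit.
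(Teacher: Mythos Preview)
The statement you are attempting to prove is labeled a \emph{Conjecture} in the paper, not a theorem; the paper explicitly says ``It remains an open question whether operations coming from Eulerian sequences generate the algebra of all stable $\Rmr$-cohomology operations'' and offers no proof. So there is no paper proof to compare your proposal against.

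Your outline is a reasonable sketch of a strategy, and you are honest that it does not close: you correctly flag the non-freeness of $\Acal_{\Gmr,p}^{\star}$ over $\MM_p^\Gmr$ as the main obstruction, and you identify the lifting step in the subgroup-lattice induction as where the real work lies. A few points where the sketch is more optimistic than the situation warrants. For (A), the Bruner--May--McClure--Steinberger reduction to prime powers uses that mod $p$ homology of symmetric groups is detected on Sylow subgroups; in the $\Gmr$-equivariant setting one needs the analogous statement for $\Hmr_\star^\Gmr((\Bmr_\Gmr\Sigma_n)_+;\ull{\FF}_p)$, and the transfer argument now takes place over the non-field coefficient ring $\MM_p^\Gmr$, where invertibility of prime-to-$p$ integers is necessary but not obviously sufficient. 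For (B), your formulation ``surjectivity of $\upchi\mapsto\Sfrak^\upchi$ onto the Hurewicz image'' is not quite the right target: Eulerian sequences are sequences of homology classes satisfying a cap-product recursion, not single classes, so what you need is that every operation coming from $\Dmr^\Gmr_{p^i}$ is detected by \emph{some} compatible family. The paper itself notes (just before the conjecture) that for $\Gmr=\Cmr_4$ the list of known Eulerian sequences is already incomplete, so even exhibiting enough sequences is open. Finally, the induction on subquotients would need a statement that restriction plus modified geometric fixed points \emph{jointly detect} elements of $\Acal_{\Gmr,p}^\star$, which is stronger than what \Cref{main3} and \Cref{main4} provide; those results compute how specific operations restrict, not that the joint map is injective.
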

One can  formulate a stronger version of this conjecture. First, note that the classical Steenrod algebra $\Acal^{*}_p$ and the $\Cmr_2$-equivariant Steenrod algebra $\Acal_{\Cmr_2, p}^{\star}$ are multiplicatively generated over the coefficient ring by the stable cohomology operations arising from $\Ecal^{(p)}_{\Gmr, p}$.  This motivates the following question for future investigation: 
\begin{question} Does there exist an $n \in \NN$ such that \[  {\bf S}_{\Gmr, p}\langle n \rangle:= \{ \Sfrak^{\upchi} : \upchi \in \bigsqcup_{i=1}^{n} \Ecal^{(p^i)}_{\Gmr, p} \} \] generate the algebra $\Acal_{\Gmr, p}^{\star}$ for all finite group $\Gmr$? 
\end{question}

\begin{notn} \label{notn:main} Throughout this paper: 
\begin{itemize}
\item $\Gmr$ is a finite group, 
\item $\uprho_{\Gmr}$ is the regular representation of $\Gmr$ (we drop the subscript when the underlying group is clear from the context), 
\item $ \uptau_n$ and $\td{\uptau}_n$ denote the permutation and the standard representations of $\Sigma_n$ respectively, 
\item $\Ucal_{\Gmr}$ denotes the complete $\Gmr$-universe, 
\item $\Vmr$ denotes an orthogonal $\Gmr$-representation which contains the trivial sub-representation $\RR$,
\item $\Ucal_{\Gmr, \Vmr}$ denotes the sub $\Gmr$-universe generated by the $\Gmr$-representation $\Vmr$, 
 \item $\Sp_{\Gmr}$ be a  category of orthogonal $\Gmr$ spectra in the universe $\Ucal_{\Gmr}$, 
 \item $\upiota_\Kmr: \Sp_\Gmr \longrightarrow \Sp_\Kmr$ will denote the restriction functor for the subgroup $\Kmr$,
 \item $\Phi^\Kmr:  \Sp_\Gmr \longrightarrow \Sp_{\Wmr(\Kmr)}$ denote the geometric fixed point functor for the subgroup $\Kmr$, where $\Wmr(\Kmr)$ denotes the Weyl group of $\Kmr$ in $\Gmr$, 
  \item $\RO(\Gmr, \Vmr)$ denote the subring of $\RO(\Gmr)$ generated by $\Ucal_{\Gmr, \Vmr}$. 
\end{itemize}
\end{notn}
\begin{conv} Throughout this paper, all $\Gmr$-spaces will be assumed to have a  basepoint.  We denote the reduced Bredon homology and cohomology with coefficients in the Mackey functor $\ull{\Amr}$ by
\[ \Hmr_{\star}^\Gmr(- ; \ull{\Amr}) \text{ and } \Hmr^{\star}_\Gmr(- ; \ull{\Amr} ),  \]
respectively. Unreduced versions will be indicated by adding a disjoint basepoint.  
\end{conv}
\subsection*{Acknowledgement} This paper have benefitted from conversations with Samik Basu, Hood Chatham, Bert Guillou, Mike Hill, Tyler Lawson,  Ang Li, Guchuan Li, and Mike Mandell. 

This research is supported by NSF grant DMS-2305016. 
\subsection*{Organization of the paper} In \Cref{sec:power}, we introduce equivariant power operations. In \Cref{sec:shiftedpower}, we use orientations of certain equivariant bundles to define shifted power operations. 

In \Cref{sec:Euler}, we introduce the theory of Eulerian sequences and prove \Cref{main1}. We also introduce restrictions, geometric fixed-points , and modified geometric fixed-points of Eulerian sequences. We discuss Cartan formula for Eulerian sequences in \Cref{sec:Cartan}. 

In \Cref{sec:compose}, we introduce the notion of $\Vmr$-shifted homotopy $\Ncal_\infty$-rings and use it define composition of Eulerian sequences. 

In \Cref{sec:new}, we identify new $\Hmr\ull{\FF}_p$-Eulerian sequences and prove \Cref{main2}, \Cref{main3}, and \Cref{main4}.

\section{Power operations associated to $\Ncal_\infty$ ring spectra}\label{sec:power}

Classical Steenrod operations are constructed from power operations, which are defined using the $\EE_\infty$-structure of $\Hmr\FF_2$.  In equivariant homotopy theory, $\Ncal_\infty$-operads  generalize  their nonequivariant $\EE_\infty$-operad counterparts. First studied in \cite{Ninfty}, these operads differ from their nonequivariant counterparts in that there can be multiple distinct homotopy classes for a given finite group $\Gmr$ (see \cite{RubinNinftyOperads}, \cite{EquiCommutativeGutierrezWhite}, \cite{GenEquiOperadsBonventrePereira}). 

In this section, we first generalize the extended power construction (see \Cref{defn:extpower}) to equivariant settings. We  then use  the multiplicative $\Ncal_\infty$-structure of a ring spectrum $\Rmr$ to define equivariant generalizations of the classical power operations (see \Cref{defn:powerF}). We also study the interaction of equivariant power operations with restriction, fixed-points, geometric fixed-points, and modified geometric fixed-points. First, we recall some standard definitions from equivariant homotopy theory. 
\begin{defn} \label{defn:family}
 A collection of subgroups $\mathscr{F}$ of a group $\Gamma$ is called a {\bf family} if it is closed with respect to subgroups up to conjugation.
We will call a  family  $\mathscr{F}$ of $ \Gamma = \Gmr \times \Pi$  a  {\bf $\Gmr$-closed family} if  it contains all subgroups of the form  $\Kmr \times \{ {\sf e} \}$. 
\end{defn}
\begin{notn} \label{notn:univGspace} For every family $\mathscr{F}$ of $\Gamma$, let $\mr{E}\mathscr{F} $ denote the universal $\Gamma$-space  satisfying 
\[ 
(\mr{E}\mathscr{F})^{\Hmr} \simeq  \left\lbrace
\begin{array}{cl}
 \ast & \text{$\Hmr \in \mathscr{F}$} \\
 \emptyset & \text{otherwise}.
\end{array}
\right.
\]
When $\Fscr$ consists of all subgroups  $\Gmr \times \Pi$ such that its intersection with $1 \times \Pi$ is the trivial group, then we use $\Emr_\Gmr \Pi$ to denote $\Emr \Fscr$. 
\end{notn}

\subsection{Extended powers and power operations}
\begin{defn} \label{defn:extpower}
  Let $\Xmr$ be a $\Gmr$-space or a $\Gmr$-spectrum, and let $\Tmr$ be a finite $\Pi$-set. For a $(\mr{G} \times \Pi)$-family $\mathscr{F}$, the {\bf $(\mathscr{F}, \Tmr)$-th extended power of $\Xmr$} is defined as the $\Gmr$-space (or $\Gmr$-spectrum) given by the formula:
  \[
  \Dmr^{\mathscr{F}}_{\Tmr}(\Xmr) := (\Emr \mathscr{F})_+ \sma_{\Pi} (\Xmr^{\sma \Tmr})
  \]
  where the $\Gmr$-action is the diagonal action.
\end{defn}

\begin{rmk}
  \label{rmk:genuine-naive}
  When $\Xmr$ is a $\Gmr$-spectrum, the object $\Xmr^{\sma \Tmr}$ is regarded as a $(\Gmr \times \Pi)$-spectrum. The defining feature is that its underlying $\Gmr$-spectrum is genuine, while its $\Pi$-spectrum structure is naive. Specifically, the universe of $\Xmr^{\sma \Tmr}$ is generated by finite-dimensional orthogonal representations of the form $\alpha \otimes \epsilon$, where $\alpha$ is a $\Gmr$-representation and $\epsilon$ is a trivial representation of $\Pi$.
\end{rmk}
 Since the diagonal map 
\[ 
\begin{tikzcd}
\Delta:\mr{E} \mathscr{F}  \rar & \mr{E}(\mathscr{F} \times \mathscr{F}) \rar & \mr{E} \mathscr{F} \times \mr{E} \mathscr{F}
\end{tikzcd}
\]
is  $\mr{G} \times \Pi$-equivariant, we get the following lemma. 
 \begin{lem} \label{lem:delpair1} Given a $\Pi$-set $\Tmr$, there exists a   $\Gmr$-equivariant natural  map  
\begin{equation} \label{eqn:extendedoplax}
\begin{tikzcd}
\partialup_{\Tmr}^{\mathscr{F}}: \mr{D}^{\mathscr{F}}_{\mr{T}}(\Xmr \sma \Ymr) \rar & \mr{D}^{\mathscr{F}}_{\mr{T} }( \Xmr) \sma \mr{D}^{\mathscr{F}}_{\mr{T} }( \Ymr)
\end{tikzcd}
\end{equation}
for any pair of $\Gmr$-spaces (or $\Gmr$- spectra) $\Xmr$ and $\Ymr$ that satisfies the external associativity condition. This means that the diagram 
\begin{equation} 
\label{diag:extcom}
\begin{tikzcd}
\mr{D}^{\mathscr{F}}_{\mr{T}}(\Xmr \sma \Ymr \sma \Zmr) \rar \dar  & \mr{D}^{\mathscr{F}}_{\mr{T}}(\Xmr \sma \Ymr) \sma \mr{D}^{\mathscr{F}}_{\mr{T}}(\Zmr)  \dar  \\
\mr{D}^{\mathscr{F}}_{\mr{T}}(\Xmr \sma \Ymr)  \sma \mr{D}^{\mathscr{F}}_{\mr{T}}(\Zmr) \rar & \mr{D}^{\mathscr{F}}_{\mr{T}}(\Xmr) \sma \mr{D}^{\mathscr{F}}_{\mr{T}}( \Ymr)  \sma \mr{D}^{\mathscr{F}}_{\mr{T}}(\Zmr) 
\end{tikzcd}
\end{equation}
commutes for any triplet $(\Xmr, \Ymr, \Zmr)$. 
\end{lem}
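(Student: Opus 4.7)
The plan is to build $\partialup_{\Tmr}^{\mathscr{F}}$ by combining three natural ingredients: the diagonal on $\mr{E}\mathscr{F}$, the symmetry of smash product, and the descent along the diagonal subgroup inclusion $\Pi \hookrightarrow \Pi \times \Pi$.

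First I would identify the target. Since smash product commutes with the orbit construction $(-)_+ \sma_{\Pi} (-)$ in each variable,
\[
\mr{D}^{\mathscr{F}}_{\Tmr}(\Xmr) \sma \mr{D}^{\mathscr{F}}_{\Tmr}(\Ymr) \;\cong\; (\mr{E}\mathscr{F} \times \mr{E}\mathscr{F})_+ \sma_{\Pi \times \Pi} \bigl(\Xmr^{\sma \Tmr} \sma \Ymr^{\sma \Tmr}\bigr),
\]
where the two copies of $\Pi$ act independently. On the other hand, the canonical symmetry of smash product gives a $(\Gmr \times \Pi)$-equivariant isomorphism $(\Xmr \sma \Ymr)^{\sma \Tmr} \cong \Xmr^{\sma \Tmr} \sma \Ymr^{\sma \Tmr}$, where the right-hand side carries the diagonal $\Pi$-action (c.f.\ \Cref{rmk:genuine-naive}, so that the $\Pi$-structure is naive and this symmetry is indeed $\Pi$-equivariant).

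Next I would define $\partialup_{\Tmr}^{\mathscr{F}}$ as the composite
\[
\begin{tikzcd}[column sep=small]
(\mr{E}\mathscr{F})_+ \sma_{\Pi} (\Xmr \sma \Ymr)^{\sma \Tmr}  \ar[d,"\cong"'] \\
(\mr{E}\mathscr{F})_+ \sma_{\Pi} \bigl(\Xmr^{\sma \Tmr} \sma \Ymr^{\sma \Tmr}\bigr)  \ar[d,"\Delta_+ \sma \id"'] \\
(\mr{E}\mathscr{F} \times \mr{E}\mathscr{F})_+ \sma_{\Pi} \bigl(\Xmr^{\sma \Tmr} \sma \Ymr^{\sma \Tmr}\bigr) \ar[d] \\
(\mr{E}\mathscr{F} \times \mr{E}\mathscr{F})_+ \sma_{\Pi \times \Pi} \bigl(\Xmr^{\sma \Tmr} \sma \Ymr^{\sma \Tmr}\bigr),
\end{tikzcd}
\]
where the second map uses the diagonal $\Delta \colon \mr{E}\mathscr{F} \to \mr{E}\mathscr{F} \times \mr{E}\mathscr{F}$, which is $(\Gmr \times \Pi)$-equivariant when the target has the diagonal $\Pi$-action, and the third map is the canonical quotient induced by the diagonal inclusion $\Pi \hookrightarrow \Pi \times \Pi$. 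All arrows are $\Gmr$-equivariant and natural in $(\Xmr, \Ymr)$, so the composite has the desired form.

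Finally I would verify the associativity square \cref{diag:extcom}. Unfolding both paths, the two composites $\mr{D}^{\mathscr{F}}_{\Tmr}(\Xmr \sma \Ymr \sma \Zmr) \to \mr{D}^{\mathscr{F}}_{\Tmr}(\Xmr) \sma \mr{D}^{\mathscr{F}}_{\Tmr}(\Ymr) \sma \mr{D}^{\mathscr{F}}_{\Tmr}(\Zmr)$ factor through the ``triple diagonal''
\[
\Delta^{(3)} \colon \mr{E}\mathscr{F} \longrightarrow \mr{E}\mathscr{F} \times \mr{E}\mathscr{F} \times \mr{E}\mathscr{F},
\]
composed with a symmetry isomorphism $(\Xmr \sma \Ymr \sma \Zmr)^{\sma \Tmr} \cong \Xmr^{\sma \Tmr} \sma \Ymr^{\sma \Tmr} \sma \Zmr^{\sma \Tmr}$ and the descent along $\Pi \hookrightarrow \Pi \times \Pi \times \Pi$. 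Both descriptions agree because $\Delta$ is coassociative up to canonical homeomorphism (any two ways of bracketing the diagonal of a space into three factors coincide), the symmetry isomorphisms for smash product satisfy Mac~Lane's coherence, and the two factorizations $\Pi \hookrightarrow \Pi \times \Pi \hookrightarrow \Pi \times \Pi \times \Pi$ as diagonal subgroups give the same subgroup. I do not anticipate a serious obstacle here; the only work is careful bookkeeping of the $(\Gmr \times \Pi)$-equivariance, in particular ensuring that the symmetry of $(\Xmr \sma \Ymr)^{\sma \Tmr}$ respects the diagonal $\Pi$-action, which is automatic in the naive $\Pi$-spectrum structure of \Cref{rmk:genuine-naive}.
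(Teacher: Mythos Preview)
Your construction is correct and is essentially identical to the paper's: both define $\partialup_{\Tmr}^{\mathscr{F}}$ as the composite of the diagonal $\Delta$ on $\mr{E}\mathscr{F}$, the shuffle isomorphism $(\Xmr\sma\Ymr)^{\sma\Tmr}\cong \Xmr^{\sma\Tmr}\sma\Ymr^{\sma\Tmr}$, and the passage from $\Pi$-orbits to $\Pi\times\Pi$-orbits, and both deduce \eqref{diag:extcom} from the coassociativity of $\Delta$. Your write-up is in fact more explicit about the individual steps than the paper's, but the argument is the same.
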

\begin{proof}The map $\partialup_{\Tmr}^{\mathscr{F}}$, which is defined as  the composite map 
\[ 
\begin{tikzcd}
\mr{D}^{\mathscr{F}}_{\mr{T}}(\Xmr \sma \Ymr) := \Emr \mathscr{F}_+ \sma_{\Pi} (\Xmr \sma \Ymr)^{\sma \Tmr} \dar["\Delta \sma_\Pi \mr{Id}"] \\
(\Emr \mathscr{F} \times \Emr \mathscr{F})_+ \sma_{\Pi} (\Xmr \sma \Ymr)^{\sma \Tmr} \dar \\
  (\Emr \mathscr{F}_+ \sma_{\Pi} \Xmr ^{\sma \Tmr} ) \sma( \Emr \mathscr{F}_+ \sma_{\Pi} \Ymr^{\sma \Tmr} )=: \mr{D}^{\mathscr{F}}_{\mr{T} }( \Xmr) \sma \mr{D}^{\mathscr{F}}_{\mr{T} }( \Ymr),
\end{tikzcd}
\] 
 satisfies \eqref{diag:extcom} as the diagonal map $\Delta$ satisfies an external associativity condition.
\end{proof}

\begin{notn} \label{notn:Ofam}Suppose $\Ocal$ is an $\Ncal_{\infty}$-operad.  Let $\mathscr{F}_n(\Ocal)$ denote  the $\Gmr$-closed family  of $\mr{G} \times \Sigma_n$ 
such that $ \mr{E}{\mathscr{F}}_n(\Ocal)$ is equivalent to $\Ocal(n)$, the $n$-th space of $\Ocal$. 
\end{notn}
The collection $\{\mathscr{F}_n(\Ocal)\}_{n \in \NN}$ must satisfy certain compatibility criteria, which gives rise to the operadic structure maps of $\Ocal$:
\begin{equation} \label{map:structure} 
\begin{tikzcd}
\mu : \Ocal(k) \times (\Ocal(n_1) \times \cdots \times \Ocal(n_k))  \rar & \Ocal(n_1 + \dots + n_k), 
\end{tikzcd}
\end{equation}
where $\Ocal(i)$ denotes the $i$-th space of $\Ocal$. This compatibility is encoded using a symmetric monoidal coefficient system $\ull{\Ccal}(\Ocal)$, called the \textbf{indexing system} of $\Ocal$.
\begin{notn} \label{notn:D} Let   ${\sf n}$ denote the set $\{ 1, 2, \dots, n \}$ on which $\Sigma_n$ acts by permutation. For an  $\Ncal_{\infty}$ $\Gmr$-operad $\mathcal{O}$,   we will use the abbreviation $\Dmr_{\sf n}^{\Ocal}(\Xmr) := \Dmr^{\mathscr{F}_{n}(\Ocal)}_{ \sf n}(\Xmr)$.
\end{notn}
Let $\Rmr \in \Sp_{\Gmr}$ be a spectrum. We call $\Rmr$ an $\Ncal_\infty$-ring if it is an algebra over some $\Ncal_\infty \Gmr$-operad $\Ocal$. By definition, an $\Ocal$-algebra $\Rmr \in \Sp_{\Gmr}$ is equipped with a compatible family of $\Gmr$-equivariant structure maps:
\[ 
\begin{tikzcd}
\uptheta_n^\Rmr: \Dmr_{\sf n}^{\Ocal}(\Rmr) \rar & \Rmr 
\end{tikzcd}
\]
 for each  $n \in \NN$. 

Given a group homomorphism  $ \upkappa: \Pi \longrightarrow \Sigma_n,$ one obtains a $\Gmr \times \Pi$-family  $(1 \times \upkappa)^*(\mathscr{F}_n(\Ocal))$ by pulling back the $\Gmr \times \Sigma_n$-family $\mathscr{F}_n(\Ocal)$ along the map $1 \times \upkappa$.
 For any sub-family $\mathscr{F}$  of $(1 \times \upkappa)^*(\mathscr{F}_n(\Ocal))$, we define a composite map
\begin{equation}
\begin{tikzcd}
\uptheta_{ \mathscr{F}, \upkappa}^{ \Rmr} : \Dmr^{\mathscr{F}}_{\upkappa^*{\sf n}}(\Rmr) \rar & \Dmr_{\sf n}^{\Ocal}(\Rmr) \rar["\uptheta^\Rmr_n"] & \Rmr,
\end{tikzcd}
\end{equation}
whose initial map is induced by  the sequence of maps 
\[ 
\begin{tikzcd}
\mr{E} \mathscr{F} \rar & \mr{E} ((1 \times \upkappa)^*\mathscr{F}_n(\Ocal))  \rar &   \mr{E}  \mathscr{F}_n(\Ocal)
\end{tikzcd}
\]
combined with the $\Pi$-equivariant map  $\Rmr^{\sma \upkappa^*{\sf n}} \longrightarrow \Rmr^{\sma {\sf n}} $. 
\begin{defn} \label{defn:powerF} Let $\Ocal$ be an $\Ncal_\infty$ $\Gmr$-operad, $\upkappa: \Pi \to \Sigma_n$ denote a group homomorphism and $\mathscr{F}$ be a sub-family of the  $\mr{G} \times \Pi$-family $(1 \times \upkappa)^*(\mathscr{F}_n(\Ocal))$. Then the {\bf $\mathscr{F}$-th  power operation} of  an $\Ocal$-ring  $\Rmr$ is the  natural map  
\[ 
\begin{tikzcd}
\Pcal^{\mathscr{F}} : \Rmr^0(-) \rar & \Rmr^0( \Dmr^{\mathscr{F}}_{ \upkappa^*{\sf n}}(-) )
\end{tikzcd}
\]
which sends  $x: \Xmr \to \Rmr$ to the composite 
\begin{equation} \label{eq:power}
\begin{tikzcd}
\Pcal^{\mathscr{F}}(x): \Dmr^{\mathscr{F}}_{ \upkappa^*{\sf n}}(\Xmr) \ar[rr, " \Dmr^{\mathscr{F}}_{ \upkappa^*{\sf n}}( x)"] &&  \Dmr^{\mathscr{F}}_{\upkappa^*{\sf n}}(\Rmr) \ar[rr,"\uptheta_{ \mathscr{F}, \upkappa}^{ \Rmr} "] && \Rmr 
\end{tikzcd}
\end{equation}
in ${\sf Ho}(\Sp_{\Gmr}) $. 
\end{defn}

\subsection{Restrictions and fixed points of power operations} \label{subsec:Powerres} \ 

 Suppose $\mathscr{F}$ is a $\mr{G}$-closed family of $\Gmr \times \Pi$ and  $\Kmr$ a subgroup of  $\Gmr$.  The {\bf restriction} of  $\mathscr{F}$ to $\Kmr$, defined as 
\[ 
\upiota_\Kmr \mathscr{F} := \{ \mr{F} \cap (\Kmr \times \Pi): \mr{F} \in \mathscr{F}   \} 
\]
is an $\mr{K}$-closed family of $\Kmr \times \Pi$. The {\bf $\Kmr$-fixed point family} of $\mathscr{F}$ is given by 
\[ 
\mathscr{F}^\Kmr := \{ \left( \mr{F} \cap (\mr{N}(\mr{K})\times \Pi) \right)/(\mr{K} \times \{ 1\}):  \Kmr \times \{ 1 \}  \subset \Fmr \in \Fscr \}
\]
and is a $\mr{W}(\mr{K})$-closed family.  These constructions lead to the following equivalences: 
\begin{itemize}
\item[--] $\Emr (\upiota_\Kmr \mathscr{F}) \simeq \upiota_{\Kmr}(\Emr  \mathscr{F})$ as  $(\Kmr \times \Pi)$-spaces.
\item[--] $(\mr{E} \mathscr{F})^{ \Kmr} \simeq \mr{E}(\mathscr{F}^{\Kmr})$ as  $\mr{W}(\Kmr) \times \Pi$-spaces.
\end{itemize}
 Therefore, for an $\Ncal_\infty$ $\Gmr$-operad $\Ocal$, the collections 
\begin{itemize}
  \item[--] $ \upiota_\Kmr\mathcal{O} := \{\upiota_\Kmr\mathcal{O}(n)  \}_{n \in \NN} $ 
  \item[--]  $\mathcal{O}^{\mr{K}} := \{\mathcal{O}(n)^{\mr{K}} \}_{ n \in \NN}$
 \end{itemize}
 form an $\Ncal_{\infty}$ $\Kmr$-operad and an $\Ncal_{\infty}$ $\mr{W}(\Kmr)$-operad, respectively. 
 
 Suppose $\Rmr$ is an $\Ocal$-ring in $\Sp_\Gmr$. Then its restriction 
$\upiota_\Kmr\Rmr$ is an $\upiota_\Kmr\Ocal$-ring in $\Sp_{\Kmr}$ with structure maps
\[ 
\begin{tikzcd}
\uptheta_{n}^{\upiota_\Kmr \Rmr} : \Dmr_{\sf n}^{\upiota_\Kmr \Ocal}(\upiota_\Kmr\Rmr) \cong \upiota_\Kmr\Dmr_{\sf n}^{\Ocal}(\Rmr) \ar[rrr, "\upiota_\Kmr (\uptheta_n^\Rmr) "] &&& \upiota_\Kmr \Rmr,
\end{tikzcd}
\]
and its $\Kmr$-fixed points  $\Rmr^{\Kmr}$ is an $\Ocal^\Kmr$-ring in $\Sp_{\mr{W}(\Kmr)}$ with structure maps 
\[ 
\begin{tikzcd}
\uptheta_{n}^{ \Rmr^\Kmr} : \Dmr_{\sf n}^{ \Ocal^\Kmr}(\Rmr^\Kmr) \rar["\uplambda"] &  \Dmr_{\sf n}^{\Ocal}(\Rmr)^\Kmr \ar[rrr, "(\uptheta_n^\Rmr)^\Kmr "] &&& \Rmr^\Kmr,
\end{tikzcd}
\]
 where $n \in \NN$ and  $\uplambda$ is the map defined in  \eqref{map:lambda}. 
\begin{rmk} \label{rmk:lambda} For any space with an action of $\Kmr \times \Sigma_n$  its  $\Sigma_n$-orbits of $\Kmr$-fixed points is a subspace of $\Kmr$-fixed points of  $\Sigma_n$-orbits. Thus, there is a natural map of the form  
\begin{equation} \label{map:lambda}
\begin{tikzcd}
\uplambda: \Dmr_{\sf n}^{ \mathscr{F}^\Kmr}([-]^\Kmr) \rar & { [\Dmr_{\sf n}^{ \mathscr{F}}( -)]^\Kmr  } 
\end{tikzcd}
\end{equation} 
in  $\Top^{\Wmr(\Kmr)}_*$. This natural map extends to $\Sp_{\Gmr}$ because the $\Kmr$-fixed points functor is lax monoidal. 
\end{rmk}
\begin{notn} For $x \in \Rmr^0_\Gmr(\Xmr) \cong [\Xmr, \Rmr ]^\Gmr$, let
\[
\begin{tikzcd}
 \upiota_{\Kmr*}(x) : \upiota_{\Kmr}\Xmr \rar & \upiota_{\Kmr}\Rmr 
 \end{tikzcd} \  \in   \Rmr^0_\Kmr(\upiota_\Kmr\Xmr) = [\upiota_\Kmr \Xmr, \upiota_\Kmr \Rmr ]^\Kmr \]
 denote the restriction of $x$  to the subgroup $\Kmr$, and 
  \[
  \begin{tikzcd}
   x^\Kmr : \Xmr^\Kmr \rar & \Rmr^\Kmr 
   \end{tikzcd}
  \  \in   (\Rmr^{\Kmr})^{0}_{\Wmr(\Kmr)} ( \Xmr^\Kmr)
   \]  is the map induced by $x$ on $\Kmr$-fixed points. 
\end{notn}
\begin{assump} \label{assump:RF}  Let 
\begin{itemize}
\item $\Ocal$ be an $\Ncal_\infty$-operad, 
\item  $\Rmr$   an $\Ocal$-ring, 
\item $\upkappa: \Pi \to \Sigma_n$ a group homomorphism, and 
\item $\mathscr{F}$ a sub $(\Gmr \times \Pi)$-family of $(1 \times \upkappa)^*\mathscr{F}_n(\Ocal)$.
\end{itemize}
\end{assump}
\begin{lem} \label{lem:power-fix} Suppose $\Rmr$, $\upkappa$, $\Fscr$ are as in \Cref{assump:RF} and $\Kmr$ be a subgroup of $\Gmr$.
 Then for any $\Xmr \in \Sp_{\Gmr}$ and  $x \in \Rmr^0_\Gmr(\Xmr)$
  \begin{enumerate}
\item $
\upiota_{\Kmr*} (\Pcal^{\mathscr{F}}(x)) = \Pcal^{\upiota_\Kmr\mathscr{F}}( \upiota_{\Kmr*}(x)),
$
\item $\uplambda^* (\Pcal^{\mathscr{F}}(x)^{\Kmr}) = \Pcal^{\mathscr{F}^\Kmr}( x^{\Kmr}) $, 
\end{enumerate}
where $\uplambda$ is the natural map of \Cref{rmk:lambda}. 
\end{lem}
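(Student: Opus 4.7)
The plan is to verify both identities by unpacking the definition of $\Pcal^{\mathscr{F}}(x)$ in \eqref{eq:power} as a two-step composite and exploiting the naturality of the extended power construction $\Dmr^{\mathscr{F}}_{\upkappa^*{\sf n}}(-)$ under restriction and $\Kmr$-fixed points. Both parts then reduce to checking that the relevant diagrams of $\Gmr$-spectra carry through either $\upiota_\Kmr$ or $(-)^\Kmr$, once one invokes the identifications of $\uptheta^{\upiota_\Kmr\Rmr}_{\upiota_\Kmr\mathscr{F},\upkappa}$ and $\uptheta^{\Rmr^\Kmr}_{\mathscr{F}^\Kmr,\upkappa}$ with the appropriate images of $\uptheta^{\Rmr}_{\mathscr{F},\upkappa}$ recalled immediately before the lemma.

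For part (1), I would first observe that the restriction functor $\upiota_\Kmr$ is strong symmetric monoidal and commutes with $\Pi$-orbits; combined with the equivalence $\upiota_\Kmr\Emr\mathscr{F} \simeq \Emr(\upiota_\Kmr\mathscr{F})$ recorded before the lemma, this yields a natural isomorphism
\[
\upiota_\Kmr \Dmr^{\mathscr{F}}_{\upkappa^*{\sf n}}(\Xmr) \cong \Dmr^{\upiota_\Kmr\mathscr{F}}_{\upkappa^*{\sf n}}(\upiota_\Kmr\Xmr).
\]
Applying $\upiota_\Kmr$ to the composite \eqref{eq:power} defining $\Pcal^{\mathscr{F}}(x)$ and invoking $\upiota_\Kmr(\uptheta^{\Rmr}_{\mathscr{F},\upkappa}) = \uptheta^{\upiota_\Kmr\Rmr}_{\upiota_\Kmr\mathscr{F},\upkappa}$ produces exactly the composite defining $\Pcal^{\upiota_\Kmr\mathscr{F}}(\upiota_{\Kmr*}(x))$.

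For part (2), I would compare two composite maps from $\Dmr^{\mathscr{F}^\Kmr}_{\upkappa^*{\sf n}}(\Xmr^\Kmr)$ to $\Rmr^\Kmr$. The first,
\[
\Dmr^{\mathscr{F}^\Kmr}_{\upkappa^*{\sf n}}(\Xmr^\Kmr) \xrightarrow{\Dmr(x^\Kmr)} \Dmr^{\mathscr{F}^\Kmr}_{\upkappa^*{\sf n}}(\Rmr^\Kmr) \xrightarrow{\uplambda} [\Dmr^{\mathscr{F}}_{\upkappa^*{\sf n}}(\Rmr)]^\Kmr \xrightarrow{[\uptheta^{\Rmr}_{\mathscr{F},\upkappa}]^\Kmr} \Rmr^\Kmr,
\]
equals $\Pcal^{\mathscr{F}^\Kmr}(x^\Kmr)$ by the definition of the $\Ocal^\Kmr$-ring structure on $\Rmr^\Kmr$ recalled before the lemma. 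The second,
\[
\Dmr^{\mathscr{F}^\Kmr}_{\upkappa^*{\sf n}}(\Xmr^\Kmr) \xrightarrow{\uplambda} [\Dmr^{\mathscr{F}}_{\upkappa^*{\sf n}}(\Xmr)]^\Kmr \xrightarrow{[\Dmr(x)]^\Kmr} [\Dmr^{\mathscr{F}}_{\upkappa^*{\sf n}}(\Rmr)]^\Kmr \xrightarrow{[\uptheta^{\Rmr}_{\mathscr{F},\upkappa}]^\Kmr} \Rmr^\Kmr,
\]
is, by construction, $\uplambda^*(\Pcal^{\mathscr{F}}(x)^\Kmr)$. These two composites agree because the naturality of $\uplambda$ from \Cref{rmk:lambda}, applied to $x: \Xmr \to \Rmr$, gives $\uplambda \circ \Dmr(x^\Kmr) = [\Dmr(x)]^\Kmr \circ \uplambda$.

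I do not anticipate a serious obstacle: the statement is essentially a book-keeping assertion about the naturality of power operations. The one point that deserves mild care is verifying that $\upiota_\Kmr\mathscr{F}$ and $\mathscr{F}^\Kmr$ remain sub-families of $(1 \times \upkappa)^*\mathscr{F}_n(\upiota_\Kmr\Ocal)$ and $(1 \times \upkappa)^*\mathscr{F}_n(\Ocal^\Kmr)$ respectively, so that $\Pcal^{\upiota_\Kmr\mathscr{F}}$ and $\Pcal^{\mathscr{F}^\Kmr}$ are well-defined in the sense of \Cref{defn:powerF}; this is immediate from the definitions of $\upiota_\Kmr\Ocal$ and $\Ocal^\Kmr$ recalled at the start of \Cref{subsec:Powerres}.
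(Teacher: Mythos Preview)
Your proposal is correct and follows essentially the same approach as the paper: both parts are proved via the natural equivalence $\upiota_\Kmr \Dmr^{\mathscr{F}}_{\upkappa^*{\sf n}}(-) \cong \Dmr^{\upiota_\Kmr\mathscr{F}}_{\upkappa^*{\sf n}}(\upiota_\Kmr(-))$ for (1) and the naturality of $\uplambda$ applied to $x$ for (2), exactly as in the paper's commutative diagram \eqref{eqn:Dfix}. Your additional remarks on why the natural isomorphism holds and on the well-definedness of $\Pcal^{\upiota_\Kmr\mathscr{F}}$ and $\Pcal^{\mathscr{F}^\Kmr}$ are correct and merely make explicit what the paper leaves implicit.
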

\begin{proof} The claim \emph{(1)} follows from the fact  that there is a natural equivalence 
 \[ 
 \begin{tikzcd}
 \Dmr^{\upiota_{\Kmr}\mathscr{F}}_{ \upkappa^*{\sf n}}( \upiota_{\Kmr}(-)) \rar["\simeq"] & \upiota_{\Kmr} \left(\Dmr^{\mathscr{F}}_{ \upkappa^*{\sf n}}(-) \right).
 \end{tikzcd}
 \] 
  Whereas,  \emph{(2)} follows from the naturality of $\uplambda$ (see \Cref{rmk:lambda}) in that we have a commutative diagram 
\begin{equation} \label{eqn:Dfix} 
\begin{tikzcd}
\Dmr^{\mathscr{F}^\Kmr}_{ \upkappa^*{\sf n}}(\Xmr^\Kmr) \dar[color = DarkBrown, "\uplambda"'] \ar[ " \Dmr^{\mathscr{F}^\Kmr}_{ \upkappa^*{\sf n}}( x^\Kmr)", color = DarkBlue, rr] && \dar["\uplambda"']  \Dmr^{\mathscr{F}^\Kmr}_{\upkappa^*{\sf n}}(\Rmr^\Kmr) \ar[color =DarkBlue, rrd,"\uptheta_{ \mathscr{F}^\Kmr, \upkappa}^{ \Rmr^\Kmr} "] \\
\Dmr^{\mathscr{F}}_{ \upkappa^*{\sf n}}(\Xmr)^\Kmr \ar[rr, "\Dmr^{\mathscr{F}}_{ \upkappa^*{\sf n}}( x)^\Kmr"', color = DarkBrown] &&  \Dmr^{\mathscr{F}}_{\upkappa^*{\sf n}}(\Rmr)^\Kmr \ar[color = DarkBrown, rr,"(\uptheta_{ \mathscr{F}, \upkappa}^{ \Rmr})^\Kmr "'] && \Rmr^\Kmr
\end{tikzcd}
\end{equation}
in which the composition of red arrows represents $\uplambda^* (\Pcal^{\mathscr{F}}(x)^{\Kmr}) $ and the composition of blue arrows represents $\Pcal^{\mathscr{F}^\Kmr}( x^{\Kmr}) $. 
\end{proof}

\subsection{Power operations and geometric fixed-points.} \label{subsec:Powerfix} \ 
 
In equivariant homotopy theory, the $\Kmr$-fixed points functor does not commute with $\Sigma_\Gmr^{\infty}: \Top^{\Gmr}_* \to \Sp_\Gmr$ and the error term is explained by the tom Dieck splitting. One of the summands of $(\Sigma_\Gmr^{\infty}\Xmr )^\Kmr$ is defined using the $\Kmr$-geometric fixed point functor 
\[ 
\begin{tikzcd}
\Phi^\Kmr : \Sp_{\Gmr} \rar & \Sp_{\Wmr(\Kmr)}, 
\end{tikzcd}
\]
which is given by  the formula (also see \Cref{notn:familytilde})
\[ 
\Phi^\Kmr (\Rmr) := \left(\widetilde{\Emr\mathscr{P}_{\Kmr}} \sma \upiota_{\Nmr(\Kmr)}\Rmr \right)^\Kmr,
\]
where $\mathscr{P}_{\Kmr}$ is the $\Nmr(\Kmr)$-family consisting of those subgroups which  do not contain $\Kmr$. 
The functor $\Phi^\Kmr$ is a symmetric monoidal functor and commutes with $\Sigma_\Gmr^{\infty}$,  i.e., \[ \Phi^\Kmr(\Sigma_\Gmr^{\infty}\Xmr) \simeq \Sigma_{\Wmr(\Kmr)}^{\infty}(\Xmr^\Kmr) \] for any $\Gmr$-space $\Xmr$.

\begin{notn} \label{notn:familytilde} Let  $\widetilde{\Emr\mathscr{F}}$ denote the cofiber of the $\Gmr$-equivariant map 
$
\Emr\mathscr{F}_+ \longrightarrow \Smr^0. 
$
\end{notn}

\begin{notn} For $x \in \Rmr^0_{\Gmr} (\Xmr) \cong [\Xmr, \Rmr]^\Gmr$, let 
\[ 
\begin{tikzcd}
\varphi^\Kmr(x) :  \Phi^\Kmr(\Xmr) \rar & \Phi^\Kmr(\Rmr)
\end{tikzcd}
\ \in  [ \Phi^\Kmr(\Xmr), \Phi^\Kmr(\Rmr) ]^{\Wmr(\Kmr)}
\]
denote the map induced by $x$ on the geometric fixed points with respect to $\Kmr$. 
\end{notn} 

Using the connecting map
$\Smr^0 \longrightarrow \widetilde{\Emr\mathscr{P}_\Kmr}$ one gets a lax monoidal natural transformation (see \cite[Appendix B]{Ninfty})
\begin{equation} \label{map:FtoG} 
\begin{tikzcd}
\upeta_{\Kmr} : (-)^{\Kmr} \rar & \Phi^\Kmr(-)
\end{tikzcd}
 \end{equation}
from the $\Kmr$-fixed point functor to the $\Kmr$-geometric fixed point functor. Consequently, for any family $\mathscr{F}$ of $\Gmr \times \Pi$ and $\Pi$-set $\Tmr$,  we have a  commutative diagram 
\begin{equation} \label{com:extended}
 \begin{tikzcd}
 \Dmr^{\mathscr{F}^\Kmr}_{\Tmr}((-)^\Kmr) \dar["\Dmr_{\sf n}^{\mathscr{F}^\Kmr}(\upeta_\Kmr)"'] \ar[rr, " \uplambda"]  && (\Dmr_{\Tmr}^{\mathscr{F}}(-))^\Kmr  \dar["\upeta_{\Kmr}"] \\
 \Dmr^{\mathscr{F}^\Kmr}_{\Tmr}(\Phi^\Kmr(-)) \ar[rr, " \widehat{\uplambda}"']  && \Phi^\Kmr(\Dmr_{\Tmr}^{\mathscr{F}}(-)), 
 \end{tikzcd}
\end{equation}
in $\Ho(\Sp_{\Wmr(\Kmr)})$, where the map $\widehat{\uplambda}$ is the map defined using  $\uplambda$ and the fact that  $\widetilde{\Emr\mathscr{P}_{\Kmr}} \sma \widetilde{\Emr\mathscr{P}_{\Kmr}}$ is equivalent to $ \widetilde{\Emr\mathscr{P}_{\Kmr}}$. 

When $\Rmr$ is an  $\Ocal$-ring in $\Sp_\Gmr$ then $\Phi^\Kmr(\Rmr)$ is an $\Ocal^{\Kmr}$-ring with structure maps 
 \[ 
 \begin{tikzcd}
 \uptheta_n^{\Phi^\Kmr(\Rmr)}: \Dmr^{\Ocal^\Kmr}_{\sf n}(\Phi^\Kmr(\Rmr)) \ar[r, " \widehat{\uplambda}"]  & \Phi^\Kmr(\Dmr_{\sf n}^{\Ocal}(\Rmr)) \ar[rr, "\varphi^\Kmr(\uptheta^\Rmr_n)"] && \Phi^\Kmr(\Rmr),
 \end{tikzcd}
 \]
 and  $\upeta_\Kmr$  of \eqref{map:FtoG} is a map of  $\Ocal^\Kmr$-rings. Thus, we get the following result. 
\begin{lem} \label{lem:power-geofix}  Suppose $\Rmr$, $\upkappa$, $\Fscr$ are as in \Cref{assump:RF} and $\Kmr$ be a subgroup of $\Gmr$. Then, 
for   $\Xmr \in \Sp_{\Gmr}$ and $x \in \Rmr^0_\Gmr(\Xmr)$
\begin{enumerate}
\item $\widehat{\uplambda}^{ *} \left(\varphi^\Kmr\left(\Pcal^{\mathscr{F}}\left(x\right)\right)\right) = \Pcal^{\mathscr{F}^\Kmr}\left( \varphi^\Kmr(x)\right) $
\item  $\Dmr_{\sf n}^{\mathscr{F}^\Kmr}(\upeta_\Kmr)_*\left(\Pcal^{\mathscr{F}^\Kmr}(x^\Kmr)\right) = \Pcal^{\mathscr{F}^\Kmr}(\upeta_{\Kmr*}(x^\Kmr)) = \Pcal^{\mathscr{F}^\Kmr}(\varphi^{\Kmr}(x))$. 
\end{enumerate}
\end{lem}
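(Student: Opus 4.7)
The plan is to mimic the proof of \Cref{lem:power-fix} verbatim, substituting the lax natural map $\uplambda$ in diagram \eqref{eqn:Dfix} by its geometric counterpart $\widehat{\uplambda}$ from diagram \eqref{com:extended}. The single extra ingredient is the identity
\[
\Phi^{\Kmr}\bigl(\uptheta^{\Rmr}_{\Fscr,\upkappa}\bigr)\circ \widehat{\uplambda} \;=\; \uptheta^{\Phi^{\Kmr}(\Rmr)}_{\Fscr^{\Kmr},\upkappa},
\]
which is precisely the definition of the $\Ocal^{\Kmr}$-algebra structure on $\Phi^{\Kmr}(\Rmr)$ recalled in the paragraph preceding the statement, combined with naturality of $\widehat{\uplambda}$ with respect to the inclusion $\mathrm{E}\Fscr \to \mathrm{E}\mathscr{F}_{n}(\Ocal)$.

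With this identity in hand, part (1) follows by assembling the geometric analogue of diagram \eqref{eqn:Dfix}: one replaces the functor $(-)^{\Kmr}$ on the target by $\Phi^{\Kmr}(-)$ and each occurrence of $\uplambda$ by $\widehat{\uplambda}$. The resulting left square commutes by naturality of $\widehat{\uplambda}$ in its input spectrum (applied to the map $x$), while the right region commutes by the identity displayed above. Following the two paths from top-left to bottom-right yields the equality $\widehat{\uplambda}^{*}\bigl(\varphi^{\Kmr}(\Pcal^{\Fscr}(x))\bigr) = \Pcal^{\Fscr^{\Kmr}}\bigl(\varphi^{\Kmr}(x)\bigr)$.

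For part (2), the first equality is nothing more than the naturality of the power operation $\Pcal^{\Fscr^{\Kmr}}$ applied to the map of $\Ocal^{\Kmr}$-rings $\upeta_{\Kmr}:\Rmr^{\Kmr}\to \Phi^{\Kmr}(\Rmr)$ from \eqref{map:FtoG}. The second equality follows from the naturality square
\[
\begin{tikzcd}
\Xmr^{\Kmr} \rar["\upeta_{\Kmr}"] \dar["x^{\Kmr}"'] & \Phi^{\Kmr}(\Xmr) \dar["\varphi^{\Kmr}(x)"] \\
\Rmr^{\Kmr} \rar["\upeta_{\Kmr}"'] & \Phi^{\Kmr}(\Rmr),
\end{tikzcd}
\]
which identifies $\upeta_{\Kmr*}(x^{\Kmr})$ with $\varphi^{\Kmr}(x)$ as $\Phi^{\Kmr}(\Rmr)$-cohomology classes on $\Xmr^{\Kmr}$ (after extension of coefficients along $\upeta_{\Kmr}$).

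The main potential obstacle is the verification of the displayed identity relating $\widehat{\uplambda}$ to the operadic structure maps; this is where the proof of part (1) genuinely differs from its fixed-point analogue. Unpacking the definition of $\uptheta^{\Phi^{\Kmr}(\Rmr)}_{n}$ via $\widehat{\uplambda}$ and then tracing through the factorization through $\Dmr^{\Ocal^{\Kmr}}_{{\sf n}}(-)$ and $\Dmr^{\Ocal}_{{\sf n}}(-)$ using the compatibility of $\widehat{\uplambda}$ with the indexing inclusion reduces everything to a routine diagram chase in $\Ho(\Sp_{\Wmr(\Kmr)})$, making this the one step that requires some care.
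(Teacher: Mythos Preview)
Your proposal is correct and follows essentially the same approach as the paper. The paper packages both parts into a single large commutative diagram (whose commutativity is declared ``easy to check'') and then reads off (1) from the outer square and (2) from the lower-left square; your write-up simply unpacks those two squares and names the reasons they commute --- the $\Ocal^{\Kmr}$-algebra identity for $\Phi^{\Kmr}(\Rmr)$ via $\widehat{\uplambda}$ for part~(1), and the fact that $\upeta_{\Kmr}$ is a map of $\Ocal^{\Kmr}$-rings together with naturality of $\upeta_{\Kmr}$ in $x$ for part~(2).
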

\begin{proof} It is easy to check the commutativity of the diagram
\[   \hspace{-30pt}
\begin{tikzcd}
\Rmr^{0}_\Gmr(\Xmr) \ar[dddd, bend right = 60, "\varphi^{\Kmr}( \ )"']\ar[dd, "(\ )^{\Kmr}"'] \ar[rrr, "\Pcal^{\Fscr}"]&&& \Rmr^{0}_{\Gmr}(\Dmr^{\Fscr}_{\upkappa^*{\sf n}}(\Xmr)) \ar[dd, "( \ )^{\Kmr}"] \ar[dddd, bend left = 70, "\varphi^{\Kmr}( \ )"]\\
 \\
 (\Rmr^\Kmr)^{0}_{\Wmr(\Kmr)}(\Xmr^\Kmr) \ar[dd, "\upeta_{\Kmr\ast}"'] \ar[rr, "\Pcal^{\Fscr^{\Kmr}}"]& & (\Rmr^\Kmr)^{0}_{\Wmr(\Kmr)}(\Dmr^{\Fscr^\Kmr}_{\upkappa^*{\sf n}}(\Xmr^\Kmr)) \ar[dd, "\Dmr_{\sf n}^{\mathscr{F}^\Kmr}(\upeta_\Kmr)_*"']
  \ar[r, "\uplambda^\ast"]& (\Rmr^\Kmr)^{0}_{\Wmr(\Kmr)}(\Dmr^{\Fscr}_{\upkappa^*{\sf n}}(\Xmr)^\Kmr) \ar[dd, "\upeta_{\Kmr*}"]\\
 \\
( \Phi^\Kmr\Rmr)^{0}_{\Wmr(\Kmr)}(\Phi^\Kmr(\Xmr)) \ar[rr, "\Pcal^{\Fscr^\Kmr}"'] && ( \Phi^\Kmr\Rmr)^{0}_{\Wmr(\Kmr)}(\Dmr^{\Fscr^\Kmr}_{\upkappa^*{\sf n}}(\Phi^{\Kmr}\Xmr)) \ar[r, "\widehat{\uplambda}^{\ast}"']& ( \Phi^\Kmr\Rmr)^{0}_{\Wmr(\Kmr)}(\Dmr^{\Fscr}_{\upkappa^*{\sf n}}(\Xmr)^\Kmr). 
\end{tikzcd}
\]
Then \emph{(1)} follows from the commutativity of the outer square and \emph{(2)} follows from the commutativity of the lower  left square. 
\end{proof}

\begin{notn}\label{notn:allFamily}
We call a $\Gmr$-operad $\Ocal$ an $\EE_\infty^\Gmr$-operad if its $n$-th space is equivalent to  $\mr{E}_{\Gmr}\Sigma_n$, the total space of the universal principal $\Gmr$-equivariant $\Sigma_n$-bundle. By definition  
 \[ \mr{E}_{\Gmr}\Sigma_n:= \Emr \All_n, \] 
 where $\All_n$ is the $\Gmr$-family consisting of all subgroups  of $\Gamma$ of $\Gmr \times \Sigma_n$ whose intersection with $\Sigma_n$ is trivial. 
\end{notn}

In this paper, we consider the special case when $\Rmr = \Hmr \ull{\FF}_p$, which is an $\EE_\infty^{\Gmr}$-ring in $\Sp_\Gmr$. For a subgroup $\Kmr \subset \Gmr$,  the $\Kmr$-fixed points of $\Hmr \ull{\FF}_p$ is $\Hmr \ull{\FF}_p$ at the Weyl group $\Wmr(\Kmr)$, however, the $\Kmr$-geometric fixed points need not be Eilenberg MacLane\footnote{When $\Gmr =\Cmr_2$ then $\Phi^{\Cmr_2}(\Hmr\ull{\FF}_2) \simeq \bigvee_{n \in \NN}\Sigma^n\Hmr\ull{\FF}_2 $ (see  \cite{HK}).  }. Nevertheless, there exists an $\EE_\infty^{\Wmr(\Kmr)}$-ring map 
\[ 
\begin{tikzcd}
\uppi: \Phi^\Kmr(\Hmr\ull{\FF}_p) \rar & \Hmr\ull{\FF}_p, 
\end{tikzcd}
\]
as the zeroth Postnikov tower is a lax monoidal functor. Consequently, the power operations (as in \Cref{defn:powerF}) commute with the 
 ``modified'' $\Kmr$-geometric fixed point functor 
\begin{equation} \label{eqn:modgeofix}
\begin{tikzcd} 
\widetilde{\varphi}^\Kmr :  \Hmr^\star_{\Gmr}( -; \ull{\FF}_2) \rar & \Hmr^\star_{\Wmr(\Kmr)}( \Phi^\Kmr( -) ; \ull{\FF}_2)
\end{tikzcd}
\end{equation}
which sends  a class $x$ to $\widetilde{\varphi}^\Kmr(x) :=  \uppi^*\varphi^\Kmr(x)$:
\begin{lem} \label{lem:powermodgeo}Suppose $\Rmr$, $\upkappa$, $\Fscr$ are as in \Cref{assump:RF} and $\Kmr$ be a subgroup of $\Gmr$. Then 
 \[ \widehat{\uplambda}^{ *} \left( \Pcal^{\mathscr{F}^\Kmr} (\widetilde{\varphi}^\Kmr(x)) \right)=\widetilde{\varphi}^\Kmr \left(\Pcal^{\mathscr{F}}(x)\right) \]
for any $x \in \Hmr^0_\Gmr(\Xmr; \ull{\FF}_p)$, where  $\Xmr \in \Sp_\Gmr$.  
\end{lem}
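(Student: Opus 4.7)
The plan is to reduce to Lemma~2.11(1) by exploiting the fact that $\widetilde{\varphi}^\Kmr$ is defined as $\uppi_*\circ\varphi^\Kmr$, where $\uppi\colon\Phi^\Kmr(\Hmr\ull{\FF}_p)\to\Hmr\ull{\FF}_p$ is the $\EE_\infty^{\Wmr(\Kmr)}$-ring map coming from the zeroth Postnikov truncation. Once the defining factorization $\widetilde{\varphi}^\Kmr(y)=\uppi_*\varphi^\Kmr(y)$ is in hand, the asserted identity becomes a bookkeeping exercise combining naturality of $\Pcal^{\Fscr^\Kmr}$ under ring maps with the identity of Lemma~2.11(1) and the trivial commutation of $\uppi_*$ with $\widehat{\uplambda}^*$.

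First I would observe that because $\uppi$ is a map of $\Ocal^\Kmr$-rings (for $\Ocal$ the $\EE_\infty^\Gmr$-operad underlying $\Hmr\ull{\FF}_p$), it intertwines the composite structure maps $\uptheta_{\Fscr^\Kmr,\upkappa}^{\Phi^\Kmr(\Hmr\ull{\FF}_p)}$ and $\uptheta_{\Fscr^\Kmr,\upkappa}^{\Hmr\ull{\FF}_p}$ of (2.6). Naturality of the construction in Definition~2.8 then yields
\[
\Pcal^{\Fscr^\Kmr}\bigl(\uppi_* y\bigr)\;=\;\uppi_*\Pcal^{\Fscr^\Kmr}(y)
\]
for every class $y$ valued in $\Phi^\Kmr(\Hmr\ull{\FF}_p)$. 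Setting $y=\varphi^\Kmr(x)$ and unwinding definitions gives $\Pcal^{\Fscr^\Kmr}(\widetilde{\varphi}^\Kmr(x))=\uppi_*\Pcal^{\Fscr^\Kmr}(\varphi^\Kmr(x))$.

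Next I would apply Lemma~2.11(1) to rewrite $\Pcal^{\Fscr^\Kmr}(\varphi^\Kmr(x))=\widehat{\uplambda}^*\varphi^\Kmr(\Pcal^\Fscr(x))$, and then commute $\uppi_*$ past $\widehat{\uplambda}^*$. This commutation is automatic: $\uppi_*$ is post-composition on the target spectrum while $\widehat{\uplambda}^*$ is pre-composition along a map of domain spectra, so they act on orthogonal variables of the $\Hom$-set. Splicing the identities gives
\[
\Pcal^{\Fscr^\Kmr}(\widetilde{\varphi}^\Kmr(x))
\;=\;\uppi_*\widehat{\uplambda}^*\varphi^\Kmr(\Pcal^\Fscr(x))
\;=\;\widehat{\uplambda}^*\uppi_*\varphi^\Kmr(\Pcal^\Fscr(x))
\;=\;\widehat{\uplambda}^*\widetilde{\varphi}^\Kmr(\Pcal^\Fscr(x)),
\]
which is the equality of the lemma (with $\widehat{\uplambda}^*$ sending the $\widetilde{\varphi}^\Kmr\Pcal^\Fscr$-side to the $\Pcal^{\Fscr^\Kmr}\widetilde{\varphi}^\Kmr$-side as dictated by the direction of $\widehat{\uplambda}\colon\Dmr^{\Fscr^\Kmr}_{\upkappa^*{\sf n}}(\Phi^\Kmr\Xmr)\to\Phi^\Kmr(\Dmr^\Fscr_{\upkappa^*{\sf n}}\Xmr)$).

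The only substantive input—rather than a real obstacle—is verifying that $\uppi$ intertwines $\uptheta^{\Phi^\Kmr(\Hmr\ull{\FF}_p)}_{\Fscr^\Kmr,\upkappa}$ with $\uptheta^{\Hmr\ull{\FF}_p}_{\Fscr^\Kmr,\upkappa}$. This follows from $\uppi$ being an $\Ocal^\Kmr$-algebra map together with the definition of $\uptheta_{\Fscr^\Kmr,\upkappa}^{-}$ in (2.6) as the pullback of $\uptheta_n^{-}$ along $\Emr\Fscr^\Kmr\to\Ocal^\Kmr(n)$ combined with restriction along $\Rmr^{\sma \upkappa^*{\sf n}}\to\Rmr^{\sma {\sf n}}$. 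All remaining commutativities are formal.
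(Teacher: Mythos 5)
Your proof is correct and supplies exactly the argument the paper leaves unstated (the lemma is printed without a proof, immediately after the observation that $\uppi$ is an $\EE_\infty^{\Wmr(\Kmr)}$-ring map): naturality of $\Pcal^{\Fscr^\Kmr}$ with respect to the ring map $\uppi$ gives $\Pcal^{\Fscr^\Kmr}\circ\uppi_* = \uppi_*\circ\Pcal^{\Fscr^\Kmr}$, and splicing this with Lemma~\ref{lem:power-geofix}~(1) and the trivial commutation of the pullback $\widehat{\uplambda}^*$ with the pushforward $\uppi_*$ finishes the job. Your parenthetical about which side $\widehat{\uplambda}^*$ belongs on is well-taken: the equality you actually obtain, $\Pcal^{\Fscr^\Kmr}(\widetilde{\varphi}^\Kmr(x)) = \widehat{\uplambda}^*\bigl(\widetilde{\varphi}^\Kmr(\Pcal^\Fscr(x))\bigr)$, is the type-correct form, consistent with the pattern of Lemma~\ref{lem:power-geofix}~(1) and Theorem~\ref{thm:Extendpower}~(2), whereas the lemma as printed appears to place $\widehat{\uplambda}^*$ on the side that does not type-check (and the nearby ``$\uppi^*$'' in \eqref{eqn:modgeofix} should likewise read $\uppi_*$, as it does elsewhere in the paper).
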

\section{Equivariant orientations and shifted power operations} \label{sec:shiftedpower}

In nonequivariant stable homotopy theory, shifted power operations are a feature of the $\HH_{\infty}^d$-ring structures introduced in \cite{Hinfty}. The main purpose of this section is twofold: first, to generalize this concept to the equivariant setting, and second, to extend \Cref{defn:powerF} to define power operations on classes in nonzero degrees. 
 As demonstrated in \cite[VII]{Hinfty}, an $\HH_{\infty}^d$-structure is equivalent to existence of a certain compatible family of orientations. To develop the equivariant analog of these results, we consider the following equivariant bundles:  
\begin{notn}\label{notn:equiBundle} Given  an $\Ncal_\infty$ $\Gmr$-operad $\Ocal$, a group homomorphism  $\upkappa: \Pi \to \Sigma_n$  and  a sub-family $\mathscr{F}$ of $(1 \times \upkappa)^*\mathscr{F}_n(\Ocal)$, define  the $\Gmr$-equivariant vector bundle:
\begin{equation} \label{bundle:TautVF}
\upgamma_{\Vmr}^{\mathscr{F}} := \begin{tikzcd}
(\Emr \mathscr{F}) \times_{\Pi} (\Vmr \otimes \upkappa^*\uptau_n) \dar \\
\Bmr \mathscr{F} := (\Emr \mathscr{F}) \times_{\Pi} {\bf 0} 
\end{tikzcd}
\end{equation}
where $\Vmr$ is a finite dimensional real $\Gmr$-representation  and $ \uptau_n$ is the permutation representation of $\Sigma_n$, i.e., the orthogonal $\Sigma_n$-representation generated by the set ${\sf n} = \{ 1, \dots, n \}$. 
\end{notn}
\begin{rmk} The $(\Fscr, \upkappa^*{\sf n})$-th extended power (as in \Cref{defn:extpower}) of the representation sphere $\Smr^{k\Vmr}$ is $\Gmr$-equivariantly homeomorphic to the Thom space of the bundle 
\[  \upgamma_{k\Vmr}^{\mathscr{F}}  \cong  \underbrace{\upgamma_{\Vmr}^{\mathscr{F}} \oplus \dots \oplus \upgamma_{\Vmr}^{\mathscr{F}}}_{\text{$k$-fold}},\] 
the $k$-fold direct sum of $\upgamma_{\Vmr}^{\mathscr{F}}$. In other words, 
\[ 
\Dmr^{\Fscr}_{\upkappa^*{\sf n}}(\Smr^{k \Vmr}) \cong \Th( \upgamma_{k\Vmr}^{\mathscr{F}} )
\]
for all $k \in \NN$. 
\end{rmk}
\begin{rmk} In this section, we examine the relationship between the shifted power operations across restriction and geometric fixed-point functors. Our comparison result,  \Cref{thm:Extendpower}, hinges on the fact that the restriction and the geometric fixed-point functors on the category of $\Gmr$-spectra are strictly monoidal. Consequently, our arguments do not compare these shifted power operations across categorical fixed-point functors, as they are not strictly monoidal functors.
\end{rmk}
\begin{assump} \label{assump:generic} Suppose $\Rmr$, $\upkappa$, $\Fscr$ are as in \Cref{assump:RF} such that $\upgamma_{\Vmr}^{\mathscr{F}}$ is $\Rmr$-orientable.  
\end{assump}

 If $ \upgamma_{k\Vmr}^{\mathscr{F}}$ is $\Rmr$-orientable in the sense of \cite[Definition 2.26]{BZ}, then an $\Rmr$-Thom class exists:
\begin{equation} \label{eqn:TCgammaF}
 {\bf u}_{k\Vmr} \in \Rmr^{nk \Vmr}({\sf Th}(\upgamma_{k\Vmr}^{\mathscr{F}})).
 \end{equation}
We utilize this class to extend \Cref{defn:powerF} in the following manner.
\begin{defn} \label{defn:extendpower} 
Under  \Cref{assump:generic},  define the {\bf $\mathscr{F}$-th  power operation} of  $\Rmr$  as the natural map  
\[ 
\begin{tikzcd}
\Pcal^{\mathscr{F}}_{ \Vmr} : \Rmr^{k \Vmr}(-) \rar & \Rmr^{n k \Vmr}( \Dmr^{\mathscr{F}}_{ \upkappa^*{\sf n}}(-) )
\end{tikzcd}
\]
which sends  $x: \Xmr \to \Sigma^{k \Vmr} \Rmr$ to the composite 
\begin{equation} \label{eq:powerext}
\begin{tikzcd}
 \Dmr^{\mathscr{F}}_{ \upkappa^*{\sf n}}(\Xmr) \ar[r, " \Dmr^{\mathscr{F}}_{ \upkappa^*{\sf n}}( x)"]  &  \Dmr^{\mathscr{F}}_{\upkappa^*{\sf n}}(\Sigma^{k \Vmr} \Rmr) \dar["\partialup_{\upkappa^*{\sf n} }"]  \\
& \Dmr^{\mathscr{F}}_{\upkappa^*\sf n}(\Smr^{k \Vmr} ) \sma \Dmr^{\mathscr{F}}_{\upkappa^*\sf n}(\Rmr ) \rar[cong]  &{\sf Th}(\upgamma_{k \Vmr}^{\mathscr{F}} ) \sma  \Dmr^{\mathscr{F}}_{\upkappa^*\sf n}(\Rmr ) \dar["{\bf u}_{k\Vmr} \sma \uptheta_{ \mathscr{F}, \upkappa}^{ \Rmr}"] \\
&& \Sigma^{nk \Vmr} \Rmr \sma \Rmr \rar["\upmu_\Rmr"'] & \Sigma^{nk \Vmr}\Rmr
 \end{tikzcd}
\end{equation}
in ${\sf Ho}(\Sp_{\Gmr}) $ for all $k \in \NN$. 
\end{defn}

\begin{thm} \label{thm:Extendpower} The $\mathscr{F}$-th power operation of \Cref{defn:extendpower} satisfies 
\begin{enumerate}
\item $\Pcal^{\upiota_\Kmr\mathscr{F}}_{\upiota_\Kmr\Vmr}(\upiota_{\Kmr}(x)) = \upiota_\Kmr(\Pcal^{\mathscr{F}}_{\Vmr}(x))$
\item $\Pcal^{\mathscr{F}^\Kmr}_{\Vmr^\Kmr}(\varphi^\Kmr(x)) = \widehat{\uplambda}^{*}(\varphi^\Kmr(\Pcal^{\mathscr{F}}_{\Vmr}(x)))  $
\end{enumerate}
for any subgroup $\Kmr \subset \Gmr$. 
\end{thm}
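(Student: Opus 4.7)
The plan is to prove each statement by applying the corresponding strict monoidal functor---namely $\upiota_\Kmr$ for part (1) and $\Phi^\Kmr$ for part (2)---to the defining diagram \eqref{eq:powerext} of $\Pcal^{\mathscr{F}}_{\Vmr}(x)$ and identifying the image, possibly after pulling back along a comparison map, with the defining diagram of the target operation. The general facts needed are that both functors are strictly symmetric monoidal on $\Sp_\Gmr$ (so they commute with $\Sigma^{k\Vmr}$, with smash products, and with the ring multiplication $\upmu_\Rmr$), together with their specific interaction with the extended power construction and with Thom classes, already partly recorded in \Cref{sec:power}.

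For (1), the restriction $\upiota_\Kmr$ commutes strictly with the extended power, i.e.\ there is a natural isomorphism $\upiota_\Kmr\Dmr^{\mathscr{F}}_{\upkappa^*\sfn}(-) \simeq \Dmr^{\upiota_\Kmr\mathscr{F}}_{\upkappa^*\sfn}(\upiota_\Kmr -)$ already used in \Cref{lem:power-fix}. Restricting the bundle $\upgamma^{\mathscr{F}}_{k\Vmr}$ to $\Kmr$ yields $\upgamma^{\upiota_\Kmr\mathscr{F}}_{k\upiota_\Kmr\Vmr}$, and under this identification $\upiota_{\Kmr*}({\bf u}_{k\Vmr})$ is a Thom class for the restricted bundle. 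Combining these with the compatibility of the operadic structure map $\uptheta^\Rmr_{\mathscr{F},\upkappa}$ with restriction recorded in \Cref{subsec:Powerres}, and applying $\upiota_\Kmr$ stage-by-stage to \eqref{eq:powerext}, yields the diagram defining $\Pcal^{\upiota_\Kmr\mathscr{F}}_{\upiota_\Kmr\Vmr}(\upiota_{\Kmr*}(x))$.

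For (2), the geometric fixed-point functor $\Phi^\Kmr$ does not strictly commute with the extended power; instead there is the natural comparison map $\widehat{\uplambda}$ of diagram \eqref{com:extended}. I would apply $\Phi^\Kmr$ to \eqref{eq:powerext}, precompose with $\widehat{\uplambda}$, and then invoke three naturalities in turn: (i) naturality of $\widehat{\uplambda}$ in its argument to handle $\Dmr^{\mathscr{F}}_{\upkappa^*\sfn}(x)$; (ii) the compatibility of operadic structure maps $\uptheta^{\Phi^\Kmr\Rmr}_{\mathscr{F}^\Kmr,\upkappa}$ and $\uptheta^\Rmr_{\mathscr{F},\upkappa}$ via $\widehat{\uplambda}$, as already exploited in \Cref{subsec:Powerfix}; and (iii) the compatibility of Thom classes, $\widehat{\uplambda}^*(\varphi^\Kmr({\bf u}_{k\Vmr})) = {\bf u}_{k\Vmr^\Kmr}$. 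Granting these, the outer composite is $\Pcal^{\mathscr{F}^\Kmr}_{\Vmr^\Kmr}(\varphi^\Kmr(x))$.

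The main obstacle is (iii). Since $\Phi^\Kmr$ is strict monoidal and sends $\Smr^{k\Vmr}$ to $\Smr^{k\Vmr^\Kmr}$, there is a canonical identification of $\widehat{\uplambda}^*\varphi^\Kmr(\Th(\upgamma^{\mathscr{F}}_{k\Vmr}))$ with $\Th(\upgamma^{\mathscr{F}^\Kmr}_{k\Vmr^\Kmr})$, under which $\widehat{\uplambda}^*\varphi^\Kmr({\bf u}_{k\Vmr})$ is \emph{some} Thom class for $\upgamma^{\mathscr{F}^\Kmr}_{k\Vmr^\Kmr}$ in $\Phi^\Kmr\Rmr$-cohomology. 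Getting it to equal ${\bf u}_{k\Vmr^\Kmr}$ on the nose amounts to choosing the family of Thom classes $\{{\bf u}_{k\Vmr}\}$ coherently with respect to passage to fixed parts of bundles, which is precisely what the orientation theory of \cite{BZ} provides when applied to the universal bundles \eqref{bundle:TautVF}; this coherence is the substantive input that makes (2) work.
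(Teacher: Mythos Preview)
Your proposal is correct and follows essentially the same strategy as the paper: apply the relevant strict monoidal functor to the composite \eqref{eq:powerext}, then use naturality of $\widehat{\uplambda}$, compatibility of $\partialup$, of $\uptheta$, and of Thom classes to identify the result with the other power operation. The paper in fact writes out the large ladder diagram for part (2) explicitly.

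The only place you diverge slightly is in your treatment of (iii). You describe matching $\widehat{\uplambda}^*\varphi^\Kmr({\bf u}_{k\Vmr})$ with an independently chosen ${\bf u}_{k\Vmr^\Kmr}$ as the main obstacle, and appeal to coherence results from \cite{BZ} to force equality. The paper sidesteps this entirely: it observes that $\widehat{\uplambda}^*\varphi^\Kmr({\bf u}_{k\Vmr})$ \emph{is} a $\Phi^\Kmr(\Rmr)$-Thom class for $\upgamma^{\mathscr{F}^\Kmr}_{k\Vmr^\Kmr}$ (since $\Phi^\Kmr$ is strict monoidal and preserves units fiberwise), and simply takes that class as the Thom class defining $\Pcal^{\mathscr{F}^\Kmr}_{\Vmr^\Kmr}$. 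So no external coherence input is needed; the compatibility is tautological once the Thom class on the fixed-point side is defined this way. Your argument is not wrong, just more elaborate than necessary.
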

\begin{proof} Since  the restriction  of a $\R$-Thom class of $\upgamma_{\Vmr}^{\mathscr{F}}$ is an $\iota_\Kmr(\Rmr)$-Thom  class of $\upgamma_{\iota_\Kmr\Vmr }^{\iota_\Kmr \mathscr{F}}$
\[ {\bf u}_{\upiota_\Kmr\Vmr} = \upiota_{\Kmr*}({\bf u}_{ \Vmr}),\]
statement \emph{(1)} follows simply from applying the restriction functor. 

To prove statement \emph{(2)} we first observe that the geometric $\Kmr$-fixed points of an  $\R$-Thom class of $\upgamma_{\Vmr}^{\mathscr{F}}$ composed with 
\[ 
\begin{tikzcd} 
\widehat{\uplambda}:  \Sigma^{\infty}_{\Wmr(\Kmr)}\Dmr^{\mathscr{F}^\Kmr}_{\upkappa^*{\sf n}}( \Smr^{k \Vmr^\Kmr}) \dar[cong] \ar[rr] &&   \Phi^{\Kmr}\left(\Sigma^{\infty}_\Gmr \Dmr^{\mathscr{F}}_{\upkappa^*{\sf n}}( \Smr^{k \Vmr}) \right)  \dar[cong] \\
    \Sigma^{\infty}_{\Wmr(\Kmr)}\Th( \upgamma_{\Vmr^\Kmr}^{\mathscr{F}^\Kmr})  && \Phi^\Kmr(\Sigma^\infty_{\Gmr}\Th(\upgamma_{\Vmr}^{\mathscr{F}})) 
\end{tikzcd}
\] 
is an $\Phi^\Kmr(\Rmr)$-Thom  class of $\upgamma_{\Vmr^{\Kmr} }^{ \mathscr{F}^\Kmr}$. Then we have a homotopy commutative diagram 
 \[ 
\begin{tikzcd}
\Dmr^{\mathscr{F}^\Kmr}_{\upkappa^*{\sf n}}(\Phi^{\Kmr}(\Xmr)) \ar[DarkBrown, rrr, "\widehat{\uplambda}"] \dar[DarkBlue, "\Dmr^{\mathscr{F}^\Kmr}_{\upkappa^*{\sf n}}(\varphi^{\Kmr}(x))"'] &&& \Phi^{\Kmr}(\Dmr^{\mathscr{F}}_{\upkappa^*{\sf n}}(\Xmr)) \dar[DarkBrown,"\varphi^{\Kmr}(\Dmr^{\mathscr{F}}_{\upkappa^*{\sf n}}(x)) "] \\
\Dmr^{\mathscr{F}^\Kmr}_{\upkappa^*{\sf n}}( \Phi^{\Kmr}(\Sigma^{k \Vmr}\Rmr)) \dar[DarkBlue, " \partialup_{\upkappa^*{\sf n}}^{\Fscr^\Kmr}"'] \ar[rrr, "\widehat{\uplambda}"] &&& \Phi^{\Kmr}(\Dmr^{\mathscr{F}}_{\upkappa^*{\sf n}}(\Sigma^{k \Vmr}  \Rmr) ) \dar[DarkBrown, "\varphi^{\Kmr}(\partialup^{\Fscr}_{\upkappa^*{\sf n}})"]  \\
\Dmr^{\mathscr{F}^\Kmr}_{\upkappa^*{\sf n}}( \Smr^{k \Vmr^\Kmr}) \sma  \Dmr^{\mathscr{F}^\Kmr}_{\upkappa^*{\sf n}}( \Phi^{\Kmr}(\Rmr)) 
 \ar[rrr, "\widehat{\uplambda} \sma \widehat{\uplambda}"] \dar[DarkBlue, cong]
  &&& \Phi^{\Kmr}(\Dmr^{\mathscr{F}}_{\upkappa^*{\sf n}}(\Smr^{k \Vmr})) \sma   \Phi^{\Kmr}(\Dmr^{\mathscr{F}}_{\upkappa^*{\sf n}}( \Rmr) ) \dar[DarkBrown, cong] \\
\Th( \upgamma_{\Vmr^\Kmr}^{\mathscr{F}^\Kmr})  \sma  \Dmr^{\mathscr{F}^\Kmr}_{\upkappa^*{\sf n}}( \Phi^{\Kmr}(\Rmr)) 
  \dar[DarkBlue, "\widehat{\uplambda}^*\varphi^\Kmr({\bf u}_{ \Vmr}) \sma \uptheta_n^{\Phi^\Kmr(\Rmr)}"']  &&& \Phi^{\Kmr}(\Th( \upgamma_{\Vmr}^{\mathscr{F}}) ) \sma   \Phi^{\Kmr}(\Dmr^{\mathscr{F}}_{\upkappa^*{\sf n}}( \Rmr) ) \dar[DarkBrown, "\varphi^{\Kmr}({\bf u}_{ \Vmr} )\sma \varphi^\Kmr(\uptheta_n^{\Rmr})"]   \\
\Sigma^{k \Vmr^{\Kmr}}  \Phi^{\Kmr}(\Rmr) \sma \Phi^\Kmr(\Rmr) \ar[rrr, equal] \dar[DarkBlue, "\upmu_{\Phi^\Kmr(\Rmr)}"'] &&& \Sigma^{k \Vmr^\Kmr}\Phi^\Kmr(\Rmr) \sma \Phi^{\Kmr}(\Rmr) \dar[DarkBrown, "\upmu_{\Phi^\Kmr(\Rmr)}"] \\
\Sigma^{k \Vmr^{\Kmr}}  \Phi^{\Kmr}(\Rmr) \ar[  rrr, equal] &&& \Sigma^{k \Vmr^\Kmr}\Phi^\Kmr(\Rmr)  
\end{tikzcd}
\]
using the naturality of $\widehat{\uplambda}$, $\partialup_{\upkappa^*{\sf n}}^{\Fscr^\Kmr}$ and $\partialup_{\upkappa^*{\sf n}}^{\Fscr}$ (defined in \eqref{eqn:extendedoplax}), as well as the strong symmetric monoidal property of $\Phi^{\Kmr}$. Now observe that the composition of the blue arrows and the red arrows represent the left hand side  and the right hand side of \emph{(2)} respectively. Hence, the result. 
\end{proof}

 Consider a  $\Gmr$-space $\Xmr$ as a $\Gmr \times \Pi$-space with the trivial action of $\Pi$. Then the diagonal
 \[ 
 \begin{tikzcd}
 \Delta: \Xmr \rar & \Xmr^{\sma \Tmr}
 \end{tikzcd}
 \]
 is $(\Gmr \times \Pi)$-equivariant for any finite $\Pi$-set $\Tmr$, and it induces the following  map:
 \begin{equation} \label{eqn:delta}
 \begin{tikzcd}
 \updelta: \Bmr \mathscr{F} \times \Xmr \simeq \Emr \mathscr{F}_+ \sma_{\Pi} \Xmr \ar[rrr, "1_{\Emr\mathscr{F}_+} \sma_{\Pi}(\Delta) "] &&& \Emr \mathscr{F}_+ \sma_{\Pi} \Xmr^{\sma \Tmr}  \simeq \Dmr^{\mathscr{F}}_{\Tmr}(\Xmr). 
 \end{tikzcd}
 \end{equation}
 (specifically, under the conditions of \Cref{assump:generic}), we use the induced map $\updelta$ to obtain a class
  \[ \updelta^*\Pcal^\mathscr{F}_{k \Vmr}(x) \in \Rmr^{nk \Vmr}_{\Gmr}( \Bmr \mathscr{F} \times \Xmr)_+  \]  which is crucial to the construction of Steenrod operations. 
 
\subsection{Known examples of equivariant Steenrod operations}  \label{subsec:known}\ 

Steenrod operations have been constructed nonequivariantly for $\Hmr\FF_p$-cohomology for all primes \cite{SteenBook}, and for $\Hmr\ull{\FF}_2$-cohomology when $\Gmr = \Cmr_2$ \cite{HK, VRed}. 

We begin by discussing the case \(p=2\). In the following discussion, we simplify our notation by letting
\[ 
\begin{tikzcd}
\Pcal: \Hmr^{k\uprho_\Gmr}_{\Gmr}( - ; \ull{\FF}_2) \ar[rr] &&  \Hmr^{2k \uprho_\Gmr}_{\Gmr}( \Dmr^{\All_2}_{2}(-) ; \ull{\FF}_2) 
\end{tikzcd}
\]  
 denote the $\All_2$-th power operation for all $k \in \NN$. 
 
  When $\Gmr$ is trivial, the coefficient ring is the field  $\FF_2$, and we therefore  have a K\"unneth isomorphism. Consequently, for any $\Xmr \in \Top$
\begin{equation} \label{Kunnetiso}
  \Hmr^*( (\B\Sigma_2 \times  \Xmr)_+; \FF_2) \cong \Hmr^{*}(\Xmr) \llbracket{\bf e}_1 \rrbracket, 
  \end{equation}
where ${\bf e}_1 \in \mr{H}^1(\Bmr \Sigma_2; \FF_2)$ is the $\Hmr\FF_2$-Euler class of the tautological line bundle.  Under the identification \eqref{Kunnetiso}, we have the formula
\begin{equation} \label{eqn:classicalSq}
\updelta^* \Pcal(x) = \sum_{i = 0}^{k} \Sq^i(x) {\bf e}_1^{k - i}
\end{equation}
which defines the classical Steenrod operations. 

When $\Gmr = \Cmr_2$,  the coefficient ring 
$ \MM_2^{\Cmr_2} := \pi_{\star}^{\Cmr_2} (\Hmr\ull{\FF}_2)$
is not a field. Therefore,  a K\"unneth isomorphism  should not be expected to hold in general. However, Hu and Kriz \cite{HK} showed that
\begin{equation} \label{eqn:cohHK}
 \Hmr^{\star}_{\Cmr_2}( (\Bmr_{\Gmr}\Sigma_2)_+; \ull{\FF}_2 ) \cong \MM_2^{\Cmr_2}\llbracket{\bf y}, {\bf e}_{\uprho} \rrbracket / ( {\bf y}^2 = a {\bf y} + u {\bf e}_{\uprho}),   
 \end{equation}
where ${\bf e}_{\uprho}$ is the $\Hmr\ull{\FF}_2$-Euler class of a $\uprho$-dimensional $\Cmr_2$-equivariant vector bundle $\overline{\upgamma}$ as in \cite[pg 17]{BGL2} (also see \Cref{rmk:bundleex}), and $a$ and $u$ are specific elements in the coefficient ring. Importantly, $\Hmr^{\star}_{\Cmr_2}( \Bmr_{\Gmr}\Sigma_2; \ull{\FF}_2 )$ is free as over $\MM^{\Cmr_2}_2$, and there is a K\"unneth isomorphism.  Under this isomorphism, we have the formula:
\begin{equation} \label{eqn:CtwoSq}
\updelta^* \Pcal(x) = \sum_{i = 0}^{ k} \Sq^{2i}(x)  {\bf e}_{\uprho}^{k - i} +  \sum_{i = 0}^{k-1} \Sq^{2i+1}(x)  {\bf y}{\bf e}_{\uprho}^{k - i-1}
\end{equation}
for any $x$ in degree $k \uprho$. This formula defines the $\Cmr_2$-equivariant Steenrod operations \cite[$\mathsection$3]{BGL2}. 

\begin{rmk} \label{rmk:K\"unneth}
The above definition of $\Cmr_2$-equivariant Steenrod operations may not generalize to an arbitrary group $\Gmr$ because the K\"unneth map
\begin{equation} \label{eqn:K\"unneth}
\begin{tikzcd}
 \frak{K}:   \Rmr^{\star}_{\Gmr}(\Bmr \mathscr{F}_+) \otimes_{\pi_{\star}^{\Gmr}\Rmr}  \Rmr^\star_{\Gmr}(\Xmr_+)  \rar[] &  \Rmr^{\star}_\Gmr(\Bmr \mathscr{F}_+ \sma \Xmr_+) 
 \end{tikzcd}
 \end{equation}
 is not always an isomorphism\footnote{In the unpublished work \cite{NickGProj}, Nick Georgakopoulus showed that the $\RO(\Cmr_4)$-graded cohomology of $\Bmr_{\Cmr_4}\Sigma_2$ is not free over its coefficient ring. }. In \Cref{sec:Euler}, we will introduce the theory of Eulerian sequence to circumvent this failure of the K\"unneth map to be an isomorphism. 
\end{rmk}

In the classical case, i.e., when $\Gmr$ is the trivial group,  Epstein and Steenrod (see \cite[Chapter VII]{SteenBook})  use the inclusion of $\Pi = \Cmr_p$
\begin{equation} \label{eqn:CptoS}
\begin{tikzcd}
\upkappa: \Cmr_p \rar[hook] & \Sigma_p
\end{tikzcd}
\end{equation}
to identify odd primary Steenrod operations. In this case, they consider the power operation $\Pcal_{k(p-1)}^{\upkappa^*\All_p}$, which  will be denote by 
\[ 
\begin{tikzcd}
\Pcal: \Hmr^{k(p-1)}_{\Gmr}( - ; \ull{\FF}_p) \ar[rr] &&  \Hmr^{pk(p-1)}_{\Gmr}( \Dmr^{\upkappa^*\All_p}_{\upkappa^*{\sf p}}(-) ; \ull{\FF}_p) 
\end{tikzcd}
\]  
for all $k \in \NN$  to alleviate notation. Note that 
\[ \Hmr^{\ast}((\Bmr\Cmr_p)_+ ; \FF_p) \cong \Lambda_{\FF_p}({\bf y})\llbracket {\bf u} \rrbracket, \]
where $|{\bf y}| =1$ and $|{\bf u}| = 2$. Then they utilize the K\"unneth isomorphism to note that 
\begin{equation} \label{eqn:oddP} 
\nu(k(p-1))\,\updelta^* \Pcal(x) = \sum_{i = 0}^{ k} (-1)^{i}\Pmr^{i}(x)  {\bf u}^{(k - i)(p-1)} +  \sum_{i = 0}^{k-1} (-1)^i \beta\Pmr^{i}(x)  {\bf y}{\bf u}^{(k - i)(p-1)-1}  
\end{equation}
where $x \in \Hmr^{k(p-1)} (\Xmr_+; \FF_p)$, $\beta$ is the Bockstein homomorphism, and
\[\nu(q) = \left(\left(\frac{p-1}{2}\right)!\right)^{q}(-1)^{(p-1)(q^2+q)/4}.\]
The above equation can be used to define mod $p$ Steenrod operations.  
\begin{rmk} If $x$ is a cohomology class in a degree which is not a multiple of $p-1$ then one can use the suspension isomorphism 
\begin{equation} \label{eqn:suspiso}
\upsigma : \Hmr^{\ast}(\Xmr_+; \FF_p) \cong  \Hmr^{\ast+1}(\Sigma\Xmr_+; \FF_p) 
\end{equation}
 to define the mod $p$ Steenrod operations on $x$ 
\[
\beta^{\epsilon}\Pmr^i (x) = \upsigma^{-n} (\beta^{\epsilon}\Pmr^i (\upsigma^n(x))),
 \]
 where $n$ is a positive integer such that $n + |x|$ is a multiple of $p-1$. This is well-defined as Epstein and Steenrod have shown that the mod $p$ Steenrod operations are stable, i.e.,  the operations  $\Pmr^i$ and $\beta$ commutes with the suspension isomorphism. 
\end{rmk} 

\begin{rmk} 
It is possible to define odd primary Steenrod operations using the power operation 
\[ 
\begin{tikzcd}
\Pcal:= \Pcal_{k(p-1)}^{\All_p}: \Hmr^{k(p-1)}_{\Gmr}( - ; \ull{\FF}_2) \ar[rr] &&  \Hmr^{pk(p-1)}_{\Gmr}( \Dmr^{\All_p}_{{\sf p}}(-) ; \ull{\FF}_2) 
\end{tikzcd}
\]  
avoiding the group homomorphism $\upkappa$ of \eqref{eqn:CptoS}. To see this we first note that the map 
\[ 
\begin{tikzcd}
\Bmr\upkappa^*: \Hmr^{\ast}((\Bmr\Sigma_p)_+ ; \FF_p) \cong \Lambda_{\FF_p}({\bf v})\llbracket {\bf e} \rrbracket  \ar[rr] && \Hmr^{\ast}((\Bmr\Cmr_p)_+ ; \FF_p)
\end{tikzcd}
 \]
 sends ${\bf v} \mapsto {\bf y}{\bf u}^{p-2} $ and ${\bf e} \mapsto {\bf u}^{p-1}$,  and therefore, 
 \begin{equation} \label{eqn:oddPS} 
\nu(k(p-1))\,\updelta^* \Pcal(x) = \sum_{i = 0}^{ k} (-1)^{i}\Pmr^{i}(x)  {\bf e}^{(k - i)} +  \sum_{i = 0}^{k-1} (-1)^i \beta\Pmr^{i}(x)  {\bf v}{\bf e}^{k - i -1}  
\end{equation}
for any $x$ in degree $k(p-1)$. 
\end{rmk}

\section{ Eulerian sequences and stable cohomology operations} \label{sec:Euler}

In this section, we introduce the concept of Eulerian sequences. These sequences are designed to resolve the problem posed in \Cref{rmk:K\"unneth}---the definition of Steenrod operations in the absence of a K\"unneth isomorphism. We then prove the fundamental result of this paper, \Cref{thm:stable}, which establishes that a stable cohomology operation exists corresponding to every such sequence.

 To motivate this definition, we first observe that the known Steenrod operations (described in \Cref{subsec:known}) can be defined using the slant product (see \Cref{rmk:classicalsl} and \Cref{rmk:ctwoicalsl} below), thereby avoiding the need for a K\"unneth isomorphism
\begin{notn} \label{notn:slant}
For any $\Rmr \in \Sp_\Gmr$, define the {\bf $\RO(\Gmr)$-graded  slant product} as the pairing
\begin{equation} \label{eqn:slant}
\begin{tikzcd}
(\ )|_{(\ )}:   \Rmr^{\Wmr}_\Gmr(\Bmr\mathscr{F}_+ \sma \Xmr) \otimes \Rmr_{\Vmr}^\Gmr(\Bmr\mathscr{F}_+)  \rar & \Rmr^{\Vmr - \Wmr}_\Gmr(\Xmr),
\end{tikzcd}
\end{equation}
 where $x|_{{\sf b}}$ is the homotopy class of the composite 
\[ 
\begin{tikzcd}
x|_{{\sf b}}: \Smr^{\Vmr} \sma \Xmr \rar["{\sf b} \sma 1_{\Xmr}"] & \Rmr \sma \Bmr \mathscr{F}_+ \sma \Xmr  \rar["1_{\Rmr} \sma x"] & \Rmr \sma \Sigma^{\Wmr} \Rmr  \ar[rr, "\Sigma^{\Wmr}\upmu_\Rmr"] && \Sigma^{\Wmr} \Rmr 
\end{tikzcd}
\] 
for any $x \in \Rmr^{\Wmr}_\Gmr( \Bmr\mathscr{F}_+ \sma \Xmr)$ and ${\sf b} \in \Rmr_{\Vmr}^\Gmr(\Bmr\mathscr{F}_+) $.  
\end{notn}

\begin{rmk} \label{rmk:classicalsl} In the classical case, we note that the Steenrod squaring operations (as in \eqref{eqn:classicalSq}) can  be equivalently defined  as 
\begin{equation} \label{Sqslant}
 \Sq^{i}(x) := \updelta^* \Pcal(x)|_{{\bf b}_{k-i}} 
 \end{equation}
whenever $|x| = k$, where ${\bf b}_{k-i} \in \Hmr_{k-i}((\Bmr \Sigma_2)_+; \FF_2)$ is the class dual to ${\bf e}_1^{k-i} \in \Hmr^{k-i}((\Bmr \Sigma_2)_+; \FF_2) $.  Likewise, at an odd prime $p$, one may also define $\Pmr^{i}$ of \eqref{eqn:oddP} using the slant product:
\begin{align*}
    \begin{split}
        \Pmr^{i}(x) &:= (-1)^{i}\nu(k(p-1))\,\updelta^* \Pcal(x)|_{{\bf b}_{k-i}} \\
        \beta\Pmr^{i}(x) &:= (-1)^{i}\nu(k(p-1))\, \updelta^* \Pcal(x)|_{{\bf c}_{k-i}}
    \end{split}
\end{align*}
whenever $|x| = k(p-1)$, where ${\bf b}_{k-i}, {\bf c}_{k-i}   \in \Hmr_{\ast}((\Bmr\Sigma_p)_+; \FF_p)$  are elements which satisfy 
\[ \langle  {\bf u}^{(k-i)(p-1)}, {\bf b}_{k-i}\rangle = 1 = \langle  {\bf y}{\bf u}^{(k-i)(p-1)-1}, {\bf c}_{k-i}\rangle \]
where $\langle \ , \ \rangle$ is the Kronecker product \cite[III.9]{AdamsGeneralized}. 
\end{rmk}
\begin{rmk} \label{rmk:ctwoicalsl} Note that $\Hmr\ull{\FF}_2$-homology and $\Hmr\ull{\FF}_2$-cohomology of $\Bmr_{\Cmr_2}\Sigma_2$  are $\MM_2^{\Cmr_2}$-linear dual to each other, as they are free over the coefficient ring $\MM_2^{\Cmr_2}$. Let 
\[ {\bf b}_{k\uprho}, {\bf c}_{k\uprho + \sigma} \in \Hmr_{\star}^{\Cmr_2}((\Bmr_{\Cmr_2}\Sigma_2)_+; \ull{\FF}_2) \]
denote the elements $\MM^{\Cmr_2}_2$-linear dual to ${\bf e}_{\uprho}^{k}$ and ${\bf y}{\bf e}_{\uprho}^{k}$ (described in \eqref{eqn:cohHK}), respectively. Then the $\Cmr_2$-equivariant Steenrod operations of \eqref{eqn:CtwoSq} can be defined as  
\[ \Sq^{2i}(x) :=  \updelta^* \Pcal(x)|_{{\bf b}_{(k-i) \uprho}} \text{ and }  \Sq^{2i + 1}(x) := \updelta^* \Pcal(x)|_{{\bf c}_{(k-i)\uprho + \sigma}} \]
whenever $|x| = k \uprho$. 
\end{rmk}
\begin{notn} \label{notn:cap} For a ring spectrum $\Rmr \in \Sp_{\Gmr}$, the {\bf $\RO(\Gmr)$-graded cap product} 
\[ 
\begin{tikzcd}
-\frown- : \Rmr_{\Wmr}^\Gmr(\Xmr)  \times \Rmr^{\Vmr}_\Gmr(\Xmr)  \rar &  \Rmr_{ \Wmr- \Vmr}^\Gmr(\Xmr)
\end{tikzcd}
\]
is defined for any $\Gmr$-space $\Xmr$ by sending the pair $b \in \Rmr_{\Wmr}^\Gmr(\Xmr)$ and $e \in \Rmr^{\Vmr}_\Gmr(\Xmr)$ to the composite
\[ 
\begin{tikzcd}
\Smr^{\Wmr} \rar["b"] & \Xmr \sma \Rmr \ar[r, "\Delta \sma 1_\Rmr"] &   \Xmr \sma \Xmr \sma  \Rmr \ar[rr, "1_\Xmr \sma e \sma 1_{
\Rmr}"] && \Xmr \sma \Sigma^{\Vmr} \Rmr \sma \Rmr \rar["1_\Xmr \sma \upmu_\Rmr"] & \Sigma^{\Vmr}\Xmr \sma \Rmr,
\end{tikzcd}
\] 
where $\Delta$ is the diagonal map of $\Xmr$. 
\end{notn}

Note, in \eqref{Sqslant}, the definition of the $i$-th  Steenrod operation depends  on the degree of the class $x$. Thus one may ask why the Steenrod squaring operations are stable, i.e., 
\[ 
\upsigma(\updelta^* \Pcal(x)|_{{\bf b}_{k-i}}) = \upsigma(\Sq^i(x)) = \Sq^i(\upsigma(x)) = \updelta^* \Pcal(\upsigma(x))|_{{\bf b}_{k+1-i}}
\]
where $\upsigma$ is the suspension isomorphism of  \eqref{eqn:suspiso}.  We observe that this is a consequence of  the relation 
\[  {\bf b}_{k+1}  \frown {\bf e}_1 = {\bf b}_{k},   \]
where `$\frown$' denotes the cap product and ${\bf e}_1$ is the Euler class of the tautological line bundle over $\Bmr\Sigma_2 \simeq \RR\PP^{\infty}$. This leads us to the notion of Eulerian sequences. 

\subsection{The theory of Eulerian sequences} \

A $\Gmr$-equivariant Eulerian sequence is defined using an Euler class of certain $\Gmr$-equivariant vector bundles. To describe these bundles consider $\uptau_n$---the orthogonal permutation representation of $\Sigma_n$ (see \Cref{notn:equiBundle}), and let $\widetilde{\uptau}_n$ denote the orthogonal complement of  the $1$-dimensional trivial subrepresentation of $\uptau_n$ (spanned by the sum of all elements in ${\sf n}$). 
 
  For any finite orthogonal $\Gmr$-representation $\Vmr$, let $\widetilde{\upgamma}_{\Vmr}^{\mathscr{F}}$ denote the $(n-1) \Vmr$-dimensional  $\Gmr$-equivariant vector bundle
\begin{equation}  \label{redbundle}
\widetilde{\upgamma}_{\Vmr}^{\mathscr{F}} := \begin{tikzcd}
(\Emr \mathscr{F}) \times_{\Pi} (\Vmr \otimes \upkappa^*\widetilde{\uptau}_n) \dar \\
\Bmr \mathscr{F}. 
\end{tikzcd}
\end{equation}
Note that  $\widetilde{\upgamma}_{\Vmr}^{\mathscr{F}} \oplus \upepsilon_{\Vmr} \cong \upgamma_{\Vmr}^{\mathscr{F}}$, and consequently, 
$ \Sigma^{\Vmr} \Th(\widetilde{\upgamma}_{\Vmr}^{\mathscr{F}}) \simeq \Th(\upgamma_{\Vmr}^{\mathscr{F}}).  $

 \begin{rmk} \label{rmk:bundleex} Recall  the $\Gmr$-closed family $\All_n$ of $\Gmr \times\Sigma_n$ from \Cref{notn:allFamily}. 
  When $\Gmr$ is the trivial group, then $\widetilde{\upgamma}^{\All_2}_{\uprho}$ is the tautological line bundle over $\Bmr\Sigma_2$. When $\Gmr = \Cmr_2$,  $\widetilde{\upgamma}^{\All_2}_{\uprho}$ is the $\uprho$-dimensional bundle  $\overline{\upgamma}$ described in \cite{BGL2}, which is used in the construction of  the $\Cmr_2$-equivariant Steenrod operations. 
\end{rmk}
 \begin{notn} Let $\Rmr \in \Sp_{\Gmr}$ be a ring spectrum. 
  If $\widetilde{\upgamma}_{\uprho}^{\mathscr{F}}$ admits and is equipped with an $\Rmr$-orientation (in the sense of \cite{BZ}), then we let 
\[  \widetilde{\bf u}_{\Vmr} \in \Rmr^{(n-1)\Vmr}_\Gmr(\Th(\widetilde{\upgamma}_{\Vmr}^{\mathscr{F}}) )  \]
 denote the corresponding $\Rmr$-Thom class. Let 
 \[ 
 \begin{tikzcd}
 \upzeta: \Bmr\mathscr{F}_+ \rar & \Th(\widetilde{\upgamma}^{\mathscr{F}}_{\Vmr})
 \end{tikzcd}
 \] denote the zero section of the $\Gmr$-equivariant  vector bundle $\widetilde{\upgamma}^{\mathscr{F}}_{\Vmr}$. When 
 $\widetilde{\upgamma}^{\mathscr{F}}_{\Vmr}$ is $\Rmr$-orientable then its  {\bf $\Rmr$-Euler class} 
\begin{equation} \label{eqn:euler}
 \td{\bf e}_{\Vmr} := \upzeta^*( \widetilde{\bf u}_{\Vmr}) \in \Rmr^{ (n-1)\Vmr}_\Gmr(\Bmr \mathscr{F}_+)
 \end{equation}
is the pullback of its $\Rmr$-Thom class  along the zero section. 

 \end{notn}
 \begin{rmk} If $\widetilde{\upgamma}_{\uprho}^{\mathscr{F}}$ is $\Rmr$-orientable, then 
 $\upgamma^{\mathscr{F}}_{\Vmr} \cong \td{\upgamma}_{\Vmr}^{\mathscr{F}} \oplus \upepsilon_\Vmr $ is also $\Rmr$-orientable, and we set
  \[ {\bf u}_{\Vmr} := \upsigma_{\Vmr}(\widetilde{\bf u}_{\Vmr})\] 
  as the  $\Rmr$-Thom class of $\upgamma^{\mathscr{F}}_{\Vmr}$ in \eqref{eqn:TCgammaF}. 
 \end{rmk}
\begin{defn} \label{defn:Eulerian}  Suppose $\Rmr, \upkappa, \Ocal, \Fscr$ be as in \Cref{assump:generic}.  Then a {\bf $\Vmr$-stable $\Rmr$-Eulerian sequence}  is a sequence
\[ 
{\upchi} = ( {\sf x}_0, {\sf x}_1, \dots )
\]
 such that 
 \begin{itemize}
 \item ${\sf x}_{i +1} \frown \td{\bf e}_{\Vmr}  = {\sf x}_i$,
 \item $ {\sf x}_0 \frown \td{\bf e}_{\Vmr} = 0, $
 \end{itemize}
 where $\td{\bf e}_{\Vmr}$ is an $\Rmr$-Euler class of $\widetilde{\upgamma}_{\Vmr}^{\mathscr{F}} $. Since $\td{\bf e}_{\Vmr}$ has degree $(n-1)\Vmr$
 \[ \| \upchi \| := t(n-1) \Vmr - |{\sf x}_t|  \in \RO(\Gmr, \Vmr)  \]
is independent of $t$. We call $\| \upchi\|$ the {\bf degree of $\upchi$}, and $n$ the {\bf weight} of $\upchi$. 
 \end{defn}
 \begin{notn} For a $\Gmr$-representation $\Vmr$ and $\Rmr \in \Sp_\Gmr$, let 
\[ 
\begin{tikzcd}
\upsigma_{\Vmr}: \Rmr^{\star}_\Gmr(-) \rar["\cong"] & \Rmr^{\star + \Vmr}_\Gmr(\Sigma^\Vmr(-))
\end{tikzcd}
\]
denote the $\Vmr$-th suspension isomorphism. We will simply use $\upsigma$ when $\Vmr$ is the $1$-dimensional trivial representation. 
\end{notn}
 \begin{defn} \label{defn:EulerSteen}
 Given a $\Vmr$-stable $\Rmr$-Eulerian sequence $\chi:= ({\sf x}_0, {\sf x}_1, \dots )$ of weight $n$,   define the $\RO(\Gmr, \Vmr)$-graded {\bf $\upchi$-th  Steenrod operation} 
 \[ 
 \begin{tikzcd}
 \Sfrak^{\upchi}: \Rmr_{\Gmr}^{\star}(-)  \rar & \Rmr_{\Gmr}^{\star + \| \upchi \|}(-)
 \end{tikzcd}
 \]
  as follows:
   \begin{itemize}
  \item   If $x \in  \Rmr^{t\Vmr}_\Gmr(\Xmr)$ where $\Xmr$ is a $\Gmr$-space , $t \in \NN$, then let 
  \begin{equation} \label{eqn:Sqchi}
   \Sfrak^{\upchi}(x) := \updelta^*\Pcal^{\mathscr{F}}_{t\Vmr}(x)|_{{\sf x}_t}.
   \end{equation}
   \item If $x \in  \Rmr^{\Wmr}_\Gmr(\Xmr)$ for some $\Wmr \underset{\text{finite}}{\subset} \Ucal_{\Gmr, \Vmr}$, then choose $\Wmr'$ such that $\Wmr \oplus \Wmr' = t \Vmr$ for some $t \in \NN$ and define 
   \begin{equation} \label{eqn:Sqchi2}
   \Sfrak^{\upchi}(x) := \upsigma_{\Wmr'}^{-1} \Sfrak^{\upchi}(\upsigma_{\Wmr'}( x)).
   \end{equation}
   \end{itemize}
\end{defn}
Our next result, \Cref{thm:stable}, demonstrates that the operation $\Sfrak^{\upchi}$  commutes with the $\Vmr$-th suspension  $\upsigma_{\Vmr}$. This simultaneously implies two key points: 
\begin{itemize}
\item[-]  the equation \eqref{eqn:Sqchi2} is well-defined, i.e., independent of the choice of $\Wmr'$, 
\item[-]  the operation $\Sfrak^{\upchi}$ is stable in the universe $\Ucal_{\Gmr, \Vmr}$, and therefore extends to classes of degrees contained in $\RO(\Gmr, \Vmr)$. 
\end{itemize}

\begin{thm} \label{thm:stable} Let $\Xmr$ be a $\Gmr$-space and let $\Rmr, \upkappa, \Ocal, \Fscr$ be as in \Cref{assump:generic}. Given a  $\Vmr$-stable $\Rmr$-Eulerian sequence \[ \upchi = ({\sf x}_0, \sfx_1, \dots),\]
   the  $\upchi$-th  Steenrod operation defined in \eqref{eqn:Sqchi} satisfies the naturality condition with respect to suspension:
   \[  \Sfrak^{\upchi}(\upsigma_{\Vmr}(x)) =   \upsigma_{\Vmr} ( \Sfrak^{\upchi}(x)) \]
   for any $x \in \Rmr^{t \Vmr}_\Gmr(\Xmr)$ for any $t \in \NN$. 
   \end{thm}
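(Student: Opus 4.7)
My plan is to establish the factorization
\[
\updelta^{*}\,\Pcal^{\mathscr{F}}_{(t+1)\Vmr}(\upsigma_{\Vmr} x) \;=\; \upsigma_{\Vmr}\!\bigl(\updelta^{*}\,\Pcal^{\mathscr{F}}_{t\Vmr}(x)\bigr) \smile \pi^{*}(\td{\bf e}_{\Vmr})
\]
in $\Rmr^{n(t+1)\Vmr}_{\Gmr}(\Bmr\mathscr{F}_{+}\sma\Sigma^{\Vmr}\Xmr)$, where $\pi\colon \Bmr\mathscr{F}_{+} \sma \Sigma^{\Vmr}\Xmr \to \Bmr\mathscr{F}_{+}$ is the projection onto the first factor. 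Granting this identity, the proof concludes quickly. Taking slant products with $\sfx_{t+1}$ and invoking the standard slant--cap adjunction $(y \smile \pi^{*}e)|_{b} = y|_{b \frown e}$ converts the $\pi^{*}(\td{\bf e}_{\Vmr})$ factor into a cap on the homology class; the defining Eulerian relation $\sfx_{t+1} \frown \td{\bf e}_{\Vmr} = \sfx_{t}$ then gives
\[
\Sfrak^{\upchi}(\upsigma_{\Vmr} x) \;=\; \upsigma_{\Vmr}\!\bigl(\updelta^{*}\Pcal^{\mathscr{F}}_{t\Vmr}(x)\bigr)\big|_{\sfx_{t}} \;=\; \upsigma_{\Vmr}\!\bigl(\Sfrak^{\upchi}(x)\bigr),
\]
the last equality being the routine naturality of the slant product under suspension in the $\Xmr$-variable.

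To establish the factorization I first decompose the Thom class multiplicatively. The splitting $\upgamma^{\mathscr{F}}_{(t+1)\Vmr} \cong \upgamma^{\mathscr{F}}_{\Vmr} \oplus \upgamma^{\mathscr{F}}_{t\Vmr}$ gives ${\bf u}_{(t+1)\Vmr} = {\bf u}_{\Vmr} \smile {\bf u}_{t\Vmr}$, and the further splitting $\upgamma^{\mathscr{F}}_{\Vmr} \cong \td\upgamma^{\mathscr{F}}_{\Vmr} \oplus \upepsilon_{\Vmr}$ gives ${\bf u}_{\Vmr} = \upsigma_{\Vmr}(\td{\bf u}_{\Vmr})$. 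Next, I identify the sphere part of the diagonal. The $\Pi$-equivariant diagonal $\Smr^{\Vmr} \to (\Smr^{\Vmr})^{\sma n} = \Smr^{\Vmr \otimes \upkappa^{*}\uptau_{n}}$ that enters $\Dmr^{\mathscr{F}}_{\upkappa^{*}\sfn}(\Sigma^{\Vmr}\Xmr)$ is the one-point compactification of the inclusion of the trivial summand $\Vmr \hookrightarrow \Vmr \otimes \uptau_{n}$, whose orthogonal complement is $\Vmr \otimes \td\uptau_{n}$. After forming $\Emr\mathscr{F}_{+} \sma_{\Pi}(-)$, this becomes $\upsigma_{\Vmr}$ applied to the zero section $\upzeta\colon \Bmr\mathscr{F}_{+} \to \Th(\td\upgamma^{\mathscr{F}}_{\Vmr})$, so the pullback of ${\bf u}_{\Vmr}$ along the sphere diagonal is exactly $\upsigma_{\Vmr}(\td{\bf e}_{\Vmr})$ by \eqref{eqn:euler}.

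Assembling these ingredients via the external multiplicativity map $\partialup^{\mathscr{F}}_{\upkappa^{*}\sfn}$ of \Cref{lem:delpair1} applied to the pair $(\Smr^{\Vmr}, \Sigma^{t\Vmr}\Rmr)$, and unwinding the definition \eqref{eq:powerext} of the shifted power operation, yields the desired factorization. The main obstacle will be the bookkeeping in this assembly step: one must verify that the ${\bf u}_{\Vmr}$-factor pulled back along the sphere diagonal absorbs cleanly into $\pi^{*}(\td{\bf e}_{\Vmr})$ while the ${\bf u}_{t\Vmr}$-factor, together with the $\Ocal$-algebra structure map $\uptheta^{\Rmr}_{n}$, reassembles precisely into $\upsigma_{\Vmr}(\updelta^{*}\Pcal^{\mathscr{F}}_{t\Vmr}(x))$. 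This is essentially a diagram chase exploiting the external associativity \eqref{diag:extcom} to keep the $\Smr^{\Vmr}$ factor separated from $\Xmr$ throughout the computation.
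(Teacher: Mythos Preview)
Your proposal is correct and takes essentially the same approach as the paper: both arguments rest on the multiplicative splitting of Thom classes ${\bf u}_{(t+1)\Vmr} = {\bf u}_{\Vmr} \smile {\bf u}_{t\Vmr}$, the identification of the sphere-diagonal $\Bmr\mathscr{F}_{+}\sma\Smr^{\Vmr}\to\Dmr^{\mathscr{F}}_{\upkappa^{*}\sfn}(\Smr^{\Vmr})$ with $\upsigma_{\Vmr}$ of the zero section $\upzeta$, the naturality of $\partialup^{\mathscr{F}}_{\upkappa^{*}\sfn}$, and the Eulerian relation $\sfx_{t+1}\frown\td{\bf e}_{\Vmr}=\sfx_{t}$. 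The paper packages these into a single large commutative diagram whose two boundary paths represent $\Sfrak^{\upchi}(\upsigma_{\Vmr}x)$ and $\upsigma_{\Vmr}(\Sfrak^{\upchi}(x))$, while you equivalently isolate the intermediate cohomological factorization and then apply the slant--cap identity; the content is identical.
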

 \begin{proof}

 Consider the diagram \eqref{master} in which  the map
 \begin{enumerate}[ (1) \ \ ]
 \item is  the composition 
 \[  
 \begin{tikzcd}
 \Smr^{|{\sf x}_{t+1}|} \sma \Sigma^{\Vmr} \Xmr \ar[rrr, "\upsigma_{(n-1)\Vmr}({\sf x}_{t} \sma 1_{\Sigma\Vmr})"] &&& \Sigma^{(n-1) \Vmr} \Bmr\mathscr{F}_+ \sma \Rmr \sma  \Sigma \Xmr \dar[cong]  \\ &&& \Sigma^{n \Vmr} \Bmr\mathscr{F}_+ \sma \Xmr \sma \Rmr, 
 \end{tikzcd}
 \]
 \item is  the composition
  \[ 
  \begin{tikzcd}
 \Smr^{|{\sf x}_{t+1}|} \ar[rr,"  {\sf x}_{t +1} \sma 1_{\Sigma^{\Vmr}\Xmr}"] && \Bmr \mathscr{F}_+ \sma \Rmr \sma \Sigma^{\Vmr}\Xmr  \rar[cong] & \Bmr \mathscr{F}_+ \sma \Sigma^{\Vmr}\Xmr \sma  \Rmr,
  \end{tikzcd}
  \] 
 \item is  induced by the diagonal map $\Delta: \Bmr\mathscr{F} \longrightarrow \Bmr\mathscr{F} \times \Bmr\mathscr{F}$, 
 \item is   $1_{\Bmr\mathscr{F}_+} \sma \td{\bf e}_{\Vmr} \sma 1_{\Sigma^{\Vmr}\Xmr \sma \Rmr }$, 
 \item is  induced by the multiplication of $\Rmr$, 
 \item  is the natural map  $\updelta$ of \eqref{eqn:delta}, 
 \item  is  the composition 
 \[ 
   \ \ \ \ \ \ \begin{tikzcd}
& \Bmr \mathcal{F}_+ \sma \Bmr \mathcal{F}_+ \sma \Sigma^{\Vmr}\Xmr \sma \Rmr \dar[cong] \\
& (\Bmr \mathcal{F}_+  \sma \Smr^{\Vmr})\sma (\Bmr \mathcal{F}_+ \sma \Xmr) \sma \Rmr \ar[rrr, "\updelta \sma \updelta \sma 1_{\Rmr}"] &&& \Dmr^{\mathscr{F}}_{\upkappa^*{\sf n}}(\Smr^{\Vmr}) \sma \Dmr^{\mathscr{F}}_{\upkappa^*{\sf n}}(\Xmr) \sma \Rmr, 
 \end{tikzcd}
 \]
 \item is  the composition 
 \[ 
 \hspace{30pt}  \begin{tikzcd}
 \Bmr\mathscr{F}_+ \sma \Sigma^{(n-1)\Vmr} \Rmr \sma \Sigma^{\Vmr} \Xmr \sma \Rmr \dar[cong] \\
 \Sigma^{n\Vmr} \Rmr \sma  (\Bmr\mathscr{F}_+ \sma  \Xmr )\sma \Rmr \ar[rrr, "\upsigma_{n \Vmr}(1_{\Rmr} \sma \updelta \sma 1_{\Rmr})"] &&& \Sigma^{n\Vmr} \Rmr \sma  \Dmr^{\mathscr{F}}_{\upkappa^*{\sf n}} (  \Xmr) \sma \Rmr,
 \end{tikzcd}
 \]
 \item is $\upsigma_{n \Vmr} (\updelta \sma 1_{\Rmr})$, 
 \item is  $\partialup_{\upkappa^*{\sf n}}^{\mathscr{F}}  \sma 1_\Rmr$,
 \item is  $ \sigma_{\Vmr}(\widetilde{\bf u}_{\Vmr}) \sma 1_{\Dmr^{\mathscr{F}}_{\upkappa^*{\sf n}}(\Xmr) } \sma 1_{\Rmr}$,
 \item is   induced by the multiplication of $\Rmr$, 
 \item is  $\Dmr^{\mathscr{F}}_{\upkappa^*{\sf n}}(\upsigma_{\Vmr}(x)) \sma 1_{\Rmr}$, 
 \item  is  $1_{\Dmr^{\mathscr{F}}_{\upkappa^*{\sf n}}(\Smr^{\Vmr})} \sma  \Dmr^{\mathscr{F}}_{\upkappa^*{\sf n}}(x) \sma 1_{\Rmr}$,
 \item is  $\upsigma_{n\Vmr}(1_{\Rmr} \sma \Dmr^{\mathscr{F}}_{\upkappa^*{\sf n}}(x) \sma 1_{\Rmr} ) $,
 \item is $\upsigma_{n\Vmr}(\Dmr^{\mathscr{F}}_{\upkappa^*{\sf n}}(x) \sma 1_{\Rmr} ) $,
 \item is   $\partialup^{\mathscr{F}}_{\upkappa^*{\sf n}} \sma 1_{\Rmr}$, 
 \item is  $ \sigma_{\Vmr}(\widetilde{\bf u}_{\Vmr}) \sma 1_{\Dmr^{\mathscr{F}}_{\upkappa^*{\sf n}}(\Sigma^{t \Vmr} \Rmr) } \sma 1_{\Rmr}$,
 \item is   induced by the multiplication of $\Rmr$, 
 \item  is   $\partialup^{\mathscr{F}}_{\upkappa^*{\sf n}} \sma 1_{\Rmr}$,
 \item  is $1_{\Dmr^{\mathscr{F}}_{\upkappa^*{\sf n}} (\Smr^{\Vmr})} \sma  \partialup^{\mathscr{F}}_{\upkappa^*{\sf n}} \sma  1_{\Rmr} $, 
 \item  is $\upsigma_{n \Vmr} ( 1_\Rmr \sma \partial^{\mathscr{F}}_{\upkappa^*{\sf n}} \sma 1_{\Rmr})$, 
 \item  is $\upsigma_{n \Vmr} (  \partial^{\mathscr{F}}_{\upkappa^*{\sf n}} \sma 1_{\Rmr})$,
 \item  is $ \partialup^{\mathscr{F}}_{\upkappa^*{\sf n}} \sma 1_{\Dmr^{\mathscr{F}}_{\upkappa^*{\sf n}}} \sma 1_{\Rmr} $, 
 \item  is  $ \sigma_{\Vmr}(\widetilde{\bf u}_{\Vmr}) \sma 1_{\Dmr^{\mathscr{F}}_{\upkappa^*{\sf n}}(\Smr^{t \Vmr}) } \sma 1_{\Dmr^{\mathscr{F}}_{\upkappa^*{\sf n}}( \Rmr) }  \sma 1_{\Rmr}$,
 \item is   induced by the multiplication of $\Rmr$, 
 \item is $ \upsigma_{(t +1) \Vmr}(\widetilde{\bf u}_{(t+1)\Vmr}) \sma \uptheta_{ \mathscr{F}, \upkappa}^{ \Rmr} \sma 1_{\Rmr},$
 \item is $ \upsigma_{\Vmr}(\widetilde{\bf u}_{\Vmr}) \sma  \upsigma_{t \Vmr}(\widetilde{\bf u}_{t \Vmr}) \sma \uptheta_{ \mathscr{F}, \upkappa}^{ \Rmr} \sma 1_{\Rmr},$
 \item is $\upsigma_{n\Vmr} (1_{\Rmr} \sma \upsigma_{t\Vmr}(\widetilde{\bf u}_{t \Vmr}) \sma \uptheta_{ \mathscr{F}, \upkappa}^{ \Rmr} \sma 1_{\Rmr}  )$, 
 \item is $ \upsigma_{n \Vmr}(\upsigma_{t\Vmr}(\widetilde{\bf u}_{t \Vmr}) \sma \uptheta_{ \mathscr{F}, \upkappa}^{ \Rmr} \sma 1_{\Rmr})$, 
 \item is   induced by the multiplication of $\Rmr$, 
 \item is   induced by the multiplication of $\Rmr$. 
 \end{enumerate}
We will now argue that \eqref{master} commutes up to homotopy by showing that squares \textbf{(S1)} through \textbf{(S16)} commutes up to homotopy. 

The square \textbf{(S1)} commutes because ${\sf x}_{t+1} \frown \td{\bf e}_{\Vmr} = {\sf x}_t$. The square  \textbf{(S2)} commutes as the definition of the map $\partialup^{\mathscr{F}}_{\upkappa^*{\sf n}}$ involves the diagonal map of $\Bmr \mathscr{F}$. The square \textbf{(S3)} commutes because the map \[ \updelta: \Bmr\mathscr{F}_+ \sma \Smr^{\Vmr} \longrightarrow \Dmr^{\mathscr{F}}_{\upkappa^*{\sf n}}(\Smr^{\Vmr}) 
\simeq \Sigma^{\Vmr} \Th(\widetilde{\upgamma}_{\Vmr})\]  is equivalent to
$\upsigma_{\Vmr}(\upzeta)$, where $\upzeta$ is the zero section of
$\widetilde{\upgamma}_{\Vmr}$, and the fact that $\td{\bf e}_{\Vmr} =
\upzeta^{*}(\widetilde{\bf u}_{\Vmr})$.

\
 \begin{equation} \label{master}
  \rotatebox{90}{
    \begin{tikzcd}[ampersand replacement=\&, nodes={font=\scriptsize}]
 \Smr^{|{\sf x}_{t+1}|} \sma  \Sigma^{\Vmr} \Xmr \ar[ddd, color = DarkBlue,   "(2)"'] \ar[rrrddd, color = DarkBrown, bend left, "(1)"] \ar[rrrdd, phantom, "\text{\large \bf (S1)}"] \\\\
\& \& \& \text{$ $} \\
  \Bmr\mathscr{F}_+ \sma \Sigma^{\Vmr}\Xmr  \sma  \Rmr \rar["(3)"] \ar[dd,color = DarkBlue, "(6)"']  \ar[rdd, phantom, "\text{ \bf (S2)}"] \&
    \Bmr\mathscr{F}_+ \sma \Bmr\mathscr{F}_+ \sma \Sigma^{\Vmr}\Xmr  \sma   \Rmr \rar["(4)"] \ar[dd, "(7)"']  \ar[rdd, phantom, "\text{\bf  (S3)}"] \&
       \Bmr\mathscr{F}_+ \sma \Sigma^{(n-1)\Vmr} \Rmr \sma \Sigma^{\Vmr}\Xmr  \sma   \Rmr \rar["(5)"] \ar[dd, "(8)"']  \ar[rdd, phantom, "\text{\bf (S4)}"]  \&
        \Sigma^{n\Vmr} \Bmr\mathscr{F}_+  \sma \Xmr  \sma    \Rmr \ar[dd,color = DarkBrown, "(9)"]   \\ \\
\Dmr^{\mathscr{F}}_{\upkappa^*{\sf n}}(\Sigma^{\Vmr}\Xmr) \sma \Rmr \ar[dd, color = DarkBlue, "(13)"'] \rar["(10)"'] \ar[rdd, phantom, "\text{\bf (S5)}"]  \&
 \Dmr^{\mathscr{F}}_{\upkappa^*{\sf n}}( \Smr^{\Vmr}) \sma \Dmr^{\mathscr{F}}_{\upkappa^*{\sf n}}( \Xmr) \sma \Rmr \rar["(11)"'] \ar[dd, "(14)"'] \ar[rdd, phantom, "\text{\bf (S6)}"]  \&
  \Sigma^{n\Vmr} \Rmr \sma \Dmr^{\mathscr{F}}_{\upkappa^*{\sf n}}( \Xmr) \sma \Rmr \rar["(12)"'] \ar[dd, "(15)"']  \ar[rdd, phantom, "\text{\bf (S7)}"] \&
   \Sigma^{n \Vmr} \Dmr^{\mathscr{F}}_{\upkappa^*{\sf n}}( \Xmr) \sma \Rmr   \ar[dd, color = DarkBrown, "(16)"] \\ \\
\Dmr^{\mathscr{F}}_{\upkappa^*{\sf n}}(\Sigma^{(t+1)\Vmr}\Rmr) \sma \Rmr  \ar[dd, color = DarkBlue, "(20)"'] \rar["(17)"'] \ar[rdd, phantom, "\text{\bf (S8)}"]  \&
 \Dmr^{\mathscr{F}}_{\upkappa^*{\sf n}}( \Smr^{\Vmr}) \sma  \Dmr^{\mathscr{F}}_{\upkappa^*{\sf n}}(\Sigma^{t\Vmr}\Rmr) \sma \Rmr \rar["(18)"'] \ar[dd, "(21)"'] \ar[rdd, phantom, "\text{\bf (S9)}"]  \& 
 \Sigma^{n \Vmr} \Rmr \sma \Dmr^{\mathscr{F}}_{\upkappa^*{\sf n}}(\Sigma^{t\Vmr}\Rmr) \sma \Rmr \rar["(19)"'] \ar[dd, "(22)"'] \ar[rdd, phantom, "\text{\bf (S10)}"]  \&
  \Sigma^{n \Vmr} \Dmr^{\mathscr{F}}_{\upkappa^*{\sf n}}(\Sigma^{t\Vmr}\Rmr) \sma \Rmr  \ar[dd, color = DarkBrown,"(23)"]      \\ \\
\Dmr^{\mathscr{F}}_{\upkappa^*{\sf n}}(\Smr^{(t+1)\Vmr}) \sma  \Dmr^{\mathscr{F}}_{\upkappa^*{\sf n}}(\Rmr) \sma \Rmr \ar[dd, color = DarkBlue, "(27)"']  \rar["(24)"']  \ar[rdd, phantom, "\text{\bf (S11)}"]\&
 \Dmr^{\mathscr{F}}_{\upkappa^*{\sf n}}(\Smr^{\Vmr}) \sma  \Dmr^{\mathscr{F}}_{\upkappa^*{\sf n}}(\Smr^{t\Vmr})  \sma  \Dmr^{\mathscr{F}}_{\upkappa^*{\sf n}}(\Rmr) \sma \Rmr \rar["(25)"'] \ar[dd, "(28)"']  \ar[rdd, phantom, "\text{\bf (S12)}"] \&
  \Sigma^{n\Vmr}\Rmr \sma  \Dmr^{\mathscr{F}}_{\upkappa^*{\sf n}}(\Smr^{t\Vmr})  \sma  \Dmr^{\mathscr{F}}_{\upkappa^*{\sf n}}(\Rmr) \sma \Rmr  \rar["(26)"'] \ar[dd, "(29)"']  \ar[rdd, phantom, "\text{\bf (S13)}"]\&
    \Sigma^{n\Vmr} \Dmr^{\mathscr{F}}_{\upkappa^*{\sf n}}(\Smr^{t\Vmr})  \sma  \Dmr^{\mathscr{F}}_{\upkappa^*{\sf n}}(\Rmr) \sma \Rmr  \ar[dd,color = DarkBrown, "(30)"]  \\ \\
\Sigma^{ n (t +1) \Vmr} \Rmr \sma \Rmr \sma \Rmr \ar[dd, color = DarkBlue, "(34)"']   \ar[rdd, phantom, "\text{\bf (S14)}"] \&
 \Sigma^{n \Vmr} \Rmr \sma \Sigma^{nt\Vmr} \Rmr \sma \Rmr \sma \Rmr \rar[equal] \lar["(31)"] \ar[dd, "(34)"'] \ar[rdd, phantom, "\text{\bf (S15)}"]  \&
  \Sigma^{n \Vmr} \Rmr \sma \Sigma^{nt\Vmr} \Rmr \sma \Rmr \sma \Rmr  \rar["(32)"'] \ar[dd, "(35)"']  \ar[rdd, phantom, "\text{\bf (S16)}"]\&
   \Sigma^{n \Vmr} ( \Sigma^{nt\Vmr} \Rmr \sma \Rmr \sma \Rmr ) \ar[dd, color = DarkBrown, "(36)"]  \\ \\
\Sigma^{ n (t +1) \Vmr}\Rmr \rar[equal] \& \Sigma^{ n (t +1) \Vmr} \Rmr \rar[equal]  \& \Sigma^{ n (t +1) \Vmr} \Rmr \rar [equal] \& \Sigma^{ n (t +1) \Vmr} \Rmr 
\end{tikzcd}  
}
\end{equation}

The squares \textbf{(S5)}, \textbf{(S8)}, \textbf{(9)}, \textbf{(10)} commutes from the naturality of  $\partialup^{\mathscr{F}}_{\upkappa^*{\sf n}}$. The  square \textbf{(S11)} commutes because  $\td{\bf u}_{(t+1)\Vmr} =\upomega^*( \widetilde{\bf u}_{\Vmr} \times \widetilde{\bf u}_{t \Vmr} )$ (see \eqref{eqn:cross}) as
\begin{equation} \label{eqn:Thomclassadd}
\begin{tikzcd}
\Th(\widetilde{\upgamma}_{(t+1)\Vmr}^\Fscr) \dar["\upomega"'] \ar[rrr, "\widetilde{\bf u}_{(t+1) \Vmr}"] &&& \Sigma^{(t+1)(n-1)\Vmr}
 \Rmr    \dar[equal]  \\
 \Th(\widetilde{\upgamma}_{\Vmr}^\Fscr) \sma \Th(\widetilde{\upgamma}_{t\Vmr}^\Fscr) \ar[rr, " \widetilde{\bf u}_{\Vmr} \sma \widetilde{\bf u}_{t \Vmr} "] \ar[rrr, bend right, " \widetilde{\bf u}_{\Vmr} \times \widetilde{\bf u}_{t \Vmr} "']&& \Sigma^{(n-1)\Vmr}\Rmr \sma \Sigma^{t(n-1)\Vmr} \Rmr \rar["\upmu_{\Rmr}"]  & \Sigma^{(t+1)(n-1)\Vmr}\Rmr, 
 \end{tikzcd}
 \end{equation}
 where $\upomega$ is the natural map from the Thom space of the Whitney sum to Thom space of the product of two vector bundles. The remaining squares, i.e. \textbf{(S4)},  \textbf{(S4)},  \textbf{(S6)},  \textbf{(S7)},  \textbf{(S12)}, \textbf{(S13)}, \textbf{(S14)}, \textbf{(S15)}, \textbf{(S16)} commutes trivially.

Therefore,  the class represented by the composition of blue arrows in \eqref{master}
\[ [(34) \circ (27) \circ (20) \circ (13) \circ (6) \circ (2)] = \Sfrak^{\upchi}(\upsigma_{\Vmr}(x))  \in \Rmr^{t \Vmr + \| \upchi \|}_\Gmr(\Xmr) \]
must equal the class represented by  the composition of red arrows 
\[ [(36) \circ (30) \circ (23) \circ (16) \circ (9) \circ  (1) ]=\upsigma_{\Vmr}( \Sfrak^{\upchi}(x))  \in \Rmr^{t \Vmr + \| \upchi\|}_\Gmr(\Xmr),   \]
and hence,  the result. 
 \end{proof}  
\begin{rmk} \label{rmk:shift}
Observe that if $\upchi$ is a $\Vmr$-stable $\Rmr$-Eulerian sequence then its {\bf $k$-th shift}
\[ 
\upchi[k]= (\overbrace{0, \dots, 0}^{k\text{-fold}}, \sfx_0, \sfx_1, \sfx_2, \dots )
\]
is also a $\Vmr$-stable $\Rmr$-Eulerian sequence. Note that 
\[ 
\| \upchi[k] \| = \| \upchi \| + k (n-1) \Vmr  
\]
if $\upchi$ has weight $n$. 
\end{rmk}
\begin{ex}\label{ex:classicalEulerianSteen} In the classical case $\Hmr_*((\Bmr\Sigma_2)_+; \FF_2) = \FF_2 \{ {\bf b}_0, {\bf b}_1, {\bf b}_2, \dots  \}$, where 
${\bf b}_i$ is the element dual to ${\bf e}_1^i$ as in \Cref{rmk:classicalsl}. Then 
\[ 
\upbeta= ({\bf b}_0, {\bf b}_1, \dots )
\]
and its shifts accounts for all $\Hmr\FF_2$-stable Eulerian sequences of weight $2$. It follows form \Cref{rmk:classicalsl} that 
$
\Sfrak^{\upbeta[k]} = \Sq^{k}
$
for all $k \in \NN$. 
\end{ex}
\begin{ex} When $\Gmr = \Cmr_2$, let 
\[{\bf b}_{i\uprho}, {\bf c}_{i\uprho+\sigma}\in \Hmr_\star^{\Cmr_2}((\Bmr_{\Cmr_2}\Sigma_{2})_+;\ull{\FF}_2)\]
be the generators discussed in \Cref{rmk:ctwoicalsl}. Then
\begin{align*}
    \begin{split}
        \upbeta &= ({\bf b}_0, {\bf b}_{\uprho},  {\bf b}_{2\uprho},  \dots)\\
    \upzeta &= (0, {\bf c}_{\sigma}, {\bf c}_{\uprho + \sigma}, \dots) 
    \end{split}
\end{align*}
and their shifts are $\uprho$-stable Eulerian sequence of weight $2$ such that
\[\Sfrak^{\upbeta[k]} = \Sq^{2k}\quad\text{and }\quad \Sfrak^{\upzeta[k]} = \Sq^{2k+1}\]
for all $k\in \NN$.
\end{ex}
\begin{rmk} \label{rmk:equivEuler} Given an $\Vmr$-stable $\Rmr$-Eulerian sequence $\upchi = (\sfx_0, \sfx_1, \sfx_2, \dots )$ of weight $n$, we get a $k\Vmr$-stable  $\Rmr$-Eulerian sequence
\[ \mathfrak{t}_k(\upchi)= ( \sfx_0, \sfx_k, \sfx_{2k}, \dots ) \]
of weight $n$. This is because 
\begin{itemize}
\item $\td{\bf e}_{k\Vmr} = \underbrace{\td{\bf e}_{\Vmr} \smile \dots \smile \td{\bf e}_{\Vmr}}_{k}$ is an $\Rmr$-Euler class of $\td{\upgamma}^{\mathscr{F}}_{k\Vmr} = (\widetilde{\upgamma}^{\mathscr{F}}_{\Vmr})^{\oplus k}$, and, 
\item ${\sf x}_{(i+1) k}  \frown \td{\bf e}_{k\Vmr}  =  {\sf x}_{ik} $.
\end{itemize}
It follows immediately from \Cref{thm:stable} that $\Sfrak^{\upchi} = \Sfrak^{\mathfrak{t}_k\upchi} $.
\end{rmk}

\subsection{Restrictions and geometric fixed points of Eulerian sequences} \

Suppose $\Kmr \subset \Gmr$. The cap product (as in \Cref{notn:cap}) is well-behaved with restrictions and fixed points: 
\begin{itemize}
\item  $\upiota_{\Kmr\ast} (x \frown e) = \upiota_{\Kmr\ast} (x) \frown  \upiota_{\Kmr\ast}(e)  $
\item $\varphi^{\Kmr} (x \frown e) = \varphi^{\Kmr} (x) \frown \varphi^{\Kmr} (e) $. 
\end{itemize}
\begin{defn}\label{defn:restrEulerianSeq} Suppose $\upchi =(\sfx_0, \sfx_1, \sfx_2, \dots )$, where $\sfx_i \in \Rmr_{\star}^\Gmr( \Bmr \Fscr_+)$, is a $\Vmr$-stable $\Rmr$-Eulerian sequence.  Then  $\upiota_{\Kmr*} \sfx_i \in \Rmr_{\star}^\Gmr( \Bmr \upiota_\Kmr\Fscr_+)$, and
 \[ 
 \upiota_\Kmr (\upchi) := ( \upiota_{\Kmr*} \sfx_0, \upiota_{\Kmr*} \sfx_1,  \upiota_{\Kmr*} \sfx_2, \dots  )
 \]
is a $\upiota_{\Kmr}\Vmr$-stable $\upiota_\Kmr\Rmr$-Eulerian sequence, which we will call the {\bf restriction of $\upchi$ to the subgroup $\Kmr$}. 
\end{defn}
Defining the geometric fixed points of an Eulerian sequence is somewhat subtle as the natural map 
\[ 
\begin{tikzcd}
\wh{\uplambda}:  \Bmr(\Fscr^{\Kmr})_+ \rar & (\Bmr\Fscr)^\Kmr_+ 
\end{tikzcd}
\] 
is not generally an equivalence. However, $ \Bmr(\Fscr^{\Kmr})$  is a path component of $\Bmr\Fscr^\Kmr$ \cite[Theorem 10]{LM} (also see \cite[Theorem 2.13]{BZ}). Therefore,  there exists a natural collapse map 
\begin{equation} \label{eqn:collapse}
 {\sf c}:(\Bmr\Fscr)^\Kmr_+ \longrightarrow  \Bmr(\Fscr^{\Kmr})_+ 
\end{equation}
which is identity on $\Bmr(\Fscr^{\Kmr})$ and maps the elements from other components to the disjoint basepoint. Moreover,  an argument similar to \cite[Lemma 2.18]{BZ} reveals that the pullback of the the $\Kmr$-fixed point of the bundle $\td{\upgamma}_{\Vmr}^{\mathscr{F}}$ along $\wh{\uplambda}$ is isomorphic to  $\td{\upgamma}_{\Vmr^\Kmr}^{\mathscr{F}^\Kmr}$:
\[ \wh{\uplambda}^* \left( (\td{\upgamma}_{\Vmr}^{\mathscr{F}})^{\Kmr} \right) \cong  \td{\upgamma}_{\Vmr^\Kmr}^{\mathscr{F}^\Kmr}\]
Consequently,  $\widehat{\uplambda}^*(\td{\bf e}_\Vmr) $ (where $\td{\bf e}_\Vmr$ as defined  in \eqref{eqn:euler}) is an $\Phi^\Kmr(\Rmr)$-Euler class of $\td{\upgamma}_{\Vmr}^{\mathscr{F}^\Kmr}$. This motivates the following definition: 
\begin{defn} \label{defn:geofixEulerSeq} Suppose $\upchi =(\sfx_0, \sfx_1, \sfx_2, \dots )$, where $\sfx_i \in \Rmr_{\star}^\Gmr( \Bmr \Fscr_+),$ is  a $\Vmr$-stable $\Rmr$-Eulerian sequence.   Then  ${\sf c}_* \varphi^{\Kmr}( \sfx_i) \in \Rmr_{\star}^{\Wmr(\Kmr)}( \Bmr \Fscr_+^\Kmr)$ and 
 \[ 
 \varphi^{\Kmr} (\upchi) := ( {\sf c}_* \varphi^{\Kmr}(\sfx_0), {\sf c}_* \varphi^{\Kmr}( \sfx_1),  {\sf c}_* \varphi^{\Kmr}( \sfx_2), \dots  )
 \]
is a $\Vmr^{\Kmr}$-stable $\Phi^{\Kmr}(\Rmr)$-Eulerian sequence, which we will call the {\bf $\Kmr$-geometric fixed points of $\upchi$}.
\end{defn}

The fact that $\varphi^{\Kmr} (\upchi)$ in \Cref{defn:geofixEulerSeq} is a $\Vmr^{\Kmr}$-stable $\Phi^{\Kmr}(\Rmr)$-Eulerian sequence follows from:
\begin{lem}\label{lem:geoFixedPointsES} Suppose $x \in \Rmr_\star^\Gmr(\Bmr \Fscr_+)$ and $e \in \Rmr^\star_\Gmr(\Bmr \Fscr_+)$, then 
\[ 
{\sf c}_* \varphi^\Kmr(x) \frown \wh{\uplambda}^\ast(e) = {\sf c}_*\varphi^\Kmr(x \frown e). 
\]
\end{lem}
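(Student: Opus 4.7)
The plan is to combine three ingredients: the strong symmetric monoidality of $\varphi^\Kmr$, the projection formula for the collapse map ${\sf c}$, and the wedge splitting of $(\Bmr\Fscr)^\Kmr_+$ underlying the very definition of ${\sf c}$. Interpreting $\wh{\uplambda}^*(e)$ as $\wh{\uplambda}^*\varphi^\Kmr(e)$, the first step is to use that $\varphi^\Kmr$ is strong symmetric monoidal and commutes with the pointed diagonal of any $\Gmr$-space, so that
\[ \varphi^\Kmr(x\frown e)=\varphi^\Kmr(x)\frown \varphi^\Kmr(e). \]
Applying ${\sf c}_*$ then shows the right-hand side of the lemma equals ${\sf c}_*\bigl(\varphi^\Kmr(x)\frown \varphi^\Kmr(e)\bigr)$.

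The second step exploits the splitting $(\Bmr\Fscr)^\Kmr\simeq \Bmr(\Fscr^\Kmr)\sqcup \Ymr$ of $\Wmr(\Kmr)$-spaces, where $\Ymr$ collects the remaining components; this induces a wedge decomposition $(\Bmr\Fscr)^\Kmr_+\simeq \Bmr(\Fscr^\Kmr)_+\vee \Ymr_+$ under which $\wh{\uplambda}$ is the inclusion of the first wedge summand and ${\sf c}$ is the projection onto it. In particular ${\sf c}\circ \wh{\uplambda}=\id$, so $\wh{\uplambda}^*{\sf c}^*=\id$, and one may decompose $\varphi^\Kmr(e)={\sf c}^*\wh{\uplambda}^*\varphi^\Kmr(e)+\eta$ with $\eta$ supported on the $\Ymr_+$-summand. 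The pointed diagonal of $(\Bmr\Fscr)^\Kmr_+$ factors through the ``diagonal'' part $\Bmr(\Fscr^\Kmr)_+\wedge \Bmr(\Fscr^\Kmr)_+\vee \Ymr_+\wedge \Ymr_+$ of the smash square, so the cap-product description of \Cref{notn:cap} sends $\varphi^\Kmr(x)\frown \eta$ into the $\Ymr_+$-summand of $\Phi^\Kmr(\Rmr)$-homology; this summand is killed by ${\sf c}_*$. Hence
\[ {\sf c}_*\bigl(\varphi^\Kmr(x)\frown \varphi^\Kmr(e)\bigr)={\sf c}_*\bigl(\varphi^\Kmr(x)\frown {\sf c}^*\wh{\uplambda}^*\varphi^\Kmr(e)\bigr), \]
and the projection formula ${\sf c}_*(a\frown {\sf c}^*b)={\sf c}_*(a)\frown b$, applied with $a=\varphi^\Kmr(x)$ and $b=\wh{\uplambda}^*\varphi^\Kmr(e)$, rewrites the right-hand side as ${\sf c}_*\varphi^\Kmr(x)\frown \wh{\uplambda}^*\varphi^\Kmr(e)$, which is the left-hand side of the lemma.

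The only delicate step is the compatibility of the cap product with the wedge splitting, which I would spell out by tracing the pointed diagonal of $(\Bmr(\Fscr^\Kmr)\sqcup \Ymr)_+$ through the smash square and observing that the mixed summands $\Bmr(\Fscr^\Kmr)_+\wedge \Ymr_+$ and $\Ymr_+\wedge \Bmr(\Fscr^\Kmr)_+$ are avoided. Once this is granted, the remainder is formal juggling with ${\sf c}\circ \wh{\uplambda}=\id$, the projection formula, and the symmetric monoidality of $\varphi^\Kmr$; no further equivariant input is required.
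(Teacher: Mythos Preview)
Your argument is correct. The paper proves the lemma by a single commutative diagram: it writes the cap-product composite for ${\sf c}_*\varphi^\Kmr(x)\frown\wh{\uplambda}^*(e)$ down a left column and the composite for ${\sf c}_*\bigl(\varphi^\Kmr(x)\frown\varphi^\Kmr(e)\bigr)$ down a right column, linking them by horizontal maps $\wh{\uplambda}\sma\cdots$ and invoking ${\sf c}\circ\wh{\uplambda}=\id$ at the bottom. Your organization via the wedge splitting $(\Bmr\Fscr)^\Kmr_+\simeq \Bmr(\Fscr^\Kmr)_+\vee \Ymr_+$ together with the projection formula is the algebraic unpacking of that diagram; in particular, your observation that the pointed diagonal of a disjoint union avoids the mixed summands is exactly the content that makes the paper's squares commute. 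Both arguments rest on the same three facts you isolated, so this is the same strategy with different bookkeeping rather than a genuinely different route.
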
 
\begin{proof} The result follows from the commutative diagram:
\[ 
\begin{tikzcd}
 \Smr^{\Vmr^\Kmr} \dar[DarkBrown, "{\sf c}_* \varphi^\Kmr(x)"'] \ar[DarkBlue, bend left, rrd, "\varphi^\Kmr(x)"]  \\
\Bmr (\Fscr^{\Kmr})_+ \sma \Phi^\Kmr(\Rmr) \ar[rr, "\wh{\uplambda} \sma 1"]  \dar[DarkBrown, "\Delta \sma 1"'] && (\Bmr \Fscr^\Kmr)_+ \sma \Phi^\Kmr(\Rmr) \dar[DarkBlue, "\Delta \sma 1"]  \\
\Bmr (\Fscr^{\Kmr})_+ \sma \Bmr (\Fscr^{\Kmr})_+ \sma \Phi^\Kmr(\Rmr) \ar[rr, "\wh{\uplambda} \sma \wh{\uplambda} \sma 1 "] \dar[DarkBrown, "1 \sma \wh{\uplambda}^*(e) \sma 1"'] && (\Bmr \Fscr^\Kmr)_+ \sma (\Bmr \Fscr^\Kmr)_+ \sma \Phi^\Kmr(\Rmr) \dar[DarkBlue, "1 \sma e \sma 1 "] \\
\Bmr (\Fscr^{\Kmr})_+ \sma \Sigma^{\Wmr^\Kmr}\Phi^\Kmr(\Rmr)  \sma \Phi^\Kmr(\Rmr) \ar[rr, "\wh{\uplambda} \sma 1 \sma 1"'] \ar[DarkBrown, bend right, ddrr, "1 \sma \upmu_\Rmr"']  && (\Bmr \Fscr^\Kmr)_+ \sma \Phi^\Kmr(\Sigma^{\Wmr}\Rmr)  \sma \Phi^\Kmr(\Rmr) \ar[DarkBlue, dd, "{\sf c} \sma \upmu_\Rmr"] \\ \\
 && \Bmr (\Fscr^{\Kmr})_+ \sma \Sigma^{\Wmr^\Kmr}\Phi^\Kmr(\Rmr)
\end{tikzcd}
\]
where the composition of the red arrows is the left hand side and the composition of the blue arrows is the right hand side. 
\end{proof}

\begin{thm}\label{thm:opsRestrAndGeoFixedPts} Suppose $\upchi = (\sfx_0, \sfx_1, \dots )$ in an $\Vmr$-stable  $\Rmr$-Eulerian sequence. Then  
\begin{enumerate}
\item $\upiota_{\Kmr*}\Sfrak^{\upchi}(-) = \Sfrak^{\upiota_{\Kmr}(\upchi)}(\upiota_{\Kmr*} (-))  $
\item $\varphi^\Kmr (\Sfrak^{\upchi}(- )) = \Sfrak^{\varphi^\Kmr(\upchi)}(\varphi^{\Kmr}(- )) $
\end{enumerate}
for all $\RO(\Gmr, \Vmr)$-graded cohomology classes. 
\end{thm}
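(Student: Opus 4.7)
The plan is to verify both identities for classes $x \in \Rmr^{t\Vmr}_\Gmr(\Xmr)$ with $t \in \NN$; the full $\RO(\Gmr,\Vmr)$-graded case propagates automatically via \Cref{thm:stable} since $\upiota_\Kmr$ and $\varphi^\Kmr$ both commute with $\Vmr$-suspension. Since
\[\Sfrak^{\upchi}(x) \;=\; \updelta^{*}\Pcal^{\mathscr{F}}_{t\Vmr}(x)|_{{\sf x}_t},\]
both identities reduce to tracking how the three ingredients $\updelta$, $\Pcal^{\mathscr{F}}_{t\Vmr}$, and the slant pairing interact with the respective functor.

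Statement \emph{(1)} is the short leg: I would apply the strong symmetric monoidal functor $\upiota_\Kmr$. The map $\updelta$ of \eqref{eqn:delta} is assembled from the $(\Gmr \times \Pi)$-equivariant diagonal, so its restriction is the corresponding $\updelta$ for $\upiota_\Kmr\mathscr{F}$; the identity $\upiota_{\Kmr*}\Pcal^{\mathscr{F}}_{t\Vmr}(x) = \Pcal^{\upiota_\Kmr\mathscr{F}}_{\upiota_\Kmr\Vmr}(\upiota_{\Kmr*}x)$ is \Cref{thm:Extendpower}\emph{(1)}; and the slant pairing is natural under any strong monoidal functor, yielding $\upiota_{\Kmr*}(y|_{\sf b}) = \upiota_{\Kmr*}(y)|_{\upiota_{\Kmr*}{\sf b}}$. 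Concatenating these three identities gives \emph{(1)}.

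For statement \emph{(2)}, the analogous monoidality of $\varphi^\Kmr$ delivers $\varphi^\Kmr(y|_{\sf b}) = \varphi^\Kmr(y)|_{\varphi^\Kmr({\sf b})}$. The core technical input is the commutative square
\[
\begin{tikzcd}
\Bmr(\mathscr{F}^\Kmr)_+ \sma \Phi^\Kmr(\Xmr) \rar["\updelta"] \dar["\wh{\uplambda} \sma 1"'] & \Dmr^{\mathscr{F}^\Kmr}_{\upkappa^{*}{\sf n}}(\Phi^\Kmr(\Xmr)) \dar["\wh{\uplambda}"] \\
(\Bmr\mathscr{F})^\Kmr_+ \sma \Phi^\Kmr(\Xmr) \rar["\varphi^\Kmr(\updelta)"'] & \Phi^\Kmr(\Dmr^{\mathscr{F}}_{\upkappa^{*}{\sf n}}(\Xmr))
\end{tikzcd}
\]
arising from the naturality of the diagonal combined with \Cref{com:extended}. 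Pulling $\varphi^\Kmr\Pcal^{\mathscr{F}}_{t\Vmr}(x)$ back along both legs and invoking \Cref{thm:Extendpower}\emph{(2)} gives $\updelta^{*}\Pcal^{\mathscr{F}^\Kmr}_{t\Vmr^\Kmr}(\varphi^\Kmr x) = (\wh{\uplambda} \sma 1)^{*}\varphi^\Kmr(\updelta)^{*}\varphi^\Kmr\Pcal^{\mathscr{F}}_{t\Vmr}(x)$. Slanting against ${\sf c}_{*}\varphi^\Kmr({\sf x}_t)$ and applying the slant-product naturality $(\wh{\uplambda}\sma 1)^{*}(z)|_{{\sf b}'} = z|_{\wh{\uplambda}_{*}{\sf b}'}$ rewrites $\Sfrak^{\varphi^\Kmr(\upchi)}(\varphi^\Kmr x)$ as $\varphi^\Kmr(\updelta)^{*}\varphi^\Kmr\Pcal^{\mathscr{F}}_{t\Vmr}(x)|_{\wh{\uplambda}_{*}{\sf c}_{*}\varphi^\Kmr({\sf x}_t)}$.

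The hard part will be the final replacement of $\wh{\uplambda}_{*}{\sf c}_{*}\varphi^\Kmr({\sf x}_t)$ by $\varphi^\Kmr({\sf x}_t)$, which fails as a literal equality of homology classes since $\wh{\uplambda}\circ{\sf c}$ merely projects $(\Bmr\mathscr{F})^\Kmr$ onto its $\Bmr(\mathscr{F}^\Kmr)$-component. My plan is to resolve this by showing that $\varphi^\Kmr(\updelta)$ itself factors as $\wh{\uplambda} \circ \updelta \circ ({\sf c} \sma 1)$: the diagonal $\Xmr \to \Xmr^{\sma \upkappa^{*}{\sf n}}$ is $\Pi$-equivariant only on trivially-acted pieces, a constraint that survives passage to $\Phi^\Kmr$ precisely on the component $\Bmr(\mathscr{F}^\Kmr)$ where every isotropy contains $\Kmr\times\{{\sf e}\}$. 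Once this factorization is in hand, naturality of the slant pairing under $({\sf c} \sma 1)^{*}$ shows that the slant products against $\varphi^\Kmr({\sf x}_t)$ and against $\wh{\uplambda}_{*}{\sf c}_{*}\varphi^\Kmr({\sf x}_t)$ agree, completing \emph{(2)}.
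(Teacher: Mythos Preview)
Your overall strategy and the argument for part \emph{(1)} match the paper exactly: reduce to classes in degree $t\Vmr$ via \Cref{thm:stable}, then track $\updelta$, the shifted power operation, and the slant pairing through the relevant functor using \Cref{thm:Extendpower}. For part \emph{(2)} you also use the same commuting square relating $\updelta$ for $\Fscr^\Kmr$ to $\Phi^\Kmr(\updelta)$ via $\wh{\uplambda}$, and you arrive at the same penultimate expression as the paper.

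The genuine gap is in your proposed resolution of the ``hard part''. The factorization $\varphi^\Kmr(\updelta) = \wh{\uplambda}\circ\updelta\circ({\sf c}\sma 1)$ is false. Take $\Xmr = \Smr^0$: then $\Dmr^{\Fscr}_{\upkappa^*{\sf n}}(\Smr^0)\cong \Bmr\Fscr_+$ and $\updelta$ is the identity, so $\varphi^\Kmr(\updelta)$ is the identity on $(\Bmr\Fscr)^\Kmr_+$, whereas $\wh{\uplambda}\circ\updelta\circ({\sf c}\sma 1) = \wh{\uplambda}\circ{\sf c}$ is the idempotent projecting onto the $\Bmr(\Fscr^\Kmr)$-component. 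These differ as soon as $(\Bmr\Fscr)^\Kmr$ has more than one component --- for instance $\Gmr=\Kmr=\Cmr_2$, $\Fscr=\All_2$, where $(\Bmr_{\Cmr_2}\Sigma_2)^{\Cmr_2}$ has two components indexed by the two homomorphisms $\Cmr_2\to\Sigma_2$. Your justification (``the diagonal is $\Pi$-equivariant only on trivially-acted pieces'') is also not right: the diagonal $\Xmr\to\Xmr^{\sma\upkappa^*{\sf n}}$ is $\Pi$-equivariant everywhere once $\Xmr$ carries the trivial $\Pi$-action.

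The paper does not push ${\sf c}_*\varphi^\Kmr(\sfx_t)$ forward along $\wh{\uplambda}_*$ as you do. Instead it invokes the identity ${\sf c}\circ\wh{\uplambda}=\id$ on $\Bmr(\Fscr^\Kmr)_+$ and passes directly from $(\wh{\uplambda}\sma 1)^*\Phi^\Kmr(\updelta)^*\varphi^\Kmr(\Pcal^\Fscr_{t\Vmr}(x))\,|_{\,{\sf c}_*\varphi^\Kmr(\sfx_t)}$ to $\varphi^\Kmr(\updelta^*\Pcal^\Fscr_{t\Vmr}(x)|_{\sfx_t})$. You should revisit that transition rather than pursue the factorization route.
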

\begin{proof} By \Cref{thm:stable}, it is enough to show that  {\it (1)} and {\it (2)} hold for any $\Rmr$-cohomology class $x \in \Rmr_{\Gmr}^{t\Vmr}(\Xmr)$, where $t \in \NN$.  

From   \Cref{thm:Extendpower}{\it (1)} and the definition of slant product (see \Cref{notn:slant})
\begin{eqnarray*}
\upiota_{\Kmr*} \Sfrak^{\upchi}(x) &=& \upiota_{\Kmr*} (\updelta^*\Pcal^{\mathscr{F}}_{t\Vmr}(x)|_{{\sf x}_t}) \\
&=& \updelta^* \upiota_{\Kmr*}(\Pcal^{\mathscr{F}}_{t\Vmr}( x) )|_{\upiota_{\Kmr*}({\sf x}_t)} \\
&=& \updelta^* \Pcal^{\upiota_{\Kmr}\mathscr{F}}_{t\upiota_{\Kmr*}\Vmr}( \upiota_\Kmr(x) )|_{\upiota_{\Kmr*}({\sf x}_t)} \\
&=& \Sfrak^{\upiota_{\Kmr}(\upchi)}(\upiota_{\Kmr*} (x )). 
\end{eqnarray*}
To prove {\it (2)}, we first observe that there is a commutative diagram
\[ 
\begin{tikzcd}
 \Bmr(\Fscr^{\Kmr})_+ \sma \Phi^{\Kmr}(\Xmr) \ar[rrr, "\updelta"] \dar["\wh{\uplambda} \sma 1"'] &&& \Dmr^{\Fscr^\Kmr}_{\upkappa^*{\sf n}}(\Phi^{\Kmr}(\Xmr))  \dar["\widehat{\uplambda}"]\\
 \Phi^{\Kmr}(\Bmr \Fscr_+ \sma \Xmr)   \cong  (\Bmr \Fscr)^{\Kmr}_+\sma \Phi^{\Kmr}(\Xmr)  \ar[rrr, "\Phi^{\Kmr}(\updelta)"'] &&&  \Phi^\Kmr(\Dmr^{\Fscr}_{\upkappa^*{\sf n}}(\Xmr)).
\end{tikzcd}
\] 
Combining this with  \Cref{thm:Extendpower}{\it (2)} and  the fact that ${\sf c} \circ \wh{\uplambda}$ is identity, we get 
\begin{eqnarray*}
\Sfrak^{\varphi^\Kmr(\upchi)}(\varphi^{\Kmr}(x )) &=& \updelta^*\Pcal^{\mathscr{F}^\Kmr}_{t\Vmr^\Kmr}(\varphi^\Kmr(x))|_{{\sf c}_*\varphi^\Kmr({\sf x}_t)} \\
&=& \updelta^* \wh{\uplambda}^* \varphi^{\Kmr}( \Pcal^{\Fscr}_{t\Vmr}(x))|_{{\sf c}_*\varphi^\Kmr({\sf x}_t)} \\ 
&=& (\wh{\uplambda} \sma 1)^* \Phi^{\Kmr}(\updelta)^* \varphi^{\Kmr}( \Pcal^{\Fscr}_{t\Vmr}(x)) |_{{\sf c}_*\varphi^\Kmr({\sf x}_t)} \\
&= & \varphi^{\Kmr}\left( \updelta^* \Pcal^{\Fscr}_{t\Vmr}(x) |_{{\sf x}_t}\right) \\
&=& \varphi^\Kmr (\Sfrak^{\upchi}({\sf x}_t ))
\end{eqnarray*}
as desired. 
\end{proof} 
\subsection{Modified geometric fixed-point of $\Hmr\ull{\FF}_p$-Eulerian Sequence} \

In \Cref{sec:new}, we demonstrate that $\widetilde{\upgamma}^{\All_n}_{\uprho_\Gmr}$ (as defined in \Cref{notn:equiBundle}) is $\H\ull{\FF}_p$-orientable when $|\Gmr|$ is even. Furthermore, we show that $2\widetilde{\upgamma}^{\All_n}_{\uprho_\Gmr}$ is always $\H\ull{\FF}_p$-orientable for all finite groups $\Gmr$. We then proceed to identify the $\uprho_\Gmr$-stable $\Hmr\ull{\FF}_p$-Eulerian sequence. The primary purpose of this subsection is to lay the foundation for analyzing the modified geometric fixed-points of the corresponding cohomology operations.

Recall that the modified geometric fixed-point map  of \eqref{eqn:modgeofix} is the composition: 
\[ 
\td{\varphi}^\Kmr (-) := \uppi_*\varphi^{\Kmr}(-)\]
where 
\[ \begin{tikzcd}
\uppi: \Phi^\Kmr(\Hmr\ull{\FF}_p) \rar & \Hmr\ull{\FF}_p, 
\end{tikzcd}
\]
is the $\EE_\infty^{\Wmr(\Kmr)}$-ring map from the $\Kmr$-geometric fixed-point of the $\Gmr$-Eilenberg MacLane spectrum $\Hmr\ull{\FF}_p$ to the $\Wmr(\Kmr)$-Eilenberg MacLane  spectrum $\Hmr\ull{\FF}_p$, induced by the zeroth Postnikov approximation. 

 \begin{defn} \label{defn:modgeoES}For a $k\uprho_\Gmr$-stable  $\Hmr\ull{\FF}_p$-Eulerian sequence $\upchi = (\sfx_0, \sfx_1, \sfx_2, \dots )$  such that ${\sf x}_i \in \Hmr_{\star}^{\Gmr}(\Bmr_\Gmr \Sigma_n ; \ull{\FF}_p)$, we define  its modified geometric fixed-point as 
\begin{equation} \label{eqn:modifiedES}
\td{\varphi}^\Kmr(\upchi) := \uppi_*\varphi^\Kmr(\upchi)= ( \uppi_*{\sf c}_* \varphi^{\Kmr}(\sfx_0), \uppi_*{\sf c}_* \varphi^{\Kmr}( \sfx_1),  \uppi_*{\sf c}_* \varphi^{\Kmr}( \sfx_2), \dots  ). 
  \end{equation}
  \end{defn}
  \begin{rmk}   Since $\sfc_*$ and $\uppi_*$ commute, we may rewrite \eqref{eqn:modifiedES}
    \[ \td{\varphi}^\Kmr(\upchi) = ( {\sf c}_* \td{\varphi}^{\Kmr}(\sfx_0), {\sf c}_* \td{\varphi}^{\Kmr}( \sfx_1),  {\sf c}_* \td{\varphi}^{\Kmr}( \sfx_2), \dots  ).\]
  Note $\td{\varphi}^\Kmr$ sends the $\H\ull{\FF}_p$-Euler class of $ k \widetilde{\upgamma}^{\All_n}_{\uprho_\Gmr} $, denote it by $\td{\bf e}_{\Gmr, p}$,  to $\td{\bf e}_{\Wmr(\Kmr), p}$, the $\H\ull{\FF}_p$-Euler class of  $ k \widetilde{\upgamma}^{\All_n}_{\uprho_{\Wmr(\Kmr)}} $. 
This along with the fact that 
\[ \td{\varphi}^\Kmr(x \frown e) = \td{\varphi}^\Kmr(x) \frown \td{\varphi}^\Kmr(e) \]
implies that $\td{\varphi}^\Kmr(\upchi)$ is a $k\uprho_{\Wmr(\Kmr)}$-stable  $\Hmr\ull{\FF}_p$-Eulerian sequence. 
  \end{rmk}
Combining  \Cref{thm:opsRestrAndGeoFixedPts} with the fact that $\uppi$ is an $\EE_\infty^{\Wmr(\Kmr)}$-ring map, we conclude:
\begin{thm} \label{thm:modgeoES} Any $k\uprho_\Gmr$-stable $\Hmr\ull{\FF}_p$-Eulerian sequence $\upchi = (\sfx_1, \sfx_2, \dots )$ satisfies:
\begin{equation} \label{Sqmgeofix}
\td{\varphi}^\Kmr(\Sfrak^{\upchi}(-)) = \Sfrak^{\td{\varphi}^\Kmr(\upchi)}( \td{\varphi}(-))
\end{equation}
for any subgroup $\Kmr$ of $\Gmr$.
\end{thm}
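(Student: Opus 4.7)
The plan is to reduce \Cref{thm:modgeoES} to the already-proved \Cref{thm:opsRestrAndGeoFixedPts}(2) by pushing the resulting identity forward along the $\EE_{\infty}^{\Wmr(\Kmr)}$-ring map $\uppi$. Given $x\in \Hmr^{\star}_{\Gmr}(\Xmr;\ull{\FF}_p)$, \Cref{thm:opsRestrAndGeoFixedPts}(2) yields
\[
\varphi^{\Kmr}(\Sfrak^{\upchi}(x)) \;=\; \Sfrak^{\varphi^{\Kmr}(\upchi)}(\varphi^{\Kmr}(x))
\]
in $\Phi^{\Kmr}(\Hmr\ull{\FF}_p)$-cohomology of $\Phi^{\Kmr}(\Xmr)$. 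Applying $\uppi_{\ast}$ turns the left-hand side into $\td{\varphi}^{\Kmr}(\Sfrak^{\upchi}(x))$ by definition, so the theorem will follow once the following intertwining is established: for every $\EE_{\infty}^{\Wmr(\Kmr)}$-ring map $\uppi\colon \Rmr\to \Smr$ and every $\Rmr$-Eulerian sequence $\upchi'$ whose pushforward $\uppi_{\ast}\upchi'$ is an $\Smr$-Eulerian sequence,
\[
\uppi_{\ast}\Sfrak^{\upchi'}(y)\;=\;\Sfrak^{\uppi_{\ast}\upchi'}(\uppi_{\ast}y).
\]

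I would prove this intertwining by unfolding the definition $\Sfrak^{\upchi'}(y)=\updelta^{\ast}\Pcal^{\Fscr^{\Kmr}}_{t\Vmr^{\Kmr}}(y)|_{\sfx'_{t}}$ and handling two naturality statements separately. First, inspecting the diagram \eqref{eq:powerext} and using that $\uppi$ commutes with the $\Ocal^{\Kmr}$-structure maps $\uptheta^{\Rmr}_{\Fscr^{\Kmr},\upkappa}$ and $\uptheta^{\Smr}_{\Fscr^{\Kmr},\upkappa}$, one gets $\uppi_{\ast}\Pcal^{\Fscr^{\Kmr}}_{t\Vmr^{\Kmr}}(y)=\Pcal^{\Fscr^{\Kmr}}_{t\Vmr^{\Kmr}}(\uppi_{\ast}y)$ provided the Thom class $\widetilde{\bf u}^{\Smr}_{t\Vmr^{\Kmr}}$ used on the target equals $\uppi_{\ast}\widetilde{\bf u}^{\Rmr}_{t\Vmr^{\Kmr}}$; equivalently, the Euler class used to build $\uppi_{\ast}\upchi'$ is the $\uppi_{\ast}$-image of the Euler class used to build $\upchi'$. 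Second, the slant product of \Cref{notn:slant} is built out of $\upmu_{\Rmr}$, so the ring-map property of $\uppi$ gives $\uppi_{\ast}(z|_{{\sf b}})=(\uppi_{\ast}z)|_{\uppi_{\ast}{\sf b}}$; combining these two facts proves the displayed intertwining.

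Finally, specialize to $\Rmr=\Phi^{\Kmr}(\Hmr\ull{\FF}_p)$, $\Smr=\Hmr\ull{\FF}_p$, $\uppi$ the Postnikov map, $\upchi'=\varphi^{\Kmr}(\upchi)$, and $y=\varphi^{\Kmr}(x)$. The composite identity reads
\[
\td{\varphi}^{\Kmr}(\Sfrak^{\upchi}(x)) \;=\; \uppi_{\ast}\Sfrak^{\varphi^{\Kmr}(\upchi)}(\varphi^{\Kmr}(x)) \;=\; \Sfrak^{\td{\varphi}^{\Kmr}(\upchi)}(\td{\varphi}^{\Kmr}(x)),
\]
which is the desired formula \eqref{Sqmgeofix}; stability in $\RO(\Gmr,\uprho_{\Gmr})$ then follows from \Cref{thm:stable} exactly as in \Cref{thm:opsRestrAndGeoFixedPts}.

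The main obstacle is the Thom-class bookkeeping hidden in the second paragraph: the operation $\Sfrak^{\upchi'}$ is rigid in its choice of Euler class, so one must check that the (implicit) $\Phi^{\Kmr}(\Hmr\ull{\FF}_p)$-orientation of $\td{\upgamma}^{\Fscr^{\Kmr}}_{\uprho_{\Gmr}^{\Kmr}}$ appearing in \Cref{defn:geofixEulerSeq} is transported by $\uppi_{\ast}$ to precisely the $\Hmr\ull{\FF}_p$-orientation that one uses to define $\Sfrak^{\td{\varphi}^{\Kmr}(\upchi)}$. This is the point at which being merely a map of spectra would not suffice, and it is why the argument uses that $\uppi$ is an $\EE_{\infty}^{\Wmr(\Kmr)}$-ring map coming from the lax monoidal zeroth Postnikov tower.
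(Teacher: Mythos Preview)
Your proposal is correct and follows essentially the same route as the paper: both start from \Cref{thm:opsRestrAndGeoFixedPts}(2), apply $\uppi_{\ast}$, and then use that $\uppi$ is an $\EE_{\infty}^{\Wmr(\Kmr)}$-ring map to push the power operation and slant product through, yielding the displayed chain $\td{\varphi}^{\Kmr}(\Sfrak^{\upchi}(x)) = \uppi_{\ast}\Sfrak^{\varphi^{\Kmr}(\upchi)}(\varphi^{\Kmr}(x)) = \Sfrak^{\td{\varphi}^{\Kmr}(\upchi)}(\td{\varphi}^{\Kmr}(x))$. The only difference is packaging: you isolate the intertwining $\uppi_{\ast}\Sfrak^{\upchi'}(y)=\Sfrak^{\uppi_{\ast}\upchi'}(\uppi_{\ast}y)$ as a general lemma and flag the Thom-class compatibility explicitly, whereas the paper performs the same manipulation inline without comment.
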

\begin{proof} Applying $\uppi_*$ to \Cref{thm:opsRestrAndGeoFixedPts} \emph{(2)}, we get 
\begin{eqnarray*}
\td{\varphi}^\Kmr (\Sfrak^{\upchi}(-)) &=& \uppi_* \left( \varphi^\Kmr (\Sfrak^{\upchi}(-)) \right) \\
&=& \uppi_*\left(  \Sfrak^{\varphi^\Kmr(\upchi)}(\varphi^{\Kmr}(- ))  \right) \\
&=&  \uppi_*\left( \updelta^*\Pcal^{\mathscr{F}^\Kmr}_{t\Vmr^\Kmr}(\varphi^\Kmr(-))|_{{\sf c}_*\varphi^\Kmr({\sf x}_t)}\right) \\
&=&   \updelta^*\Pcal^{\mathscr{F}^\Kmr}_{t\Vmr^\Kmr}(\uppi_*\varphi^\Kmr(-)) |_{\uppi_* \left({\sf c}_*\varphi^\Kmr({\sf x}_t) \right)} \\
&=& \updelta^*\Pcal^{\mathscr{F}^\Kmr}_{t\Vmr^\Kmr}(\td{\varphi}^\Kmr(-)) |_{ {\sf c}_*\td{\varphi}^\Kmr({\sf x}_t) } \\
&=& \Sfrak^{\td{\varphi}^\Kmr(\upchi)}( \td{\varphi}(-))
\end{eqnarray*}
as desired. 
\end{proof}
  \section{Generalized Cartan formula} \label{sec:Cartan}
  
The classical Cartan formula encodes the relationship between Steenrod operations and the external product. The standard formulation, however, relies on the K\"unneth isomorphism, which does not hold for most equivariant cohomology theories satisfying \Cref{assump:generic}. In this section, we overcome this challenge by introducing the diagonal of an Eulerian sequence and formulating a generalization of the Cartan formula (see \Cref{thm:GenCartan}). 

Recall that any multiplicative $\Rmr$-cohomology theory admits a natural external product pairing:
\[ 
\begin{tikzcd}
(-)\times(-): \Rmr_{\Gmr}^{\star}(\Xmr) \times \Rmr_{\Gmr}^{\star}(\Ymr) \rar & \Rmr_{\Gmr}^{\star}(\Xmr \sma \Ymr)
\end{tikzcd}
\]
This pairing sends elements $x\in \Rmr_{\Gmr}^{\star}(\Xmr)$ and $y\in \Rmr_{\Gmr}^{\star}(\Ymr)$ to the composite map
\begin{equation} \label{eqn:cross}
\begin{tikzcd} 
x \times y: \Xmr \sma \Ymr \rar["x \sma y"] & \Sigma^{|x|} \Rmr \sma \Sigma^{|y|} \Rmr \rar["\upmu_\Rmr"] & \Sigma^{|x| + |y|} \Rmr. 
\end{tikzcd}
\end{equation}
The first step toward establishing the Cartan formula is to examine the following relationship between power operations and the external product:
\begin{thm} \label{power:cross} Suppose $\Rmr, \upkappa, \Fscr$ satisfy the condition of \Cref{assump:generic}.  For elements $x\in \Rmr_{\Gmr}^{i_1 \Vmr}(\Xmr)$ and $y\in \Rmr_{\Gmr}^{i_2 \Vmr}(\Ymr)$ with $i_1,i_2 \in \NN$ the  formula 
\begin{equation} \label{eq:Pcartan}
\Pcal^{\Fscr}_{i \Vmr}(x \times y)  = (\partialup_{\upkappa^*{\sf n}}^{\Fscr})^* \left( \Pcal^{\Fscr}_{i_1 \Vmr}(x ) \times \Pcal^{\Fscr}_{i_2 \Vmr}(y) \right),
\end{equation}
where $i= i_1 + i_2$, relates power operations with external products. 
\end{thm}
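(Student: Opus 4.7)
The plan is to unwind both sides of \eqref{eq:Pcartan} as explicit composite maps out of $\Dmr^\Fscr_{\upkappa^*\sfn}(\Xmr\sma\Ymr)$ and exhibit a single homotopy-commutative diagram identifying them. Writing $x\times y$ as the composite $\Xmr\sma\Ymr\xrightarrow{x\sma y}\Sigma^{i_1\Vmr}\Rmr\sma\Sigma^{i_2\Vmr}\Rmr\xrightarrow{\upmu_\Rmr}\Sigma^{i\Vmr}\Rmr$ and unpacking \Cref{defn:extendpower}, the left-hand side becomes
\[
\Dmr^\Fscr(\Xmr\sma\Ymr)\xrightarrow{\Dmr^\Fscr(x\sma y)}\Dmr^\Fscr(\Sigma^{i_1\Vmr}\Rmr\sma\Sigma^{i_2\Vmr}\Rmr)\xrightarrow{\Dmr^\Fscr(\upmu_\Rmr)}\Dmr^\Fscr(\Sigma^{i\Vmr}\Rmr)\xrightarrow{\partialup}\Th(\upgamma^\Fscr_{i\Vmr})\sma\Dmr^\Fscr(\Rmr)\xrightarrow{{\bf u}_{i\Vmr}\sma\uptheta^\Rmr_{\Fscr,\upkappa}}\Sigma^{ni\Vmr}\Rmr,
\]
while the right-hand side is the composite obtained by first applying $\partialup$ and then smashing the two operations $\Pcal^\Fscr_{i_1\Vmr}(x)$ and $\Pcal^\Fscr_{i_2\Vmr}(y)$ and multiplying in $\Rmr$.

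The comparison rests on three coherences. \emph{(i)} Naturality of $\partialup_{\upkappa^*\sfn}^\Fscr$ in its argument allows one to slide $\Dmr^\Fscr(x\sma y)$ past $\partialup$ so that $\partialup\circ\Dmr^\Fscr(x\sma y)=(\Dmr^\Fscr(x)\sma\Dmr^\Fscr(y))\circ\partialup$. \emph{(ii)} The multiplicativity of Thom classes, recorded in \eqref{eqn:Thomclassadd}, together with the identification $\upgamma^\Fscr_{i\Vmr}\cong\upgamma^\Fscr_{i_1\Vmr}\oplus\upgamma^\Fscr_{i_2\Vmr}$, produces a homotopy ${\bf u}_{i\Vmr}\simeq\upomega^*({\bf u}_{i_1\Vmr}\times{\bf u}_{i_2\Vmr})$ where $\upomega$ factors through the Thom-diagonal supplied by $\partialup$ on representation spheres. \emph{(iii)} The fact that $\Rmr$ is an $\Ocal$-algebra makes $\uptheta^\Rmr_{\Fscr,\upkappa}$ multiplicative with respect to $\partialup$, i.e.\ $\upmu_\Rmr\circ(\uptheta^\Rmr\sma\uptheta^\Rmr)\circ\partialup\simeq\uptheta^\Rmr\circ\Dmr^\Fscr(\upmu_\Rmr)$ on $\Dmr^\Fscr(\Rmr\sma\Rmr)$. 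This last identity is the $\Ocal$-algebra axiom expressed through the external oplax structure of $\Dmr^\Fscr$.

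Assembling these in one diagram, after applying \emph{(i)} the map becomes $\Dmr^\Fscr(x)\sma\Dmr^\Fscr(y)$ followed by $\Dmr^\Fscr(\upmu_\Rmr)$ on the smash factor of $\Rmr$'s; coherence \emph{(iii)} rewrites the $\uptheta$-composite as a smash of two $\uptheta$'s mediated by another $\partialup$; and coherence \emph{(ii)} rewrites ${\bf u}_{i\Vmr}$ as ${\bf u}_{i_1\Vmr}\sma{\bf u}_{i_2\Vmr}$ after another $\partialup$ on the sphere factor. A final symmetry shuffle of the four tensor factors $(\Sigma^{ni_1\Vmr}\Rmr,\Rmr,\Sigma^{ni_2\Vmr}\Rmr,\Rmr)$ followed by the $\Rmr$-multiplications produces precisely $\Pcal^\Fscr_{i_1\Vmr}(x)\times\Pcal^\Fscr_{i_2\Vmr}(y)$ precomposed with $\partialup$, which is the right-hand side.

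The main obstacle is bookkeeping: one must fit the three coherences into one diagram while tracking the Koszul-type permutation of monoidal factors introduced when $\upmu_\Rmr$ is moved across $\Vmr$-suspensions and when $\partialup$ is iterated. The external associativity square \eqref{diag:extcom} of \Cref{lem:delpair1} is what guarantees that the two different ways of inserting $\partialup$ on a triple smash $\Xmr\sma\Ymr\sma\Rmr$ agree up to homotopy, and it is this identity that legitimizes all the rearrangements above.
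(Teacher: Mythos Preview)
Your proposal is correct and follows essentially the same strategy as the paper: unwind both sides of \eqref{eq:Pcartan} as composites out of $\Dmr^\Fscr_{\upkappa^*\sfn}(\Xmr\sma\Ymr)$ and match them via naturality of $\partialup^\Fscr_{\upkappa^*\sfn}$ together with the multiplicativity of the Thom classes. The paper organizes the argument a bit more economically by introducing the abbreviation $\uptau_i^\Rmr := \upmu_\Rmr\circ({\bf u}_{i\Vmr}\sma\uptheta^\Rmr_{\Fscr,\upkappa})\circ\partialup^\Fscr_{\upkappa^*\sfn}$ and collapsing everything into a two-square diagram (top square by naturality of $\partialup$, bottom square by the Thom-class compatibility \eqref{eq:thomcampatible}), whereas you separate the coherences into three named pieces; your coherence \emph{(iii)} on the multiplicativity of $\uptheta^\Rmr_{\Fscr,\upkappa}$ is in fact implicitly swept into the paper's bottom square, so your account is arguably the more transparent one.
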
 
\begin{proof} For simplicity, let $\uptau_k^\Rmr $ denote the composite 
\[ 
\begin{tikzcd}
 \uptau_i^\Rmr: \Dmr^{\mathscr{F}}_{ \upkappa^*{\sf n}}(\Sigma^{i \Vmr}\Rmr) \rar["\partialup_{\upkappa^*{\sf n}}^\Fscr"] &  \Dmr^{\mathscr{F}}_{ \upkappa^*{\sf n}}(\Smr^{i \Vmr}) \sma  \Dmr^{\mathscr{F}}_{ \upkappa^*{\sf n}}(\Rmr) \ar[rr, "{\bf u}_{ i\Vmr} \sma \uptheta_{n}^{ \Rmr}"] && \Sigma^{in\Vmr} \Rmr \sma \Rmr \rar["\upmu_\Rmr"] & \Sigma^{in\Vmr} \Rmr, 
\end{tikzcd}
\]
where ${\bf u}_{i\Vmr}$ be an $\Rmr$-Thom class of  $\upgamma^{\mathscr{F}}_{i\Vmr}$ (as in \eqref{eqn:TCgammaF}). These $\Rmr$-Thom classes are compatible as $i$ varies, in the sense that the diagram 
\begin{equation} \label{eq:thomcampatible}
\begin{tikzcd}
\Th(\upgamma^{\mathscr{F}}_{i\Vmr}) \rar["\partialup_{\upkappa^*{\sf n}}"] \ar[bend right , rrrd, "{\bf u}_{i \Vmr}"'] &  \Th(\upgamma^{\mathscr{F}}_{i_1\Vmr}) \sma \Th(\upgamma^{\mathscr{F}}_{i_2\Vmr}) \ar[rr,"{\bf u}_{i_1\Vmr} \sma {\bf u}_{i_2\Vmr}"] && \Sigma^{i_1 \Vmr }  \Rmr  \sma   \Sigma^{i_2 \Vmr }  \Rmr \dar["\upmu_{\Rmr}"] \\
&&& \Sigma^{i \Vmr} \Rmr
\end{tikzcd}
\end{equation}
commutes whenever $i = i_1 + i_2$. Consequently, we have a commutative diagram
\[ 
\begin{tikzcd}
 \Dmr^{\mathscr{F}}_{ \upkappa^*{\sf n}}(\Xmr\sma \Ymr) \rar["\partialup_{\upkappa^*{\sf n}}", color = DarkBrown] \ar[dd,color = DarkBlue,  " \Dmr^{\mathscr{F}}_{ \upkappa^*{\sf n}}(x \sma y)"']  & \Dmr^{\mathscr{F}}_{ \upkappa^*{\sf n}}(\Xmr) \sma  \Dmr^{\mathscr{F}}_{ \upkappa^*{\sf n}}(\Ymr) \ar[dd, color = DarkBrown, "\Dmr^{\mathscr{F}}_{ \upkappa^*{\sf n}}(x) \sma \Dmr^{\mathscr{F}}_{ \upkappa^*{\sf n}}(y) "] \\ \\
  \Dmr^{\mathscr{F}}_{ \upkappa^*{\sf n}}(\Sigma^{i_1\Vmr}\Rmr \sma \Sigma^{i_2\Vmr}\Rmr ) \rar["\partialup_{\upkappa^*{\sf n}}"'] \ar[dd, color = DarkBlue, "\Dmr^{\mathscr{F}}_{ \upkappa^*{\sf n}}(\upmu_\Rmr) "'] & \Dmr^{\mathscr{F}}_{ \upkappa^*{\sf n}}(\Sigma^{i_1\Vmr}\Rmr) \sma \Dmr^{\mathscr{F}}_{ \upkappa^*{\sf n}}(\Sigma^{i_2\Vmr}\Rmr ) \dar[color = DarkBrown,"\uptau_{i_1} \sma \uptau_{i_2}"]  \\
  & \Sigma^{i_1n\Vmr} \Rmr \sma \Sigma^{i_2n\Vmr} \Rmr \dar[color = DarkBrown, "\upmu_\Rmr"] \\ 
  \Dmr^{\mathscr{F}}_{ \upkappa^*{\sf n}}(\Sigma^{i \Vmr}\Rmr)  \rar[color = DarkBlue, "\uptau^{\Rmr}_i"'] & \Sigma^{in\Vmr} \Rmr 
 \end{tikzcd},
\]
where the top square commutes because of naturality $\partialup_{\upkappa^*{\sf n}}^\Fscr$ and bottom square commutes because of \eqref{eq:thomcampatible}. In the above diagram, the composition of the blue arrows is the left hand side and composition of the red arrows is the right hand side of \eqref{eq:Pcartan}, hence the result. 
\end{proof}
Now notice, there is a commutative diagram 
\begin{equation} \label{eqn:diagonal}
\begin{tikzcd}
\Bmr \Fscr_+ \sma \Xmr_1 \sma \dots \sma \Xmr_k \ar[rrr, "\Delta_k \sma 1_{\Xmr_1 \sma\dots \sma  \Xmr_k}"] \ar[dd, "\updelta"'] &&& \overbrace{\Bmr \Fscr_+ \sma \dots \sma \Bmr \Fscr_+}^{k\text{-fold}} \sma  \Xmr_1 \sma \dots \sma \Xmr_k \dar[cong] \dar[bend left = 90, dd, "\updelta^{(k)}"] \\
&&& \Bmr \Fscr_+ \sma   \Xmr_1 \sma \dots  \sma \Bmr \Fscr_+ \sma  \Xmr_k  \dar["\updelta \sma \dots \sma \updelta"] \\
\Dmr^{\mathscr{F}}_{ \upkappa^*{\sf n}}(\Xmr_1 \sma \dots \sma \Xmr_k) \ar[rrr,"\partialup_{\upkappa^*{\sf n}}^\Fscr"'] &&& \Dmr^{\mathscr{F}}_{ \upkappa^*{\sf n}}(\Xmr_1) \sma \dots \sma \Dmr^{\mathscr{F}}_{ \upkappa^*{\sf n}}(\Xmr_k),
\end{tikzcd}
\end{equation}
where  
\[ 
\begin{tikzcd}
\Delta_k : \Bmr \Fscr  \rar &  \overbrace{\Bmr \Fscr \times \dots \times \Bmr \Fscr}^{k\text{-fold}}
\end{tikzcd}
\]
is the $k$-fold diagonal map.   

 Since the pullback of $\td{\upgamma}_{\Vmr}^{\Fscr} \times \dots \times \td{\upgamma}_{\Vmr}^{\Fscr}$ along $\Delta_k$ is $k \td{\upgamma}_{\Vmr}^{\Fscr}$, we make the following definition.  
 \begin{defn}
For a $\Vmr$-stable $\Rmr$-Eulerian sequence $\upchi = (\sfx_0, \sfx_1, \sfx_2, \dots )$,  the sequence
\[ \Delta_k (\upchi) := ( \Delta_{k\ast} \sfx_0, \Delta_{k\ast} \sfx_k, \Delta_{k\ast} \sfx_{2k}, \dots   ) \]
is defined as the {\bf diagonal of $\upchi$}. 
\end{defn}
This sequence exhibits Eulerian like properties  with respect to  the $\Rmr$-Euler class $\td{\bf e}_{\Vmr}^{\times k}$ of the bundle  $(\td{\upgamma}_{\Vmr}^{\Fscr})^{\times k}$. Indeed, the naturality of cap product implies:
\begin{eqnarray*} 
(\Delta_{k\ast} \sfx_{ik}) \frown \td{\bf e}_{\Vmr}^{\times k} &=& \Delta_{k*} \left( \sfx_{ik} \frown \Delta_{k}^{\ast} (\td{\bf e}_{\Vmr}^{\times k})\right) \\
&=&  \Delta_{k*} \left( \sfx_{ik} \frown \td{\bf e}_{k \Vmr} \right) \\
&=&  \Delta_{k*} \sfx_{(i-1)k}. 
\end{eqnarray*}
However, it is not an Eulerian sequence according to \Cref{defn:Eulerian}, as $(\td{\upgamma}_{\Vmr}^{\Fscr})^{\times k}$ is not of the form \eqref{bundle:TautVF}. This is because the $\Sigma_n \times \cdots \times \Sigma_n$-representation $\td{\uptau}_n \times \cdots \times \td{\uptau}_n$ is not a pullback of $\td{\uptau}_{d}$ for any $d \in \NN$.   

We call a sequence  $\wh{\upchi} = (\wh{\sfx}_0, \wh{\sfx}_1, \dots)$ a {\bf pseudo $\Rmr$-Eulerian sequence} if:
\begin{itemize}
\item  $\wh{\sfx}_i \in \Rmr^{\Gmr}_{\star}(\overbrace{\Bmr \Fscr_+ \sma \dots \sma\Bmr \Fscr_+}^{k\text{-fold}})$
\item  $\wh{\sfx}_{i+1} \frown \td{\bf e}_{\Vmr}^{\times k} = \wh{\sfx}_i $ 
\item $\wh{\sfx}_0 \frown \td{\bf e}_{\Vmr}^{\times k}   = 0 $. 
\end{itemize}
Given a pseudo $\Rmr$-Eulerian sequence  $\wh{\upchi} = (\wh{\sfx}_0, \wh{\sfx}_1, \dots)$,  define the cohomology operation 
\[ 
\begin{tikzcd}
\Sfrak^{\wh{\upchi}} : \Rmr^{t_1\Vmr}_\Gmr (\Xmr_1 ) \times \dots \times  \Rmr^{t_k\Vmr}_\Gmr (\Xmr_k) \rar & \Rmr^{t\Vmr}_\Gmr (\Xmr_1 \sma \dots \sma \Xmr_k).
\end{tikzcd}
\]
 by setting 
\begin{equation}
\Sfrak^{\wh{\upchi}}(x_1, \dots, x_n) = (\updelta^{(k)})^* \left(\Pcal_{t_1 \Vmr}^{\Fscr}(x_1) \times \dots \times \Pcal_{t_1 \Vmr}^{\Fscr}(x_k)\right)|_{\wh{\sfx}_t},
\end{equation} 
where $t = t_1 + \dots + t_k$,.  A diagram chase similar to \eqref{master} shows that above  operation is stable, i.e., 
\[ 
\Sfrak^{\wh{\upchi}} \left( \upsigma_{\Vmr}(x_1), \dots, \upsigma_{\Vmr}(x_k) \right) = \upsigma_{k\Vmr}\left( \Sfrak^{\wh{\upchi}}(x_1, \dots, x_k) \right)
\]
and extends to $\RO(\Gmr, \Vmr)$-graded cohomology classes. 

For any $\Rmr$-Eulerian sequence $\upchi = (\sfx_0, \sfx_1, \dots )$,  $\Delta_k(\upchi)$ is a pseudo $\Rmr$-Eulerian sequence and the corresponding stable $\Rmr$-cohmology operation satisfies the following relation: 
\begin{thm}[{\bf Generalized Cartan formula}] \label{thm:GenCartan} For any  $\Rmr$-Eulerian sequence $\upchi$
\[ 
\Sfrak^{\upchi}(x_1 \times \dots \times x_k) = \Sfrak^{\Delta_k(\upchi)}(x_1, \dots, x_k), 
\]
where $x_j$, for each $1 \leq j \leq k$, is an $\RO(\Gmr, \Vmr)$-cohomology class. 
\end{thm}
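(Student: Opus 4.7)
The plan is to deduce the identity from three ingredients working in concert: the multiplicativity of the power operation (\Cref{power:cross}), the commutativity of diagram \eqref{eqn:diagonal} relating $\updelta$ and $\updelta^{(k)}$ through $\Delta_k$, and naturality of the slant product under pushforward along $\Delta_k$.

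First, by the stability result \Cref{thm:stable} together with its analog for $\Sfrak^{\wh\upchi}$ when $\wh\upchi$ is a pseudo $\Rmr$-Eulerian sequence (which is proved by a diagram chase parallel to \eqref{master}, with the Thom class compatibility \eqref{eqn:Thomclassadd} now replaced by the external-product version $\td{\bf u}_\Vmr \times \td{\bf u}_\Vmr \times \cdots$ on $(\td{\upgamma}^{\Fscr}_\Vmr)^{\times k}$), it suffices to verify the formula when each $x_j \in \Rmr^{t_j\Vmr}_\Gmr(\Xmr_j)$ with $t_j \in \NN$. Set $t = t_1 + \cdots + t_k$, so that $x_1 \times \cdots \times x_k \in \Rmr^{t\Vmr}_\Gmr(\Xmr_1 \sma \cdots \sma \Xmr_k)$ and $\Sfrak^{\upchi}(x_1 \times \cdots \times x_k) = \updelta^{*}\Pcal^{\Fscr}_{t\Vmr}(x_1 \times \cdots \times x_k)\big|_{\sfx_t}$ by definition \eqref{eqn:Sqchi}.

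Next, the external associativity \eqref{diag:extcom} of the oplax map $\partialup^{\Fscr}_{\upkappa^*\sfn}$ ensures that the iterated comultiplication
\[ \partialup^{(k)} : \Dmr^{\Fscr}_{\upkappa^*\sfn}(\Xmr_1 \sma \cdots \sma \Xmr_k) \longrightarrow \Dmr^{\Fscr}_{\upkappa^*\sfn}(\Xmr_1) \sma \cdots \sma \Dmr^{\Fscr}_{\upkappa^*\sfn}(\Xmr_k) \]
is well defined up to homotopy. An inductive application of \Cref{power:cross} then yields
\[ \Pcal^{\Fscr}_{t\Vmr}(x_1 \times \cdots \times x_k) = (\partialup^{(k)})^{*} \bigl( \Pcal^{\Fscr}_{t_1 \Vmr}(x_1) \times \cdots \times \Pcal^{\Fscr}_{t_k \Vmr}(x_k) \bigr). \]
Pulling back along $\updelta$ and invoking the commutativity of \eqref{eqn:diagonal}, namely $\partialup^{(k)} \circ \updelta = \updelta^{(k)} \circ (\Delta_k \sma 1)$, gives
\[ \updelta^{*}\Pcal^{\Fscr}_{t\Vmr}(x_1 \times \cdots \times x_k) = (\Delta_k \sma 1)^{*} (\updelta^{(k)})^{*} \bigl( \Pcal^{\Fscr}_{t_1 \Vmr}(x_1) \times \cdots \times \Pcal^{\Fscr}_{t_k \Vmr}(x_k) \bigr). \]

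Finally, the slant product of \Cref{notn:slant} is natural with respect to the base, in the sense that $(f \sma \mathrm{id}_Y)^{*}(z) \big|_{b} = z \big|_{f_{*}(b)}$ for $f: A \to A'$, $z \in \Rmr^{\star}_\Gmr(A' \sma Y)$, and $b \in \Rmr^\Gmr_\star(A)$; this is immediate from unraveling the definition of $(\ )|_{(\ )}$. Applying this identity with $f = \Delta_k$ and $b = \sfx_t$ collapses the previous formula to $(\updelta^{(k)})^{*}\bigl( \Pcal^{\Fscr}_{t_1 \Vmr}(x_1) \times \cdots \times \Pcal^{\Fscr}_{t_k \Vmr}(x_k) \bigr) \big|_{\Delta_{k*} \sfx_t}$, which is precisely $\Sfrak^{\Delta_k(\upchi)}(x_1, \ldots, x_k)$ by the definition of the operation attached to a pseudo $\Rmr$-Eulerian sequence. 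The main obstacle is bookkeeping: one must verify the stability of $\Sfrak^{\wh\upchi}$ by essentially repeating the diagram \eqref{master} with the Thom class $\td{\bf u}_{(t+1)\Vmr}$ replaced by its external counterpart, and then track the degree conventions for $\Delta_k(\upchi)$ to ensure the slant indexing matches on the nose. Neither of these introduces a genuinely new idea beyond the ones already deployed in the proof of \Cref{thm:stable}.
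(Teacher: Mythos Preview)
Your proof is correct and follows essentially the same approach as the paper's: both arguments reduce to the case $x_j\in\Rmr^{t_j\Vmr}_\Gmr(\Xmr_j)$ and then combine \Cref{power:cross}, the commutativity of diagram~\eqref{eqn:diagonal}, and naturality of the slant product under pushforward along $\Delta_k$. The paper packages these three facts into a single commutative diagram whose blue and red boundaries are the two sides of the identity, whereas you spell them out as a chain of equalities, but the content is identical.
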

\begin{proof} It is enough to establish the result when $x_i \in \Rmr^{t_i \Vmr}_\Gmr(\Bmr\Fscr_+)$ for some $t_i \in \NN$ for each $j \in \{ 1, \dots, k \} $. In the diagram 
\[ 
\begin{tikzcd}
\Smr^{|\sfx_{ik}|} \sma \left( \bigwedge_{j =1}^k \Xmr_j \right) \dar[DarkBlue, "\sfx_{\sfi k} "'] \ar[DarkBrown, rrd, bend left, "\Delta_{k \ast}\sfx_{\sfi k}"] \ar[rrd, phantom, "\text{\bf (I)}"]  \\
\Bmr \Fscr_+ \sma \left( \bigwedge_{j =1}^k \Xmr_j \right)  \sma \Rmr  \ar[rr, "\Delta_k \sma 1"] \dar[DarkBlue, "\updelta \sma 1_{\Rmr}"'] \ar[rrd, phantom, "\text{\bf (II)}"] && \Bmr \Fscr^{\times k}_+ \sma \left( \bigwedge_{j =1}^k \Xmr_j \right)  \sma \Rmr \dar[DarkBrown, "\updelta^{(k)} \sma 1_{\Rmr}"]  \\
\Dmr_{\upkappa^*\sfn}^{\Fscr}\left( \bigwedge_{j =1}^k \Xmr_j \right) \sma \Rmr  \ar[rr, "\partialup_{\upkappa^*{\sf n}}^\Fscr \sma 1_\Rmr"'] \dar[DarkBlue, "\Pcal_{t \Vmr}^\Fscr(\sfx_1 \times \dots \times \sfx_k) \sma 1_\Rmr"'] \ar[rrdd, phantom, "\text{\bf (III)}"] && \left( \bigwedge_{j =1}^k \Dmr_{\upkappa^*\sfn}^{\Fscr}(\Xmr_j) \right) \sma \Rmr \dar[DarkBrown, "\left(\bigwedge\Pcal_{t_i \Vmr}^{\Fscr}(x_j)\right) \sma 1_{\Rmr} " ] \\
\Sigma^{t \Vmr} \Rmr \sma \Rmr \ar[DarkBlue, rrd, bend right, "\upmu_\Rmr"'] &&  \left( \bigwedge_{j =1}^k \Sigma^{t_j \Vmr} \Rmr  \right) \sma \Rmr  \dar[DarkBrown, "\upmu_\Rmr"] \\
&& \Sigma^{t\Vmr}\Rmr,
\end{tikzcd}
\]
it is easy to see {\bf (I)} and {\bf (II)} commutes, and {\bf (III)} commutes because of \Cref{power:cross}. Since the composition of blue arrows is the left hand side and the composition of the red arrows is the right hand side, the result follows.  
\end{proof}
In the following remark, we show that \Cref{thm:GenCartan} recovers the standard Cartan formula in the classical setting where the K\"unneth isomorphism holds.
\begin{rmk}   Recall from \Cref{ex:classicalEulerianSteen} that $\Sq^{k} = \Sfrak^{\upbeta[k]}$ where 
\[
\upbeta[k] = (\overbrace{0, \dots, 0}^{k\text{-fold}}, {\bf b}_0, {\bf b}_1, \dots )
\]
The diagonal map $\Delta_2: \Hmr_{\ast}(\Bmr\Sigma_{2+}; \FF_2) \to \Hmr_{\ast}\left((\Bmr \Sigma_2 \times \Bmr \Sigma_2)_+; \FF_2\right)$ sends ${\bf b}_l$ to 
\[ \Delta_2({\bf b}_l) = {\bf b}_l \otimes {\bf b}_0 +  {\bf b}_{l-1} \otimes {\bf b}_1 + \dots + {\bf b}_0 \otimes {\bf b}_l.\] 
Suppose, $x \in \Hmr^{i}(\Xmr; \FF_2)$ and  $y \in \Hmr^{j}(\Ymr; \FF_2)$ such that $i+j$ is an even number greater than $k$. Set $l = i+j -k$.  By \Cref{thm:GenCartan} 
\begin{eqnarray*}
\Sq^{k}(x \otimes y) &=& (\updelta \sma \updelta)^*(\Pcal_i(x) \otimes \Pcal_j(y))|_{\Delta_2({\bf b}_l)} \\ 
&=& \left(\updelta^*(\Pcal_i(x)) \otimes \updelta^*(\Pcal_j(x)) \right)|_{\sum_{l' +l'' = l } {\bf b}_{l'} \otimes  {\bf b}_{l''} } \\
&=&  \sum_{l' + l'' = l} \updelta^*(\Pcal_i(x))|_{{\bf b}_{l'}} \otimes \updelta^*(\Pcal_j(y))|_{ {\bf b}_{l''} } \\
&=& \sum_{l' + l'' = l} \Sq^{i - l'}(x) \otimes \Sq^{j - l''}(y)
\end{eqnarray*}
which is equivalent to the classical Cartan formula after implementing the unstable conditions. 
\end{rmk}

\section{Homotopy $\Ncal_\infty$ rings and the composition law} \label{sec:compose}

The main goal of this section is to establish a product law for $\Rmr$-Eulerian sequences in a way that emulates the composition of their corresponding stable cohomology operations. We note that this product law does  not require the strict commutativity among the structure maps of an $\Ncal_\infty$-ring $\Rmr$; instead we need these diagrams to commute up to a homotopy. However, we require certain compatibility among the 
$\Rmr$-Euler classes of \eqref{eqn:euler} across all $n \in \NN$. In the classical case, this requirement is precisely the $\mathbb{H}_\infty^d$-ring structures introduced in \cite{Hinfty}. We  generalize this to equivariant settings to introduce shifted homotopy $\Ncal_\infty$-rings before establishing the composition law. We  also identify the shift degree of the homotopy $\EE_\infty^{\Gmr}$-ring structures for Eilenberg MacLane spectra. 

Suppose $\Ocal$ is an $\Ncal_\infty$ $\Gmr$-operad. Let $\Fscr_n$ denote the $\Gmr$-closed family of $(\Gmr \times \Sigma_n)$ (see \Cref{defn:family}) such that the $n$-th space $\Ocal(n)$ is equivalent to the universal space $\Emr\Fscr_n$. Since $\Fscr_n$ is $\Gmr$-closed,  the $\Gmr$-fixed point space of $\Ocal(n)$ is contractible. Consequently, there is  a  contractible choice of $\Gmr$-equivariant maps 
\begin{equation} \label{map:iota}
\begin{tikzcd}
\iota_n: \ast \rar[hook] & \Ocal(n).
\end{tikzcd}
\end{equation}
This gives rise to a diagram of $\Gmr$-equivariant maps
\begin{equation} \label{diag:compatibletau}
\begin{tikzcd}
\ast \times (\ast \times \dots \times \ast ) \ar[rrr, "\iota_{n} \times (\iota_{i_1} \times \dots \times \iota_{i_k}) "] \dar[equal] &&& \Ocal(n) \times \Ocal(i_1) \times \dots \times \Ocal(i_n) \dar  \\
\ast \ar[rrr, "\iota_{i_1 + \dots + i_n}"'] &&& \Ocal(i_1 + \dots + i_n)
\end{tikzcd}
\end{equation}
which commutes up to a $\Gmr$-equivariant homotopy for all  $n, i_1, \dots, i_n \in \NN$. 
\begin{rmk} When $n =1$, we  choose $\iota_1$ such that it maps $\ast$ to the  distinguished element $1 \in \Ocal(1)$. Since,  $\Gmr \times \Sigma_1$ has exactly one $\Gmr$-closed family, $\iota_1$ of \eqref{map:iota} is an equivalence. 
\end{rmk}
By combining $\iota_n$ of \eqref{map:iota} with the structure map \eqref{map:structure}  of the operad $\Ocal$, we obtain a  map 
\[ 
\begin{tikzcd} 
\Ocal(i_1) \times  \dots \times \Ocal(i_n) \rar & \Ocal(i_1 + \dots + i_n).
\end{tikzcd}
\] 
This gives rise to a  natural $\Gmr$-equivariant map 
\[
\begin{tikzcd}
\alpha_{i_1, \dots, i_k}: \Dmr_{{\sf i}_1}^{\Ocal}(\Xmr) \sma \dots \sma \Dmr_{{\sf i}_n}^{\Ocal}(\Xmr) \rar & \Dmr_{i_1 + \dots + i_n}^{\Ocal}(\Xmr) 
\end{tikzcd}
 \]
which satisfies the following property: 
\begin{lem}  \label{lem:alpha} For all $i, j, k \in \NN$, 
\[ 
\alpha_{i + j, k} \circ (\alpha_{i, j } \sma 1) \simeq \alpha_{i, j, k} \simeq \alpha_{i, j + k} \circ( 1 \sma \alpha_{j, k} ).
\]
\end{lem}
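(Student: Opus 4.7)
The plan is to reduce the desired homotopy to two ingredients: the strict associativity axiom of the operad $\Ocal$ applied to its structure maps \eqref{map:structure}, and the homotopy coherence of the inclusions $\iota_n$ expressed by \Cref{diag:compatibletau}.

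First I would unpack the construction of $\alpha_{i_1,\dots,i_n}$. By definition, $\alpha_{i_1,\dots,i_n}$ is induced by the $\Gmr$-equivariant map
\[
\Ocal(i_1)\times\dots\times\Ocal(i_n) \xrightarrow{\iota_n\times 1} \Ocal(n)\times\Ocal(i_1)\times\dots\times\Ocal(i_n) \xrightarrow{\mu} \Ocal(i_1+\dots+i_n)
\]
together with the permutation of smash factors on $\Xmr^{\sma({\sf i}_1+\dots+{\sf i}_n)}$. Consequently, the composite $\alpha_{i+j,k}\circ(\alpha_{i,j}\sma 1)$ is induced by first inserting $\iota_2$ into the first two factors, then inserting another $\iota_2$ into the first pair of the resulting three factors. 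Using the operadic associativity of $\mu$, this composite is strictly equal to the map induced by the single element $\mu(\iota_2;\,\iota_2,\iota_1)\in\Ocal(3)$ acting on $\Ocal(i)\times\Ocal(j)\times\Ocal(k)$ through $\mu$.

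Next I would invoke \Cref{diag:compatibletau} with $n=2$, $i_1=2$, $i_2=1$: it provides a $\Gmr$-equivariant homotopy between $\mu\circ(\iota_2\times(\iota_2\times\iota_1))$ and $\iota_3:\ast\hookrightarrow\Ocal(3)$. Transporting this homotopy through the functor $\Dmr^{\Ocal}_{-}(\Xmr)$ produces a homotopy
\[
\alpha_{i+j,k}\circ(\alpha_{i,j}\sma 1)\simeq \alpha_{i,j,k},
\]
which is one of the two desired equivalences. The second equivalence, $\alpha_{i,j,k}\simeq\alpha_{i,j+k}\circ(1\sma\alpha_{j,k})$, is obtained by an entirely symmetric argument using \Cref{diag:compatibletau} applied to $\mu\circ(\iota_2\times(\iota_1\times\iota_2))$.

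The only nontrivial point is to ensure that the homotopies produced by \Cref{diag:compatibletau} descend to honest $\Gmr$-equivariant homotopies between maps out of the Borel constructions $\Emr\mathscr{F}_{i_1+\dots+i_n}\sma_{\Sigma_{i_1+\dots+i_n}}\Xmr^{\sma({\sf i}_1+\dots+{\sf i}_n)}$. This follows formally from the universal property of $\Emr\mathscr{F}_{i_1+\dots+i_n}$, since any $(\Gmr\times\Sigma_{i_1+\dots+i_n})$-equivariant homotopy between maps from $\Ocal(n)\times\prod\Ocal(i_j)\times\Xmr^{\sma(\sum{\sf i}_j)}$ to a $\Gmr$-space automatically factors through the extended power. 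Thus no obstruction arises, and the main technical care is simply bookkeeping of the homotopy between $\iota_3$ and its iterated $\iota_2$-expansions.
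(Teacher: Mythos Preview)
Your proposal is correct and follows essentially the same approach as the paper: both arguments reduce to the operadic associativity of $\mu$ (your ``strictly equal'' step, the paper's lower squares) together with the homotopy from \eqref{diag:compatibletau} relating $\mu(\iota_2;\iota_2,\iota_1)$ and $\mu(\iota_2;\iota_1,\iota_2)$ to $\iota_3$ (the paper's upper squares). The paper packages this as a single $3\times 3$ diagram in $\Ocal$-spaces over $\Ocal(i)\times\Ocal(j)\times\Ocal(k)$, whereas you spell out the same content in prose; your final paragraph on descent to Borel constructions is a reasonable sanity check but is not singled out in the paper, since the homotopy lives at the level of operad spaces and passes directly to the extended powers.
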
 
\begin{proof} To abbreviate notations, let $\Ocal(i_1, \dots, i_n):= \Ocal(i_1) \times \dots \times  \Ocal(i_n) $. Consider the commutative diagram: 
\[ 
\begin{tikzcd}
\ast \times (\ast \times \ast) \times \Ocal(i,j,k) \dar \rar[equal] & \ast \times \Ocal(i,j, k) \dar  & \lar[equal]\ast \times (\ast \times \ast) \times \Ocal(i,j,k)   \dar  \\ 
\Ocal(2) \times \Ocal(2, 1) \times \Ocal(i,j, k)  \dar  \rar & \Ocal(3) \times \Ocal(i,j,k) \dar  & \lar  \Ocal(2) \times \Ocal(1, 2) \times \Ocal(i,j, k) \dar  \\
\Ocal(2) \times \Ocal(i+j, k) \rar & \Ocal(i+j +k)  & \lar   \Ocal(2) \times \Ocal(i+j, k)
\end{tikzcd}
\]  
In this diagram, the lower squares commute by the usual compatibility of the evident structure maps of $\Ocal$. The upper squares commute up to  homotopy due to  \eqref{diag:compatibletau}.  The result follows immediately from the homotopy commutativity of the entire diagram above.
\end{proof}

If $\Rmr$ is an $\Ocal$-ring, one can readily verify the existence of the following diagrams, which commute in the homotopy category $\Ho(\Sp_\Gmr)$ for all $i,j \in \NN$:
\begin{equation} \label{Hinfty1}
\begin{tikzcd}
\Dmr_{\sf i}^{\Ocal}(\Rmr) \sma \Dmr_{\sf j}^{\Ocal}(\Rmr) \ar[rr, "\alpha_{\sfi,\sfj}"] \ar[dd] && \Dmr_{\sf i +j}^{\Ocal}(\Rmr)  \ar[dd] \\ \\
\Rmr \sma \Rmr \rar & \Dmr_2^\Ocal(\Rmr) \rar & \Rmr  
\end{tikzcd}
\end{equation} 
\begin{equation} \label{Hinfty2}
\begin{tikzcd}
 \Dmr^{\Ocal}_{\sfi} \Dmr^{\Ocal}_{\sfj}(\Rmr) \dar["\Dmr^{\Ocal}_{\sf i}(\uptheta_{\sf j}^\Rmr)"'] \rar["\upbeta_{\sf i, j}"] & \Dmr^{\Ocal}_{\sfi\sfj}(\Rmr) \dar["\uptheta^{\Rmr}_{\sf ij}"] \\
\Dmr^{\Ocal}_{\sfi}(\Rmr) \rar["\uptheta^{\Rmr}_{\sf i}"] & \Rmr
\end{tikzcd}
\end{equation}
where the map $\upbeta_{i,j}$ is induced by the inclusion map $ \Sigma_j \wr \Sigma_i := \Sigma_j^{\times i} \rtimes \Sigma_i \hookrightarrow \Sigma_{ij}$. 
\begin{rmk} As a consequence of \Cref{lem:alpha}, the diagram of  \eqref{Hinfty1} implies the homotopy commutativity of the general diagram 
\[
\begin{tikzcd}
\Dmr_{{\bf i}_1}^{\Ocal}(\Rmr) \sma \dots \sma \Dmr_{{\sf i}_n}^{\Ocal}(\Rmr) \ar[rr, "\alpha_{\sfi_1, \dots, \sfi_n}"] \ar[dd, "\uptheta_{\sfi_1}^\Rmr \sma \dots \sma \uptheta_{\sfi_n}^\Rmr"'] && \Dmr_{\sf i_1 + \dots + i_n }^{\Ocal}(\Rmr)  \ar[dd, "\uptheta_{\sf i_1 + \dots + i_n}^{\Rmr}"] \\
 \\
\Rmr^{\sma n} \rar["\upalpha_{1, \dots, 1}"'] & \Dmr_\sfn^\Ocal(\Rmr) \rar["\uptheta_{n}^{\Rmr}"'] & \Rmr. 
\end{tikzcd} 
\]
The details are left to the reader to verify.
\end{rmk}
However, the commutativity of diagrams \eqref{Hinfty1} and \eqref{Hinfty2} does not guarantee an $\Ocal$-ring structure on $\Rmr$. For example, in the nonequivariant case  \cite{Noel} gave an example of an $\HH_\infty$ rings which is not $\EE_\infty$. 

\begin{defn} \label{defn:EquivHinfty} Let $\Ocal$ be an $\Ncal_\infty$-operad. An $\Rmr \in \Sp_\Gmr$ is a {\bf homotopy $\Ocal$-ring} (or simply an $\Ocal^{\sf h}$-ring) if for all $n \in \NN$, there exists a map 
\[ 
\begin{tikzcd}
\uptheta_n^\Rmr: \Dmr^{\Ocal}_{\sfn}(\Rmr) \rar & \Rmr
\end{tikzcd}
\]
 such that 
\begin{enumerate}
\item $\uptheta_{1}^\Rmr = 1_\Rmr$, and
\item  the diagrams in \eqref{Hinfty1} and \eqref{Hinfty2} commutes  for all $i,j \in \NN$ 
\end{enumerate}
in $\Ho(\Sp_\Gmr)$.
\end{defn}
\begin{rmk} When $\Gmr$ is the trivial group and $\Ocal$ is an $\Einfty$-operad then \Cref{defn:EquivHinfty} recovers the classical definition of $\HH_\infty$ ring spectra \cite[I.3.1]{Hinfty}. 
\end{rmk}
\begin{rmk} It is easy to check that the unit map $e= \uptheta_0^{\Rmr}: \SS \to \Rmr$ along with the composition 
\[ 
\begin{tikzcd}
\upmu^\Rmr: \Rmr \sma \Rmr \rar["\alpha_{1,1}"] & \Dmr_{2}^{\Ocal}(\Rmr) \rar["\uptheta_2^{\Rmr}"] & \Rmr
\end{tikzcd}
\]
gives $\Rmr$ the structure of a homotopy commutative $\Gmr$-ring, i.e. 
the  diagrams  
\[ 
\begin{tikzcd}
\SS \sma \Rmr \rar["e \sma 1"] \ar[dr, equal]&\Rmr \sma \Rmr \dar["\upmu^\Rmr"]& \Rmr \sma \SS \lar["1 \sma e"'] \ar[dl, equal]\\
& \Rmr
\end{tikzcd} \ \ 
\begin{tikzcd}
\Rmr \sma \Rmr \sma \Rmr \rar["1 \sma \upmu^\Rmr"] \dar[ "\upmu^\Rmr \sma 1 "'] & \Rmr \sma \Rmr \dar["\upmu^\Rmr"] \\
\Rmr \sma \Rmr \rar["\upmu^\Rmr"'] & \Rmr 
\end{tikzcd} \ \
\begin{tikzcd}
\Rmr \sma \Rmr \ar[dd, "\tau"] \ar[dr, "\upmu^\Rmr"] \\
& \Rmr \\
\Rmr \sma \Rmr \ar[ur, "\upmu^\Rmr"']
\end{tikzcd}
\]
in which $\tau$ is the shuffle map, commutes up to $\Gmr$-equivariant homotopies. 
\end{rmk}

\subsection{Shifted $\Ncal_\infty$-rings}  \ 
 
The definition of $\Fscr$-th power operation (as in \Cref{defn:powerF}) applies to any $\Ocal^{\sfh}$-ring $\Rmr$. Consequently, the results of \Cref{subsec:Powerres} and \Cref{subsec:Powerfix} generalize directly to $\Ocal^{\sfh}$-ring spectra. To extend the notion of shifted power operations (as in \Cref{sec:shiftedpower}) to a broader class of equivariant ring spectra, we introduce the equivariant analog of $\HH_\infty^{\sfd}$-rings \cite[I.4.1]{Hinfty}:
\begin{defn} \label{defn:HinftyV} Suppose $\Ocal$ is an $\Ncal_\infty$ $\Gmr$-operad and $\Vmr$ is a finite $\Gmr$-representation. We say that  $\Rmr$ is a {\bf $\Vmr$-shifted homotopy $\Ocal$-ring}  (or simply an $\Ocal^{\sfh}_{[\Vmr]}$-ring) if there are maps 
\begin{equation} \label{map:stHV} 
\begin{tikzcd}
\uptheta_{\sfi, \sft}^{\Rmr} : \Dmr_{\sfi}^{\Ocal}(\Sigma^{t \Vmr} \Rmr ) \rar & \Sigma^{i t \Vmr} \Rmr
\end{tikzcd}
\end{equation}
for each  $\sfi, \sft \in \NN$,  such that $\uptheta_{1, \sft}^\Rmr$ is equivalent to the identity map $1_{\Sigma^{\sft\Vmr}\Rmr}$,    and the diagrams
 \begin{equation}
\begin{tikzcd} \label{HinftyV1}
\Dmr_{\sfi}^{\Ocal}(\Sigma^{t\Vmr}\Rmr) \sma \Dmr_{\sfj}^{\Ocal}(\Sigma^{t\Vmr}\Rmr)  \ar[rr, "\alpha_{\sfi,\sfj}"] \ar[dd,"\uptheta_{\sfi, \sft}^\Rmr \sma \uptheta_{\sfj, \sft}^\Rmr"'] && \Dmr_{\sf i +j}^{\Ocal}(\Sigma^{t \Vmr}\Rmr)  \ar[dd, "\uptheta_{\sfi + \sfj, \sft}^\Rmr"] \\
\\
\Sigma^{it\Vmr}\Rmr \sma \Sigma^{jt\Vmr} \Rmr \ar[rr, "\upmu"'] & & \Sigma^{(i + j)t}\Rmr 
\end{tikzcd}
\end{equation}
\begin{equation} \label{HinftyV2}
\begin{tikzcd}
 \Dmr^{\Ocal}_{\sfi} \left( \Dmr^{\Ocal}_{\sfj}(\Sigma^{t\Vmr}\Rmr) \right) \dar["\Dmr^{\Ocal}_{\sfi}(\uptheta_{\sfj, \sft}^\Rmr)"'] \rar["\upbeta_{\sfi, \sfj}"] & \Dmr^{\Ocal}_{\sfi \sfj}(\Sigma^{t \Vmr}\Rmr) \dar["\uptheta^{\Rmr}_{\sfi \sfj, \sft}"] \\
\Dmr^{\Ocal}_{\sfi}(\Sigma^{jt\Vmr}\Rmr) \rar["\uptheta^{\Rmr}_{\sfi, \sfj\sft}"'] & \Sigma^{i j t \Vmr} \Rmr
\end{tikzcd}
\end{equation}
\begin{equation} \label{HinftyV3}
\begin{tikzcd}
\Dmr_{\sfi}^{\Ocal}(\Sigma^{s\Vmr}\Rmr \sma \Sigma^{t\Vmr}\Rmr) \ar[rr,"\Dmr_{\sfi}^\Ocal(\upmu)"] \dar && \Dmr_{\sfi}^\Ocal(\Sigma^{(s+ t)\Vmr}\Rmr) \ar[dd, "\uptheta^{\Rmr}_{\sfi, \sfs + \sft}"]\\
\Dmr_{\sfi}^{\Ocal}(\Sigma^{s\Vmr}\Rmr) \sma \Dmr^{\Ocal}_{\sfi}(\Sigma^{t\Vmr}\Rmr)\dar["\uptheta^\Rmr_{\sfi, \sfs} \sma \uptheta^\Rmr_{\sfi, \sft}"']  \\
\Sigma^{is  \Vmr} \Rmr \sma \Sigma^{i t \Vmr} \Rmr  \ar[rr, "\upmu^\Rmr"'] && \Sigma^{i(s+t)\Vmr} \Rmr
\end{tikzcd}
\end{equation}
commute for all $i, j ,s, t \in \NN$ in $\Ho(\Sp_\Gmr)$. 
\end{defn}
\begin{rmk} \label{notn:Hinfty-d} When $\Ocal$ is a nonequivariant $\EE_\infty$-operad and $\Vmr = \RR^{\sfd}$ then \Cref{defn:HinftyV} recovers the classical definition of $\HH_\infty^{\sfd}$-rings \cite[I.4.1]{Hinfty}. 
\end{rmk}
\begin{rmk} When $\sfi =0$, we have $\Dmr_{\sfi}^{\Ocal}(\Sigma^{t\Vmr} \Rmr ) \simeq \SS$ and the map $\uptheta_{\sfi, \sft}^\Rmr$ of \eqref{map:stHV} is the same map for all $t \in \NN$. This map serves as the unit map of $\Rmr$ which we will denoted by $\uptheta_{0}^{\Rmr}$.
\end{rmk}
\begin{rmk} The diagrams \eqref{HinftyV1} and \eqref{HinftyV2} restricted to $t =0$ implies that any $\Ocal^{\sfh}_{[\Vmr]}$-ring spectrum is automatically an $\Ocal^{\sfh}$-ring spectrum. 
\end{rmk}
\begin{notn}
 For brevity, we set $\upgamma^{(i)}_{\Vmr} \coloneqq \upgamma_{\Vmr}^{\Fscr_i(\Ocal)}$,  $\td{\upgamma}^{(i)}_{\Vmr} \coloneqq \td{\upgamma}_{\Vmr}^{\Fscr_i(\Ocal)}$ and $\partialup_{\sfi} = \partialup_{\sfi}^{\Fscr_i(\Ocal)}$ with $\Fscr_i(\Ocal)$ as given in \Cref{notn:Ofam}.
\end{notn}

Using the unit map of $\Rmr$, we get a map 
\begin{equation} \label{HinftyThom}
\begin{tikzcd}
{\bf u}_{t \Vmr}^{(i)}: \Th(\upgamma_{t\Vmr}^{(i)}) \simeq \Dmr_{\sfi}^{\Ocal}(\Smr^{t \Vmr}) \rar & \Dmr_{\sfi}^{\Ocal}(\Sigma^{t \Vmr}\Rmr) \rar & \Sigma^{i t \Vmr}\Rmr 
\end{tikzcd}
\end{equation}
which serves as an $\Rmr$-Thom class of the bundle $\upgamma_{t\Vmr}^{\Fscr_i}$. Since $\upgamma_{t\Vmr}^{(i)} \cong \td{\upgamma}_{t\Vmr}^{(i)} \oplus \upepsilon_{t\Vmr}$, we also get an $\Rmr$-Thom class 
\[ \td{\bf u}_{t \Vmr}^{(i)} \in \Rmr^{(\sfi -1)t \Vmr}\Th(\td{\upgamma}_{t\Vmr}^{(i)}). \] 
 We arrange these Thom classes so that 
\begin{equation} \label{eqn:tdu}
 {\bf u}_{t \Vmr}^{(i)} = \upsigma_{t \Vmr}(\td{\bf u}_{t \Vmr}^{(i)})
 \end{equation}
  for all $i \geq 2$ and $t \in \NN$. 

The diagrams in \eqref{HinftyV1}, \eqref{HinftyV2} and \eqref{HinftyV3} forces certain relationships between these $\Rmr$-Thom classes defined in \eqref{HinftyThom}. These relations are nothing but the equivariant analogs of those in \cite[VII]{Hinfty}. 
\begin{prop} The family  $\{ {\bf u}_{t \Vmr}^{(i)}: i, \sft \in \NN \}$ of $\Rmr$-Thom classes satisfy 
\begin{equation} \label{system1}
 {\bf u}_{(s +t) \Vmr}^{(i)} = \partialup_{\sfi}^*({\bf u}_{s \Vmr}^{(i)} \times {\bf u}_{t \Vmr}^{(i)}),  
 \end{equation}
where $\partialup_{\sfi}^{\Fscr_i}$ is the natural map defined in \Cref{eqn:extendedoplax}. 
\end{prop}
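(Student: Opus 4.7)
The plan is a direct diagram chase that uses only the naturality of $\partialup_\sfi$ together with the compatibility square \eqref{HinftyV3} in the definition of a $\Vmr$-shifted homotopy $\Ocal$-ring. Let $\iota_s \colon \Smr^{s\Vmr} \to \Sigma^{s\Vmr}\Rmr$ and $\iota_t \colon \Smr^{t\Vmr} \to \Sigma^{t\Vmr}\Rmr$ denote the suspended unit maps of $\Rmr$. The first step is to note that, under the identification $\Smr^{(s+t)\Vmr} \cong \Smr^{s\Vmr} \sma \Smr^{t\Vmr}$, the suspended unit $\Smr^{(s+t)\Vmr} \to \Sigma^{(s+t)\Vmr}\Rmr$ factors as $\Smr^{s\Vmr} \sma \Smr^{t\Vmr} \xrightarrow{\iota_s \sma \iota_t} \Sigma^{s\Vmr}\Rmr \sma \Sigma^{t\Vmr}\Rmr \xrightarrow{\upmu^\Rmr} \Sigma^{(s+t)\Vmr}\Rmr$, so that by \eqref{HinftyThom} the class ${\bf u}_{(s+t)\Vmr}^{(i)}$ is represented by the composite
\[
\Dmr_\sfi^{\Ocal}(\Smr^{s\Vmr} \sma \Smr^{t\Vmr}) \xrightarrow{\Dmr_\sfi^{\Ocal}(\iota_s \sma \iota_t)} \Dmr_\sfi^{\Ocal}(\Sigma^{s\Vmr}\Rmr \sma \Sigma^{t\Vmr}\Rmr) \xrightarrow{\Dmr_\sfi^{\Ocal}(\upmu^\Rmr)} \Dmr_\sfi^{\Ocal}(\Sigma^{(s+t)\Vmr}\Rmr) \xrightarrow{\uptheta^\Rmr_{\sfi, \sfs + \sft}} \Sigma^{i(s+t)\Vmr}\Rmr.
\]

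Second, I apply \eqref{HinftyV3} to replace the last two arrows: the composite $\uptheta^\Rmr_{\sfi, \sfs + \sft} \circ \Dmr_\sfi^{\Ocal}(\upmu^\Rmr)$ agrees, in $\Ho(\Sp_\Gmr)$, with $\upmu^\Rmr \circ (\uptheta^\Rmr_{\sfi, \sfs} \sma \uptheta^\Rmr_{\sfi, \sft}) \circ \partialup_\sfi$. This rewrites ${\bf u}_{(s+t)\Vmr}^{(i)}$ as
\[
\upmu^\Rmr \circ (\uptheta^\Rmr_{\sfi, \sfs} \sma \uptheta^\Rmr_{\sfi, \sft}) \circ \partialup_\sfi \circ \Dmr_\sfi^{\Ocal}(\iota_s \sma \iota_t).
\]
Third, I use the naturality of the oplax structure map $\partialup_\sfi$ from \Cref{lem:delpair1} applied to the pair of maps $\iota_s$ and $\iota_t$: the square
\[
\begin{tikzcd}
\Dmr_\sfi^{\Ocal}(\Smr^{s\Vmr} \sma \Smr^{t\Vmr}) \rar["\partialup_\sfi"] \dar["\Dmr_\sfi^{\Ocal}(\iota_s \sma \iota_t)"'] & \Dmr_\sfi^{\Ocal}(\Smr^{s\Vmr}) \sma \Dmr_\sfi^{\Ocal}(\Smr^{t\Vmr}) \dar["\Dmr_\sfi^{\Ocal}(\iota_s) \sma \Dmr_\sfi^{\Ocal}(\iota_t)"] \\
\Dmr_\sfi^{\Ocal}(\Sigma^{s\Vmr}\Rmr \sma \Sigma^{t\Vmr}\Rmr) \rar["\partialup_\sfi"'] & \Dmr_\sfi^{\Ocal}(\Sigma^{s\Vmr}\Rmr) \sma \Dmr_\sfi^{\Ocal}(\Sigma^{t\Vmr}\Rmr)
\end{tikzcd}
\]
commutes, permitting me to commute $\partialup_\sfi$ to the left of the two unit maps. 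The resulting expression for ${\bf u}_{(s+t)\Vmr}^{(i)}$ is
\[
\upmu^\Rmr \circ \bigl( (\uptheta^\Rmr_{\sfi, \sfs} \circ \Dmr_\sfi^{\Ocal}(\iota_s)) \sma (\uptheta^\Rmr_{\sfi, \sft} \circ \Dmr_\sfi^{\Ocal}(\iota_t)) \bigr) \circ \partialup_\sfi = \upmu^\Rmr \circ ({\bf u}_{s\Vmr}^{(i)} \sma {\bf u}_{t\Vmr}^{(i)}) \circ \partialup_\sfi,
\]
which by the definition \eqref{eqn:cross} of the external product is exactly $\partialup_\sfi^*({\bf u}_{s\Vmr}^{(i)} \times {\bf u}_{t\Vmr}^{(i)})$.

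There is no genuine obstacle; the argument is entirely formal once the correct diagram is set up, and the content lies exactly in the axiom \eqref{HinftyV3} which was designed to encode this Thom-class compatibility. The only care needed is bookkeeping to keep track that all identifications take place in $\Ho(\Sp_\Gmr)$ and that the external product is interpreted as a map out of a smash product as in \eqref{eqn:cross}.
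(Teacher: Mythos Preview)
Your proof is correct and takes essentially the same approach as the paper: both rely on the naturality of $\partialup_\sfi$ together with the axiom \eqref{HinftyV3}, the paper presenting this as a single two-square commutative diagram while you unpack the same argument as a linear chain of rewrites.
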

\begin{proof} The result follows from the diagram 
\[ 
\begin{tikzcd}
\Dmr_{\sfi}^{\Ocal}(\Smr^{(s+ t)\Vmr}) \ar[r, "{(\bf A)}"] \dar["\partialup_{\sfi}"'] & \Dmr_{\sfi}^{\Ocal}(\Sigma^{s\Vmr}\Rmr \sma \Sigma^{t\Vmr}\Rmr) \ar[rr,"\Dmr_{\sfi}^\Ocal(\upmu)"] \dar["\partialup_{\sfi}"'] && \Dmr_{\sfi}^\Ocal(\Sigma^{(s+ t)\Vmr}\Rmr) \ar[d, "\uptheta^{\Rmr}_{\sfi, \sfs + \sft}"]\\
\Dmr_{\sfi}^{\Ocal}(\Sigma^{s\Vmr}) \sma \Dmr^{\Ocal}_{\sfi}(\Sigma^{t\Vmr}) \rar["{(\bf B)}"'] & \Dmr_{\sfi}^{\Ocal}(\Sigma^{s\Vmr}\Rmr) \sma \Dmr^{\Ocal}_{\sfi}(\Sigma^{t\Vmr}\Rmr)\ar[rr, "\upmu^\Rmr  (\uptheta^\Rmr_{\sfi, \sfs} \sma \uptheta^\Rmr_{\sfi, \sft})"']   && \Sigma^{(s + t)i\Vmr} \Rmr
\end{tikzcd}
\]
where ${\bf (A)} =\Dmr_{\sfi}^{\Ocal}(e \sma e)$ and ${\bf (B)} = \Dmr_{\sfi}^{\Ocal}(e ) \sma \Dmr_{\sfi}^{\Ocal}(e )$, 
which commutes due to \eqref{HinftyV3} and the naturality of $\partialup_{\sfi}^{\Fscr_i}$. 
\end{proof}
\begin{cor} The family  $\{ \td{\bf u}_{t \Vmr}^{(i)}: i, t \in \NN \}$ of $\Rmr$-Thom classes satisfy 
\[ \td{\bf u}_{(s +t) \Vmr}^{(i)} = \partialup_{\sfi}^*(\td{\bf u}_{s \Vmr}^{(i)} \times \td{\bf u}_{t \Vmr}^{(i)}),  \]
where $\partialup_{\sfi}^{\Fscr_i}$ is the natural map defined in \Cref{eqn:extendedoplax}.
\end{cor}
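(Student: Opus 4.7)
The plan is to deduce the corollary directly from the displayed Proposition, together with the relation \eqref{eqn:tdu} that identifies each unreduced Thom class ${\bf u}^{(i)}_{t\Vmr}$ with the $t\Vmr$-suspension of its reduced counterpart $\td{\bf u}^{(i)}_{t\Vmr}$. Concretely, the splitting $\upgamma^{(i)}_{t\Vmr}\cong \td{\upgamma}^{(i)}_{t\Vmr}\oplus \upepsilon_{t\Vmr}$ yields a canonical identification
\[
\Dmr^{\Ocal}_{\sfi}(\Smr^{t\Vmr})\;\simeq\;\Th(\upgamma^{(i)}_{t\Vmr})\;\simeq\;\Sigma^{t\Vmr}\Th(\td{\upgamma}^{(i)}_{t\Vmr}),
\]
and under this identification ${\bf u}^{(i)}_{t\Vmr}$ corresponds to $\upsigma_{t\Vmr}(\td{\bf u}^{(i)}_{t\Vmr})$.

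Next, I would exhibit $\partialup_{\sfi}$ at level $(s+t)\Vmr$ as a suspension of a reduced diagonal. Using $\Smr^{(s+t)\Vmr}\cong \Smr^{s\Vmr}\sma \Smr^{t\Vmr}$ and the naturality of the external product on Thom spaces, the map
\[
\partialup_{\sfi}\colon \Dmr^{\Ocal}_{\sfi}(\Smr^{(s+t)\Vmr})\longrightarrow \Dmr^{\Ocal}_{\sfi}(\Smr^{s\Vmr})\sma \Dmr^{\Ocal}_{\sfi}(\Smr^{t\Vmr})
\]
fits into a commutative square whose vertical arrows are the above Thom-space identifications and whose bottom arrow has the form $\Sigma^{(s+t)\Vmr}\td{\partialup}_{\sfi}$ for a reduced map $\td{\partialup}_{\sfi}\colon \Th(\td{\upgamma}^{(i)}_{(s+t)\Vmr})\to \Th(\td{\upgamma}^{(i)}_{s\Vmr})\sma \Th(\td{\upgamma}^{(i)}_{t\Vmr})$. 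In particular, $\partialup_{\sfi}^{*}$ commutes with the suspension isomorphism $\upsigma_{(s+t)\Vmr}$.

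Now substitute ${\bf u}^{(i)}_{s\Vmr}=\upsigma_{s\Vmr}(\td{\bf u}^{(i)}_{s\Vmr})$ and ${\bf u}^{(i)}_{t\Vmr}=\upsigma_{t\Vmr}(\td{\bf u}^{(i)}_{t\Vmr})$ into the conclusion of the Proposition. The external product $\times$ is compatible with suspension isomorphisms in the sense that
\[
{\bf u}^{(i)}_{s\Vmr}\times {\bf u}^{(i)}_{t\Vmr}\;=\;\upsigma_{(s+t)\Vmr}\!\bigl(\td{\bf u}^{(i)}_{s\Vmr}\times \td{\bf u}^{(i)}_{t\Vmr}\bigr),
\]
so the Proposition gives
\[
\upsigma_{(s+t)\Vmr}\bigl(\td{\bf u}^{(i)}_{(s+t)\Vmr}\bigr)={\bf u}^{(i)}_{(s+t)\Vmr}=\partialup_{\sfi}^{*}\upsigma_{(s+t)\Vmr}\bigl(\td{\bf u}^{(i)}_{s\Vmr}\times \td{\bf u}^{(i)}_{t\Vmr}\bigr)=\upsigma_{(s+t)\Vmr}\partialup_{\sfi}^{*}\bigl(\td{\bf u}^{(i)}_{s\Vmr}\times \td{\bf u}^{(i)}_{t\Vmr}\bigr).
\]
Since $\upsigma_{(s+t)\Vmr}$ is an isomorphism, we may cancel it to obtain the claimed identity.

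The only nontrivial step is verifying that $\partialup_{\sfi}$ really is the $(s+t)\Vmr$-suspension of a reduced map after the identification with Thom spaces, i.e.\ that the external product of Thom classes behaves compatibly with the summand decomposition $\upgamma\cong\td{\upgamma}\oplus\upepsilon$ on both the source and the target. This is essentially a naturality statement for the Thom-diagonal that already sits implicitly inside the proof of the Proposition, so no new homotopical input is required—the remaining work is a careful bookkeeping of suspension coordinates.
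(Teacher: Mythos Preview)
Your proposal is correct and is exactly the intended derivation: the paper states this corollary without proof, leaving it implicit that one substitutes the relation ${\bf u}_{t\Vmr}^{(i)}=\upsigma_{t\Vmr}(\td{\bf u}_{t\Vmr}^{(i)})$ from \eqref{eqn:tdu} into the preceding Proposition and cancels the suspension isomorphism, using that $\partialup_{\sfi}$ on $\Dmr^{\Ocal}_{\sfi}(\Smr^{(s+t)\Vmr})$ is the $(s+t)\Vmr$-suspension of the reduced diagonal on Thom spaces. Your bookkeeping of the suspension coordinates is precisely the omitted detail.
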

Likewise, using the diagram \eqref{HinftyV1} and naturality of $\alpha_{i,j}$ we get: 
\begin{prop} The family  $\{ {\bf u}_{t \Vmr}^{(i)}: i, t \in \NN \}$ of $\Rmr$-Thom classes satisfy 
\begin{equation} \label{system2}
\alpha_{i,j}^*{\bf u}_{t \Vmr}^{(i +j)} = {\bf u}_{t \Vmr}^{(i)} \times {\bf u}_{t \Vmr}^{(j)} ,  
\end{equation}
for all $i,j, \sft \in \NN$.
\end{prop}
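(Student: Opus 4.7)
The plan is to unwind the definition of ${\bf u}_{t\Vmr}^{(i+j)}$ given in \eqref{HinftyThom} and then fit it into a single homotopy commutative diagram built from two basic pieces: the naturality of $\alpha_{i,j}$ applied to the unit map $e\colon \Smr \to \Rmr$ (smashed with $\Smr^{t\Vmr}$), and the compatibility diagram \eqref{HinftyV1} of an $\Ocal^{\sfh}_{[\Vmr]}$-ring structure. The external product ${\bf u}_{t\Vmr}^{(i)} \times {\bf u}_{t\Vmr}^{(j)}$ is by \eqref{eqn:cross} the composite $\upmu^{\Rmr}\circ({\bf u}_{t\Vmr}^{(i)}\sma {\bf u}_{t\Vmr}^{(j)})$, so unfolding both sides we are comparing two natural maps out of $\Dmr_{\sfi}^{\Ocal}(\Smr^{t\Vmr}) \sma \Dmr_{\sfj}^{\Ocal}(\Smr^{t\Vmr})$.

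Concretely, I would assemble the diagram
\[
\begin{tikzcd}[column sep = small, row sep = small]
\Dmr^\Ocal_{\sfi}(\Smr^{t\Vmr}) \sma \Dmr^\Ocal_{\sfj}(\Smr^{t\Vmr}) \ar[rr, "\alpha_{i,j}"] \ar[dd, "\Dmr^\Ocal_{\sfi}(e)\sma \Dmr^\Ocal_{\sfj}(e)"'] && \Dmr^\Ocal_{\sfi+\sfj}(\Smr^{t\Vmr}) \ar[dd, "\Dmr^\Ocal_{\sfi+\sfj}(e)"] \\ \\
\Dmr^\Ocal_{\sfi}(\Sigma^{t\Vmr}\Rmr) \sma \Dmr^\Ocal_{\sfj}(\Sigma^{t\Vmr}\Rmr) \ar[rr, "\alpha_{i,j}"] \ar[dd, "\uptheta^\Rmr_{\sfi,\sft} \sma \uptheta^\Rmr_{\sfj,\sft}"'] && \Dmr^\Ocal_{\sfi+\sfj}(\Sigma^{t\Vmr}\Rmr) \ar[dd, "\uptheta^\Rmr_{\sfi+\sfj,\sft}"] \\ \\
\Sigma^{it\Vmr}\Rmr \sma \Sigma^{jt\Vmr}\Rmr \ar[rr, "\upmu^\Rmr"'] && \Sigma^{(i+j)t\Vmr}\Rmr.
\end{tikzcd}
\]
The top square commutes by naturality of $\alpha_{i,j}$ (as a natural transformation of functors $\Sp_\Gmr \to \Sp_\Gmr$ applied to the unit map $e\colon \Smr^{t\Vmr}\to \Sigma^{t\Vmr}\Rmr$), and the bottom square commutes up to homotopy by \eqref{HinftyV1}. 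Reading the outer rectangle, the right-hand composite is $\alpha_{i,j}^*{\bf u}^{(i+j)}_{t\Vmr}$ by the definition \eqref{HinftyThom}, while the left-hand composite is $\upmu^\Rmr \circ ({\bf u}^{(i)}_{t\Vmr} \sma {\bf u}^{(j)}_{t\Vmr}) = {\bf u}^{(i)}_{t\Vmr} \times {\bf u}^{(j)}_{t\Vmr}$.

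There are no serious obstacles in this argument: naturality of $\alpha_{i,j}$ is a formal consequence of the construction (obtained from the operadic structure map \eqref{map:structure} combined with $\iota_n$), and the square \eqref{HinftyV1} is part of the definition of a $\Vmr$-shifted homotopy $\Ocal$-ring. The only care needed is to note that since all commutativity takes place in $\Ho(\Sp_\Gmr)$, the resulting equality is an equality of stable homotopy classes, which is exactly the statement. The corresponding identity for $\td{\bf u}^{(i)}_{t\Vmr}$ then follows from \eqref{eqn:tdu} by desuspending, giving the analogue $\alpha_{i,j}^*\td{\bf u}^{(i+j)}_{t\Vmr} = \td{\bf u}^{(i)}_{t\Vmr}\times \td{\bf u}^{(j)}_{t\Vmr}$.
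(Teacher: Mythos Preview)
Your proof is correct and takes essentially the same approach as the paper, which simply says the result follows from diagram \eqref{HinftyV1} together with the naturality of $\alpha_{i,j}$; you have just spelled out the two-square diagram explicitly. One small correction to your final remark: desuspending via \eqref{eqn:tdu} does not quite give $\alpha_{i,j}^*\td{\bf u}^{(i+j)}_{t\Vmr} = \td{\bf u}^{(i)}_{t\Vmr}\times \td{\bf u}^{(j)}_{t\Vmr}$, because ${\bf u}_{t\Vmr}^{(i)}\times{\bf u}_{t\Vmr}^{(j)}$ involves two $t\Vmr$-suspensions while ${\bf u}_{t\Vmr}^{(i+j)}$ involves only one, so the correct identity is $\alpha_{i,j}^*\td{\bf u}_{t\Vmr}^{(i+j)} = \upsigma_{t\Vmr}\bigl(\td{\bf u}_{t\Vmr}^{(i)}\times\td{\bf u}_{t\Vmr}^{(j)}\bigr)$, as stated in the corollary following the proposition.
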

\begin{cor} The family  $\{ \td{\bf u}_{t \Vmr}^{(i)}: i, \sft \in \NN \}$ of $\Rmr$-Thom classes satisfy 
\[ \alpha_{i,j}^*\td{\bf u}_{t \Vmr}^{(i +j)} = \upsigma_{t \Vmr}(\td{\bf u}_{t \Vmr}^{(i)} \times \td{\bf u}_{t \Vmr}^{(j)}) ,  \]
for all $i,j, t \in \NN$.
\end{cor}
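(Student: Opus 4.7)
The plan is to derive this corollary directly from the preceding proposition \eqref{system2} by translating between the Thom classes $\mathbf{u}_{t\Vmr}^{(i)}$ of $\upgamma_{t\Vmr}^{(i)}$ and the reduced Thom classes $\td{\mathbf{u}}_{t\Vmr}^{(i)}$ of $\td{\upgamma}_{t\Vmr}^{(i)}$ via the identification $\mathbf{u}_{t\Vmr}^{(i)} = \upsigma_{t\Vmr}(\td{\mathbf{u}}_{t\Vmr}^{(i)})$ from \eqref{eqn:tdu}. The only substantive content is bookkeeping of the suspensions, since $\upgamma_{t\Vmr}^{(i)} \cong \td{\upgamma}_{t\Vmr}^{(i)} \oplus \upepsilon_{t\Vmr}$ yields a canonical homeomorphism $\Dmr_{\sfi}^\Ocal(\Smr^{t\Vmr}) \cong \Sigma^{t\Vmr} \Th(\td{\upgamma}_{t\Vmr}^{(i)})$.

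First I would substitute $\mathbf{u}_{t\Vmr}^{(i+j)} = \upsigma_{t\Vmr}(\td{\mathbf{u}}_{t\Vmr}^{(i+j)})$, $\mathbf{u}_{t\Vmr}^{(i)} = \upsigma_{t\Vmr}(\td{\mathbf{u}}_{t\Vmr}^{(i)})$, and similarly for $j$, into \eqref{system2}. Using the fact that pullback along the $\Gmr$-equivariant map $\alpha_{i,j}$ commutes with the $t\Vmr$-suspension isomorphism (since both sides of $\alpha_{i,j}$ carry canonical identifications with $t\Vmr$-suspensions of the reduced Thom spaces), the left-hand side becomes $\upsigma_{t\Vmr}(\alpha_{i,j}^* \td{\mathbf{u}}_{t\Vmr}^{(i+j)})$. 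Next, I would apply the compatibility of the external product with suspension, namely
\[
\upsigma_{t\Vmr}(a) \times \upsigma_{t\Vmr}(b) \;=\; \upsigma_{2t\Vmr}(a \times b),
\]
to rewrite the right-hand side as $\upsigma_{2t\Vmr}(\td{\mathbf{u}}_{t\Vmr}^{(i)} \times \td{\mathbf{u}}_{t\Vmr}^{(j)})$.

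Equating the two expressions and applying the inverse suspension $\upsigma_{t\Vmr}^{-1}$, one obtains
\[
\alpha_{i,j}^* \td{\mathbf{u}}_{t\Vmr}^{(i+j)} \;=\; \upsigma_{t\Vmr}(\td{\mathbf{u}}_{t\Vmr}^{(i)} \times \td{\mathbf{u}}_{t\Vmr}^{(j)}),
\]
which is the claim. There is no real obstacle here: the only mild care is required in verifying that the naturality square relating $\alpha_{i,j}^*$ and $\upsigma_{t\Vmr}$ commutes, which follows formally from the fact that the stabilization $\upgamma_{t\Vmr}^{(i)} = \td{\upgamma}_{t\Vmr}^{(i)} \oplus \upepsilon_{t\Vmr}$ is compatible under $\alpha_{i,j}$ with $\upgamma_{t\Vmr}^{(i+j)} = \td{\upgamma}_{t\Vmr}^{(i+j)} \oplus \upepsilon_{t\Vmr}$ (the trivial summand $\upepsilon_{t\Vmr}$ being pulled back from the base point factor on both sides).
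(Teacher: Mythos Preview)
Your proposal is correct and is exactly the intended derivation: the paper states this corollary without proof, as an immediate consequence of \eqref{system2} via the identification ${\bf u}_{t\Vmr}^{(i)} = \upsigma_{t\Vmr}(\td{\bf u}_{t\Vmr}^{(i)})$ from \eqref{eqn:tdu}. Your bookkeeping of the suspensions and the compatibility of $\alpha_{i,j}$ with the trivial summand $\upepsilon_{t\Vmr}$ is precisely what is needed.
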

Now we discuss the consequence of \eqref{HinftyV2} which will be crucial in defining composition of Eulerian sequences. Using unit map of $\Rmr$ and  \eqref{HinftyV2}, we get the diagram 
\[ 
\begin{tikzcd}
 \Dmr^{\Ocal}_{\sfi} \left( \Dmr^{\Ocal}_{\sfj}(\Smr^{t\Vmr}) \right) \dar["\Dmr^{\Ocal}_{\sfi}({\bf u}_{t \Vmr}^{(j)})"'] \ar[rr, "\upbeta_{\sfi, \sfj}"] && \Dmr^{\Ocal}_{\sfi \sfj}(\Smr^{t \Vmr}) \ar[dd,"{\bf u}_{t \Vmr}^{(ij)}"] \\
\Dmr^{\Ocal}_{\sfi}(\Sigma^{jt\Vmr}\Rmr) \dar["\partialup_{\sfi}"']\ar[bend left, rrd, "\uptheta^{\Rmr}_{\sfi, \sfj\sft}"] \\
\Dmr^{\Ocal}_{\sfi}(\Smr^{j t\Vmr}) \sma \Dmr^{\Ocal}_{\sfi} (\Rmr) \ar[r, "{\bf u}_{jt\Vmr}^{(i)} \sm \uptheta^\Rmr_{i}"'] & \Sigma^{i j t \Vmr} \Rmr  \sma \Rmr \ar[r, "\upmu^\Rmr"'] & \Sigma^{i j t \Vmr} \Rmr 
\end{tikzcd}
\] 
which implies: 
\begin{prop} \label{prop:Thomsystem3} The system  $\{ \td{\bf u}_{\sft \Vmr}^{(i)}: i, \sft \in \NN \}$ of $\Rmr$-Thom classes satisfy
\begin{equation} \label{system3}
\upmu^\Rmr  \left({\bf u}_{jt\Vmr}^{(i)} \sm \uptheta^\Rmr_{i}\right) \circ \partialup_\sfi \circ \Dmr^{\Ocal}_{\sfi}({\bf u}_{\sft \Vmr}^{(j)}) = {\bf u}_{\sft \Vmr}^{(ij)} \circ \upbeta_{i, j}
\end{equation}
for all $i, j, t \in \NN$. 
\end{prop}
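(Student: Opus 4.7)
The strategy is to expand both sides in terms of the structure maps $\uptheta_{\sfn, \sft}^\Rmr$ using the definition \eqref{HinftyThom} of the Thom classes, namely ${\bf u}_{t\Vmr}^{(n)} = \uptheta_{\sfn, \sft}^\Rmr \circ \Dmr_\sfn^\Ocal(e)$ where $e\colon\sph\to\Rmr$ is the unit, and then string together the homotopy coherences \eqref{HinftyV2} and \eqref{HinftyV3} from \Cref{defn:HinftyV}. Since every identity takes place in $\Ho(\Sp_\Gmr)$, only homotopy commutativity is required, and the chase is encoded schematically by the diagram displayed just above the statement.

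First I unwind the right-hand side ${\bf u}_{t\Vmr}^{(ij)} \circ \upbeta_{\sfi, \sfj}$. Using naturality of $\upbeta_{\sfi, \sfj}$ with respect to the unit $e$,
\[
{\bf u}_{t\Vmr}^{(ij)} \circ \upbeta_{\sfi, \sfj} = \uptheta_{\sfi\sfj, \sft}^\Rmr \circ \Dmr_{\sfi\sfj}^\Ocal(e) \circ \upbeta_{\sfi, \sfj} = \uptheta_{\sfi\sfj, \sft}^\Rmr \circ \upbeta_{\sfi, \sfj} \circ \Dmr_\sfi^\Ocal\Dmr_\sfj^\Ocal(e).
\]
Applying \eqref{HinftyV2} to replace $\uptheta_{\sfi\sfj, \sft}^\Rmr \circ \upbeta_{\sfi, \sfj}$ by $\uptheta_{\sfi, \sfj\sft}^\Rmr \circ \Dmr_\sfi^\Ocal(\uptheta_{\sfj, \sft}^\Rmr)$ and collapsing the two $\Dmr_\sfi^\Ocal$ factors rewrites the right-hand side as $\uptheta_{\sfi, \sfj\sft}^\Rmr \circ \Dmr_\sfi^\Ocal({\bf u}_{t\Vmr}^{(j)})$. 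Thus it suffices to show that $\uptheta_{\sfi, \sfj\sft}^\Rmr \simeq \upmu^\Rmr \circ ({\bf u}_{jt\Vmr}^{(i)} \sma \uptheta_\sfi^\Rmr) \circ \partialup_\sfi$ as maps $\Dmr_\sfi^\Ocal(\Sigma^{jt\Vmr}\Rmr) \to \Sigma^{ijt\Vmr}\Rmr$.

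For this factorization I specialize \eqref{HinftyV3} to the pair $(s, t) = (jt, 0)$, identifying $\Sigma^{0\cdot\Vmr}\Rmr = \Rmr$ and $\uptheta_{\sfi, 0}^\Rmr = \uptheta_\sfi^\Rmr$. This produces two homotopic composites $\Dmr_\sfi^\Ocal(\Sigma^{jt\Vmr}\Rmr\sma\Rmr)\to\Sigma^{ijt\Vmr}\Rmr$: one going through $\Dmr_\sfi^\Ocal(\upmu^\Rmr)$ followed by $\uptheta_{\sfi, \sfj\sft}^\Rmr$, and one through $\partialup_\sfi$ followed by $\uptheta_{\sfi, \sfj\sft}^\Rmr\sma\uptheta_\sfi^\Rmr$ and then $\upmu^\Rmr$. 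Precomposing with $\Dmr_\sfi^\Ocal(1\sma e)$, the first composite collapses to $\uptheta_{\sfi, \sfj\sft}^\Rmr$ by the unit axiom $\upmu^\Rmr\circ(1\sma e)\simeq 1$; the second collapses, via naturality of $\partialup_\sfi$ along $e$ and the identification ${\bf u}_{jt\Vmr}^{(i)} = \uptheta_{\sfi, \sfj\sft}^\Rmr \circ \Dmr_\sfi^\Ocal(e)$, to $\upmu^\Rmr \circ ({\bf u}_{jt\Vmr}^{(i)} \sma \uptheta_\sfi^\Rmr) \circ \partialup_\sfi$. Combining this with the preceding step yields \eqref{system3}.

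The only real subtlety will be bookkeeping: tracking $\partialup_\sfi$ through the unit map and pasting together the two homotopies provided by \eqref{HinftyV2} and \eqref{HinftyV3} into a single homotopy in $\Ho(\Sp_\Gmr)$. Since \Cref{defn:HinftyV} supplies only commutativity in $\Ho(\Sp_\Gmr)$, no strict equalities are available; but no higher coherence data is needed either, since the proposition asks only for an equality of homotopy classes, so the single diagram displayed just before the statement captures all the commutativities that must be assembled.
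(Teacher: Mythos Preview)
Your argument is correct and follows the same route as the paper: both sides are reduced, via the unit map and the coherence squares of \Cref{defn:HinftyV}, to the diagram displayed just before the proposition. Your write-up is in fact more careful than the paper's own, since you explicitly invoke \eqref{HinftyV3} (specialized to $(s,t)=(jt,0)$) to justify the lower triangle $\uptheta_{\sfi,\sfj\sft}^\Rmr \simeq \upmu^\Rmr\circ({\bf u}_{jt\Vmr}^{(i)}\sma\uptheta_\sfi^\Rmr)\circ\partialup_\sfi$, whereas the paper's prose mentions only \eqref{HinftyV2}.
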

\begin{cor} The family  $\{ \td{\bf u}_{\sft \Vmr}^{(i)}: i, \sft \in \NN \}$ of $\Rmr$-Thom classes satisfy 
\[ 
\upmu^\Rmr  \left(\upsigma_{jt\Vmr}(\td{\bf u}_{jt\Vmr}^{(i)}) \sm \uptheta^\Rmr_{i}\right) \circ \partialup_\sfi \circ \Dmr^{\Ocal}_{\sfi}(\upsigma_{t\Vmr}(\td{\bf u}_{t \Vmr}^{(j)})) = \upsigma_{t\Vmr}(\td{\bf u}_{t \Vmr}^{(ij)}) \circ \upbeta_{i, j}
 \]
for all $i,j, t \in \NN$.
\end{cor}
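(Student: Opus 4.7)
The corollary is an immediate formal consequence of Proposition \ref{prop:Thomsystem3} together with the normalization \eqref{eqn:tdu}. The plan is simply to rewrite each of the three total Thom classes appearing in \eqref{system3} in terms of its reduced counterpart and check that no extra terms appear.

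More precisely, I would substitute the three instances of \eqref{eqn:tdu}, namely
\[
{\bf u}_{jt\Vmr}^{(i)} = \upsigma_{jt\Vmr}(\td{\bf u}_{jt\Vmr}^{(i)}), \qquad {\bf u}_{t\Vmr}^{(j)} = \upsigma_{t\Vmr}(\td{\bf u}_{t\Vmr}^{(j)}), \qquad {\bf u}_{t\Vmr}^{(ij)} = \upsigma_{t\Vmr}(\td{\bf u}_{t\Vmr}^{(ij)}),
\]
directly into the identity
\[
\upmu^\Rmr \left({\bf u}_{jt\Vmr}^{(i)} \sm \uptheta^\Rmr_{i}\right) \circ \partialup_\sfi \circ \Dmr^{\Ocal}_{\sfi}({\bf u}_{\sft \Vmr}^{(j)}) = {\bf u}_{\sft \Vmr}^{(ij)} \circ \upbeta_{i, j}
\]
furnished by Proposition \ref{prop:Thomsystem3}. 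A quick dimension check confirms internal consistency: $\td{\bf u}_{t\Vmr}^{(k)}$ lives in degree $(k-1)t\Vmr$, so both sides naturally land in degree $ijt\Vmr$, matching the degrees of the corresponding total Thom classes.

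There is essentially no obstacle here — the only thing to be mindful of is that the suspension isomorphism $\upsigma_{t\Vmr}$ is natural and compatible with $\Dmr^{\Ocal}_{\sfi}(-)$, $\partialup_\sfi$, $\upbeta_{i,j}$, and smash product with $\uptheta^\Rmr_i$, so the substitutions commute past these operations without introducing correction terms. In particular, no new homotopy coherence beyond what was already used to establish \eqref{system3} is required.
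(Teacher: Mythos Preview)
Your proposal is correct and matches the paper's intent: the corollary is stated without proof immediately after Proposition~\ref{prop:Thomsystem3}, and the only content is the direct substitution of \eqref{eqn:tdu} into \eqref{system3}, exactly as you describe.
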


Conversely:
\begin{prop}\label{prop:consistentOrientationsToShiftedRing}
    If $\Rmr$ is an $\Ocal^\sfh$-ring spectrum such that there exists a family of $\Rmr$-Thom classes $ \{ {\bf u}_{\Vmr}^{(i)}: i\in \NN\}$
   satisfying \eqref{system2} and \eqref{system3}, then $\Rmr$ is an  $\Ocal^{\sfh}_{[\Vmr]}$-ring spectrum.  
\end{prop}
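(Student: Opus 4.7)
The plan is to construct the structure maps $\uptheta^\Rmr_{\sfi,\sft}$ in the expected way---emulating the composite \eqref{eq:powerext}---and then verify the three coherence diagrams \eqref{HinftyV1}, \eqref{HinftyV2}, \eqref{HinftyV3} by diagram chases that exactly invoke the hypotheses \eqref{system2} and \eqref{system3} together with the $\Ocal^{\sfh}$-structure of $\Rmr$.

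First, I would upgrade the given family $\{\mathbf{u}^{(i)}_{\Vmr}\}$ to a doubly-indexed family $\{\mathbf{u}^{(i)}_{t\Vmr}\}_{i,t \in \NN}$. Setting $\mathbf{u}^{(i)}_{0} := 1$ and
\[
\mathbf{u}^{(i)}_{(t+1)\Vmr} := \partialup_{\sfi}^{*}\bigl(\mathbf{u}^{(i)}_{\Vmr}\times \mathbf{u}^{(i)}_{t\Vmr}\bigr),
\]
the external associativity of $\partialup_{\sfi}$ (\Cref{lem:delpair1}) forces \eqref{system1} to hold for all $s,t$. An easy induction on $t$, using naturality of $\alpha_{i,j}$ with respect to $\partialup_{\sfi}$, then propagates \eqref{system2} from $t=1$ to all $t$, giving $\alpha_{i,j}^{*}\mathbf{u}^{(i+j)}_{t\Vmr}=\mathbf{u}^{(i)}_{t\Vmr}\times \mathbf{u}^{(j)}_{t\Vmr}$; a similar induction, this time using the naturality of $\upbeta_{i,j}$ together with \eqref{system3} at $t=1$, extends \eqref{system3} to all $t$.

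Next I would define
\[
\begin{tikzcd}[column sep=small]
\uptheta^{\Rmr}_{\sfi,\sft} : \Dmr_{\sfi}^{\Ocal}(\Sigma^{t\Vmr}\Rmr) \ar[r,"\partialup_{\sfi}"] & \Dmr_{\sfi}^{\Ocal}(\Smr^{t\Vmr})\sma \Dmr_{\sfi}^{\Ocal}(\Rmr)  \ar[r,"\mathbf{u}^{(i)}_{t\Vmr}\sma \uptheta^{\Rmr}_{\sfi}"] & \Sigma^{it\Vmr}\Rmr\sma \Rmr \ar[r,"\upmu^{\Rmr}"]& \Sigma^{it\Vmr}\Rmr,
\end{tikzcd}
\]
using the original structure maps $\uptheta^{\Rmr}_{\sfi}$ of the $\Ocal^{\sfh}$-ring. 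That $\uptheta^{\Rmr}_{1,\sft}\simeq 1_{\Sigma^{t\Vmr}\Rmr}$ is immediate: $\partialup_{1}$ is an equivalence identifying the two smash factors, $\mathbf{u}^{(1)}_{t\Vmr}$ is forced (by \eqref{system2} at $i=j=1$ and unit considerations) to be the identity $\Smr^{t\Vmr}\to \Smr^{t\Vmr}$, and $\uptheta^{\Rmr}_{1}=1_{\Rmr}$ by the definition of an $\Ocal^{\sfh}$-ring.

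Diagram \eqref{HinftyV3} is essentially built into the definition: it reduces, after unwinding $\uptheta^{\Rmr}_{\sfi,\sfs+\sft}$ via $\partialup_{\sfi}$ on $\Smr^{s\Vmr}\sma \Smr^{t\Vmr}$ and applying \eqref{system1}, to the monoidality of $\partialup_{\sfi}$ combined with the homotopy commutativity and associativity of $\upmu^{\Rmr}$ (which $\Rmr$ inherits as an $\Ocal^{\sfh}$-ring). Diagram \eqref{HinftyV1} is then obtained from a diagram chase in which one invokes \eqref{system2} to split $\mathbf{u}^{(i+j)}_{t\Vmr}\circ \alpha_{i,j}$ as a smash product of Thom classes, and the $\Ocal^{\sfh}$-axiom \eqref{Hinfty1} applied to $\uptheta^{\Rmr}_{\sfi}\sma \uptheta^{\Rmr}_{\sfj}$ versus $\uptheta^{\Rmr}_{\sfi+\sfj}\circ \alpha_{i,j}$.

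The main obstacle is \eqref{HinftyV2}, which is where \eqref{system3} does the real work. Expanding $\uptheta^{\Rmr}_{\sfi,\sfj\sft}\circ \Dmr^{\Ocal}_{\sfi}(\uptheta^{\Rmr}_{\sfj,\sft})$ and $\uptheta^{\Rmr}_{\sfi\sfj,\sft}\circ \upbeta_{i,j}$ produces two a priori different composites in $\Ho(\Sp_{\Gmr})$, involving iterated $\partialup$'s, Thom classes at different indices, the operadic composition map $\upbeta_{i,j}$, and the $\Ocal^{\sfh}$-composition \eqref{Hinfty2} applied to $\Rmr$ itself. Bringing these two composites into coincidence requires carefully interleaving (i) the naturality of $\partialup$ against $\mathbf{u}^{(j)}_{t\Vmr}$ and $\uptheta^{\Rmr}_{\sfj}$, (ii) the hypothesis \eqref{system3} to rewrite $\mathbf{u}^{(ij)}_{t\Vmr}\circ \upbeta_{i,j}$, and (iii) the $\Ocal^{\sfh}$-coherence \eqref{Hinfty2} for $\Rmr$ together with associativity of $\upmu^{\Rmr}$. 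Each piece is individually routine, but organizing them into a single homotopy commutative diagram---and in particular ensuring that the $\Rmr$-smash factor produced by the outer $\partialup_{\sfi}$ is absorbed correctly by $\uptheta^{\Rmr}_{\sfi}$---is the central calculation. Once this is done, all three diagrams commute in $\Ho(\Sp_{\Gmr})$ and $\Rmr$ acquires the desired $\Ocal^{\sfh}_{[\Vmr]}$-structure.
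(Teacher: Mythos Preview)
Your proposal is correct and is essentially the same approach the paper indicates: the paper does not write out its own proof but defers to \cite[VII.5, VII.6]{Hinfty} (specifically \cite[VII, Proposition 6.2]{Hinfty}), where exactly this construction of $\uptheta^{\Rmr}_{\sfi,\sft}$ via $\partialup_{\sfi}$, the Thom class, and the ring multiplication is carried out, and the coherences are checked as you describe. Your outline---extend the Thom classes to all $t$ by \eqref{system1}, propagate \eqref{system2} and \eqref{system3} by induction, then verify \eqref{HinftyV1}--\eqref{HinftyV3} using the $\Ocal^{\sfh}$-axioms---matches that argument, and your identification of \eqref{HinftyV2} as the place where \eqref{system3} and \eqref{Hinfty2} must be interwoven is exactly right.
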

\begin{rmk} Note that ${\bf u}_{t\Vmr}^{(i)}$, for $t \geq 2$ can be defined inductively from ${\bf u}_{\Vmr}^{(i)}$ using the formula in \eqref{system1}. This will satisfy both \eqref{system2} and \eqref{system1} as long as the initial family  $ \{ {\bf u}_{\Vmr}^{(i)}: i\in \NN\}$ does. This consistency allows us to  omit   the $\Rmr$-Thom classes for $t \geq 2$ in the family of $\Rmr$-Thom classes considered in \Cref{prop:consistentOrientationsToShiftedRing}.
\end{rmk}
The proof of \Cref{prop:consistentOrientationsToShiftedRing} is identical to the arguments provided in \cite[VII.5,VII.6]{Hinfty} (in particular see \cite[VII, Proposition 6.2]{Hinfty}) which covers the nonequivariant case. Hence,  we leave the details to the readers. Also note: 
\begin{lem} \label{lem:mapshiftVring}
Suppose $\Rmr$ is an $\Ocal_{[\Vmr]}^{\sfh}$-ring and suppose have a map of $\Ocal^{\sfh}$-rings \[ f:\Rmr \longrightarrow \Smr,\] then $\Smr$ is also an $\Ocal_{[\Vmr]}^{\sfh}$-ring.  
\end{lem}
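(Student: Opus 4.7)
The plan is to invoke \Cref{prop:consistentOrientationsToShiftedRing}: it suffices to exhibit a family of $\Smr$-Thom classes $\{{\bf v}_{\Vmr}^{(i)}\}_{i \in \NN}$ for the bundles $\upgamma_{\Vmr}^{(i)}$ that satisfies the compatibility relations \eqref{system2} and \eqref{system3}. The natural candidates are the pushforwards of the given $\Rmr$-Thom classes along $f$, namely
\[ {\bf v}_{\Vmr}^{(i)} \;:=\; \Sigma^{i\Vmr}(f) \circ {\bf u}_{\Vmr}^{(i)} \;\in\; \Smr^{i\Vmr}\bigl(\Th(\upgamma_{\Vmr}^{(i)})\bigr). \]
To check these are genuinely $\Smr$-Thom classes in the sense of \cite{BZ}, I would use that a map of $\Ocal^\sfh$-rings preserves the unit map $\uptheta_0$ up to homotopy, so that pullback of ${\bf v}_{\Vmr}^{(i)}$ to any fiber is obtained by applying $f_*$ to the corresponding generator of $\Rmr$-cohomology, hence is a generator of $\Smr$-cohomology.

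Next I would verify \eqref{system2}. Applying $\Sigma^{(i+j)\Vmr}(f)$ to both sides of the relation for $\Rmr$ and using naturality of $\alpha_{i,j}^*$ yields
\[ \alpha_{i,j}^* {\bf v}_{\Vmr}^{(i+j)} \;=\; \Sigma^{(i+j)\Vmr}(f)\circ \bigl({\bf u}_{\Vmr}^{(i)} \times {\bf u}_{\Vmr}^{(j)}\bigr). \]
Since the external product \eqref{eqn:cross} is built from the multiplication $\upmu^{\Rmr}$, and any map of homotopy $\Ocal$-rings is automatically a homotopy ring map (via the $n=2$ case of \eqref{Hinfty1}, which realizes $\upmu^{\Rmr}$ as $\uptheta_2^{\Rmr}\circ \alpha_{1,1}$, together with the hypothesis that $f$ commutes with $\uptheta_2$ up to homotopy), one has $f_*(a\times b)\simeq f_*a\times f_*b$ in $\Ho(\Sp_\Gmr)$. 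This identifies the right-hand side with ${\bf v}_{\Vmr}^{(i)} \times {\bf v}_{\Vmr}^{(j)}$, establishing \eqref{system2}.

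The verification of \eqref{system3} proceeds along the same lines but now uses the full strength of the hypothesis that $f$ is a map of $\Ocal^\sfh$-rings: specifically, $f \circ \uptheta_\sfi^{\Rmr} \simeq \uptheta_\sfi^{\Smr} \circ \Dmr_\sfi^{\Ocal}(f)$ in $\Ho(\Sp_{\Gmr})$ for every $i$. Combining this with naturality of $\partialup_\sfi$ and $\upbeta_{i,j}$ and the homotopy multiplicativity of $f$ already used above, post-composing the equation in \eqref{system3} with $\Sigma^{ijt\Vmr}(f)$ transforms every ${\bf u}$ into the corresponding ${\bf v}$, giving the analogous relation for $\Smr$. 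The main obstacle is purely bookkeeping: since we work entirely in $\Ho(\Sp_\Gmr)$, no higher coherence data is needed, but one must carefully identify where each hypothesis on $f$ (unit preservation, homotopy multiplicativity, and compatibility with the full family of $\uptheta_\sfi$) is invoked. Once these verifications are complete, \Cref{prop:consistentOrientationsToShiftedRing} promotes $\Smr$ to an $\Ocal_{[\Vmr]}^{\sfh}$-ring, as desired.
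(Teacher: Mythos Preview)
Your proposal is correct and follows essentially the same approach as the paper: push forward the family $\{{\bf u}_{\Vmr}^{(i)}\}$ along $f$ to obtain $\Smr$-Thom classes, verify that \eqref{system2} and \eqref{system3} are preserved because $f$ is an $\Ocal^{\sfh}$-ring map, and then invoke \Cref{prop:consistentOrientationsToShiftedRing}. The paper's proof is simply a terse one-line version of what you have spelled out in detail.
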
 
\begin{proof} By assumption, we have a family $ \{ {\bf u}_{\Vmr}^{(i)}: i\in \NN\}$ of $\Rmr$-Thom classes   satisfying \eqref{system2} and \eqref{system3}. Since $f$ is a homotopy $\Ocal$-ring map, the family
\[ \{ f_*{\bf u}_{\Vmr}^{(i)}: i\in \NN\} \]
of $\Smr$-Thom classes also satisfies \eqref{system2} and \eqref{system3}. Therefore, $\Smr$ is  an $\Ocal_{[\Vmr]}^{\sfh}$-ring.
\end{proof}

\subsection{Norms and geometric fixed points of $\Vmr$-shifted $\EE_\infty^{\Gmr}$-rings}\

In \cite{Hinfty}, the authors demonstrate that $\Hmr\FF_p$ is an $\HH_{\infty}^d$-ring, where $d = 1$ when $p=2$ and $d =2$ when $p$ is odd. 
The primary goal of this subsection is to leverage this result by employing the norm (introduced in \cite{HHR}) and geometric fixed-point functors to identify $\Vmr$ for which  $\Hmr\ull{\FF}_p$ admits a $\Vmr$-shifted homotopy $\EE_\infty^{\Gmr}$-ring structure. 

Suppose $\Kmr$ is a subgroup of $\Gmr$. Then for a pointed $\Kmr$-space $\Xmr$ its norm is nothing but the $\Gmr$-space $\Nmr_\Kmr^\Gmr(\Xmr):= \Map_{\Hmr}(\Gmr_+,  \Xmr)$. In \cite{HHR}, the authors introduced a spectrum level norm 
\[
\begin{tikzcd}
\Nmr_\Kmr^{\Gmr}:\Sp_\Kmr\rar & \Sp_\Gmr,
\end{tikzcd}
\]
which is  
\begin{itemize}
     \item[---] strong symmetric monoidal, 
    \item[---] commutes with sifted colimits, 
    \item[---] and $\Nmr_\Kmr^{\Gmr}(\Sigma^\infty_\Kmr\Xmr)\simeq \Sigma^\infty_\Gmr(\Nmr^{
    \Gmr}_\Kmr(\Xmr)).$ 
\end{itemize}
In particular,  $\Nmr_\Kmr^\Gmr(\Sigma^{\Vmr}\SS_{\Kmr}) \simeq \Sigma^{\Ind \Vmr} \SS_\Gmr$, where $\Ind \Vmr := \RR[\Gmr] \otimes_{\RR[\Hmr]} \Vmr$ for any orthogonal $\Kmr$-representation $\Vmr$. 

The smash product serves as the coproduct in the category of $\mathbb{E}_\infty^{\Gmr}$ rings. 
The norm $\Nmr_\Kmr^{\Gmr}$ is constructed inductively from  the smash product. Since $\Nmr_\Kmr^{\Gmr}$ is strong symmetric monoidal, it lifts to a functor from the category of (homotopy) $\mathbb{E}_\infty^{\Kmr}$-rings to the homotopy category of (homotopy) $\mathbb{E}_\infty^{\Gmr}$-rings. We will now show that:
\begin{prop}\label{prop:normOfShiftedRing}
    If $\Rmr$ is a $\Vmr$-shifted homotopy $\EE_\infty^{\Kmr}$-ring spectrum, then $\Nmr_\Kmr^{\Gmr}\Rmr$ is an $\Ind \Vmr$-shifted homotopy $\EE_\infty^{\Gmr}$-ring spectrum. 
\end{prop}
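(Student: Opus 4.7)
The plan is to apply \Cref{prop:consistentOrientationsToShiftedRing}. This reduces the problem to checking that $\Nmr_\Kmr^\Gmr\Rmr$ is a homotopy $\EE_\infty^\Gmr$-ring equipped with a family of $\Nmr_\Kmr^\Gmr\Rmr$-Thom classes $\{\mathbf{u}^{(i)}_{\Ind\,\Vmr}\}_{i \in \NN}$ satisfying the compatibility conditions \eqref{system2} and \eqref{system3}.

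First I would observe that $\Nmr_\Kmr^\Gmr\Rmr$ automatically inherits a homotopy $\EE_\infty^\Gmr$-ring structure: since $\Nmr_\Kmr^\Gmr$ is strong symmetric monoidal, it sends homotopy $\EE_\infty^\Kmr$-rings (in the sense of \Cref{defn:EquivHinfty}) to homotopy $\EE_\infty^\Gmr$-rings. Concretely, the structure maps $\uptheta_n^{\Nmr_\Kmr^\Gmr\Rmr}$ will be obtained by applying $\Nmr_\Kmr^\Gmr$ to the maps $\uptheta_n^\Rmr$ and precomposing with a natural comparison map $\Dmr_\sfn^{\Ocal_\Gmr}(\Nmr_\Kmr^\Gmr(-)) \to \Nmr_\Kmr^\Gmr \Dmr_\sfn^{\Ocal_\Kmr}(-)$; homotopy commutativity of \eqref{Hinfty1} and \eqref{Hinfty2} for $\Nmr_\Kmr^\Gmr\Rmr$ is transported from the corresponding diagrams for $\Rmr$ by functoriality and strong symmetric monoidality.

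Next I would construct the Thom classes. Using the identifications $\Nmr_\Kmr^\Gmr \Smr^\Vmr \simeq \Smr^{\Ind\,\Vmr}$ and $\Nmr_\Kmr^\Gmr(\Sigma^{i\Vmr}\Rmr) \simeq \Sigma^{i\,\Ind\,\Vmr}\Nmr_\Kmr^\Gmr\Rmr$, applying $\Nmr_\Kmr^\Gmr$ to $\mathbf{u}^{(i)}_\Vmr$ yields a map $\Nmr_\Kmr^\Gmr\Dmr_\sfi^{\Ocal_\Kmr}(\Smr^\Vmr) \to \Sigma^{i\,\Ind\,\Vmr}\Nmr_\Kmr^\Gmr\Rmr$, and precomposing with the comparison map above gives the desired candidate $\mathbf{u}^{(i)}_{\Ind\,\Vmr}$. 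To verify \eqref{system2} and \eqref{system3}, I would chase the fact that the natural maps $\alpha_{i,j}$, $\upbeta_{i,j}$, $\partialup_\sfi$, and $\upmu^\Rmr$ all have manifest $\Gmr$-equivariant analogues that intertwine with $\Nmr_\Kmr^\Gmr$ via the comparison maps; the required identities would then reduce to the corresponding identities already assumed for the classes on $\Rmr$.

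The hard part will be constructing the comparison map $\Dmr_\sfi^{\Ocal_\Gmr}(\Nmr_\Kmr^\Gmr(-)) \to \Nmr_\Kmr^\Gmr\Dmr_\sfi^{\Ocal_\Kmr}(-)$ with the correct homotopy coherence across all $i$ simultaneously. Concretely, this amounts to understanding how $\Nmr_\Kmr^\Gmr \Emr_\Kmr\Sigma_i \cong (\Emr_\Kmr\Sigma_i)^{\times(\Gmr/\Kmr)}$, endowed with its twisted $\Gmr \times \Sigma_i$-action, compares to $\Emr_\Gmr\Sigma_i = \Emr\All_i$. The key step here will be to verify that every graph subgroup of $\Gmr \times \Sigma_i$ acts freely on $\Nmr_\Kmr^\Gmr \Emr_\Kmr\Sigma_i$ and that the relevant fixed-point spaces are non-empty, so that a canonical (up to contractible choice) $\Gmr \times \Sigma_i$-equivariant map from $\Emr_\Gmr\Sigma_i$ exists by the universal property of $\Emr\All_i$; the compatibility of these comparison maps with the operadic composition $\upbeta_{i,j}$ and the external multiplication $\alpha_{i,j}$ will then follow from the universal property together with strong symmetric monoidality of the norm.
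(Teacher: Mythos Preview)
Your proposal is correct and follows essentially the same strategy as the paper: reduce to \Cref{prop:consistentOrientationsToShiftedRing}, construct $\Nmr_\Kmr^\Gmr\Rmr$-Thom classes by norming the given ones and precomposing with a comparison map, then verify \eqref{system2} and \eqref{system3} by diagram chases (the paper packages these two verifications as \Cref{lem:familyInd}).

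The one place where you make life harder than necessary is the ``hard part''. You propose to build the comparison map by analyzing the fixed points of $\Nmr_\Kmr^\Gmr\Emr_\Kmr\Sigma_i$ for graph subgroups and then invoking the universal property of $\Emr\All_i$. The paper instead obtains the comparison map in one line from the unit $\eta\colon \Xmr\to \Map_\Kmr(\Gmr,\Xmr)$ of the restriction--coinduction adjunction, applied at the level of classifying spaces: using $\upiota_\Kmr\Bmr_\Gmr\Sigma_i\simeq \Bmr_\Kmr\Sigma_i$ and $\Map_\Kmr(\Gmr,\Xmr)_+\cong \Nmr_\Kmr^\Gmr(\Xmr_+)$, this immediately yields $\Bmr_\Gmr\Sigma_i\to \Nmr_\Kmr^\Gmr\Bmr_\Kmr\Sigma_i$ with $\eta^*\Nmr_\Kmr^\Gmr(\upgamma_\Vmr^{(i)})\cong \upgamma_{\Ind\Vmr}^{(i)}$, so $\mathbf{u}_{\Ind\Vmr}^{(i)}:=\eta^*\Nmr_\Kmr^\Gmr\mathbf{u}_\Vmr^{(i)}$. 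Naturality of $\eta$ gives the compatibility with $\alpha_{i,j}$ and $\upbeta_{i,j}$ for free, so no separate coherence analysis is needed. Your route would work, but the adjunction-unit shortcut eliminates what you flagged as the main obstacle.
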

 \begin{notn}  In this subsection, we simplify the notation of \eqref{notn:equiBundle} by writing  $\upgamma_{\Vmr}^{(i)}$ as an abbreviation for $\upgamma_{\Vmr}^{\All_i}$, where $\All_i$ denotes the $\Gmr \times \Sigma_i$ family defined in \Cref{notn:allFamily}.  The notation implicitly indicates that $\upgamma_{\Vmr}^{(i)}$ is a $\Gmr$-equivariant bundle, a property that follows from $\Vmr$ being a $\Gmr$-representation.
   \end{notn}

Suppose $\Rmr$ is a $\Vmr$-shifted homotopy $\EE_\infty^{\Kmr}$-ring for some orthogonal $\Kmr$-representation $\Vmr$. Then there exist a $\Rmr$-Thom class ${\bf \sfu}_{\Vmr}^{(i)}$ of the bundle $\upgamma_{\Vmr}^{(i)}$ such that the collection $ \{ {\bf u}_{\Vmr}^{(i)}: i\in \NN\}$
   satisfies \eqref{system2} and \eqref{system3}.  It is easy to see 
\[\Nmr_{\Kmr}^{\Gmr}\Th(\upgamma_\Vmr^{(i)})\cong \Th(\Nmr^{\Gmr}_\Kmr(\upgamma_{\Vmr}^{(i)})).\]
An $\Rmr$-Thom class, restricted to ($\Gmr$-orbit of) a point of the base space is a unit in the homotopy group of $\Rmr$. Since norms are  strong symmetric monoidal they preserve units. Thus:
\begin{lem}
    The class $\Nmr_\Kmr^{\Gmr}{\bf \sfu}_\Vmr^{(i)}$ is an $\Nmr_\Kmr^{\Gmr}\Rmr$-Thom class for $\Nmr_\Kmr^\Gmr( \upgamma_\Vmr^{(i)}))$. 
\end{lem}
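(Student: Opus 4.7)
The plan is to verify the defining property of a Thom class directly — namely, that the class restricts to a unit in the appropriate (equivariant) homotopy group on each fiber of the bundle, or equivalently, on each $\Gmr$-orbit of a point in the base space. Throughout, I will use the characterization of $\Rmr$-orientability from \cite{BZ} adopted elsewhere in the paper.

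First, I would fix an arbitrary $\Gmr$-orbit $\Gmr/\Hmr \hookrightarrow \Nmr_\Kmr^\Gmr \Bmr\All_i$ of a point in the base of $\Nmr_\Kmr^\Gmr \upgamma_\Vmr^{(i)}$. Under the identification
\[
\Nmr_\Kmr^\Gmr \Th(\upgamma_\Vmr^{(i)}) \cong \Th(\Nmr_\Kmr^\Gmr \upgamma_\Vmr^{(i)}),
\]
restriction of $\Nmr_\Kmr^\Gmr \mathsf{u}_\Vmr^{(i)}$ to the fiber over this orbit yields a class in $(\Nmr_\Kmr^\Gmr \Rmr)^*(\Gmr_+ \sma_{\Hmr} \Smr^W)$ for the appropriate orthogonal representation $W$ of $\Hmr$. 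I need to show this class is a unit (i.e.\ a $\Gmr$-equivariant equivalence onto a shifted copy of $\Nmr_\Kmr^\Gmr \Rmr$).

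Second, because the norm $\Nmr_\Kmr^\Gmr$ is built inductively from smash products and commutes with the relevant formations of Thom spaces (as recorded above the lemma), the fiber of $\Nmr_\Kmr^\Gmr \upgamma_\Vmr^{(i)}$ over the image of the unit $\Gmr$-orbit is modeled by the norm of the corresponding fiber of $\upgamma_\Vmr^{(i)}$. Consequently, restriction of $\Nmr_\Kmr^\Gmr \mathsf{u}_\Vmr^{(i)}$ to this fiber is the norm of the restriction of $\mathsf{u}_\Vmr^{(i)}$ to its corresponding fiber, up to the canonical isomorphisms provided by the strong symmetric monoidal structure of the norm.

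Third, and this is the crux: the hypothesis that $\mathsf{u}_\Vmr^{(i)}$ is an $\Rmr$-Thom class means that its fiberwise restriction is a unit of $\pi^\Kmr_*\Rmr$. The norm, being strong symmetric monoidal, preserves units — this is exactly the statement immediately preceding the lemma. Hence the norm of this unit is a unit of $\pi^\Gmr_*\Nmr_\Kmr^\Gmr \Rmr$. This shows $\Nmr_\Kmr^\Gmr \mathsf{u}_\Vmr^{(i)}$ restricts to a unit on the special unit $\Gmr$-orbit of $\Nmr_\Kmr^\Gmr \Bmr\All_i$. Finally, the general case of an arbitrary orbit follows by equivariance and the naturality of the norm construction.

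The main obstacle I anticipate is the careful bookkeeping in step two: explicitly identifying the fibers of $\Nmr_\Kmr^\Gmr \upgamma_\Vmr^{(i)}$ with norms of fibers of $\upgamma_\Vmr^{(i)}$, compatibly with the Thom-space homeomorphism. Everything else is formal consequence of the norm being strong symmetric monoidal.
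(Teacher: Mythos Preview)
Your proposal is correct and follows exactly the approach indicated in the paper: the paper's ``proof'' is the two sentences immediately preceding the lemma, namely that a Thom class is characterized by restricting to a unit over each $\Gmr$-orbit of a point in the base, and that the norm---being strong symmetric monoidal---preserves units. Your write-up simply unpacks this in more detail; the bookkeeping you flag in step two is left implicit in the paper as well.
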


The non-basepointed space-level restriction-coinduction adjunction yields a natural unit map
\[ 
\begin{tikzcd}
\eta:\Xmr \rar & \Map_{\Kmr}(\Gmr, \Xmr)
\end{tikzcd}
\]
for any $\Gmr$-space $\Xmr$. Because $\Gmr$ is a finite group, we have a $\Gmr$-equivariant homeomorphism 
\[
\Map_{\Kmr}(\Gmr, \Xmr)_+ \cong \Nmr_{\Kmr}^\Gmr(\Xmr_+).
\] 
Using this homeomorphism, we identify $\eta^*\Nmr_{\Kmr}^\Gmr(\upgamma_{\Vmr}^{(i)}) \cong \upgamma_{\Ind \Vmr}^{(i)}.$
 Thus,
\[ {\bf u}_{\Ind\Vmr}^{(i)} := \eta^*\Nmr_\Kmr^{\Gmr} {\bf u}_\Vmr^{(i)} \] is an $\Nmr_\sfe^{\Gmr}\Rmr$-Thom class for $\upgamma_{\Ind \Vmr}^{(i)}$.

\bigskip
\begin{proof}[\bf Proof of \Cref{prop:normOfShiftedRing}]
 Since $\{ {\bf u}_{\Vmr}^{(i)}: i \in \NN \}$ satisfy the conditions of \Cref{prop:consistentOrientationsToShiftedRing},  the family 
 \[ \{ {\bf u}_{\Ind \Vmr}^{(i)}: i \in \NN \} \]
 also satisfies those conditions (see \Cref{lem:familyInd} below). This immediately implies the result. 
 \end{proof}
 
 \bigskip
\begin{lem} \label{lem:familyInd}
    The family $\{{\bf \sfu}_{\Ind \Vmr}^{(i)}: i\in \NN\}$ of $\Nmr_\Kmr^\Gmr(\Rmr)$ satisfies \eqref{system2} and \eqref{system3}.
    \end{lem}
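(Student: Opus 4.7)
The plan is to transport the identities \eqref{system2} and \eqref{system3} for $\{{\bf u}_\Vmr^{(i)}\}$ across the two functors $\Nmr_\Kmr^\Gmr$ and $\eta^*$ that together produce $\{{\bf u}_{\Ind\Vmr}^{(i)}\}$. The content of the proof is that each of the structural maps $\alpha_{i,j}$, $\upbeta_{i,j}$, $\partialup_\sfi$, and the structure map $\uptheta_i^\Rmr$ appearing in those identities is natural with respect to these two operations.

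Because $\Nmr_\Kmr^\Gmr$ is strong symmetric monoidal and commutes with sifted colimits, it commutes up to canonical equivalence with the formation of smash powers (preserving the $\Sigma_n$-action), with $\Sigma_n$-orbits, and hence with the extended power functors $\Dmr_\sfn^\Ocal$. The maps $\alpha_{i,j}$ (built from the operadic composition together with the points $\iota_n \colon \ast \to \Ocal(n)$), $\upbeta_{i,j}$ (induced by $\Sigma_j \wr \Sigma_i \hookrightarrow \Sigma_{ij}$), and $\partialup_\sfi$ (induced by the diagonal on $\Emr \All_i$) are all preserved by $\Nmr_\Kmr^\Gmr$ through these canonical identifications. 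Moreover, since norms lift (homotopy) $\EE_\infty^\Kmr$-rings to $\EE_\infty^\Gmr$-rings, the structure map $\uptheta_i^{\Nmr_\Kmr^\Gmr \Rmr}$ agrees with $\Nmr_\Kmr^\Gmr \uptheta_i^\Rmr$ under these equivalences. The pullback along $\eta$ is then natural in each of these ingredients, using the identification $\eta^*\Nmr_\Kmr^\Gmr\upgamma_{t\Vmr}^{(i)} \cong \upgamma_{t\Ind\Vmr}^{(i)}$ recorded just before the lemma. Applying $\eta^*\circ \Nmr_\Kmr^\Gmr$ to \eqref{system2} and \eqref{system3} then yields exactly the two required identities for $\{{\bf u}_{\Ind\Vmr}^{(i)}\}$.

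The main obstacle is the bookkeeping needed to justify these compatibilities. Concretely, one needs to verify that $\eta^*\Nmr_\Kmr^\Gmr$ sends the universal space $\Emr\All_n^\Kmr$ (appearing inside $\Dmr_\sfn^\Ocal$) to $\Emr\All_n^\Gmr$, and that the contractible choices $\iota_n$ transport correctly through $\Nmr_\Kmr^\Gmr$ and $\eta^*$. Both facts are consequences of how the indexed-smash-product model of $\Nmr_\Kmr^\Gmr$ acts on universal spaces of $\Sigma_n$-free families, combined with the restriction-coinduction adjunction that defines $\eta$. Once these identifications are in hand, the two required identities transport automatically by applying $\eta^*\circ\Nmr_\Kmr^\Gmr$ to the homotopy commutative diagrams (specifically \eqref{HinftyV1}, \eqref{HinftyV2}, and the diagram preceding \Cref{prop:Thomsystem3}) that witness \eqref{system2} and \eqref{system3} for $\Rmr$.
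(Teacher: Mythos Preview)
Your proposal is correct and follows essentially the same approach as the paper's proof. The paper constructs, for each of \eqref{system2} and \eqref{system3}, an explicit commutative diagram whose top squares commute by naturality of the unit $\eta$ and whose bottom squares are the norm of the corresponding identity for $\{{\bf u}_{\Vmr}^{(i)}\}$; this is exactly your strategy of transporting the identities through $\Nmr_\Kmr^\Gmr$ and then pulling back along $\eta$, spelled out diagrammatically rather than functorially.
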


\begin{proof}
Consider the diagram:
    \[ \hspace{-10pt}
    \begin{tikzcd}
        \dar[dd, "{\bf \sfu}_{\Ind \Vmr}^{(i)} \wedge {\bf \sfu}_{\Ind \Vmr}^{(j)}"', bend right = 90] \Dmr_{\sfi}^{\EE_\infty^{\Gmr}}(\Smr^{n\Ind\Vmr}) \wedge \Dmr_{\sfj}^{\EE_\infty^{\Gmr}}(\Smr^{n\Ind\Vmr}) \ar[rr, "\alpha_{\sfi, \sfj}"] \ar[d, ""'] && \Dmr_{\sfi + \sfj}^{\EE_\infty^{\Gmr}}(\Smr^{n\Ind\Vmr}) \ar[d, ""] \dar[dd, "{\bf \sfu}_{\Ind \Vmr}^{(i)}", bend left = 90]\\
        \Nmr_{\Kmr}^\Gmr(\Dmr_{\sfi}^{\EE_\infty^\Kmr}\Smr^{n\Vmr}) \wedge  \Nmr_\Kmr^\Gmr(\Dmr_{\sfj}^{\EE_\infty^\Kmr}\Smr^{n\Vmr}) \ar[rr, "\Nmr_{\Kmr}^\Gmr(\alpha_{\sfi,\sfj})"] \ar[d, "\Nmr_\Kmr^{\Gmr}{\bf \sfu}_\Vmr^{(i)}\wedge \Nmr_\Kmr^{\Gmr}{\bf \sfu}_\Vmr^{(j)}"'] && \Nmr_\Kmr^\Gmr\Dmr_{\sfi+\sfj}^{\EE_\infty^\Kmr}\Smr^{n\Vmr} \ar[d, "\Nmr_\Kmr^{\Gmr}{\bf \sfu}_\Vmr^{(i+j)}"] \\
        \Sigma^{i n\Ind\Vmr} \Nmr_\Kmr^\Gmr \Rmr \wedge \Sigma^{j n\Ind\Vmr}\Nmr_\Kmr^{\Gmr}\Rmr \ar[rr, "\Nmr_\Gmr^\Gmr(\upmu_\Rmr)"] && \Sigma^{(i+ j)n \Ind \Vmr}\Nmr_\Kmr^{\Gmr}\Rmr 
    \end{tikzcd} 
    \]
   where the top vertical arrows are induced from the unit map $\eta$ for $\Emr_\Gmr \Sigma_{(-)}$.  Thus, the top square commutes due to the naturality of $\eta$. The bottom square commutes because it is the norm of \eqref{system2} for the family $\{ \sfu^{(i)}_\Vmr: i \in \NN\}$. Consequently,  the entire diagram commutes  which shows that  the family  $\{{\bf \sfu}_{\Ind \Vmr}^{(i)}: i\in \NN\}$ also satisfies the condition specified in \eqref{system2}.
    
  Next, we consider the diagram:
    \[
        \begin{tikzcd}
            &\Dmr_{\sfi}^{\EE_\infty^\Gmr}\Dmr_{\sfj}^{\EE_\infty^\Gmr} \Smr^{n\Ind \Vmr} \ar[r, "\upbeta_{\sfi, \sfj}"] \ar[d, ""] \ar[dl, "\Dmr_{\sfi}{\bf \sfu}_{\Ind\Vmr}^{(j)}"', bend right = 20] & \Dmr_{\sfi\sfj}^{\EE_\infty^{\Gmr}} \Smr^{n\Ind \Vmr} \ar[d, ""] \ar[dd, "{\bf \sfu}_{\Ind\Vmr}^{(ij)}", bend left = 90]\\
            \Dmr_{\sfi}^{\EE_\infty^{\Gmr}} \Sigma^{j n\Ind \Vmr}\Nmr_\Kmr^{\Gmr}\Rmr \ar[dr, "", bend right = 20] & \Nmr_\Kmr^{\Gmr}\Dmr_\sfi^{\EE_\infty^{\Kmr}}\Dmr_\sfj^{\EE_\infty^{\Kmr}}\Smr^{n\Vmr} \ar[r, "\upbeta_{\sfi, \sfj}"] \ar[d, "\Dmr_\sfi{\bf \sfu}_{\Vmr}^{(j)}"] & \Nmr_\Kmr^\Gmr\Dmr_{\sfi\sfj}^{\Emr_\infty^{\Kmr}} \Smr^{n\Vmr} \ar[d, "{\bf \sfu}_{\Vmr}^{(ij)}"]\\
            & \Nmr_{\Kmr}^\Gmr \Dmr_{\sfi}^{\EE_\infty^{\Kmr}} \Sigma^{j n} \Rmr \ar[r, "\Nmr_{\Kmr}^{\Gmr}(\uptheta_{\sfi, \sfj}^{\Rmr})"] & \Sigma^{i j n \Ind\Vmr} \Nmr_\Kmr^\Gmr \Rmr
        \end{tikzcd}
    \]
  where the unlabelled arrows in the top square are induced from the unit map $\eta$ for $\Emr_\Gmr \Sigma_{(-)}$.  It is straightforward to check that this diagram commutes, and therefore the family $\{{\bf \sfu}_{\Ind \Vmr}^{(i)}: i\in \NN\}$ satisfies \eqref{system3}. 
\end{proof}

Rephrasing the result  \cite[Proposition I.4.5]{Hinfty} in our language we conclude that $\Hmr\FF_p$ is an $\RR^d$-shifted homotopy $\EE_\infty$-ring, where $d=1$ when $p=2$ and $d =2$ when $p$ is odd. Using the counit of the norm-restriction adjunction, we have an $\EE_\infty^\Gmr$-ring map
\[ 
\begin{tikzcd}
\Nmr_{\sfe}^{\Gmr} \Hmr\FF_p \rar & \Hmr\ull{\FF}_p.
\end{tikzcd}
\]
Therefore, from \Cref{lem:mapshiftVring} we get: 

\begin{thm}
    The spectrum $\Hmr\ull{\FF}_p$ admits a functorial $d\uprho$-shifted homotopy $\EE_\infty^{\Gmr}$-ring structure, where $d=1$ when $p=2$ and $d =2$ when $p$ is odd. 
\end{thm}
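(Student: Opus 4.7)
The plan is to assemble the proof by threading together three ingredients already in place: the nonequivariant shifted structure on $\Hmr\FF_p$, the norm construction, and the transfer of shifted structure along homotopy ring maps.

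First, I would invoke the classical input \cite[Proposition I.4.5]{Hinfty} (rephrased in the paper's language just before the theorem) to regard $\Hmr\FF_p \in \Sp$ as an $\RR^d$-shifted homotopy $\EE_\infty$-ring, where $d=1$ for $p=2$ and $d=2$ for odd $p$. This gives the basepoint: a coherent family of $\Hmr\FF_p$-Thom classes $\{\mathbf{u}_{\RR^d}^{(i)} : i \in \NN\}$ satisfying \eqref{system2} and \eqref{system3}.

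Next, I would apply \Cref{prop:normOfShiftedRing} with $\Kmr = \sfe$ and $\Vmr = \RR^d$. Since $\Nmr_\sfe^\Gmr$ is strong symmetric monoidal and lifts to a functor on homotopy $\EE_\infty$-rings, the conclusion is that $\Nmr_\sfe^\Gmr \Hmr\FF_p$ is an $\Ind_\sfe^\Gmr(\RR^d)$-shifted homotopy $\EE_\infty^\Gmr$-ring. The one computation to note is that $\Ind_\sfe^\Gmr(\RR^d) \cong \RR[\Gmr]^{\oplus d} = d\uprho_\Gmr$ as an orthogonal $\Gmr$-representation, so the shift degree is exactly $d\uprho$.

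Finally, I would use the counit of the norm-restriction adjunction, which provides an $\EE_\infty^\Gmr$-ring map
\[
\Nmr_\sfe^\Gmr \Hmr\FF_p \longrightarrow \Hmr\ull{\FF}_p,
\]
and apply \Cref{lem:mapshiftVring} to transport the $d\uprho$-shifted homotopy $\EE_\infty^\Gmr$-ring structure from the source to the target. Explicitly, pushing the family $\{\Nmr_\sfe^\Gmr \mathbf{u}_{\RR^d}^{(i)}\}$ forward along this map produces a family of $\Hmr\ull{\FF}_p$-Thom classes $\{\mathbf{u}_{d\uprho}^{(i)} : i \in \NN\}$ that still satisfies \eqref{system2} and \eqref{system3}, which by \Cref{prop:consistentOrientationsToShiftedRing} is precisely the data of the required shifted structure.

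For functoriality, I would observe that every step above is natural: the norm $\Nmr_\sfe^\Gmr$ is a functor, the counit is a natural transformation, and the construction of the transported Thom classes in \Cref{lem:mapshiftVring} is by pushforward. I do not expect any substantive obstacle here; the content of the theorem has been distributed across the preceding propositions, and the task is simply to specialize $\Kmr = \sfe$, $\Vmr = \RR^d$ and identify the induced representation with $d\uprho$. The one point worth a brief remark is that the counit really is a map of homotopy $\EE_\infty^\Gmr$-rings (not merely of $\Gmr$-spectra), which is standard for the norm-restriction adjunction on commutative algebras.
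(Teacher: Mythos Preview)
Your proposal is correct and follows essentially the same approach as the paper: invoke the nonequivariant $\RR^d$-shifted structure on $\Hmr\FF_p$, norm it up via \Cref{prop:normOfShiftedRing} (noting $\Ind_\sfe^\Gmr(\RR^d) = d\uprho$), and then push forward along the counit $\Nmr_\sfe^\Gmr\Hmr\FF_p \to \Hmr\ull{\FF}_p$ using \Cref{lem:mapshiftVring}. The paper's proof is in fact just the two sentences preceding the theorem statement, and your write-up expands exactly those steps.
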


\begin{prop} \label{prop:GfixV}
    If $\Rmr$ is a $\Vmr$-shifted homotopy $\EE_\infty^{\Gmr}$ ring, then $\Phi^{\Kmr}\Rmr$ is a $\Vmr^{\Kmr}$-shifted homotopy $\EE_\infty^{\Kmr}$-ring. 
\end{prop}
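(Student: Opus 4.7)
The plan is to apply the criterion of \Cref{prop:consistentOrientationsToShiftedRing}: it suffices to produce a coherent family of $\Phi^{\Kmr}\Rmr$-Thom classes for the bundles $\upgamma_{\Vmr^{\Kmr}}^{(i)}$ (for $i \in \NN$) satisfying the compatibilities \eqref{system2} and \eqref{system3}. First I would note that because $\Phi^{\Kmr}$ is strong symmetric monoidal and commutes with $\Sigma^\infty$, the $\Kmr$-geometric fixed points of an $\EE_\infty^{\Gmr,\sfh}$-ring is automatically an $\EE_\infty^{\Wmr(\Kmr),\sfh}$-ring, with structure maps $\uptheta_n^{\Phi^\Kmr\Rmr}$ constructed as in the paragraph preceding \Cref{lem:power-geofix}. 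So the ``underlying'' homotopy $\EE_\infty$-structure on $\Phi^\Kmr\Rmr$ is already in place, and we only need to equip it with the $\Vmr^{\Kmr}$-shift.

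For each $i \in \NN$ I would define the candidate Thom class
\[ {\bf u}_{\Vmr^{\Kmr}}^{(i)} := \widehat{\uplambda}^{\ast}\,\varphi^{\Kmr}({\bf u}_{\Vmr}^{(i)}) \in (\Phi^{\Kmr}\Rmr)^{i \Vmr^{\Kmr}}\bigl(\Th(\upgamma_{\Vmr^\Kmr}^{(i)})\bigr), \]
where $\widehat{\uplambda}$ is the natural map from \eqref{com:extended}. As already observed in the proof of \Cref{thm:Extendpower}, this class is indeed a $\Phi^\Kmr\Rmr$-Thom class for $\upgamma_{\Vmr^{\Kmr}}^{(i)}$, because the pullback of $(\upgamma_{\Vmr}^{(i)})^{\Kmr}$ along the collapse-type map identifies with $\upgamma_{\Vmr^{\Kmr}}^{(i)}$, as explained before \Cref{defn:geofixEulerSeq}. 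The construction is manifestly ``functorial'' in $i$ via $\widehat{\uplambda}$ and $\varphi^\Kmr$.

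Next I would check \eqref{system2} and \eqref{system3} for this family. For \eqref{system2}, the key input is naturality of the operadic composition map $\alpha_{i,j}$ with respect to $\widehat{\uplambda}$ together with the strong monoidality of $\Phi^\Kmr$, which gives a homotopy commutative square identifying $\alpha_{i,j}^{\ast}\widehat{\uplambda}^{\ast}\varphi^\Kmr({\bf u}_{\Vmr}^{(i+j)})$ with $\widehat{\uplambda}^\ast\varphi^\Kmr(\alpha_{i,j}^\ast{\bf u}_{\Vmr}^{(i+j)})$; applying $\widehat{\uplambda}^\ast\varphi^\Kmr$ to the identity $\alpha_{i,j}^\ast{\bf u}_{\Vmr}^{(i+j)} = {\bf u}_{\Vmr}^{(i)}\times{\bf u}_{\Vmr}^{(j)}$ and using that $\varphi^\Kmr$ preserves external products yields the desired equality. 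The verification of \eqref{system3} is essentially the same chase, now combining naturality of $\upbeta_{i,j}$ and $\partialup_i$ with the fact that $\Phi^\Kmr$ sends $\upmu_\Rmr$ to $\upmu_{\Phi^\Kmr\Rmr}$ and $\uptheta^\Rmr_i$ to $\uptheta^{\Phi^\Kmr\Rmr}_i$ (up to canonical homotopy via $\widehat{\uplambda}$).

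The main bookkeeping obstacle is the iterated case appearing in \eqref{system3}: one must compose two layers of $\widehat{\uplambda}$ (for $\Dmr_{\sfi}^\Ocal\Dmr_{\sfj}^\Ocal$) and confirm they intertwine with $\upbeta_{i,j}$ coherently. This reduces to the associativity-like compatibility between $\widehat{\uplambda}$ and the operadic structure, which ultimately comes from the fact that $\widetilde{\Emr\mathscr{P}_\Kmr}\sma\widetilde{\Emr\mathscr{P}_\Kmr}\simeq \widetilde{\Emr\mathscr{P}_\Kmr}$ (the same input used to construct $\widehat{\uplambda}$ in the first place); no new conceptual ingredient is needed. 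Granted these diagram chases, \Cref{prop:consistentOrientationsToShiftedRing} upgrades $\Phi^\Kmr\Rmr$ to a $\Vmr^\Kmr$-shifted homotopy $\EE_\infty^{\Wmr(\Kmr)}$-ring, completing the proof.
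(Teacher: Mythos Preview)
Your proposal is correct and follows essentially the same approach as the paper: define ${\bf u}_{\Vmr^{\Kmr}}^{(i)} := \widehat{\uplambda}^{\ast}\varphi^{\Kmr}({\bf u}_{\Vmr}^{(i)})$, observe it is a $\Phi^{\Kmr}\Rmr$-Thom class, and invoke \Cref{prop:consistentOrientationsToShiftedRing} after verifying \eqref{system2} and \eqref{system3} by diagram chases. The paper's proof is terser (it simply asserts the diagram chases are ``straightforward''), whereas you spell out the relevant naturality and monoidality inputs, including the iterated $\widehat{\uplambda}$ compatibility needed for \eqref{system3}; this added detail is welcome and accurate.
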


\begin{proof}
    Let $\{ {\bf \sfu}_{\Vmr}^{(i)}: i \in \NN \}$ denote the family $\Rmr$-Thom classes satisfying  \eqref{system2} and \eqref{system3} that determine  the $\Vmr$-shifted homotopy ring structure of $\Rmr$. Define ${\bf \sfu}_{\Vmr^\Kmr}^{(i)}$ to be the composite
    \[
  {\bf \sfu}_{\Vmr^\Kmr}^{(i)} : \begin{tikzcd}
        \Dmr_\sfi^{\EE_\infty^{\Kmr}}(\Smr^{\Vmr^{\Kmr}}) \ar[r, "\wh{\uplambda}"] & \Phi^{\Kmr}\Dmr_\sfi^{\EE_\infty^{\Gmr}}(\Smr^{\Vmr}) \ar[rr, "\varphi^\Kmr({\bf \sfu}_{\Vmr}^{(i)})"] && \Phi^{\Kmr}\Sigma^{\sfi\Vmr} \Rmr \ar[r, cong] & \Sigma^{\sfi\Vmr^{\Kmr}}\Phi^{\Kmr}\Rmr
    \end{tikzcd},
    \]
    where $\wh{\uplambda}$ is the natural map  defined in \eqref{com:extended}. It is easy to see ${\bf \sfu}_{\Vmr^\Kmr}^{(i)}$ is a $\Phi^{\Kmr}\Rmr$-Thom class of $\upgamma_{\Vmr^\Kmr}^{(i)}$. A straightforward diagram chase shows that the family $\{   {\bf \sfu}_{\Vmr^\Kmr}^{(i)}: i \in \NN \}$
 also satisfies \eqref{system2} and \eqref{system3}. Hence, the result.  
  \end{proof}
\begin{thm}
    When $p$ is odd, $\Hmr\ull{\FF}_p$ does not admit a $\uprho$-shifted homotopy $\EE_\infty^\Gmr$-ring structure. 
\end{thm}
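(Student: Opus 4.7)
I would prove this by contradiction, reducing the equivariant claim to the classical fact that $\upgamma_{\RR}^{(p)}$ fails to be $\Hmr\FF_p$-orientable over $\Bmr\Sigma_p$ when $p$ is odd.

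Assume that $\Hmr\ull{\FF}_p$ admits a $\uprho_\Gmr$-shifted homotopy $\EE_\infty^\Gmr$-ring structure. First I would apply \Cref{prop:GfixV} with $\Kmr = \Gmr$ to conclude that $\Phi^\Gmr(\Hmr\ull{\FF}_p)$ carries a $(\uprho_\Gmr)^\Gmr$-shifted homotopy $\EE_\infty$-ring structure. The $\Gmr$-fixed subspace of the regular representation is the one-dimensional trivial summand spanned by $\sum_{g\in\Gmr}g$, so $(\uprho_\Gmr)^\Gmr \cong \RR$. Next, the zeroth Postnikov truncation $\uppi : \Phi^\Gmr(\Hmr\ull{\FF}_p) \to \Hmr\FF_p$ is an $\EE_\infty$-ring map (as recalled in the excerpt around \eqref{eqn:modgeofix}); combined with \Cref{lem:mapshiftVring}, this shows that the nonequivariant spectrum $\Hmr\FF_p$ would inherit an $\RR$-shifted homotopy $\EE_\infty$-ring structure.

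At this point I invoke \Cref{prop:consistentOrientationsToShiftedRing}: such a structure provides a compatible family of $\Hmr\FF_p$-Thom classes $\{ {\bf u}_{\RR}^{(i)} \}_{i\in\NN}$ with ${\bf u}_\RR^{(i)} \in \Hmr^i(\Th(\upgamma_\RR^{(i)}); \FF_p)$. In particular, we would need an $\Hmr\FF_p$-orientation of $\upgamma_\RR^{(p)}$, the $\Sigma_p$-permutation bundle over $\Bmr\Sigma_p$. However, the determinant line bundle of $\upgamma_\RR^{(p)}$ is classified by the nontrivial sign character $\mathrm{sgn}: \Sigma_p \to \{\pm 1\}$, and by the twisted Thom isomorphism one has
\[ \Hmr^p(\Th(\upgamma_\RR^{(p)}); \FF_p) \;\cong\; \Hmr^0(\Bmr\Sigma_p;\, \FF_p(\mathrm{sgn})) \;\cong\; \FF_p(\mathrm{sgn})^{\Sigma_p}, \]
where $\FF_p(\mathrm{sgn})$ is $\FF_p$ twisted by the sign representation. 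Since $p$ is odd, $\mathrm{sgn}$ acts on $\FF_p$ as multiplication by $-1 \neq 1$, so
\[ \FF_p(\mathrm{sgn})^{\Sigma_p} \;=\; \{ a \in \FF_p : a = -a \} \;=\; \{a \in \FF_p : 2a = 0\} \;=\; 0, \]
since $2$ is a unit in $\FF_p$. Consequently $\Hmr^p(\Th(\upgamma_\RR^{(p)}); \FF_p) = 0$, so no Thom class ${\bf u}_\RR^{(p)}$ can exist, contradicting the assumption.

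The core of the proof is the computation showing that $\upgamma_\RR^{(p)}$ is not $\Hmr\FF_p$-orientable in odd characteristic; the rest is a chain of reductions using the geometric fixed-point and Postnikov machinery developed earlier in the section. The main subtlety is ensuring that \Cref{prop:GfixV} and \Cref{lem:mapshiftVring} really transport the shifted structure all the way down to $\Hmr\FF_p$—this is essentially the content of those results—after which the classical obstruction for $\upgamma_\RR^{(p)}$ provides the contradiction.
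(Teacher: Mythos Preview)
Your argument is correct and follows essentially the same route as the paper: assume the structure, pass to $\Phi^\Gmr$ via \Cref{prop:GfixV}, push along the Postnikov truncation via \Cref{lem:mapshiftVring}, and derive a contradiction from the nonexistence of an $\RR$-shifted homotopy $\EE_\infty$-structure on $\Hmr\FF_p$. The paper obtains this last contradiction by citing \cite[Proposition~I.4.5]{Hinfty}, whereas you supply an explicit proof via the vanishing of $\Hmr^p(\Th(\upgamma_\RR^{(p)});\FF_p)$; note that the direction you need (shifted structure $\Rightarrow$ Thom classes) is the construction around \eqref{HinftyThom} rather than \Cref{prop:consistentOrientationsToShiftedRing}, which is the converse.
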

\begin{proof} Assume, for the sake of contradiction, $\Hmr\ull{\FF}_p$ admits such a  structure. Then, by \Cref{prop:GfixV}, its geometric fixed points $\Phi^\Gmr \Hmr\ull{\FF}_p$ would admit an $\RR$-shifted homotopy $\EE_\infty$-ring structure. Since the zeroth Postnikov approximation provides an $\EE_\infty$-ring map
\[ 
\begin{tikzcd}
 \Phi^\Gmr \Hmr\ull{\FF}_p \rar & \Hmr\FF_p
 \end{tikzcd} \]
which implies that $\Hmr\FF_p$ itself admits an $\RR$-shifted homotopy $\EE_\infty$-ring structure (by \Cref{lem:mapshiftVring}). This, however, contradicts \cite[Proposition I.4.5]{Hinfty}.
\end{proof}

\subsection{Composition of Eulerian sequences} \ 

\begin{defn} \label{defn:intES} We say that $\upchi = (\sfx_0, \sfx_1, \dots)$ 
is an {\bf integral $\Vmr$-stable $\Rmr$-Eulerian sequence} if $\chi$ satisfies \Cref{defn:Eulerian} and  $|\sfx_i| \in \RO(\Gmr, \Vmr)$ is non-virtual for all $i \in \NN$. 
\end{defn}

Let $\Ecal^{(n)}_{\Rmr, \Vmr}$ denote the collection of integral $\Vmr$-stable $\Rmr$-Eulerian sequence of weight $n$ for a $\Vmr$-shifted homotopy $\Ocal$-ring $\Rmr$, where $\Ocal$ is an $\Ncal_\infty$-operad. Our goal is to define a strictly associative pairing
\[ 
\begin{tikzcd}
\odot: \Ecal^{(n)}_{\Rmr, \Vmr} \times \Ecal^{(m)}_{\Rmr, \Vmr} \rar & \Ecal^{(nm)}_{\Rmr, \Vmr}
\end{tikzcd}
\]
and prove \Cref{thm:composeES}. 

Since $\Rmr$ is an $\Ocal^{\sfh}_{[\Vmr]}$-ring, $\td{\upgamma}_{\Vmr}^{(i)}$ admits an $\Rmr$-Thom class $\td{\bf u}_{\Vmr}^{(i)}$ for all $i \in \NN$ such that the family  $\{ {\bf u}_\Vmr^{(i)}: i \in \NN\}$ satisfies the conditions of \eqref{system2} and \eqref{system3}. Let $\td{\bf e}_\Vmr^{(i)}$ denote the corresponding Euler class. 

\begin{notn} We  introduce a slight abuse of notation for a non-basepointed $\Gmr$-space $\Xmr$, defining its $\Fscr$-th extended power as:
\[ \Dmr^{\Fscr}_{\sfn}(\Xmr) := \Emr\Fscr \times_{\Sigma_n} \Xmr^{\times \sfn} \]
This notation is consistent with the basepointed case, as there is a canonical homeomorphism $\Dmr^{\Fscr}_{\sfn}(\Xmr_+) \cong \Dmr^{\Fscr}_{\sfn}(\Xmr)_+$. 
\end{notn}

We now consider the standard inclusion of the wreath product 
\[ \begin{tikzcd}
i: \Sigma_m\wr \Sigma_n \cong \Sigma_m^{\times n} \rtimes \Sigma_n \rar[hook] & \Sigma_{mn}.
\end{tikzcd}
\]
The pullback of $\uptau_{mn}$ along this inclusion is isomorphic to the representation \[ \uptau_m^{\times \sfn} := \Map(\sfn, \uptau_m), \] where $\Sigma_n$ acts on $\sfn= \{ 1, \dots, n\}$ by permutation. The action of  the element  $((s_1, \dots, s_n), \sigma^{-1}) \in \Sigma_{m} \wr \Sigma_n$  on $\uptau_{m}^{\times \sfn}$ is given by the  formula  
\[  
((s_1, \dots, s_n), \sigma) \cdot (v_1, \dots ,v_{n}) = (s_{\sigma(1)} \cdot v_{\sigma(1)}, \dots,  s_{\sigma(n)} \cdot v_{\sigma(n)} ).
\] 
Thus, the pullback of $\td{\uptau}_{mn}$ along $i$ decomposes into a direct sum of the form  
\[ 
i^* \td{\uptau}_{mn} \cong \wh{\uptau}_n \oplus \td{\uptau}_m^{\times \sfn}, 
\] 
  where $\wh{\uptau}_{n}$ is the pullback of $\td{\uptau}_n$ along the quotient map $\Sigma_m \wr \Sigma_n \twoheadrightarrow \Sigma_n$.
\begin{notn} Let $\Bscr_n$ denote the $\Gmr$-space $\Bmr \Fscr_n(\Ocal) = \Ocal(n) \times_{\Sigma_n} \ast$.
\end{notn}  
The map $i$ also results in the map 
\begin{equation} \label{map:Biota}
\begin{tikzcd}
\iota: \Dmr^{\Ocal}_{\sfn}(\Bscr_{m}) \cong \Ocal(n) \times_{\Sigma_n} (\Bscr_{m})^{\times n} \rar &   \Bscr_{mn}. 
 \end{tikzcd}
 \end{equation}
The pullback of  $\upgamma_{\Vmr}^{(mn)}$ along $\iota$ is the bundle 
 \begin{equation*}
  \begin{tikzcd}
\left(\Ocal(n)\times (\Ocal(m)^{\times n}\right)\times_{\Sigma_m\wr \Sigma_n}\left((\Vmr\otimes(   \uptau_m^{\times \sfn} )\right) \dar \\
  \Dmr^{\Ocal}_{\sfn}(\Bscr_{m}). 
    \end{tikzcd}
\end{equation*}
The above bundle is isomorphic to $\Dmr_\sfn^\Ocal(\upgamma^{(m)}_\Vmr)$, which is obtained by applying $\Dmr_{\sfn}^{\Ocal}(-)$ to both the total space and the base space of $\upgamma^{(m)}_\Vmr$. Likewise, 
\begin{equation*}
 \iota^*\td{\upgamma}^{(mn)}_\Vmr \cong   \begin{tikzcd}
\left(\Ocal(n)\times (\Ocal(m)^{\times n}\right)\times_{\Sigma_m\wr \Sigma_n}\left((\Vmr\otimes(  \wh{\uptau}_n \oplus \td{\uptau}_m^{\times \sfn} )\right) \dar \\
  \Dmr^{\Ocal}_{\sfn}(\Bscr_{m}). 
    \end{tikzcd}  
\end{equation*}
is isomorphic to the direct sum $q^*\td{\upgamma}_{\Vmr}^{(n)}  \oplus \Dmr_{\sfn}(\td{\upgamma}_{\Vmr}^{(m)})$, where 
\[ 
\begin{tikzcd}
q: \Dmr^{\Ocal}_{\sfn}(\Bscr_{m}) \rar &\Bscr_{\sfn}
\end{tikzcd}
\]
is map induced by the quotient $\Sigma_m \wr \Sigma_n \twoheadrightarrow \Sigma_n$. 
 This enables us to express the $\Rmr$-Euler class of $ \iota^*\td{\upgamma}^{(mn)}_\Vmr$ in terms of the $\Rmr$-Euler class of $\td{\upgamma}_{\Vmr}^{(m)}$ and $\td{\upgamma}_{\Vmr}^{(n)}$. 

Now fix two $\Rmr$-Eulerian sequence 
\[\upchi^{(1)} = (\sfx_0, \sfx_1, \sfx_2, \dots ) \in \Ecal_{\Rmr, \Vmr}^{(n)} \quad\text{and}\quad \upchi^{(2)}= (\sfy_0, \sfy_1, \sfy_2, \dots) \in \Ecal_{\Rmr, \Vmr}^{(m)}\]
of weight $m$ and $n$ respectively. 
\begin{notn} For convenience, we introduce the notations: 
    \begin{itemize}
        \item ${\bf e}_x:=\td{\bf e}^{(n)}_{\Vmr} \in \Rmr^{(n-1)\Vmr}\left((\Bscr_n)_+\right)$.
        \item ${\bf e}_y:=\td{\bf e}_{\Vmr}^{(m)} \in \Rmr^{(m-1)\Vmr}\left((\Bscr_m)_+\right)$. 
    \end{itemize}
\end{notn}
Let \[ {\bf e}_{xy} \in \Rmr^{(mn-1)\Vmr}(\Dmr_{\sfn}^{\Ocal}(\Bscr_{m})_+ ) \] denote the class represented by the following composite map:  
\begin{equation*}
 \begin{tikzcd}
   {\bf e}_{xy}:   \Dmr^{\Ocal}_{\sfn}( \Bscr_{m})_+
     \ar[r,"{\Dmr^{\Ocal}_{{{\sf n}}}({\bf e}_y)}"] &
     \Dmr^{\Ocal}_{\sfn}(\Sigma^{(m-1)\Vmr} \Rmr) \ar[d, "\partialup_{\sfn}"]\\
     &
       \Dmr^{\Ocal}_{\sfn}(\Smr^{(m-1)\Vmr}) \sm  \Dmr^{\Ocal}_{\sfn}(\Rmr ) \dar["1 \sm \uptheta_{n}^\Rmr "] \\
       & \Dmr^{\Ocal}_{\sfn}(\Smr^{(m-1)\Vmr}) \sm \Rmr 
       \ar[simeq, d,"\hspace{40pt}\Tmr_{m-1}"]  \\
       & \Sigma^{(m-1)n\Vmr}  \Bscr_{n+} \sm \Rmr \ar[r,"{\bf e}_{x}"]
       &\Sigma^{(nm-1)\Vmr} \Rmr
  \end{tikzcd}
\end{equation*}
Here $\Tmr_{k}$ is the $\Rmr$-Thom isomorphism 
\begin{equation} \label{eqn:Tm}
\Tmr_{k}: \Dmr^{\Ocal}_{\sfn}(\Smr^{k\Vmr}) \sm \Rmr \simeq \Sigma^{kn\Vmr}  \Bscr_{n+} \sm \Rmr
\end{equation}
 defined using the $\Rmr$-Thom class ${\bf u}_{k\Vmr}^{(n)}$ (via the Thom diagonal) and the identification $\Dmr^{\Ocal}_{\sfn}(\Smr^{k \Vmr}) \cong \Th(k \upgamma^{(mn)}_\Vmr)$. Note that the $\Rmr$-Euler class of the bundle $\Dmr_\sfn^{\Ocal}(\upgamma^{(m)}_\Vmr)$  and $q^{\ast}\td{\upgamma}_\Vmr^{(n)}$ are  given by the following composites, respectively: 
 \[ \begin{tikzcd}
  \Dmr^{\Ocal}_{\sfn}( \Bscr_{m})_+
     \ar[r,"{\Dmr^{\Ocal}_{{{\sf n}}}({\bf e}_y)}"] &
     \Dmr^{\Ocal}_{\sfn}(\Sigma^{(m-1)\Vmr} \Rmr) \ar[rr, "\uptheta_{n, m-1}^{\Rmr}"] && \Sigma^{n(m-1) \Vmr} \Rmr \\   
 \end{tikzcd} \]
 \[ 
 \begin{tikzcd}
  \Dmr^{\Ocal}_{\sfn}( \Bscr_{m})_+ \rar["q"] & \Bscr_{n +} \rar["{\bf e}_x"] & \Sigma^{(n-1) \Vmr} \Rmr  
 \end{tikzcd}
 \]
 The class ${\bf e}_{xy}$ is their cup product. This  is implicit in the diagram in the following lemma: 
 
 \begin{lem} \label{lem:EulerPB}  The class ${\bf e}_{xy}$ is the pullback of the $\Rmr$-Euler class  $ \td{\bf e}_{\Vmr}^{(mn)}$ of the bundle $\td{\upgamma}_{\Vmr}^{(mn)}$ along the map $\iota$ define in \eqref{map:Biota}. 
\end{lem}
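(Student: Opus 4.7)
The plan is to reduce the claim to two inputs: the multiplicativity of $\Rmr$-Thom (hence Euler) classes under Whitney sum, and the compatibility equation \eqref{system3} built into the $\Vmr$-shifted homotopy $\Ocal$-ring structure of $\Rmr$. The Euler class of an oriented bundle is the pullback of its Thom class along the zero section, so it suffices to identify $\iota^{*}\td{\bf u}^{(mn)}_{\Vmr}$ after further pullback along the zero section of $\iota^{*}\td{\upgamma}^{(mn)}_{\Vmr}$, using the bundle splitting $\iota^{*}\td{\upgamma}^{(mn)}_{\Vmr}\cong q^{*}\td{\upgamma}_{\Vmr}^{(n)}\oplus \Dmr_{\sfn}^{\Ocal}(\td{\upgamma}_{\Vmr}^{(m)})$ already established just before the lemma.

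First I would treat the two summands separately. For $q^{*}\td{\upgamma}_{\Vmr}^{(n)}$, naturality of Thom classes gives that its $\Rmr$-Thom class is $q^{*}\td{\bf u}^{(n)}_{\Vmr}$, and pulling back along the zero section yields $q^{*}{\bf e}_{x}$. For $\Dmr_{\sfn}^{\Ocal}(\td{\upgamma}_{\Vmr}^{(m)})$, the associated Thom space is $\Dmr_{\sfn}^{\Ocal}(\Th(\td{\upgamma}_{\Vmr}^{(m)}))$, and its $\Rmr$-Thom class is produced by the composite that applies $\Dmr_{\sfn}^{\Ocal}$ to $\td{\bf u}^{(m)}_{\Vmr}$, then uses the oplax map $\partialup_{\sfn}$ to split off a $\Dmr_{\sfn}^{\Ocal}(\Smr^{(m-1)\Vmr})$-factor, then consumes $\Dmr_{\sfn}^{\Ocal}(\Rmr)$ via $\uptheta_{n}^{\Rmr}$, and finally applies the Thom isomorphism $\Tmr_{m-1}$ determined by ${\bf u}^{(n)}_{(m-1)\Vmr}$; this is precisely the content of \eqref{system3} (after stripping the outer suspension and using \eqref{eqn:tdu} to convert ${\bf u}$ to $\td{\bf u}$). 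Restricting along the zero section then turns $\Dmr_{\sfn}^{\Ocal}(\td{\bf u}^{(m)}_{\Vmr})$ into $\Dmr_{\sfn}^{\Ocal}({\bf e}_{y})$, giving exactly the first three maps in the definition of ${\bf e}_{xy}$.

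Next, I would combine the two factors through the multiplicativity of Thom classes: an isomorphism of oriented bundles $E \cong E_1 \oplus E_2$ identifies the total Thom class with the external product $\td{\bf u}_{E_1}\times \td{\bf u}_{E_2}$ followed by the $\Rmr$-multiplication (this is a mild variant of \eqref{eqn:Thomclassadd}). Pulling back along the zero section, the total Euler class of $\iota^{*}\td{\upgamma}^{(mn)}_{\Vmr}$ becomes the cup product of the two Euler classes computed above. The Thom isomorphism $\Tmr_{m-1}$ in the definition of ${\bf e}_{xy}$ converts the $\Dmr_{\sfn}^{\Ocal}(\Smr^{(m-1)\Vmr})$-slot into $\Sigma^{(m-1)n\Vmr}(\Bscr_{n})_{+}$, and capping with $q^{*}{\bf e}_{x}$ is then the same as smashing with ${\bf e}_{x}$ on the $(\Bscr_{n})_{+}$-factor and multiplying in $\Rmr$; this assembly exactly reproduces the composite diagram that defines ${\bf e}_{xy}$.

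The main obstacle is not any single step but the coherent diagram chase that bridges the two viewpoints: one must verify that the splitting $\iota^{*}\td{\upgamma}^{(mn)}_{\Vmr} \cong q^{*}\td{\upgamma}_{\Vmr}^{(n)}\oplus \Dmr_{\sfn}^{\Ocal}(\td{\upgamma}_{\Vmr}^{(m)})$ intertwines the $\Rmr$-Thom class $\td{\bf u}^{(mn)}_{\Vmr}$ with the external product of Thom classes on the summands, and that the suspension-stripping used to pass from ${\bf u}^{(?)}$ to $\td{\bf u}^{(?)}$ via \eqref{eqn:tdu} is compatible with this external product. All of this is homotopy commutative rather than strict, and must be carried out in $\Ho(\Sp_{\Gmr})$, where \eqref{system2} and \eqref{system3} are the only tools available; the care needed to make these compatibilities line up with the Thom isomorphism $\Tmr_{m-1}$ (defined via ${\bf u}^{(n)}_{(m-1)\Vmr}$ rather than $\td{\bf u}^{(n)}_{(m-1)\Vmr}$) is where the bookkeeping is most delicate.
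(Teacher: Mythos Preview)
Your proposal is correct and follows essentially the same approach as the paper: decompose $\iota^{*}\td{\upgamma}^{(mn)}_{\Vmr}$ as $q^{*}\td{\upgamma}^{(n)}_{\Vmr}\oplus \Dmr_{\sfn}^{\Ocal}(\td{\upgamma}^{(m)}_{\Vmr})$, identify the Euler classes of the summands (the second via \eqref{system3}), and combine them by multiplicativity of Thom classes. The paper simply makes explicit the large commutative diagram you describe as the ``main obstacle,'' verifying the coherence among zero sections, the sphere diagonals $\Delta_{\Vmr}$, the map $\upbeta_{n,m}$, and the Thom classes ${\bf u}^{(n)}_{(m-1)\Vmr}$, ${\bf u}^{(m)}_{\Vmr}$, ${\bf u}^{(mn)}_{\Vmr}$, citing \eqref{eqn:Thomclassadd} and \Cref{prop:Thomsystem3} exactly as you anticipate.
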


\begin{proof} Let  $\td{\zeta}^{(k)}_{t \Vmr}$ and $ \zeta^{(k)}_{t\Vmr}$  denote the zero sections of $t\td{\upgamma}_{ \Vmr}^{(k)}$ and $t\upgamma_{ \Vmr}^{(k)}$, respectively.  Using  the  identification $\Dmr_{\sfk}^\Ocal(\Smr^{t \Vmr}) \cong \Th(t\upgamma_{ \Vmr}^{(k)})$, we construct the following  diagram: 
\[ 
\begin{tikzcd}
\Sigma^\Vmr \Dmr_\sfn^{\Ocal}(\Bscr_{m+}) \ar[, DarkBlue, d, "\upsigma_{\Vmr}\Dmr_\sfn^\Ocal(\td{\upzeta}_\Vmr^{(m)})"'] \ar[, DarkBrown, rrr, "\iota"] &&&\Sigma^\Vmr \Bscr_{mn} \ar[d, DarkBrown, "\upsigma_{\Vmr}\left(\td{\zeta}^{(mn)}_\Vmr \right)"']\\
 \Sigma^{\Vmr}\Dmr_\sfn^{\Ocal}(\Th(\td{\upgamma}_{\Vmr}^{(m)})) \ar[DarkBlue, d,"\upsigma_{\Vmr}\Dmr_\sfn^\Ocal\left(\td{\bf u}^{(m)}_\Vmr\right)"' ] \ar[rr, "\Delta_1"] && \Dmr_\sfn^{\Ocal}(\Dmr_{\sfm}^{\Ocal}(\Smr^{\Vmr})) \ar[r, "\upbeta_{n,m}"] \ar[d, "\Dmr_{\sfn}^\Ocal \left({\bf u}^{(m)}_\Vmr\right)"]& \Dmr^\Ocal_{\sfm \sfn}(\Smr^\Vmr) \ar[DarkBrown, ddd, "{\bf u}_{\Vmr}^{(i)}"] \\ 
\Sigma^{\Vmr} \Dmr_{\sfn}^\Ocal(\Sigma^{(m-1)\Vmr}\Rmr) \ar[rr, "\Delta_2"] \ar[DarkBlue, d, "\Theta_1"']&& \Dmr_\sfn^\Ocal(\Sigma^{m\Vmr}\Rmr) \ar[d, "\Theta_2"] \\
\Sigma^{\Vmr} \Dmr_{\sfn}^\Ocal(\Smr^{(m-1)\Vmr})\sm \Rmr \ar[rr, "\Delta_3"] \ar[DarkBlue, d, "\Dmr"'] && \Dmr_\sfn^\Ocal(\Smr^{m\Vmr}) \sm  \Rmr \ar[d, "\partialup_{\sfn}"] \ar[rd, bend left, "\Umr_m"] \\
\Sigma^{\Vmr} \Bscr_{n} \sm \Dmr_{\sfn}^\Ocal(\Smr^{(m-1)\Vmr})\sm \Rmr  \ar[DarkBlue, rr, " \upsigma_{\Vmr}(\td{\upzeta}^{(n)}_\Vmr) \sma 1"'] && \Dmr_\sfn^\Ocal(\Smr^{\Vmr}) \sm \Dmr_\sfn^\Ocal(\Smr^{(\sfm-1)\Vmr}) \sm  \Rmr \ar[DarkBlue, r, "\Umr_{1,m-1}"'] & \Sigma^{mn}\Rmr
\end{tikzcd}
\]
The constituent maps are defined as follows: $\Dmr$ is induced by the Thom diagonal map for $\Th((m-1)\upgamma_{ \Vmr}^{(n)}) $,  $\Theta_i$ uses $\partialup_{\sfn}$ and $\uptheta^\Rmr_\sfn: \Dmr_\sfn(\Rmr) \to \Rmr$, $\Umr_m$ uses the $\Rmr$-Thom class ${\bf u}_{m \Vmr}^{(n)}$, $\Umr_{1,m-1}$ uses ${\bf u}_{\Vmr}^{n} \times {\bf u}_{(m-1)\Vmr}^{n}$, and  $\Delta_i$ are all induced by the diagonal map 
\[
\begin{tikzcd}
\Delta_{\Vmr}: \Smr^{\Vmr} \rar & \Smr^{\Vmr} \sm \cdots \sm \Smr^{\Vmr} \cong \Smr^{n \Vmr}.
\end{tikzcd}
\] 
The naturality of these diagonal maps, along with  \eqref{eqn:Thomclassadd}, and \Cref{prop:Thomsystem3}, shows that the diagram commutes. 

From, \eqref{eqn:tdu}, and the property that the zero section composed with the Thom class yields the Euler class, we conclude that compositions of blue arrows and red arrows are ${\bf e}_{xy}$ and $\iota_*\td{\bf e}_{\Vmr}^{(mn)}$, respectively. This establishes the result. 
\end{proof}

As a necessary precursor to defining the product $\upchi^{(1)} \odot \upchi^{(2)}$, we first define the class
\[\sfx_{mk}\circ \sfy_{k}\in \Rmr_{|\sfx_{km}|+|\sfy_{k}|}^\Gmr(\Dmr^{\Ocal}_{{{\sf n}}}(\Bscr_{m})_+)\]
as the following composition: 
\begin{equation}  \label{eqn:precomp}
\begin{tikzcd}
\Smr^{|\sfx_{mk}| + |\sfy_{k}|} \ar["\upsigma_{|\sfy_k|}(\sfx_{mk})",rr] && \Sigma^{|\sfy_k|} \Bscr_{n+} \sma \Rmr \ar[d, "\Delta_k' \sm 1_\Rmr "] \\
&&\Dmr_\sfn^{\Ocal}(\Smr^{|\sfy_k|}) \sm \Rmr \ar[d, "\Dmr_\sfn^\Ocal(\sfy_k) \sm 1_\Rmr"] \\
&& \Dmr_\sfn^{\Ocal}(\Bscr_{m+} \sm \Rmr) \sm \Rmr \ar[d, "\partialup_\sfn \sm 1"'] \\
&& \Dmr_\sfn^{\Ocal}(\Bscr_{m})_+ \sm\Dmr_\sfn^{\Ocal}( \Rmr) \sm \Rmr  \ar[rr, "1 \sm \upmu^\Rmr(\uptheta^\Rmr_{\sfn} \sm 1) "'] && \Dmr_\sfn^{\Ocal}(\Bscr_{m})_+ \sm \Rmr
\end{tikzcd}
\end{equation}
\begin{rmk} \label{rmk:notvirtualneeded}
Here, we make use of the fact that  $|\sfy_{k}| = k \Vmr + |\sfy_0|$  is an isomorphism class of a non-virtual $\Gmr$-representation.  This condition allows for the existence of a diagonal map $\Smr^{|\sfy_k| } \longrightarrow \Smr^{n |\sfy_k|}$ which induces $\Delta'_k$ in the diagram above. Note that the map $\Delta'_k$ is nothing but a suspension of the zero section of the bundle $\td{\upgamma}^{(n)}_{|\sfy_k|}$. 
\end{rmk}
\begin{lem} \label{lem:exy} For all $k \in \NN$, we have: 
\begin{equation} \label{eqn:exycap}
 (\sfx_{m(k+1)} \circ \sfy_{k+1}) \frown {\bf e}_{xy} = \sfx_{mk}\circ \sfy_{k}  
 \end{equation}
\end{lem}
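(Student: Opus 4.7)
The plan is to decompose ${\bf e}_{xy}$ as a cup product of two classes by invoking \Cref{lem:EulerPB} and the Whitney sum structure of the pullback bundle, then to use the cap/cup associativity identity to split the computation of $(\sfx_{m(k+1)}\circ\sfy_{k+1})\frown{\bf e}_{xy}$ into two reductions, each invoking the Eulerian relation for one of $\upchi^{(1)}$ or $\upchi^{(2)}$.

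First, by \Cref{lem:EulerPB} together with the splitting $\iota^*\td{\upgamma}_\Vmr^{(mn)}\cong q^*\td{\upgamma}_\Vmr^{(n)}\oplus \Dmr_\sfn^\Ocal(\td{\upgamma}_\Vmr^{(m)})$ recorded in the discussion preceding it, the multiplicativity of $\Rmr$-Euler classes under Whitney sum yields the decomposition
\[
{\bf e}_{xy} \;=\; q^*({\bf e}_x) \smile E_y,
\]
where $E_y\in \Rmr^{n(m-1)\Vmr}(\Dmr_\sfn^\Ocal(\Bscr_m)_+)$ is the $\Rmr$-Euler class of $\Dmr_\sfn^\Ocal(\td{\upgamma}_\Vmr^{(m)})$, realised concretely as the composite $\uptheta^\Rmr_{n,m-1}\circ\Dmr_\sfn^\Ocal({\bf e}_y)$ via the $\Vmr$-shifted homotopy $\Ocal$-ring structure. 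Applying the cap/cup associativity $a \frown (b \smile c) = (a \frown b) \frown c$, one reduces the computation to
\[
(\sfx_{m(k+1)}\circ\sfy_{k+1})\frown{\bf e}_{xy} \;=\; \bigl((\sfx_{m(k+1)}\circ\sfy_{k+1})\frown E_y\bigr)\frown q^*({\bf e}_x).
\]

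The inner cap with $E_y$ acts on the composite \eqref{eqn:precomp} at the stage $\Dmr_\sfn^\Ocal(\sfy_{k+1})$. A diagram chase, exploiting the functoriality of $\Dmr_\sfn^\Ocal$, the naturality of $\partialup_\sfn$, the Thom-class compatibilities \eqref{system1}--\eqref{system3}, and the Eulerian relation $\sfy_{k+1}\frown{\bf e}_y=\sfy_k$, identifies this inner cap with the intermediate class $\sfx_{mk+1}\circ\sfy_k$; a direct degree check confirms this, since $|E_y|=n(m-1)\Vmr$ accounts for the $(m-1)\Vmr$ reduction in $\sfy$ together with an additional $(m-1)(n-1)\Vmr$ reduction carrying $\sfx_{m(k+1)}$ down to $\sfx_{mk+1}$. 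The outer cap with $q^*({\bf e}_x)$ then pushes ${\bf e}_x$ through the projection $q$ onto the $\Bscr_n$-factor in the first line of \eqref{eqn:precomp} and invokes the remaining Eulerian relation $\sfx_{mk+1}\frown{\bf e}_x=\sfx_{mk}$ to yield $\sfx_{mk}\circ\sfy_k$.

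The main obstacle is the inner step: verifying that capping the composite defining $\sfx_{m(k+1)}\circ\sfy_{k+1}$ with $E_y$ produces $\sfx_{mk+1}\circ\sfy_k$. This requires tracking how $E_y$ interacts with each stage of \eqref{eqn:precomp}, and in particular how the Thom class ${\bf u}_{(m-1)\Vmr}^{(n)}$ appearing in the Thom isomorphism $\Tmr_{m-1}$ contributes the extra $(m-1)$-fold Eulerian reduction on the $\sfx$-side. This bookkeeping reduces, via \eqref{system1}--\eqref{system3} and the compatibilities \eqref{HinftyV1}--\eqref{HinftyV3}, to iterated applications of the defining Eulerian relation for $\upchi^{(1)}$ (equivalently, to \Cref{rmk:equivEuler}) combined with the naturality of extended powers; once this identification is in hand, the outer cap argument is a direct naturality computation.
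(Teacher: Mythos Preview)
Your plan is sound and draws on the same ingredients as the paper's proof: the Whitney-sum splitting $\iota^*\td{\upgamma}_\Vmr^{(mn)}\cong q^*\td{\upgamma}_\Vmr^{(n)}\oplus \Dmr_\sfn^\Ocal(\td{\upgamma}_\Vmr^{(m)})$, the multiplicativity of Euler classes, and the two Eulerian relations. The organization, however, is different. The paper does not split the cap into two stages via an intermediate class $\sfx_{mk+1}\circ\sfy_k$; instead it constructs a single large commutative diagram (squares \textbf{(S1)}--\textbf{(S10)}) whose left vertical column unfolds the composite defining $(\sfx_{m(k+1)}\circ\sfy_{k+1})\frown{\bf e}_{xy}$ and whose right column unfolds $\sfx_{mk}\circ\sfy_{k}$. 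In that layout the entire $m$-fold reduction $\sfx_{m(k+1)}\mapsto\sfx_{mk}$ is isolated at the top square (via ${\bf e}_x^{m}$), while the single $\sfy$-reduction occurs at a middle square (via $\Dmr_\sfn^\Ocal({\bf e}_y)$); the remaining squares record naturality of $\partialup_\sfn$ and the Thom-class compatibility \eqref{system1}. Your decomposition instead groups $(m-1)$ of the $\sfx$-reductions together with the $\sfy$-reduction into the $E_y$-cap, leaving one $\sfx$-reduction for $q^*({\bf e}_x)$. Both partitions are legitimate and rest on the same compatibilities; the paper's monolithic diagram has the virtue that each square admits a one-line justification, while your two-step version is more conceptual but the ``inner step'' still requires a diagram chase of comparable size --- which you correctly flag as the main obstacle but do not actually carry out.
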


\begin{proof} We will prove this result by showing that the diagram in \eqref{eqn:compositionEulerian} commutes. 
In this diagram, the map labeled 
   \begin{enumerate}[ (1) \ \ ]
\item is induced by $\Delta: \Bscr_n \longrightarrow \Bscr_n \times  \Bscr_n$;
\item is the composite map:
\begin{equation*}
    \begin{tikzcd}
        & \Sigma^{|\sfy_{k+1}|} \Bscr_{n+} \sma  \Bscr_{n+} \sma  \Rmr \ar[d,equal]\\ 
        &  \Sigma^{|\sfy_{k}|} \Bscr_{n+} \sma \Sigma^{(n-1)\Vmr} \Bscr_{n+} \sma  \Rmr \ar[d,"\Delta_k' \sma \sfe_y^{m-1} \sma 1_\Rmr"] \\
        & \Sigma^{(n-1)\Vmr} \Dmr^{\Ocal}_{{\sf n}}(\Smr^{\sfy_k}) \sma
        \Sigma^{(m-1)(n-1)\Vmr} \Bscr_{n+} \sma \Rmr \sma \Rmr \ar[d,"1 \sm \upmu"] \\
        & \Sigma^{(n-1)\Vmr} \Dmr^{\Ocal}_{{\sf n}}(\Smr^{\sfy_k}) \sma
        \Sigma^{(m-1)(n-1)\Vmr} \Bscr_{n+} \sma \Rmr \ar[cong, d]\\ &
        \Sigma^{m(n-1)\Vmr} \Dmr^{\Ocal}_{{\sf n}}(\Smr^{\sfy_k}) \sma
     \Bscr_{n+} \sma \Rmr
    \end{tikzcd}
\end{equation*}
\item is $\upsigma_{(mn-1)\Vmr}(\Delta_k') \sm 1_\Rmr$;

\item is induced by $\Tmr_{m-1}$ in \Cref{eqn:Tm};
  
\item is induced by $\upmu( \sfe_y \sm1):  \Bscr_{n+} \sma \Rmr \to \Sigma^{(n-1)\Vmr}\Rmr \sma
  \Rmr \to \Sigma^{(n-1)\Vmr}\Rmr $;

\item is induced by the composition: 
\[ 
\begin{tikzcd}
\Dmr_{\sfn}^{\Ocal}(\Bscr_{n+} \sm \Rmr) \dar["\partialup_\sfn^\Rmr \sm 1_\Rmr"] \\
\Dmr_{\sfn}^\Ocal(\Bscr_{n+}  ) \sm \Dmr_\sfn^{\Ocal}(\Rmr)  \dar["\Dmr_\sfn^{\Ocal}(\Delta) \sm \uptheta_{\sfn}^\Rmr "] \\
\Dmr_{\sfn}^\Ocal(\Bscr_{n+}  \sm  \Bscr_{n+} ) \sm \Rmr  \dar["\partialup_\sfn \sm 1_\Rmr"] \\
\Dmr_{\sfn}^\Ocal(\Bscr_{n+} )  \sm  \Dmr_{\sfn}^\Ocal( \Bscr_{n+} )  \sm \Rmr \dar["1 \sm \upzeta_{(m-1)\Vmr}^{(n)} \sm 1_\Rmr"] \\ 
\Dmr_{\sfn}^\Ocal(\Bscr_{n+} ) \sm \Dmr^{\Ocal}_\sfn(\Smr^{(m-1) \Vmr}) \sm \Rmr;
\end{tikzcd}
\]  
\item is the composition of the last two maps in \eqref{eqn:precomp};
\item is the composition: 
\[ 
\begin{tikzcd}
\Dmr_{\sfn}^{\Ocal}(\Bscr_{n+} ) \sm \Rmr \dar["\Dmr_\sfn^{\Ocal}(\Delta)"] \\
\Dmr_{\sfn}^\Ocal(\Bscr_{n+}  \sm  \Bscr_{n+} )  \sm \Rmr  \dar["\partialup_\sfn  "] \\
\Dmr_{\sfn}^\Ocal(\Bscr_{n+} )  \sm  \Dmr_{\sfn}^\Ocal( \Bscr_{n+} )  \sm \Rmr \dar["1 \sm \upzeta_{(m-1)\Vmr}^{(n)} \sm 1_\Rmr"] \\ 
\Dmr_{\sfn}^\Ocal(\Bscr_{n+} ) \sm \Dmr^{\Ocal}_\sfn(\Smr^{(m-1) \Vmr}) \sm \Rmr
\end{tikzcd}
\]  
\end{enumerate}
Now observe that: 
\begin{itemize}
\item[{\bf(S1)}] commutes because $\sfy_{m(k+1)} \frown \sfe_y^{m} = \sfy_{mk}$
  (Eulerian of $\upchi_2$);
\item[{\bf(S2)}] commutes by inspection;
\item[{\bf(S3)}] commutes by the relation between Euler class and Thom class;
\item[{\bf(S4)}] commutes because $\sfe_y \smile \sfe_y^{m-1} = \sfe_y^{m}$;
\item[{\bf(S5)}] commutes because $\Dmr^{\Ocal}_{{\sf n}}(\sfx_k
  \frown \sfe_x) = \Dmr^{\Ocal}_{{\sf n}}(\sfx_{k-1})$ (Eulerian of
  $\upchi_1$);
\end{itemize}
${\bf(S6) - \bf(S10)}$ commute by naturality. The composition of the blue arrows and the red arrows yields the two sides of \eqref{eqn:exycap}, proving the result. 

\rotatebox{90}{\parbox{1.1\textheight}{
$ $ \vspace{40pt} \\
 \begin{equation} \label{eqn:compositionEulerian}
 \hspace{50pt}
 \begin{tikzcd}[ampersand replacement=\&, nodes={font=\scriptsize}]
\Smr^{|{\sf x}_{k+1}m|+|\sfy_{(k+1)}|} \ar[dd, color = DarkBlue,
"\sfx_{m(k+1)}"'] \ar[rrr, color = DarkBrown,  "="] \ar[rrrdd,
phantom, "\text{\large \bf (S1)}"] \& \& \& \Sigma^{(mn-1)\Vmr}\Smr^{|{\sf
    x}_{mk}|+|\sfy_{k}|} \ar[dd, color = DarkBrown,"\upsigma_{(mn-1)\Vmr}(\sfx_{mk})"] \\\\
  \Sigma^{|y_{k+1}|}\Bscr_{n+} \sma  \Rmr \rar["(1)"] \ar[dd,color = DarkBlue, "\Delta'_{k+1}"']  \ar[rdd, phantom, "\text{ \bf (S2)}"] \&
   \Sigma^{|y_{k+1}|}\Bscr_{n+} \sma \Bscr{n+} \sma  \Rmr  \rar["1 \sm {\bf e}_\sfy^{m} \sm 1"] \ar[dd, "(2)"']  \ar[rdd, "(3)"] \&
    \Sigma^{|y_{k+1}|} \Bscr_{n+} \sma \Sigma^{m(n-1)\Vmr} \Rmr \sma  \Rmr
    \rar["1 \sm \upmu"]   \ar[rdd, phantom, "\text{\bf (S4)}"]  \&
      \Sigma^{(mn-1)\Vmr} \Sigma^{|\sfy_{k}|} \Bscr_{n+} \sma \Rmr \ar[dd,color = DarkBrown, "\upsigma_{(mn-1)\Vmr}(\Delta_k') \sm 1_\Rmr"]   \\ \\
\Dmr^{\Ocal}_{{\sf n}}(\Smr^{|y_{k+1}|}) \sma \Rmr \ar[dd, color = DarkBlue, "\Dmr^{\Ocal}_{{\sf n}}(\sfy_{k+1}) \sm 1"'] \rar["\partialup_{\sfn} \sma 1_{\Rmr}"'] \ar[rdd, phantom, "\text{\bf (S5)}"]  \&
 \Dmr^{\Ocal}_{{\sf n}}(\Smr^{|y_{k}|}) \sma
 \Dmr^{\Ocal}_{{\sf n}}(\Smr^{(m-1)\Vmr}) \sma \Rmr
 \rar["(4)"'{name=target}] \arrow[phantom, from = uu, to = target, "{\bf (S3)}"]
 \ar[dd, "\Dmr^{\Ocal}_{{\sf  n}}(\sfy_{k}) \sm 1"'] \ar[rdd, phantom, "\text{\bf (S6)}"] 
 \&
 \Dmr^{\Ocal}_{{\sf n}}(\Smr^{|y_{k}|}) \sma
 \Sigma^{n(m-1)\Vmr} \Bscr_{n+} \sma \Rmr \rar["(5)"'] \ar[dd, "\Dmr^{\Ocal}_{{\sf n}}(\sfy_{k})\sm 1"']  \ar[rdd, phantom, "\text{\bf (S7)}"] \&
  \Sigma^{(mn-1)\Vmr}\Dmr^{\Ocal}_{{\sf n}}(\Smr^{|\sfy_{k}|}) \sma \Rmr   \ar[dd, color = DarkBrown, "\Dmr^{\Ocal}_{{\sf n}}(\sfy_{k})\sm 1"] \\ \\
\Dmr^{\Ocal}_{{\sf n}}( \Bscr_{m+} \sma \Rmr) \sma \Rmr  \ar[dd, color = DarkBlue, "(7)"'] \rar["(6)"'] \ar[rdd, phantom, "\text{\bf (S8)}"]  \&
\Dmr^{\Ocal}_{{\sf n}}( \Bscr_{m+} \sma \Rmr) \sma  \Dmr^{\Ocal}_{{\sf n}}( \Smr^{(m-1)\Vmr}) \sma  \Rmr \rar["(4)"'] \ar[dd, "(7)"'] \ar[rdd, phantom, "\text{\bf (S9)}"]  \& 
\Dmr^{\Ocal}_{{\sf n}}( \Bscr_{m+} \sma \Rmr) \sma
\Sigma^{n(m-1)\Vmr} \Bscr_{n+} \sma  \Rmr \rar["(5)"'] \ar[dd, "(7)"'] \ar[rdd, phantom, "\text{\bf (S10)}"]  \&
\Sigma^{(mn-1)\Vmr}\Dmr^{\Ocal}_{{\sf n}}( \Bscr_{m+} \sma \Rmr) \sma  \Rmr  \ar[dd, color = DarkBrown,"(7)"]      \\ \\
\Dmr^{\Ocal}_{{\sf n}}( \Bscr_{m+}) \sma \Rmr \rar[DarkBlue, "(8)"']    \&
\Dmr^{\Ocal}_{{\sf n}}( \Bscr_{m+}) \sma  \Dmr^{\Ocal}_{{\sf n}}( \Smr^{(m-1)\Vmr}) \sma  \Rmr \rar[DarkBlue, "(4)"']   \& 
\Dmr^{\Ocal}_{{\sf n}}( \Bscr_{m+}) \sma
\Sigma^{n(m-1)\Vmr} \Bscr_{n+} \sma  \Rmr \rar[DarkBlue, "(5)"']   \&
\Sigma^{(mn-1)\Vmr}\Dmr^{\Ocal}_{{\sf n}}( \Bscr_{m+} ) \sma  \Rmr 
\end{tikzcd}
\end{equation}
}}
\end{proof}
\begin{defn}[{\bf Product Law}] \label{defn:productES}
Suppose $\Vmr$ is an orthogonal $\Gmr$-representation that contains a trivial subrepresentation. We define a {\bf product operation}
\begin{equation}
\begin{tikzcd}
\odot: \Ecal^{(n)}_{\Rmr, \Vmr} \times \Ecal^{(m)}_{\Rmr, \Vmr} \rar & \Ecal^{(nm)}_{\Rmr, \Vmr}
\end{tikzcd}
\end{equation}
for $n,m \geq 2$ as follows:
Given $\Vmr$-stable $\Rmr$-Eulerian sequences  $\upchi^{(1)} =(\sfx_1, \sfx_2, \dots)  \in \Ecal^{(n)}_{\Rmr, \Vmr} $ and $\upchi^{(2)} =(\sfy_1, \sfy_2, \dots) \in \Ecal^{(m)}_{\Rmr, \Vmr}$ define 
\[ \upchi^{(1)} \odot \upchi^{(2)}  :=  (\iota_*(\sfx_{0} \circ \sfy_{0}), \iota_*(\sfx_{m} \circ \sfy_{1}), \iota_*(\sfx_{2m} \circ \sfy_{2}), \cdots  ). \] 
From \Cref{lem:EulerPB} and \Cref{lem:exy}, we conclude that $\upchi^{(1)} \odot \upchi^{(2)}$ is a $\Vmr$-stable $\Rmr$-Eulerian sequence of weight $nm$. 
\end{defn}

\begin{thm} \label{thm:composeES}
    Let $\upchi^{(1)}$ and $\upchi^{(2)}$ be $\Vmr$-stable $\Rmr$-Eulerian sequences of weight $n$ and $m$, respectively. Then, $\upchi^{(1)} \odot \upchi^{(2)}$ is an Eulerian sequence of weight $mn$ such that
    \begin{equation} \label{eqn:Steencompose}
    \Sfrak^{\upchi^{(1)}\odot \upchi^{(2)} }= \Sfrak^{\upchi^{(1)}}\circ \Sfrak^{\upchi^{(2)}}.
    \end{equation}
\end{thm}
\begin{proof} In light of \Cref{thm:stable}, it suffices to verify the equality \eqref{eqn:Steencompose} on  cohomology classes whose $\RO(\Gmr, \Vmr)$-degrees are positive multiples of $\Vmr$. 

Fix  $a \in \Rmr^{t\Vmr}(\Xmr)$ for a $\Gmr$-space $\Xmr$ and $t \in \NN$. Following  \eqref{eqn:Sqchi}, we express $\Sfrak^{\upchi^{(2)}}(a)$ as the composition:
\[ 
\begin{tikzcd}
 \Smr^{|{\sf y}_t|} \sma \Xmr_+ \rar["{\sf y}_t \sma 1"] & (\Bscr_m)_+ \sma \Xmr_+ \sma \Rmr \dar["\updelta"] \\
&\Dmr^{\Ocal}_{m}(\Xmr)_+ \sma \Rmr \rar["\Dmr_m^{\Ocal}(a) \sma 1"] 
& \Dmr_{m}^{\Ocal}(\Sigma^{t \Vmr}\Rmr) \sma \Rmr \rar["\uptheta^{\Rmr}_{m, t \Vmr}"] & \Sigma^{mt \Vmr}\Rmr.
\end{tikzcd} 
\]
Strictly speaking, this composite defines the suspension $\upsigma_{|\sfy_t|}(\Sfrak^{\upchi^{(2)}}(a))$. 

Note that the $\RO(\Gmr, \Vmr)$-grading of $\Sfrak^{\upchi^{(2)}}(x)$, which is 
\[ |\Sfrak^{\upchi^{(2)}}(a) | = m |a| - |\sfy_t|  = mt \Vmr + \| \upchi^{(2)} \| - (m-1)t\Vmr = t\Vmr + \| \upchi^{(2)} \| \] 
is not necessarily a multiple of $\Vmr$. Setting $\Wmr = |\sfy_t|$ and  applying  formula \eqref{eqn:Sqchi2} we describe the compostion $\Sfrak^{\upchi^{(1)}} \left(\Sfrak^{\upchi^{(2)}}(a)\right)$ as:
\[ 
\begin{tikzcd}
 \Smr^{|\sfx_{mt}| +|{\sf y}_t|} \sma \Xmr_+ \rar["{\sf x}_{mt} \sma 1"] & (\Bscr_n)_+ \sma \Sigma^{|\sfy_t|}\Xmr_+ \sma \Rmr \dar["\updelta"] \\
&\Dmr^{\Ocal}_{n}(\Sigma^{|\sfy_t|} \Xmr)_+ \sma \Rmr \rar["{\bf A}"] 
& \Dmr_{n}^{\Ocal}(\Sigma^{|x|}\Rmr) \sma \Rmr \rar & \Sigma^{m |x|}\Rmr
\end{tikzcd}
\]
where ${\bf A}$ is the map $\Dmr_n^{\Ocal}(\upsigma_{|\sfy_t|}(\Sfrak^{\upchi^{(2)}}(a)))$. 
Now the result will follow from the commutativity of the diagram:

\begin{equation} \label{eqn:compositionEulerian2}
\centering
\begin{tikzcd}[ nodes={font=\scriptsize}]
  \mr{S}^{|\sfx_{mt}| + |\sfy_t|} \sma \Xmr \arrow[d, "\upsigma_{|\sfy_{t}|(\sfx_{mt})}"'] \\
  \Sigma^{|\sfy_t|} \Bscr_{n+} \sma \Rmr \sma \Xmr \arrow[d, color=DarkBlue, "\updelta \sma 1_{\Rmr \sma \Xmr}"'] \ar[rd, color = DarkBrown, bend left, "{\bf (A)}"] \\
  \Dmr^{\Ocal}_n(\Smr^{|\sfy_t|}) \sma \Rmr \sma \Xmr \ar[r,"{\bf (B)}"] \dar[color=DarkBlue, "\Dmr_n^{\Ocal}(\sfy_t)\sma 1_{\Rmr \sma \Xmr}"'] & \Dmr_{n}^{\Ocal}(\Sigma^{|\sfy_{t}|} \Xmr) \sma \Rmr \dar[color= DarkBrown, "\Dmr_n^{\Ocal}(\sfy_t\sma 1_{\Rmr}) \sma 1_{\Xmr}"] \\
  \Dmr^{\Ocal}_n(\Bscr_{m +} \sma \Rmr) \sma \Rmr \sma \Xmr \ar[r, "{\bf (C)}"] \ar[dd, color=DarkBlue, "\partialup_{n} \sma 1_{\Rmr \sma \Xmr}"'] & \Dmr_{n}^\Ocal (\Bscr_{m+} \sma \Xmr \sma \Rmr) \sma \Rmr \ar[d, color= DarkBrown, "\Dmr^{\Ocal}_n(\updelta \sma 1_\Rmr) \sma \Rmr"'] \\
  & \Dmr_n^{\Ocal}(\Dmr_{m}^{\Ocal}(\Xmr) \sma \Rmr )  \sma \Rmr \ar[r, color= DarkBrown, "{\bf (U)}"] \ar[d, "\partialup_n \sma 1_\Rmr"]   &  \Dmr_n^{\Ocal}(\Dmr_m^{\Ocal}(\Sigma^{t \Vmr}\Rmr) \sma \Rmr)  \sma \Rmr \ar[d, color=DarkBrown, "1 \sma \upmu^\Rmr(\uptheta^{\Rmr}_n \sma 1_\Rmr)\circ (\partialup_n \sma 1_\Rmr)  "] \\
   \Dmr_n^{\Ocal}(\Bscr_{m+}) \sma \Dmr^{\Ocal}_n(\Rmr) \sma \Rmr \sma \Xmr  \dar[color=DarkBlue, "1 \sma \uptheta^{\Rmr}_n \sma 1_{\Rmr \sma \Xmr}"'] \ar[r,"{\bf (D)}"] & \Dmr_n^{\Ocal}(\Dmr_{m}^{\Ocal}(\Xmr)) \sma \Dmr^{\Ocal}_n(\Rmr) \sma \Rmr \ar[d, "1 \sma \uptheta_n^{\Rmr} \sma 1_\Rmr"] \ar[r, "{\bf (V)}"]  & \Dmr_n^{\Ocal}(\Dmr_m^{\Ocal}(\Sigma^{t \Vmr}\Rmr)) \sma \Rmr \dar[ color= DarkBrown, "\Dmr^{\Ocal}_n(\uptheta^\Rmr_{m, t}) \sma 1_\Rmr"] \\
   \Dmr_n^{\Ocal}(\Bscr_{m+}) \sma \Rmr \sma \Xmr \dar[color=DarkBlue, "\iota \sma 1_{\Rmr \sma \Xmr}"'] \rar["{\bf (E)}"] &\Dmr^{\Ocal}_n(\Dmr^{\Ocal}_m(\Xmr)) \sma \Rmr \sma \Rmr \rar["{\bf (W)}"] \dar["\iota \sma \upmu^\Rmr"] & \Dmr_n^{\Ocal}(\Sigma^{mt\Vmr} \Rmr) \sma \Rmr \dar[color= DarkBrown, "\upmu^\Rmr(\uptheta_{n, mt}^\Rmr \sma 1_\Rmr)"] \\ 
   \Bscr_{nm+} \sma \Rmr \sma \Xmr \rar[color=DarkBlue, "{\bf (F)}"] & \Dmr_{nm}^{\Ocal}(\Xmr) \sma \Rmr \rar[color=DarkBlue, "{\bf (X)}"] & \Sigma^{nmt}\Rmr
 \end{tikzcd}
\end{equation}
where 
\begin{itemize}
\item the maps ${\bf (A)}$ --- ${\bf (F)}$ are induced by the diagonal map of $\Xmr$ 
\item the maps ${\bf (U)}$ --- ${\bf (X)}$ are induced by the cohomology class $a \in \Rmr^{t \Vmr}(\Xmr)$ and the evident multiplicative structure of $\Rmr$.   
\end{itemize}

Indeed, the composing the top vertical arrow with the blue arrows represent and the red arrow represent the left-hand side and the right-hand side of \eqref{eqn:Steencompose}. 
\end{proof}

\begin{rmk} We may extend the definition of $\Vmr$-stable $\Rmr$-Eulerian sequence to weight $1$ using $\td{\uptau}_1 = 0$ in \eqref{redbundle}, thereby setting $\td{\upgamma}_{\Vmr}^{(1)}$ as the $0$-dimensional bundle over $\Bscr_1 \simeq \ast$.  Then the $\Rmr$-Euler class of $\td{\upgamma}_{\Vmr}^{(1)} $ is the identity element  $1 \in\pi_0^{\Gmr}(\Rmr)  \cong \Rmr^{0}(\Bscr_{1+})$. Thus, for any $a \in \pi_{\Wmr}^{\Gmr}(\Rmr)$, where $\Wmr \in \RO(\Gmr, \Vmr)$, the constant sequence 
\[ {\bf a} = (a, a, \dots ) \]
is an $\Rmr$-Eulerian sequence of weight $1$. 
\end{rmk}
\begin{notn} 
We define the $\RO(\Gmr, \Vmr)$-graded homotopy groups of $\Rmr$ as the subset of $\pi_{\star}^\Gmr (\Rmr)$ whose degrees lie in $\RO(\Gmr, \Vmr)$:
$$\MM^{\Rmr}_{\Vmr}:= \{ x \in \pi_{\star}^\Gmr (\Rmr): |x| \in \RO(\Gmr, \Vmr) \}. $$
We define $\MM^{\Rmr}_{\Vmr, +}$ as the subring of $\MM^{\Rmr}_{\Vmr}$ generated by elements in non-virtual degrees:
$$\MM^{\Rmr}_{\Vmr, +}:= \{ x \in \pi_{\star}^\Gmr (\Rmr): |x| \text{ is non-virtual in } \RO(\Gmr, \Vmr) \}. $$
\end{notn}
\begin{rmk}
We extend the product $\odot$ in \Cref{defn:productES} by allowing $n$ and $m$ to equal $1$.  Under this extended pairing, the weight $1$ Eulerian sequence 
\[ {\bf 1} = (1,1,\dots )\]
satisfies the identity condition namely ${\bf 1} \odot \upchi = \upchi = \upchi \odot {\bf 1}$. 
 It is easy to see that the point-wise sum turns $\Ecal_{\Rmr, \Vmr}^{(n)}$ into an Abelian group for each $n$. Furthermore, the product law $\odot$ endows the collection  
\[\Ecal_{\Rmr, \Vmr} = \bigsqcup_{n \geq 1} \Ecal_{\Rmr, \Vmr}^{(n)}\]  
a structure of a ring. The weight $1$ elements form a subring isomorphic to $\MM^{\Rmr}_{\Vmr, +}$, and  $\Ecal_{\Rmr, \Vmr}$ admits a structure of a left  as well as a right module over $\MM^{\Rmr}_{\Vmr, +}$. 
Moreover, by  \Cref{thm:composeES}, we get a ring homomorphism 
\[ 
\begin{tikzcd}
\Sfrak^{(-)}: \Ecal_{\Rmr, \Vmr} \rar & {[\Rmr, \Rmr]^{\Gmr}_{-\star}}.
\end{tikzcd}
\]
Thus, it is natural to ask if the left $\MM^{\Rmr}_{\Vmr}$-algebra generated by the image of $\Sfrak^{(-)}$ account for all $\Vmr$-stable $\Rmr$-cohomology operations. We expect this to be the case when $\Rmr = \Hmr\ull{\FF}_p$ and $\Vmr = \uprho_\Gmr$ as outlined in \Cref{conj:genall}. 
\end{rmk}

\section{New equivariant cohomology operations}\label{sec:newCohomologyOps} \label{sec:new}

In this section, we construct genuine stable $\Hmr\ull{\FF}_p$-cohomology operations by identifying $\uprho_{\Gmr}$-stable Eulerian sequences in $\Hmr_{\star}^{\Gmr}\left((\Bmr_\Gmr\Sigma_p)_+; \ull{\FF}_p\right)$ for all finite $\Gmr$ and primes $p$. 

To find $\Hmr\ull{\FF}_p$-Eulerian sequences, first we need to fix an $\Hmr\ull{\FF}_p$-Euler class of the $n$-fold sum of the  $\Gmr$-vector bundle (see \Cref{rmk:equivEuler})
\[
\widetilde{\upgamma}_{\Gmr, p} : = 
\begin{tikzcd}
\mr{E}_{\Gmr}\Sigma_p \times_{\Sigma_p} (\uprho_{\Gmr} \otimes \td{\uptau}_p)
\dar \\
\mr{B}_{\Gmr}\Sigma_p
\end{tikzcd}
\]
for some nonzero $n \in \NN$. 
\begin{rmk} The $\Gmr$-bundle 
$\widetilde{\upgamma}_{\Gmr, p}$ is nothing but $\widetilde{\upgamma}^{\All_p}_{\uprho_\Gmr}$ according to \Cref{notn:equiBundle}.
\end{rmk}
\begin{notn} \label{notn:Gsub} When underlying group $\Gmr$ is clear from the context, we will simply use $\widetilde{\upgamma}_{p}$ to denote  $\widetilde{\upgamma}_{ \Gmr, p}$. 
 \end{notn}

By \cite[Theorem 1.12]{BZ}, $\widetilde{\upgamma}_{\Gmr, p}$ is a homogeneous bundle, therefore, we use the equivariant first Steifel Whitney class (as in \cite[Definition~3.2]{BZ})  to detect orientation.  When $p = 2$, any homogeneous bundle is $\Hmr\ull{\FF}_2$-orientable as $\FF_2^{\times}$ is the trivial group.  In particular, $\widetilde{\upgamma}_{\Gmr, 2}$ is $\Hmr\ull{\FF}_2$-orientable.  
 
\begin{prop}\label{prop:oddPrimeOrient} When $p>2$,  $\widetilde{\upgamma}_{\Gmr, p}$ is $\Hmr\ull{\ZZ}$-orientable if and only if $\abs{\Gmr}$ is even. 
\end{prop}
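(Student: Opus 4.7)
The plan is to compute the determinant character of the fiber representation of $\widetilde{\upgamma}_{\Gmr, p}$ and apply the equivariant first Stiefel--Whitney class machinery from \cite[Definition~3.2]{BZ} to detect $\Hmr\ull{\ZZ}$-orientability. Since $\widetilde{\upgamma}_{\Gmr, p}$ is homogeneous with fiber the $(\Gmr \times \Sigma_p)$-representation $\uprho_\Gmr \otimes \td{\uptau}_p$, orientability over $\Hmr\ull{\ZZ}$ should be equivalent to triviality of the determinant character $\chi \colon \Gmr \times \Sigma_p \to \{\pm 1\}$ of this fiber representation.

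First I would compute $\chi$ explicitly. Using the identity $\det(A \otimes B) = \det(A)^{\dim B} \det(B)^{\dim A}$ together with $\dim \td{\uptau}_p = p-1$, $\dim \uprho_\Gmr = |\Gmr|$, and the fact that a permutation $\sigma$ acts on $\td{\uptau}_p$ with determinant $\mathrm{sgn}(\sigma)$ (since $\uptau_p \cong \td{\uptau}_p \oplus \RR$), I obtain
\[
\chi(g, \sigma) = \det(L_g \mid \uprho_\Gmr)^{p-1} \cdot \mathrm{sgn}(\sigma)^{|\Gmr|}
\]
for every $(g, \sigma) \in \Gmr \times \Sigma_p$. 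Because $p$ is odd, $p-1$ is even, so the first factor is identically $+1$ and therefore $\chi(g, \sigma) = \mathrm{sgn}(\sigma)^{|\Gmr|}$.

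The two cases then follow immediately. If $|\Gmr|$ is even, then $\chi \equiv 1$, so the determinant line bundle of $\widetilde{\upgamma}_{\Gmr, p}$ is the trivial $\Gmr$-equivariant line bundle; its equivariant first Stiefel--Whitney class vanishes, and hence $\widetilde{\upgamma}_{\Gmr, p}$ is $\Hmr\ull{\ZZ}$-orientable. If $|\Gmr|$ is odd, then $\chi$ equals $\mathrm{sgn}$ pulled back along the projection to $\Sigma_p$, which is nontrivial. To conclude non-orientability in this case I would simply restrict to the trivial subgroup $\sfe \subset \Gmr$: the underlying nonequivariant determinant line bundle is $\mr{E}\Sigma_p \times_{\Sigma_p} \RR_{\mathrm{sgn}}$, the classical sign line bundle over $\mr{B}\Sigma_p$, whose $w_1 \in \Hmr^1(\mr{B}\Sigma_p; \FF_2)$ is nontrivial. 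Since the underlying nonequivariant bundle already fails to be orientable, $\widetilde{\upgamma}_{\Gmr, p}$ cannot admit an $\Hmr\ull{\ZZ}$-orientation.

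The main obstacle is the interface with \cite[Definition~3.2]{BZ}: I need the statement that for a homogeneous $\Gmr$-vector bundle, the equivariant first Stiefel--Whitney class vanishes precisely when the determinant character of the fiber representation is trivial, and that vanishing of this class is equivalent to $\Hmr\ull{\ZZ}$-orientability. Granting this identification from the BZ framework, the proposition reduces to the elementary character computation above.
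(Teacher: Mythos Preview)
Your proof is correct and takes a genuinely different, more elementary route than the paper's. The paper proves the ``$|\Gmr|$ even $\Rightarrow$ orientable'' direction by invoking the transfer formula \cite[Theorem~3.14]{BZ} for the first Stiefel--Whitney class of an induced bundle, then factoring $\tr_\sfe^\Gmr$ through a subgroup $\Kmr$ of order~$2$ and appealing to the Hu--Kriz computation $\Hmr^1_{\Kmr}((\Bmr_\Kmr\Sigma_2)_+;\ull{\FF}_2)=0$ to kill the transfer. You instead compute the determinant character of the fiber representation $\uprho_\Gmr\otimes\td{\uptau}_p$ directly and observe that it is identically trivial when $|\Gmr|$ is even, so the determinant line bundle is the trivial $\Gmr$-line bundle and $w_1^{\Hmr\ull{\ZZ}}$ vanishes. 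For the ``$|\Gmr|$ odd $\Rightarrow$ not orientable'' direction, both proofs restrict to the trivial subgroup and use that the underlying nonequivariant $w_1$ is $|\Gmr|\cdot w_1(\td{\upgamma}_{\sfe,p})\neq 0$.

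Your approach buys a cleaner argument that avoids the transfer machinery and the external input from \cite{HK}; the paper's approach illustrates how the general Stiefel--Whitney formalism of \cite{BZ} handles such questions. The interface you flag is indeed the only point to verify: that a $\Gmr$-equivariant vector bundle whose determinant line bundle is $\Gmr$-equivariantly trivial has vanishing $w_1^{\Hmr\ull{\ZZ}}$ and hence is $\Hmr\ull{\ZZ}$-orientable. This is implicit in the definition of $w_1^{\Hmr\ull{\ZZ}}$ in \cite{BZ} (it classifies the determinant line bundle) together with \cite[Theorem~1.15]{BZ}, and can also be seen directly since a $\Gmr$-equivariant trivialization of the determinant bundle yields a $\Gmr$-invariant orientation and hence an $\Hmr\ull{\ZZ}$-Thom class.
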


\begin{proof} 
    In this proof, we subscribe to \Cref{notn:Gsub} and write $\widetilde{\upgamma}_{p}$ as  $\widetilde{\upgamma}_{\Gmr,p}$. The obstruction to $\widetilde{\upgamma}_{\Gmr, p}$ being $\Hmr\ull{\ZZ}$-orientable is the first equivariant Stiefel-Whitney class 
    \[w_1^{\Hmr\ull{\ZZ}}(\widetilde{\upgamma}_{\Gmr,p})\in \Hmr^1(\Bmr_{\Gmr}\Sigma_p;\ull{\FF}_2)\]
    according to \cite[Theorem 1.15]{BZ}. Since the restriction of $\widetilde{\upgamma}_{\Gmr, p}$ to the trivial group is $|\Gmr| \widetilde{\upgamma}_{\sfe, p}$, we conclude  
    \[ \res^\Gmr_\sfe (w_1^{\Hmr\ull{\ZZ}}(\widetilde{\upgamma}_{\Gmr,p})) = \abs{\Gmr}\,w_1^{\Hmr\ZZ}(\widetilde{\upgamma}_{\sfe, p}), \]
    which is nonzero when $|\Gmr|$ is odd. 
    
    Conversely, suppose $\abs{\Gmr}$ is even and fix $\Kmr\subset \Gmr$ of order $2$. Using \cite[Theorem 3.14]{BZ} and the fact that $\widetilde{\upgamma}_{\Gmr,p}$ is induced up from $\widetilde{\upgamma}_{\sfe, p}$ (as in \cite[Notation 3.13]{BZ}), we conclude
    \[w_1^{\Hmr\ull{\ZZ}}(\td{\upgamma}_{\Gmr, p}) = \tr_\sfe^{\Gmr}w_1^{\Hmr\ZZ}(\td{\upgamma}_{\sfe, p}) = \tr_{\Kmr}^{\Gmr}\,\tr_{\sfe}^{\Kmr}w_1^{\Hmr\ZZ}(\td{\upgamma}_{\sfe, p}).\]
It is a standard fact that $w_1^{\Hmr\ZZ}(\td{\upgamma}_{\sfe, p})$ is nonzero and can be written as ${\sf sgn}^*(\iota)$, where 
\[ 
\begin{tikzcd}
{\sf sgn}^*: \ZZ/2\langle \iota \rangle \cong \Hmr^{1} (\Bmr\Sigma_{2+}; \FF_2) \rar["\cong"] &  \Hmr^{1} (\Bmr\Sigma_{p+}; \FF_2)
\end{tikzcd}
\]
is the isomorphism induced by the sign homomorphism ${\sf sgn}:\Sigma_p \to \Sigma_2$ and $\iota$ is the generator as indicated. Thus, 
\begin{eqnarray*}
\tr_{\sfe}^{\Kmr}w_1^{\Hmr\ZZ}(\td{\upgamma}_{\sfe, p}) &=& \tr_{\sfe}^{\Kmr} {\sf sgn}^*(\iota) \\
&=& {\sf sgn}^* \tr_{\sfe}^{\Kmr} (\iota) \\
&=& 0
\end{eqnarray*}
as $\Hmr^{1}_{\Kmr}(\Bmr_{\Kmr}\Sigma_{2+}; \ull{\FF}_2) =0$ by \cite{HK}. 
\end{proof}

\begin{rmk} \label{rmk:2fold}
The  proposition above, combined with the fact that the ring $\ZZ \to \FF_p$ induce an  injection on  units  
\[ 
\begin{tikzcd}
\ZZ^{\times} \rar[hook] & \FF_p^{\times}
\end{tikzcd}
\]
when $p$ is odd,  shows that $\td{\upgamma}_{\Gmr, p}$ is not $\Hmr\ull{\FF}_p$-orientable when $p$ and $|\Gmr|$ are odd. However,  the $2$-fold sum of $\widetilde{\upgamma}_{ \Gmr, p}$ is $\Hmr\ull{\FF}_p$-orientable for all $\Gmr$ by  \cite[Theorem 1.19]{BZ}). Thus, when $p$ and $|\Gmr|$ are odd, we will work with of $2 \widetilde{\upgamma}_{ \Gmr, p}$. 
\end{rmk}

\begin{notn} \label{notn:fix}For the rest of the section we fix a finite group $\Gmr$ and a prime $p$, and  let
\begin{itemize}
\item $\epsilon = [1 + (|\Gmr | \mod 2)] (p-1)/2$, i.e., 
\[ 
\epsilon = \left\lbrace \begin{array}{ccc}
(p-1)/2 & \text{if $|\Gmr|$ is even} \\
(p-1) & \text{if $|\Gmr|$ is odd,} 
\end{array} \right.
\]
\item $\td{\bf u}_{\Gmr, 2}$ denote an $\Hmr\ull{\FF}_2$-Thom class of $\td{\upgamma}_{\Gmr, 2}$, 
\item $\td{\bf u}_{\Gmr, p}$ denote an $\Hmr\ull{\FF}_p$-Thom class of $(1 + |\Gmr| \mod 2) \td{\upgamma}_{\Gmr, p}$ when $p \neq 2$, 
\item $\td{\bf e}_{\Gmr, p}$ denote the $\Hmr\ull{\FF}_p$-Euler class corresponding to $\td{\bf u}_{\Gmr, p}$ at all prime $p$.
\end{itemize}
\end{notn}
\begin{rmk} Note that $\td{\bf e}_\Gmr \in \Hmr_{\Gmr}^{\epsilon (p-1) \uprho_\Gmr}(\Bmr_\Gmr \Sigma_p; \ull{\FF}_p)$ is a nonzero class as its image under the restriction map to trivial group is nonzero. 
\end{rmk}

At the prime $2$, we filter $\Bmr_{\Gmr}\Sigma_2$ as
\begin{equation} \label{eqn:projfilt}
\begin{tikzcd}
\ast \rar[hook] & \PP(\uprho) \rar[hook] & \PP(2 \uprho) \rar[hook] & \PP(3 \uprho) \rar[hook] & \cdots  \rar[hook] & \Bmr_{\Gmr}\Sigma_2, 
\end{tikzcd}
\end{equation}
where 
\[ \PP(-) := \Smr( - \otimes \widetilde{\uptau}_2) \times_{\Sigma_2} \ast \] 
denotes the $\Gmr$-equivariant projective space as a functor of $\Gmr$-representations. This results in
an Atiyah-Hirzebruch like spectral sequence 
\begin{equation} \label{eqn:mod2SS}
\begin{tikzcd}
\Emr^1_{\ast,\Vmr} := \overset{\infty}{\underset{n \in \NN}{\bigoplus}} \ \Hmr_{\Vmr}^{\Gmr}(\PP((n+1)\uprho)/\PP(n\uprho);\ull{\FF}_2)\ar[Rightarrow, r] &\Hmr_{\Vmr}^{\Gmr}(\Bmr_{\Gmr}\Sigma_{2+};\ull{\FF}_2)
\end{tikzcd}
\end{equation}
calculating the homology of $\Hmr_{\Vmr}^{\Gmr}(\Bmr_{\Gmr}\Sigma_{2+};\ull{\FF}_2)$. 

\begin{notn} \label{notn:kappa} Fix an injection $\kappa: \Cmr_p \hookrightarrow \Sigma_p$ and let 
\begin{equation} \label{eqn:kappa}
\begin{tikzcd}
\upkappa: \Bmr_{\Gmr}\Cmr_p \rar[hook] & \Bmr_{\Gmr}\Sigma_p
\end{tikzcd}
\end{equation}
denote the map  induced by $\upkappa$ on the classifying spaces. Note that the map $\upkappa$, up to homotopy, is independent of the choice of the injection $\kappa$. This is because all injections from $\Cmr_p$ to $\Sigma_p$ are conjugates of each other. 
\end{notn}

When $p$ is odd, we focus on identifying $\Hmr\ull{\FF}_p$-Eulerian sequence in $\Hmr_{\star}^{\Gmr}(\Bmr_{\Gmr} \Cmr_p; \ull{\FF}_p)$ defined using the Euler class $\upkappa^{*}\td{\bf e}_{\Gmr, p}$ of the bundle $\epsilon \upkappa^* \td{\upgamma}_{\Gmr,p}$.  This is because if $\upchi =({\sf x}_0, {\sf x}_1, \dots )$ is a $\Hmr\ull{\FF}_p$-Eulerian sequence in $\Hmr_{\star}^{\Gmr}(\Bmr_{\Gmr} \Cmr_p; \ull{\FF}_p)$ then 
\[ \upkappa_*\upchi = (\upkappa_* {\sf x}_0,  \upkappa_* {\sf x}_1, \dots ) \]
is a $\Hmr\ull{\FF}_p$-Eulerian sequence in $\Hmr_{\star}^{\Gmr}(\Bmr_{\Gmr} \Sigma_p; \ull{\FF}_p)$ as 
\[ 
\upkappa_*({\sf x}_{i+1})  \frown \td{\bf e}_{\Gmr, p} =  \upkappa_*( \sfx_{i+1} \frown \upkappa^* \td{\bf e}_{\Gmr, p})  =  \upkappa_*({\sf x}_i). 
\]
\begin{notn} Let $\uprho_{\Gmr, \CC}$ denote the complex regular representation of $\Gmr$. We will simply use $\uprho_\CC$ to denote $\uprho_{\Gmr, \CC}$  when the group $\Gmr$ is clear from the context. 
\end{notn}
 Similar to \eqref{eqn:projfilt}, $\Bmr_{\Gmr}\Cmr_p$ admits a filtration 
\begin{equation} \label{eqn:lensfilt}
\begin{tikzcd}
\ast \rar[hook] & \LL_p(\epsilon \uprho_\CC) \rar[hook] & \LL_p(2 \epsilon\uprho_\CC) \rar[hook] & \LL_p(3 \epsilon \uprho_\CC) \rar[hook] & \cdots  \rar[hook] & \Bmr_{\Gmr}\Cmr_p, 
\end{tikzcd}
\end{equation}
where 
\[ \LL_{p}(-) := \Smr(- \otimes_{\CC}  {\sf r}_{2 \pi/p})\times_{\Cmr_p}\ast \] 
denotes the $\Gmr$-equivariant lens space as a functor of complex $\Gmr$-representations defined using ${\sf r}_{2 \pi/p}$, the rotation by $2 \pi/p$ representation of $\Cmr_p$.  This yields an Atiyah-Hirzebruch like spectral sequence 
\begin{equation}\label{eqn:modpSS1}
\begin{tikzcd}
  \Emr^1_{\ast,\Vmr} := \overset{\infty}{\underset{n \in \NN}{\bigoplus}} \  \Hmr_{\Vmr}^{\Gmr}(\LL_p(\epsilon(n+1)\uprho_\CC)/\LL_p(\epsilon n\uprho_\CC);\ull{\FF}_p)\ar[Rightarrow, r] &\Hmr_{\Vmr}^{\Gmr}(\Bmr_{\Gmr}\Cmr_{p+};\ull{\FF}_p)\end{tikzcd}
\end{equation}
calculating the homology groups of $\Bmr_{\Gmr}\Cmr_p$. 

To identify classes in first pages of \eqref{eqn:mod2SS} and \eqref{eqn:modpSS1} we  pause briefly  to discuss the equivariant analogs of some of the classical results of Atiyah \cite{AtiyahThomCmplx} on projective spaces. 

\subsection{Some equivariant analogs of Atiyah's result} \

In 1961, Atiyah proved that the quotient $\RR\PP^{n+k}/\RR\PP^n$ is the Thom complex of the $n$-fold sum of the tautological bundle over $\RR\PP^{k}$ \cite[Proposition 4.3]{AtiyahThomCmplx}. The equivariant analog of this result takes the following form.  
 \begin{prop} \label{prop:stuntedProjThom} There exist a $\Gmr$-equivariant homeomorphism 
 \[  \Th\left(n \,\td{\upgamma}_{\Gmr, 2}^{(k)}\right) \cong \PP((n+k) \uprho)/ \PP(n \uprho) \]
  for all  $n, k \in \NN$, where $\td{\upgamma}_{\Gmr, 2}^{(k)}$ is the pullback of $\td{\upgamma}_{\Gmr, 2}$ along $\PP(k \uprho) \hookrightarrow \Bmr_{\Gmr}\Sigma_2$.
\end{prop}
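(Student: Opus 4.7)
The plan is to produce an explicit $\Gmr \times \Sigma_2$-equivariant homeomorphism, and then pass to the $\Sigma_2$-quotient to obtain the desired $\Gmr$-equivariant identification. Throughout, set $V = n\uprho \otimes \widetilde{\uptau}_2$ and $W = k\uprho \otimes \widetilde{\uptau}_2$, both viewed as $\Gmr \times \Sigma_2$-representations, where $\Sigma_2$ acts on $\widetilde{\uptau}_2$ by sign.

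First I would unwind the relevant definitions. By construction $\PP((n+k)\uprho) = \Smr(V \oplus W)/\Sigma_2$ and $\PP(n\uprho) = \Smr(V)/\Sigma_2$, embedded via $(v,0)$, so
\[ \PP((n+k)\uprho)/\PP(n\uprho) = \bigl(\Smr(V \oplus W)/\Smr(V)\bigr)/\Sigma_2. \]
Similarly, the inclusion $\PP(k\uprho) \hookrightarrow \Bmr_\Gmr \Sigma_2$ is modeled by $\Smr(W)/\Sigma_2 \hookrightarrow \Emr_\Gmr \Sigma_2/\Sigma_2$, so the pullback bundle is explicitly
\[ \widetilde{\upgamma}_{\Gmr,2}^{(k)} \cong \Smr(W) \times_{\Sigma_2} (\uprho \otimes \widetilde{\uptau}_2), \]
and its $n$-fold Whitney sum is $\Smr(W) \times_{\Sigma_2} V$, whose disk and sphere bundles are $\Smr(W) \times_{\Sigma_2} \Dmr(V)$ and $\Smr(W) \times_{\Sigma_2} \Smr(V)$, respectively.

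Next I would construct the key $\Gmr \times \Sigma_2$-equivariant map
\[
\begin{tikzcd}
\Phi : \Dmr(V) \times \Smr(W) \rar & \Smr(V \oplus W), & (v,w) \ar[r, mapsto] & \bigl(v,\, \sqrt{1-\|v\|^2}\, w\bigr).
\end{tikzcd}
\]
It is $\Gmr \times \Sigma_2$-equivariant since the norms of $v$ and $w$ are invariant. Its image is precisely $\Smr(V \oplus W) \setminus (\Smr(V) \setminus \{\text{equator}\})$... more usefully, $\Phi$ collapses $\Smr(V) \times \Smr(W)$ (the locus $\|v\|=1$) onto $\Smr(V) \subset \Smr(V \oplus W)$, and away from this locus is a homeomorphism onto $\Smr(V \oplus W) \setminus \Smr(V)$. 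A standard argument then shows that $\Phi$ descends to a $\Gmr \times \Sigma_2$-equivariant homeomorphism
\[ \overline{\Phi} : \bigl(\Dmr(V) \times \Smr(W)\bigr) \big/ \bigl(\Smr(V) \times \Smr(W)\bigr) \;\xrightarrow{\cong}\; \Smr(V \oplus W)/\Smr(V). \]

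Finally, since $\Sigma_2$ acts freely on $\Smr(W)$, taking $\Sigma_2$-orbits preserves the homeomorphism and the $\times_{\Sigma_2}$-constructions yield precisely the disk and sphere bundles of $n\,\widetilde{\upgamma}_{\Gmr,2}^{(k)}$. Thus
\[ \Th\bigl(n\,\widetilde{\upgamma}_{\Gmr,2}^{(k)}\bigr) \;=\; \bigl(\Smr(W) \times_{\Sigma_2} \Dmr(V)\bigr) \big/ \bigl(\Smr(W) \times_{\Sigma_2} \Smr(V)\bigr) \;\xrightarrow{\overline{\Phi}/\Sigma_2}\; \PP((n+k)\uprho)/\PP(n\uprho), \]
and this homeomorphism is $\Gmr$-equivariant because all constructions were carried out in $\Gmr$-equivariant topological spaces. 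The only real subtlety is verifying that $\overline{\Phi}$ is a genuine homeomorphism (rather than just a continuous bijection), which follows from the compactness of $\Dmr(V) \times \Smr(W)$ and the Hausdorff property of the target; this is the only step requiring any care.
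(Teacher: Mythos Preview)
Your proof is correct and essentially identical to the paper's. The paper uses the same explicit map $(v,w)\mapsto(w,v\sqrt{1-\|w\|^2})$ (with the roles of the letters $v,w$ swapped relative to yours) and concludes by one-point compactifying the open homeomorphism rather than quoting compact/Hausdorff, which amounts to the same thing.
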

\begin{proof} We first note that $\td{\upgamma}_{\Gmr,2}^{(k)}$ can be explicitly described as 
  \[ 
    \td{\upgamma}_{\Gmr, 2}^{(k)} := 
    \begin{tikzcd}
    \Smr(k \uprho \otimes \td{\uptau}_2) \times_{\Sigma_2} (\uprho \otimes \td{\uptau}_2 ) \dar  \\
    \PP(k \uprho), 
    \end{tikzcd}
    \]
where $\Smr(\Vmr)$ is the space of subspace of unit length vectors in $\Vmr$. Let 
\[ \Dmr(\Vmr) := \{ v \in \Vmr: \|v \| \leq 1 \} \] 
denote the subspace of $\Vmr$ consisting of vectors of length less than or equal to $1$. Note than when $\Vmr$ is an orthogonal representation then both $\Smr(\Vmr)$ and $\Dmr(\Vmr)$ inherits the action of the ambient group. Thus, $\Smr(k \uprho \otimes \td{\uptau}_2)$ and $\Dmr(k \uprho \otimes \td{\uptau}_2)$ are $\Gmr \times \Sigma_2$-spaces. 

 Now notice that the  $\Gmr\times \Sigma_2$-equivariant map 
    \[f:\Smr(k\uprho\otimes \td{\uptau}_2)\times \Dmr(n\uprho\otimes \td{\uptau}_2)\to \Smr((n + k)\uprho\otimes \td{\uptau}_2)\]
    defined by the formula $f(v,w) = \left(w, v\sqrt{1 - \| w \|^2 }\right)$, 
   restricts to an equivariant homeomorphism
    \[\Smr(k\uprho\otimes \td{\tau}_2)\times (\Dmr(n\uprho\otimes \td{\tau}_2) - \Smr(n\uprho\otimes \td{\tau}_2))\to \Smr((n+k)\uprho\otimes \td{\tau}_2) - \Smr(n\uprho\otimes \td{\tau}_2). \]
    Taking $\Sigma_2$-orbits, we get a $\Gmr$-equivariant homeomorphism
    \[\Dmr\left(n\td{\upgamma}_{2 }^{(k)}\right) - \Smr\left(n\td{\upgamma}_{ 2}^{(k)}\right)\to \PP((n+k)\uprho) - \PP(n\uprho),\]
where $\Dmr(-)$ and $\Smr(-)$ on the left hand side are the unit disk bundle and the unit sphere bundle functors. By one-point compactifying the map above, we get the desired homeomorphism. 
\end{proof}
 Thus, we have the following $\Hmr\ull{\FF}_2$-Thom isomorphism. 
\begin{cor} \label{cor:ProjThomiso} For any finite group $\Gmr$ \[ \Hmr_{ \Vmr + n \uprho}^\Gmr( \PP(n+1)\uprho)/\PP(n \uprho); \ull{\FF}_2 ) \cong  \Hmr_{\Vmr }^\Gmr( \PP(\uprho)_+; \ull{\FF}_2 ).\]
\end{cor}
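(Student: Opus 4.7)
The proof of this corollary is essentially a direct application of \Cref{prop:stuntedProjThom} combined with the $\Hmr\ull{\FF}_2$-Thom isomorphism, so the plan is short.

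First, I would specialize \Cref{prop:stuntedProjThom} to the case $k=1$. This yields a $\Gmr$-equivariant homeomorphism
\[
\PP((n+1)\uprho)/\PP(n\uprho)\; \cong\; \Th\bigl(n\,\td{\upgamma}_{\Gmr,2}^{(1)}\bigr),
\]
where $\td{\upgamma}_{\Gmr,2}^{(1)}$ denotes the pullback of $\td{\upgamma}_{\Gmr,2}$ along the inclusion $\PP(\uprho)\hookrightarrow \Bmr_{\Gmr}\Sigma_2$. This bundle has fiber dimension equal to $\dim(\uprho\otimes\td{\uptau}_2)=\uprho$, so the $n$-fold direct sum $n\,\td{\upgamma}_{\Gmr,2}^{(1)}$ is an $n\uprho$-dimensional $\Gmr$-equivariant vector bundle over $\PP(\uprho)$.

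Next, I would verify that this bundle is $\Hmr\ull{\FF}_2$-orientable. As observed immediately before \Cref{prop:oddPrimeOrient}, since $\FF_2^\times$ is trivial, every homogeneous bundle is automatically $\Hmr\ull{\FF}_2$-orientable; in particular $\td{\upgamma}_{\Gmr,2}$ is $\Hmr\ull{\FF}_2$-orientable, and orientability is preserved both by pullback and by direct sums, so $n\,\td{\upgamma}_{\Gmr,2}^{(1)}$ carries an $\Hmr\ull{\FF}_2$-Thom class.

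Finally, applying the $\Hmr\ull{\FF}_2$-Thom isomorphism of $n\,\td{\upgamma}_{\Gmr,2}^{(1)}$, which shifts degree by $n\uprho$, and composing with the homeomorphism from the first step, I obtain
\[
\Hmr_{\Vmr+n\uprho}^{\Gmr}\bigl(\PP((n+1)\uprho)/\PP(n\uprho);\ull{\FF}_2\bigr)
\;\cong\;
\Hmr_{\Vmr+n\uprho}^{\Gmr}\bigl(\Th(n\,\td{\upgamma}_{\Gmr,2}^{(1)});\ull{\FF}_2\bigr)
\;\cong\;
\Hmr_{\Vmr}^{\Gmr}\bigl(\PP(\uprho)_+;\ull{\FF}_2\bigr),
\]
which is exactly the claim. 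There is no real obstacle here: the content of the corollary is in \Cref{prop:stuntedProjThom} and in the automatic orientability at $p=2$; no spectral sequence or further computation is needed.
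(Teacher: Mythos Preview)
Your proposal is correct and matches the paper's approach: the corollary is stated immediately after \Cref{prop:stuntedProjThom} with the phrase ``Thus, we have the following $\Hmr\ull{\FF}_2$-Thom isomorphism,'' and no further argument is given there. Your expansion---specializing to $k=1$, invoking automatic $\Hmr\ull{\FF}_2$-orientability, and applying the Thom isomorphism---is exactly the intended reasoning.
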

An analogous result can be obtained for equivariant lens spaces using the bundle 
 \begin{equation} \label{eqn:bundleomega}
 \td{\upomega}_{\Gmr, p}^{(k)} := 
    \begin{tikzcd}
    \Smr(k \uprho_{\CC} \otimes_{\CC} \sfr_{2\pi/p} ) \times_{\Cmr_p} (\uprho_{\CC} \otimes_{\CC} \sfr_{2\pi/p} ) \dar  \\
    \LL_p(k \uprho), 
    \end{tikzcd}
  \end{equation}
  where $\uprho_\CC$ is the complex regular representation of $\Gmr$. 
  \begin{rmk} For a complex $\Gmr \times \Cmr_p$-representation $\Vmr$, let ${\sf u}(\Vmr)$ denote its underlying real representation. Then 
  \[{\sf u}( \uprho_{\CC} \otimes_{\CC} \sfr_{2\pi/p} ) \cong {\sf u} \left( (\uprho \otimes \CC) \otimes_{\CC} \sfr_{2\pi/p} \right) \cong  {\sf u} \left( \uprho \otimes (\CC \otimes_{\CC} \sfr_{2\pi/p}) \right) \cong \uprho \otimes {\sf u}(\sfr_{2\pi/p}).  \]  
 Thus $\td{\upomega}_{\Gmr, p}^{(k)}$ is a bundle of rank $2 \uprho$ in the sense of \cite{BZ}. 
  \end{rmk}
\begin{prop} \label{prop:stuntedLensThom} There exist a $\Gmr$-equivariant homeomorphism 
 \[  \Th\left(n \,\td{\upomega}_{\Gmr, p}^{(k)}\right) \cong \LL_p((n+k) \uprho_\CC)/ \LL_p(n \uprho_\CC) \] for all  $n, k \in \NN$, where $\td{\upomega}_{\Gmr, p}^{(k)}$ is the pullback of $\td{\upomega}_{\Gmr, p}$ as in \Cref{eqn:bundleomega}. 
\end{prop}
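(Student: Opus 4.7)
The plan is to follow the proof of \Cref{prop:stuntedProjThom} verbatim, replacing the real orthogonal representation $\uprho \otimes \td{\uptau}_2$ with the complex representation $\uprho_\CC \otimes_\CC \sfr_{2\pi/p}$ and the $\Sigma_2$-action with the $\Cmr_p$-action via complex scalar multiplication. Concretely, I would first rewrite $n \td{\upomega}_{\Gmr,p}^{(k)}$ explicitly as the Whitney sum description
\[
n\td{\upomega}_{\Gmr,p}^{(k)} \;\cong\; \Smr(k \uprho_\CC \otimes_\CC \sfr_{2\pi/p}) \times_{\Cmr_p} (n\uprho_\CC \otimes_\CC \sfr_{2\pi/p}),
\]
so that the disk and sphere bundles are obtained by applying $\Dmr(-)$ and $\Smr(-)$ fiberwise before taking $\Cmr_p$-orbits.

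The key geometric input is a $\Gmr \times \Cmr_p$-equivariant map
\[
f \colon \Smr(k\uprho_\CC \otimes_\CC \sfr_{2\pi/p}) \times \Dmr(n\uprho_\CC \otimes_\CC \sfr_{2\pi/p}) \longrightarrow \Smr((n+k)\uprho_\CC \otimes_\CC \sfr_{2\pi/p})
\]
defined by the same formula $f(v,w) = \bigl(w,\, v\sqrt{1-\|w\|^2}\bigr)$. Equivariance under $\Gmr$ is immediate because $\Gmr$ acts through $\uprho_\CC$ on both factors. Equivariance under $\Cmr_p$ holds because $\Cmr_p$ acts diagonally by the complex scalar $\zeta = e^{2\pi i/p}$ on both $v$ and $w$, and the scaling $\sqrt{1-\|w\|^2}$ is a \emph{real} scalar depending only on $\|w\|$ (which is $\Cmr_p$-invariant), so it commutes with the complex scalar action. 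As in the real case, $f$ restricts to a homeomorphism on the open disk locus
\[
\Smr(k\uprho_\CC \otimes_\CC \sfr_{2\pi/p}) \times \bigl(\Dmr(n\uprho_\CC \otimes_\CC \sfr_{2\pi/p}) - \Smr(n\uprho_\CC \otimes_\CC \sfr_{2\pi/p})\bigr) \xrightarrow{\;\cong\;} \Smr((n+k)\uprho_\CC \otimes_\CC \sfr_{2\pi/p}) - \Smr(n\uprho_\CC \otimes_\CC \sfr_{2\pi/p}),
\]
with explicit inverse sending $(w, u)$ to $(u/\|u\|,\, w)$ on the complement of $\{u=0\}$.

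Passing to $\Cmr_p$-orbits then yields a $\Gmr$-equivariant homeomorphism
\[
\Dmr\bigl(n\td{\upomega}_{\Gmr,p}^{(k)}\bigr) - \Smr\bigl(n\td{\upomega}_{\Gmr,p}^{(k)}\bigr) \;\xrightarrow{\;\cong\;}\; \LL_p((n+k)\uprho_\CC) - \LL_p(n\uprho_\CC),
\]
and one-point compactification produces the claimed identification $\Th(n\td{\upomega}_{\Gmr,p}^{(k)}) \cong \LL_p((n+k)\uprho_\CC)/\LL_p(n\uprho_\CC)$. I do not anticipate a real obstacle here: the only thing to watch carefully is the compatibility of the real rescaling factor with the complex $\Cmr_p$-action, which, as noted, follows from the $\Cmr_p$-invariance of $\|w\|$. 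The rest is a direct translation of Atiyah's classical argument into the equivariant complex setting.
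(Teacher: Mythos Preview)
Your proposal is correct and is essentially identical to the paper's own proof: the paper uses the same $\Gmr \times \Cmr_p$-equivariant map $f(v,w) = (w, v\sqrt{1-\|w\|^2})$, restricts it to the open disk locus, passes to $\Cmr_p$-orbits, and one-point compactifies. Your additional remark checking that the real rescaling factor commutes with the complex $\Cmr_p$-action is a helpful clarification that the paper leaves implicit.
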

\begin{proof}  Notice that the  $\Gmr\times \Cmr_p$-equivariant map 
  \[ \begin{tikzcd}
  f:\Smr(k\uprho_\CC\otimes_\CC {\sf r}_{2 \pi/p})\times \Dmr(n\uprho_\CC\otimes_\CC {\sf r}_{2 \pi/p})\rar & \Smr((n + k)\uprho_\CC\otimes_\CC {\sf r}_{2 \pi/p})
  \end{tikzcd}
  \]
    defined by the formula $f(v,w) = \left(w, v\sqrt{1 - \| w \|^2 }\right)$,  
   restricts to an equivariant homeomorphism
    \[
    \begin{tikzcd}
    \Smr(k\uprho_\CC\otimes_\CC {\sf r}_{2 \pi/p})\times \left(\Dmr(n\uprho_\CC\otimes_\CC {\sf r}_{2 \pi/p}\right) - \Smr(n\uprho_\CC\otimes_\CC {\sf r}_{2 \pi/p}))\ar[d] \\
     \Smr((n+k)\uprho_\CC\otimes_\CC {\sf r}_{2 \pi/p}) - \Smr(n\uprho_\CC\otimes_\CC {\sf r}_{2 \pi/p}).
     \end{tikzcd} \]
    Taking $\Cmr_p$-orbits, we get a $\Gmr$-equivariant homeomorphism
    \[
    \begin{tikzcd}
    \Dmr\left(n\td{\upomega}_{2 }^{(k)}\right) - \Smr\left(n\td{\upomega}_{ 2}^{(k)}\right)\ar[r] & \LL((n+k)\uprho_\CC) - \LL(n\uprho_\CC). 
    \end{tikzcd}\]
By one-point compactifying the map above, we get the desired homeomorphism. 
\end{proof} 
Since $2 \td{\upomega}^{(k)}_{\Gmr, p}$ is $\Hmr\ull{\FF}_p$-orientable by $\Cref{rmk:2fold}$, we conclude: 
\begin{cor} \label{cor:stuntedLensThom} For any finite group $\Gmr$ 
  \[  \Hmr_{ \Vmr +   4n \uprho}^\Gmr \left( \LL_p( (2n+k)\uprho_\CC)/\LL_p(2n \uprho_\CC); \ull{\FF}_p \right) \cong  \Hmr_{\Vmr }^\Gmr( \LL_p( k\uprho_\CC)_+; \ull{\FF}_p ). \]
\end{cor}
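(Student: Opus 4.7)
The plan is to apply Proposition \ref{prop:stuntedLensThom} with $2n$ in place of $n$ and then invoke the $\Hmr\ull{\FF}_p$-Thom isomorphism. Specifically, Proposition \ref{prop:stuntedLensThom} yields a $\Gmr$-equivariant homeomorphism
\[ \LL_p((2n+k)\uprho_\CC)/\LL_p(2n\uprho_\CC) \cong \Th\left(2n\,\td{\upomega}_{\Gmr,p}^{(k)}\right), \]
and I would rewrite the bundle as $2n\,\td{\upomega}_{\Gmr,p}^{(k)} \cong n\,(2\td{\upomega}_{\Gmr,p}^{(k)})$.

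The next step is to observe that by Remark \ref{rmk:2fold}, the two-fold sum $2\td{\upomega}_{\Gmr,p}^{(k)}$ is $\Hmr\ull{\FF}_p$-orientable (this is where the factor of $2$ in the hypothesis $2n$ is essential, as a single copy of $\td{\upomega}_{\Gmr,p}^{(k)}$ need not be $\Hmr\ull{\FF}_p$-orientable when $p$ and $|\Gmr|$ are both odd). Consequently, the $n$-fold Whitney sum $n\,(2\td{\upomega}_{\Gmr,p}^{(k)})$ is also $\Hmr\ull{\FF}_p$-orientable, and it admits a Thom class whose cup product with the zero section provides the Thom isomorphism.

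Finally, I would compute the degree shift. Since $\td{\upomega}_{\Gmr,p}^{(k)}$ has rank $2\uprho$ as observed in the remark preceding Proposition \ref{prop:stuntedLensThom}, the $2n$-fold Whitney sum has rank $4n\uprho$. The Thom isomorphism therefore gives
\[ \Hmr_{\Vmr+4n\uprho}^\Gmr\left(\Th(2n\,\td{\upomega}_{\Gmr,p}^{(k)}); \ull{\FF}_p\right) \cong \Hmr_{\Vmr}^\Gmr\left(\LL_p(k\uprho_\CC)_+; \ull{\FF}_p\right), \]
which is the desired statement once we identify the left-hand side with the homology of the stunted lens space via the first step. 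There is no real obstacle here beyond tracking the degree convention and confirming orientability; the main technical content is already packaged into Proposition \ref{prop:stuntedLensThom} and Remark \ref{rmk:2fold}.
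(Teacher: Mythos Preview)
Your proposal is correct and matches the paper's approach exactly: the paper introduces the corollary with the single sentence ``Since $2\td{\upomega}^{(k)}_{\Gmr,p}$ is $\Hmr\ull{\FF}_p$-orientable by \Cref{rmk:2fold}, we conclude,'' which is precisely the two-step argument (apply \Cref{prop:stuntedLensThom} with $2n$, then use the Thom isomorphism with the rank-$4n\uprho$ shift) that you outline.
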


Using \Cref{cor:ProjThomiso}, we may rewrite the spectral sequence \eqref{eqn:mod2SS} as
\begin{equation} \label{eqn:mmod2SS}
\begin{tikzcd}
\Emr^1_{\ast,\Vmr} := \overset{\infty}{\underset{k \in \NN}{\bigoplus}} \  \Hmr_{\Vmr}^{\Gmr}(\Sigma^{k \uprho}\PP(\uprho)_+;\ull{\FF}_2)\ar[Rightarrow, r] &\Hmr_{\Vmr}^{\Gmr}(\Bmr_{\Gmr}\Sigma_{2+};\ull{\FF}_2). 
\end{tikzcd}
\end{equation}
Likewise, we use \Cref{cor:stuntedLensThom} to rewrite \eqref{eqn:modpSS1} as 
\begin{equation} \label{eqn:mmodpSS1}
\begin{tikzcd}
 \Emr^1_{\ast,\Vmr} := \overset{\infty}{\underset{k \in \NN}{\bigoplus}}  \  \Hmr_{\Vmr}^{\Gmr}(\Sigma^{2\epsilon k \uprho}\LL_p(\epsilon \uprho_\CC)_+;\ull{\FF}_p)\ar[Rightarrow, r] &\Hmr_{\Vmr}^{\Gmr}(\Bmr_{\Gmr}\Cmr_{p+};\ull{\FF}_p)
    \end{tikzcd}
\end{equation}
when $p$ is an odd prime.

 Atiyah  showed that \cite[Lemma 4.5]{AtiyahThomCmplx} the sum of the tangent bundle $\Tmr \RR\PP^{n-1}$ of $\RR\PP^{n-1} := \PP(\RR^n)$ and a $1$-dimensional trivial bundle is isomorphic to the $n$-fold Whitney sum $n\td{\upgamma}_2^{(n)}$ of the tautological line bundle. Using this and  Atiyah duality \cite[Proposition 3.2]{AtiyahThomCmplx}, one can reproduce Poincar\'e duality
\begin{eqnarray*}
\Hmr^{k}((\RR\PP^{n-1})_+; \FF_2) &\cong& \Hmr_{-k}(\Th(-\Tmr\RR\PP^{n-1}); \FF_2) \\
&\cong& \Hmr_{-k}(\Th(-n\td{\upgamma}_2^{(n)} + \upepsilon_1); \FF_2) \\
&\cong& \Hmr_{-k}(\Sigma^{1-n}(\RR\PP^{n-1})_+;\FF_2) \\
&\cong& \Hmr_{n-1-k}((\RR\PP^{n-1})_+;\FF_2),
\end{eqnarray*}
where the third isomorphism is the evident $\Hmr\FF_2$-Thom isomorphism. 

We would now like to establish an equivariant analog of \cite[Lemma 4.5]{AtiyahThomCmplx} to establish the equivariant Poincar\'e duality results for $\PP(n \uprho)$ and $\LL_p(n \uprho)$. 
\begin{lem} \label{lem:tangentPL}  The following are isomorphisms of $\Gmr$-equivariant vector bundles:
\begin{enumerate}
\item $\Tmr \PP(k \uprho) \oplus\upepsilon_1 \cong k\td{\upgamma}^{(k)}_{2}$.
\item $\Tmr \LL_p(k\uprho) \oplus\upepsilon_1 \cong k\td{\upomega}^{(k)}_{p} $ when $p$ is odd. 
\end{enumerate}
\end{lem}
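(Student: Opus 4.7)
The plan is to treat both parts uniformly by descending a single short exact sequence of equivariant bundles from a sphere to its quotient. For part (1), I would work with $V = k\uprho \otimes \td{\uptau}_2$ as a $\Gmr \times \Sigma_2$-representation; for part (2), with $V = k\uprho_\CC \otimes_\CC \sfr_{2\pi/p}$ as a $\Gmr \times \Cmr_p$-representation. In either case, the relevant finite group $\Pi$ (being $\Sigma_2$ or $\Cmr_p$) acts freely on $S(V)$, and the quotient $S(V)/\Pi$ is $\PP(k\uprho)$ or $\LL_p(k\uprho_\CC)$ respectively. This makes both statements instances of one construction.

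On $S(V)$ there is a canonical $\Gmr \times \Pi$-equivariant short exact sequence
\[
0 \longrightarrow TS(V) \longrightarrow V|_{S(V)} \longrightarrow \nu \longrightarrow 0,
\]
where $V|_{S(V)} = S(V) \times V$ is the trivial bundle obtained by restricting the tangent bundle of $V$ to the sphere, and $\nu$ is the rank-one radial normal subbundle with fiber $\RR v$ at $v \in S(V)$. The standard $\Gmr$-invariant inner product on $V$ (which is automatically $\Pi$-invariant since $V$ is orthogonal for both actions) yields an equivariant splitting $V|_{S(V)} \cong TS(V) \oplus \nu$, which I would then descend to $S(V)/\Pi$; this descent is well-behaved because $\Pi$ acts freely and the action is $\Gmr$-equivariant.

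It remains to identify each of the three descents. The descent of $TS(V)$ is $T\PP(k\uprho)$ or $T\LL_p(k\uprho_\CC)$, by freeness of the $\Pi$-action. The descent of the trivial bundle $S(V)\times V$, equipped with the diagonal $\Gmr \times \Pi$-action, is by definition $S(V)\times_\Pi V$, which is exactly $k\td{\upgamma}^{(k)}_{2}$ or $k\td{\upomega}^{(k)}_{p}$ as in \Cref{redbundle} and \Cref{eqn:bundleomega}. The main computation is the descent of the normal bundle $\nu$: parametrizing $\nu|_v = \RR v$ by the coordinate $t$ (so $(v,t) \leftrightarrow tv$), a generator $\zeta \in \Pi$ acts by $(v, tv) \mapsto (\zeta v, t \zeta v)$, which in the coordinate $t$ is just $(v,t) \mapsto (\zeta v, t)$. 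Hence $\Pi$ acts trivially on the parameter, $\Gmr$ acts trivially on it as well, and $\nu$ descends to the trivial line bundle $\upepsilon_1$.

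The main subtle point is the last identification: for $p$ odd, the real lines $\RR v$ and $\RR (\zeta v)$ are genuinely distinct inside the complex line $\CC v$, so at first glance one might worry that $\nu$ descends to something twisted. The key observation is that the equivariant identification $tv \leftrightarrow t\zeta v$ makes the scalar parameter $t$ literally fixed by the group action, regardless of how the underlying real subspaces rotate inside $V$. Combining the three descent identifications with the equivariant splitting of the short exact sequence yields the claimed isomorphisms $\Tmr\PP(k\uprho) \oplus \upepsilon_1 \cong k\td{\upgamma}^{(k)}_{2}$ and $\Tmr\LL_p(k\uprho_\CC) \oplus \upepsilon_1 \cong k\td{\upomega}^{(k)}_{p}$.
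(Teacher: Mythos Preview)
Your proof is correct and follows essentially the same approach as the paper: both arguments start from the $(\Gmr\times\Pi)$-equivariant splitting $TS(V)\oplus\nu\cong S(V)\times V$ for $V=k\uprho\otimes\td{\uptau}_2$ (resp.\ $k\uprho_\CC\otimes_\CC\sfr_{2\pi/p}$), identify the normal bundle $\nu$ with the trivial line $\upepsilon_1$ via the radial parameter, and then pass to $\Pi$-orbits. The paper simply asserts that the normal bundle is equivariantly trivial and takes orbits in one line, whereas you spell out the descent of each summand and the fiber-coordinate calculation for $\nu$ more explicitly.
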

\begin{proof} Note that the normal bundle of $\SS(k \uprho \otimes \td{\uptau}_2) \hookrightarrow k \uprho \otimes \td{\uptau}_2$ is isomorphic to $\upepsilon_1$, the trivial $1$-dimensional real $(\Gmr \times \Sigma_2)$- vector bundle. Therefore, we get $(\Gmr \times \Sigma_2)$-equivariant isomorphism
    \[\Tmr \Smr(k \uprho\otimes \td{\uptau}_2)\oplus \upepsilon_1 \cong \Smr(k \uprho\otimes \td{\uptau}_2)\times (k \uprho\otimes \td{\uptau}_2)\]
of $(\Gmr \times \Sigma_2)$-vector bundle. By taking $\Sigma_2$-orbit, we get {\it (1)}. 

Similarly, the homeomorphism of {\it (2)} is the map on the $\Cmr_p$-orbits of the $(\Gmr \times \Cmr_p)$-isomorphism
\[ \Tmr \Smr(k \uprho_\CC \otimes_\CC {\sf r}_{2 \pi/p})\oplus \upepsilon_1 \cong \Smr(k \uprho_\CC \otimes_\CC {\sf r}_{2 \pi/p})\times (k \uprho_\CC \otimes_\CC {\sf r}_{2 \pi/p}), \]
where $\upepsilon_1$ is the trivial $1$ dimensional  real $(\Gmr \times \Cmr_p)$-representation. 
\end{proof}

\begin{lem}[Poincar\'e duality]\label{lem:poincareDuality} Let $\Gmr$ be a finite group. Then there are isomorphisms of Abelian groups
\begin{enumerate}
\item $\Hmr^{\star}_{\Gmr} (\PP(k\uprho)_+;\ull{\FF}_2) \cong \Hmr_{k\uprho - 1 - \star}^{\Gmr} (\PP(k\uprho)_+;\ull{\FF}_2)$
\item $\Hmr_{\star}^{\Gmr} (\LL_p(\epsilon k\uprho_\CC)_+;\ull{\FF}_p) \cong \Hmr^{2 \epsilon k \uprho - 1 - \star}_{\Gmr} (\LL_p(\epsilon k\uprho_\CC)_+;\ull{\FF}_p)$
\end{enumerate}
for all $k \in \NN$. 
\end{lem}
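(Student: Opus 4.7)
The plan is to mimic the classical Atiyah duality argument for $\RR\PP^{n-1}$ recalled in the paragraph preceding \Cref{lem:tangentPL}, using its equivariant analog. Since $\PP(k\uprho)$ and $\LL_p(\epsilon k \uprho_\CC)$ are smooth closed $\Gmr$-manifolds, equivariant Atiyah duality gives
\[
\DD(\PP(k\uprho)_+) \simeq \PP(k\uprho)^{-\Tmr \PP(k\uprho)}, \qquad \DD(\LL_p(\epsilon k\uprho_\CC)_+) \simeq \LL_p(\epsilon k\uprho_\CC)^{-\Tmr \LL_p(\epsilon k\uprho_\CC)}
\]
in $\Sp_\Gmr$. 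Taking $\Hmr\ull{\FF}_p$-cohomology and using the duality pairing between homology and cohomology of Spanier--Whitehead duals yields
\[
\Hmr^{\star}_\Gmr(\PP(k\uprho)_+ ; \ull{\FF}_2) \cong \Hmr_{-\star}^\Gmr(\Th(-\Tmr \PP(k\uprho)) ; \ull{\FF}_2),
\]
and analogously for the lens space at the prime $p$.

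Next I will replace the stable normal bundle with a sum of copies of the tautological-type bundle using \Cref{lem:tangentPL}. Part (1) rewrites $-\Tmr \PP(k\uprho) \simeq \upepsilon_1 - k\widetilde{\upgamma}_2^{(k)}$ stably, and part (2) rewrites $-\Tmr \LL_p(\epsilon k\uprho_\CC) \simeq \upepsilon_1 - \epsilon k\,\widetilde{\upomega}_p^{(\epsilon k)}$. The $\Hmr\ull{\FF}_2$-orientability of $\widetilde{\upgamma}_{\Gmr,2}$ (which holds for all finite $\Gmr$) and the $\Hmr\ull{\FF}_p$-orientability of $\epsilon k\,\widetilde{\upomega}_p^{(\epsilon k)}$ (guaranteed by \Cref{prop:oddPrimeOrient} together with \Cref{rmk:2fold}, taking into account the value of $\epsilon$ in the even/odd $|\Gmr|$ cases as used to set up $\widetilde{\bf u}_{\Gmr,p}$) then furnishes $\Hmr\ull{\FF}_p$-Thom isomorphisms
\[
\Hmr_{-\star}^\Gmr\bigl(\Th(\upepsilon_1 - k\widetilde{\upgamma}_2^{(k)}); \ull{\FF}_2\bigr) \cong \Hmr_{-\star + 1 - k\uprho}^\Gmr(\PP(k\uprho)_+; \ull{\FF}_2),
\]
and similarly $\Hmr_{-\star + 1 - 2\epsilon k\uprho}^\Gmr(\LL_p(\epsilon k\uprho_\CC)_+; \ull{\FF}_p)$ for the odd prime case. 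Concatenating these isomorphisms with the Atiyah duality identification, and reindexing $-\star + 1 - k\uprho = k\uprho - 1 - \star$ after substituting $\star \mapsto -\star$ where appropriate to normalize the sign conventions on $\RO(\Gmr)$-graded (co)homology, produces the claimed duality in both cases.

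The main obstacle I anticipate is not the conceptual argument, which is parallel to the classical one, but the careful bookkeeping needed for two issues. First, one must verify that the chosen $\Hmr\ull{\FF}_p$-orientation on $\epsilon k\,\widetilde{\upomega}_p^{(\epsilon k)}$ (which in the $|\Gmr|$ odd case requires pairing $\widetilde{\upomega}$ with itself to reach an even multiple, as in \Cref{rmk:2fold}) actually exists on the normal bundle appearing after Atiyah duality; this amounts to noting that since the orientation systems of \cite{BZ} are additive, the orientability of $\epsilon k$-copies follows from that of the building blocks already used to define $\widetilde{\bf u}_{\Gmr,p}$. Second, one must confirm that equivariant Atiyah duality is available in the precise form needed at all primes and all finite $\Gmr$; I would invoke the standard Wirthmüller/Atiyah duality for finite-group equivariant spectra applied to the closed $\Gmr$-manifolds $\PP(k\uprho)$ and $\LL_p(\epsilon k\uprho_\CC)$, whose smooth structures are evident from their construction as orbit spaces of unit spheres in orthogonal/unitary $\Gmr \times \Sigma_2$- and $\Gmr \times \Cmr_p$-representations.
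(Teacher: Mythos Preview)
Your proposal is correct and follows essentially the same route as the paper: equivariant Atiyah duality, the tangent-bundle identification of \Cref{lem:tangentPL}, and the $\Hmr\ull{\FF}_p$-Thom isomorphism (with the same orientability justification via \Cref{prop:oddPrimeOrient} and \Cref{rmk:2fold}). The only slip is in the degree bookkeeping: the Thom isomorphism for the virtual bundle $\upepsilon_1 - k\widetilde{\upgamma}_2^{(k)}$ of rank $1 - k\uprho$ gives $\Hmr_{-\star}^\Gmr(\Th(\upepsilon_1 - k\widetilde{\upgamma}_2^{(k)});\ull{\FF}_2) \cong \Hmr_{k\uprho - 1 - \star}^\Gmr(\PP(k\uprho)_+;\ull{\FF}_2)$ directly, so no reindexing is needed.
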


\begin{proof}Since the tangent bundle of $\PP(k \uprho)$ is homogeneous \cite[Lemma 2.21]{BZ}, we may combine equivariant Atiyah duality \cite[XV1$\mathsection$8]{Alaska} with \Cref{lem:tangentPL} to obtain
\begin{eqnarray*}
 \Hmr_{\Gmr}^{\star}(\PP(k\uprho)_+;\ull{\FF}_2) &\cong& \Hmr^{\Gmr}_{-\star}(\Th( -\Tmr\PP(k\uprho));\ull{\FF}_2) \\
 &\cong& \Hmr^{\Gmr}_{-\star }(\Th(-k\td{\upgamma}_{\Gmr, 2}^{(k)} + \upepsilon_1);\ull{\FF}_2)\\
&\cong &\Hmr^{\Gmr}_{k\uprho - 1 - \star}(\PP(k\uprho)_+);\ull{\FF}_2).
\end{eqnarray*}
Likewise, when $p$ is odd, we have 
\begin{eqnarray*}
 \Hmr_{\Gmr}^{\star}(\LL_p(\epsilon k\uprho_\CC)_+;\ull{\FF}_p) &\cong& \Hmr^{\Gmr}_{-\star}(\Th( -\Tmr\LL_p(\epsilon k\uprho_\CC));\ull{\FF}_p) \\
 &\cong& \Hmr^{\Gmr}_{-\star }(\Th(-\epsilon k\td{\upomega}_{\Gmr, p}^{(k)} + \upepsilon_1);\ull{\FF}_p)\\
&\cong &\Hmr^{\Gmr}_{2\epsilon k \uprho - 1 - \star}(\LL_p(\epsilon k\uprho_\CC)_+);\ull{\FF}_p),
\end{eqnarray*}
where the last isomorphism is an $\Hmr\ull{\FF}_p$-Thom isomorphism which holds because $\epsilon$ is an even number when $|\Gmr|$ and $p$ are odd (see \Cref{prop:oddPrimeOrient} and \Cref{rmk:2fold}).
\end{proof}

\subsection{Identifying $\Hmr\ull{\FF}_p$-Eulerian sequences} \ 

 In this subsection, we begin by analyzing the path components of $\PP(k\uprho)^{\Gmr}$ and $\LL_p(k\uprho_\CC)^{\Gmr}$. This will lead us to a calculation of  $\Hmr_{0}^{\Gmr}(\PP(\uprho)_+;\ull{\FF}_2)$ as well as \linebreak $\Hmr_{0}^{\Gmr}(\LL(\epsilon \uprho_\CC)_+;\ull{\FF}_p)$, and consequently, new  elements in the $\mr{E}_1$-page of  \eqref{eqn:mmod2SS} and \eqref{eqn:mmodpSS1}. We then show that these elements are nonzero permanent cycles resulting in identification of $\Hmr\ull{\FF}_p$-Eulerian sequences.

\begin{notn}  
For a finite real or complex $\Gmr$-representation $\Vmr$ and an irreducible $\Gmr$-representation $\lambda$, we let $\Vmr_\lambda$  be the subrepresentation of $\Vmr$ such that $\Vmr/\Vmr_\lambda$ does not contain an irreducible sub-representation isomorphic to $\lambda$. 
\end{notn}

The fixed-points of $\PP(k\uprho_\Gmr)$ and $\LL_p(k\uprho_\Gmr)$ have been studied and described explicitly in \cite{KL}. We simply state their conclusion. 

\begin{lem}\label{lem:GFixedPointsProjLensSpace} Let $\operatorname{Irr}_1(\Gmr)$ is the set of isomorphism classes of $1$-dimensional irreducible representations. Then 
    \[\PP(\Vmr)^{\Gmr} = \coprod_{\lambda\in \operatorname{Irr}_1(\Gmr)} \PP(\Vmr_\lambda).\]
    For $p>2$,
    \[\LL_p(\Vmr)^{\Gmr} = \coprod_{[\lambda]\in \td{\operatorname{Irr}}_1(\Gmr)}\LL_p(\Vmr_\lambda),\]
    where $\td{\operatorname{Irr}_1}(\Gmr)$ is the set of isomorphism classes of complex $1$-dimensional irreducible representations whose character factors through $\Cmr_p$. 
 \end{lem}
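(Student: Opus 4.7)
The plan is to interpret both spaces as parametrizing $\Gmr$-equivariant choices of lines (real or complex) inside $\Vmr$, and then to stratify these lines according to the isomorphism class of the one-dimensional subrepresentation they carry.

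For the projective case, observe that $\PP(\Vmr) = \Smr(\Vmr\otimes\td{\uptau}_2)/\Sigma_2$ is naturally identified with the space of real $1$-dimensional subspaces of $\Vmr$, with $\Gmr$ acting through its action on $\Vmr$. A line $L\subset\Vmr$ is $\Gmr$-fixed in $\PP(\Vmr)$ precisely when $gL=L$ for all $g\in\Gmr$, i.e.\ when $L$ is a $\Gmr$-subrepresentation of $\Vmr$. Being one-dimensional, $L$ is then isomorphic to a unique $\lambda\in\operatorname{Irr}_1(\Gmr)$, and by definition of the isotypic component $L\subset\Vmr_\lambda$. Conversely, for any $\lambda\in\operatorname{Irr}_1(\Gmr)$, writing $\Vmr_\lambda\cong\lambda\otimes_\RR M_\lambda$ for a trivial multiplicity space $M_\lambda$, every real line in $\Vmr_\lambda$ is of the form $\lambda\otimes\RR\{m\}$ and is thus $\Gmr$-stable. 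This gives the bijection $\PP(\Vmr)^\Gmr \cong \coprod_{\lambda} \PP(\Vmr_\lambda)$ on the point-set level; the topology matches because each $\PP(\Vmr_\lambda)$ sits inside $\PP(\Vmr)$ as a closed subspace, and the disjointness is forced since a single line cannot realize two distinct isomorphism classes of characters.

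For the lens space case, I would run the analogous argument after unwinding the definition $\LL_p(\Vmr)=\Smr(\Vmr\otimes_\CC\sfr_{2\pi/p})/\Cmr_p$. A point is a $\Cmr_p$-orbit $\{\zeta^k v: k=0,\dots,p-1\}$ for $\zeta=e^{2\pi i/p}$ and $v\in\Smr(\Vmr)$, which is the same data as the intersection of $\Smr(\Vmr)$ with the complex line $\CC v\subset\Vmr$. Such an orbit is $\Gmr$-fixed iff for every $g\in\Gmr$ there is some $k_g\in\ZZ/p$ with $gv=\zeta^{k_g}v$, equivalently iff $\CC v$ is $\Gmr$-stable and $\Gmr$ acts on it through a complex character whose image lies in $\Cmr_p\subset\mathrm{S}^1\subset\CC^\times$, i.e.\ through some $\lambda\in\td{\operatorname{Irr}}_1(\Gmr)$. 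Such lines are exactly the complex lines inside $\Vmr_\lambda$, and conversely every complex line in $\Vmr_\lambda\cong\lambda\otimes_\CC M_\lambda$ furnishes a $\Gmr$-fixed $\Cmr_p$-orbit. This identifies $\LL_p(\Vmr)^\Gmr$ with $\coprod_{[\lambda]}\LL_p(\Vmr_\lambda)$.

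The only subtle point, and the place where I would be most careful, is the restriction to characters that factor through $\Cmr_p$ in the lens space case: this condition arises precisely because the orbit is constructed by quotienting by $\Cmr_p$-rotation, so only those characters whose image sits inside the quotienting subgroup produce fixed orbits. Everything else is essentially a direct unpacking of definitions together with standard semisimple decomposition of real and complex representations of the finite group $\Gmr$, so no deeper input is needed beyond what is already in \cite{KL}.
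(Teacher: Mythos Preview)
The paper does not supply its own proof of this lemma; it simply cites \cite{KL} and records the statement. Your proposal goes further by giving a complete, self-contained argument, and that argument is correct: the key observation in both cases is that a point of $\PP(\Vmr)$ (resp.\ $\LL_p(\Vmr)$) is fixed by $\Gmr$ exactly when the corresponding real (resp.\ complex) line is a one-dimensional $\Gmr$-subrepresentation, and such lines are stratified by their isomorphism type as elements of $\operatorname{Irr}_1(\Gmr)$ (resp.\ $\td{\operatorname{Irr}}_1(\Gmr)$). Your care with the lens-space case---checking that the assignment $g\mapsto k_g$ really forces the character to land in $\Cmr_p$---is exactly the point that distinguishes $\td{\operatorname{Irr}}_1(\Gmr)$ from all complex characters. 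So there is nothing to compare against, and your write-up could serve as the missing proof.
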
      
    
For each $\lambda\in \operatorname{Irr}_1(\Gmr)$, we consider a based map 
\begin{equation*}
\begin{tikzcd}
b_{\lambda}:\Smr^0\rar[ "b_{\lambda}' "] & \PP(\uprho_\lambda)_+\rar[hook, "i"] & \PP(\uprho)_+,
\end{tikzcd}
\end{equation*}
where $b_{\lambda}'$ sends the non-basepoint of $\Smr^0$ to a point in $\PP(\omega_\lambda)$ and $i$ is the inclusion along the $\Gmr$-fixed points. By \Cref{lem:GFixedPointsProjLensSpace}, such a map is unique up to a contractible choice  and determines a unique class in $[b_{\lambda}] \in \pi_0^{\Gmr}(\PP(\uprho)_+)$.
\begin{rmk} \label{rmk:nonzeroHurb}
The $\Hmr\ull{\FF}_2$-Hurewicz image of $[b_{\lambda}] \in \pi_0^{\Gmr}(\PP(\uprho)_+)$, denote it by \[ \sfb_{\lambda} \in \Hmr_0^{\Gmr}(\PP(\uprho)_+; \ull{\FF}_2),\] is nonzero because its underlying nonequivariant Hurewicz image is nonzero. 
\end{rmk}
\begin{rmk} \label{rmk:diflambda}
Moreover, $\sfb_{\lambda} = \sfb_{\lambda'}$ if and only if $\lambda = \lambda'$ in $\operatorname{Irr}_1(\Gmr)$. This is because images of $\sfb_{\lambda}$ and $\sfb_{\lambda'}$ under modified geometric fixed-point functor 
\[ 
\begin{tikzcd}
\td{\varphi}^\Gmr: \Hmr^{\Gmr}_0(\PP(\uprho)_+; \ull{\FF}_2) \rar &  \bigoplus_{\lambda\in \operatorname{Irr}_1(\Gmr)}\Hmr_0(  \PP(\lambda)_+ ; \FF_2)
\end{tikzcd}
\]
are different classes when $\uplambda \neq \uplambda'$.
\end{rmk}
  The classes $\sfb_\lambda$ leads to a nonzero element
\[ \oll{\sfb}_{k \uprho, \lambda} \in \Emr^{1}_{k, k \uprho}\]
for each $k \in \NN$ in the $\Emr_1$-page of \eqref{eqn:mmod2SS}. 
\begin{rmk}[When $p$ is odd] \label{rmk:oddb}In this case, for each $\lambda \in \td{\operatorname{Irr}_1}(\Gmr)$, we consider a basepoint-preserving map: 
\[ \begin{tikzcd}
b_{\lambda}:\Smr^0\rar[hook, "b_{\lambda}' "] & \LL_p((\epsilon\uprho_{\CC})_\lambda)_+\rar["i"] & \LL_p(\epsilon\uprho_\CC)_+.
\end{tikzcd}
\]
Here, $b_{\lambda }^{'}$ sends the non-basepoint of $\Smr^0$ to a point in $\LL_p((\epsilon\uprho_{\CC})_\lambda)$. The Hurewicz image, ${\sf b}_{\lambda} \in \Hmr^{\Gmr}_0(\LL_p((\epsilon\uprho_{\CC})_\lambda)_+; \ull{\FF}_p)$, is a nonzero class because its restriction is also nonzero. This results in the element \[ \oll{\sfb}_{2\epsilon k \uprho,\lambda} \in \Emr^{1}_{k, 2\epsilon k \uprho} \] in the  \eqref{eqn:mmodpSS1} for each $k \in \NN$ and $\lambda \in \td{\operatorname{Irr}}_1(\Gmr)$.
\end{rmk}

Our next goal is to prove \Cref{prop:bperm}, where we show that $\oll{\sfb}_{k \uprho,  \lambda}$ in \eqref{eqn:mmod2SS} are nonzero permanent cycles. We will need the following two lemmas. 
\begin{lem} \label{lem:vanish} Suppose $r$ is an integer. Then 
\[
\Hmr^{\Gmr}_{r \uprho -1}(\PP(\uprho)_+; \ull{\FF}_2) \cong
\left\lbrace \begin{array}{llc}
\FF_2  & \text{when $r =1$,} \\
0 & \text{otherwise.}
\end{array} \right.
\]
Moreover, the restriction map 
\begin{equation} \label{eqn:resPrho}
\begin{tikzcd} 
\upiota_{{\sf e}\ast}: \Hmr^{\Gmr}_{ \uprho -1}(\PP(\uprho)_+; \ull{\FF}_2) \rar & \Hmr_{|\Gmr| -1}(\RR\PP^{|\Gmr| -1}; \FF_2) 
\end{tikzcd} 
\end{equation}
is an isomorphism. 
\end{lem}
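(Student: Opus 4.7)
The plan is to apply equivariant Poincar\'e duality, \Cref{lem:poincareDuality}(1), to rewrite
$$\Hmr^\Gmr_{r\uprho - 1}(\PP(\uprho)_+; \ull{\FF}_2) \cong \Hmr^{(1-r)\uprho}_\Gmr(\PP(\uprho)_+; \ull{\FF}_2),$$
and then to handle the three ranges $r = 1$, $r \geq 2$, and $r \leq 0$ by separate elementary arguments. The restriction claim \eqref{eqn:resPrho} will follow by transporting the Poincar\'e-dual generator.

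For $r = 1$, the right-hand side is the integer-graded Bredon cohomology $\Hmr^0_\Gmr(\PP(\uprho)_+; \ull{\FF}_2)$, which is identified with the group of Mackey functor transformations $\td{\ull{H}}_0(\PP(\uprho)_+) \to \ull{\FF}_2$. By \Cref{lem:GFixedPointsProjLensSpace} the Mackey functor $\td{\ull{H}}_0(\PP(\uprho)_+)$ has value $\FF_2[\mr{Irr}_1(\Hmr)]$ at $\Gmr/\Hmr$ with restriction maps induced by restricting $1$-dimensional representations; such a transformation is therefore determined by its value at the trivial subgroup (whose indexing set is a singleton) and is forced to be constant across basis elements at every subgroup, producing $\FF_2$. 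The map \eqref{eqn:resPrho} is then an isomorphism because under Poincar\'e duality on both sides it is identified with the canonical restriction $\Hmr^0_\Gmr(\PP(\uprho)_+; \ull{\FF}_2) \to \Hmr^0((\RR\PP^{|\Gmr|-1})_+; \FF_2)$, which sends the unique generator to the unique generator.

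For the vanishing at $r \geq 2$, the right-hand side becomes $\td{\Hmr}^0_\Gmr(\Sigma^{(r-1)\uprho}\PP(\uprho)_+; \ull{\FF}_2)$, and the $\Hmr$-fixed points of $\Sigma^{(r-1)\uprho}\PP(\uprho)_+$ have the form $\Smr^{(r-1)|\Gmr/\Hmr|}\sma (\PP(\uprho)^\Hmr)_+$ with $(r-1)|\Gmr/\Hmr|\geq 1$, hence are path-connected, so the reduced cohomology in degree zero vanishes. For $r \leq 0$, it is cleaner to argue directly: letting $\alpha = r\uprho - 1$ and $\beta = -\alpha$, we have
$$\Hmr^\Gmr_\alpha(\PP(\uprho)_+; \ull{\FF}_2) = \pi^\Gmr_0\bigl(\Smr^\beta \sma \PP(\uprho)_+ \sma \Hmr\ull{\FF}_2\bigr),$$
and since $\beta^\Hmr = -r|\Gmr/\Hmr| + 1 \geq 1$ for every subgroup $\Hmr$, the pointed space $\Smr^\beta \sma \PP(\uprho)_+$ is path-connected at every fixed-point level, so its reduced Bredon homology in degree zero (equivalently, the above stable homotopy group) vanishes.

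The main bookkeeping obstacle is verifying that the Poincar\'e duality isomorphism carries the equivariant fundamental class to the generator of $\Hmr^0_\Gmr(\PP(\uprho)_+; \ull{\FF}_2)$ identified above and that restriction to the trivial subgroup intertwines the equivariant and nonequivariant versions of duality; both follow from the compatibility of equivariant Atiyah duality (which underlies \Cref{lem:poincareDuality}) with the restriction functor $\upiota_\sfe$, but the identifications must be tracked carefully.
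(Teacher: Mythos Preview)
Your proof is correct and follows the same skeleton as the paper's: apply Poincar\'e duality (\Cref{lem:poincareDuality}) and then split into the three ranges $r\le 0$, $r=1$, $r\ge 2$, handling the restriction claim via compatibility of duality with $\upiota_\sfe$.

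The case-by-case arguments differ in flavor. For $r\le 0$ and $r\ge 2$ the paper suspends by a multiple of $\uprho-1$ so as to land in a \emph{negative} integer degree and then invokes the blanket fact that negative Bredon (co)homology of a genuine $\Gmr$-space vanishes; you instead suspend to degree $0$ and argue that all fixed-point spaces are path-connected, hence reduced degree-$0$ Bredon (co)homology vanishes. Both are valid, but the paper's version avoids having to justify the ``path-connected at every level $\Rightarrow \td{\Hmr}^\Gmr_0=0$'' step, which you use without proof. For $r=1$ the paper is slicker: since $\ull{\FF}_2$ is constant, $\Hmr^0_\Gmr(\PP(\uprho)_+;\ull{\FF}_2)\cong \Hmr^0(\PP(\uprho)/\Gmr;\FF_2)$, and the orbit space is path-connected because the underlying space $\RR\PP^{|\Gmr|-1}$ is. Your coefficient-system argument via \Cref{lem:GFixedPointsProjLensSpace} reaches the same conclusion but is more involved; note also that what you need are natural transformations of \emph{coefficient systems} (only restrictions are used), not of Mackey functors---your argument only invokes restrictions, so this is merely a terminological slip.
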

\begin{proof} When $r \leq 0$, then
\begin{eqnarray*}
 \Hmr^{\Gmr}_{r \uprho -1}(\PP(\uprho)_+; \ull{\FF}_2) &\cong&  \Hmr^{\Gmr}_{r  -1}(\Sigma^{-r( \uprho -1)}\PP(\uprho)_+; \ull{\FF}_2)  \\
 &\cong& 0
 \end{eqnarray*}
as negative Bredon homology groups of any $\Gmr$-space is trivial. 

When $r >1$, we use \Cref{lem:poincareDuality} (Poincar\'e duality) to conclude that  
\begin{eqnarray*}
 \Hmr^{\Gmr}_{r \uprho -1}(\PP(\uprho)_+; \ull{\FF}_2) &\cong& \Hmr_{\Gmr}^{(1-r) \uprho}(\PP(\uprho)_+; \ull{\FF}_2) \\
 &\cong& \Hmr^{(1-r)}_{\Gmr}\left(\Sigma^{(r-1)(\uprho-1)} \PP(\uprho)_+; \FF_2 \right) \\
 &\cong& 0
 \end{eqnarray*}
 as negative Bredon cohomology of $\Gmr$-spaces are trivial. 
 
 When $r =1$, we use Poincar\'e duality and  the fact that the action of $\Gmr$ on $\Hmr \ull{\FF}_p$ is trivial to relate the $(\uprho -1)$-th Bredon homology group 
\begin{eqnarray*} 
 \Hmr^{\Gmr}_{ \uprho -1}(\PP(\uprho)_+; \ull{\FF}_2) &\cong& \Hmr^{0}_{\Gmr}\left(\Sigma^{(r-1)(\uprho-1)} \PP(\uprho)_+; \FF_2 \right) \\
 &\cong&  \Hmr^{0}\left( \mr{Orb}_\Gmr(\Sigma^{(r-1)(\uprho-1)} \PP(\uprho))_+; \FF_2 \right) 
 \end{eqnarray*}
 with the zeroth ordinary cohomology of the space of $\Gmr$-orbits of $\Sigma^{(r-1)(\uprho-1)} \PP(\uprho) $.  Since the underlying space of $\Sigma^{(r-1)(\uprho-1)} \PP(\uprho)$ is path connected, its $\Gmr$-orbits also form a path connected space, and therefore,  $ \Hmr^{\Gmr}_{ \uprho -1}(\PP(\uprho); \ull{\FF}_2) \cong \FF_2$.
 
 Note that the nonzero element $\sfc \in \Hmr^{\Gmr}_{ \uprho -1}(\PP(\uprho)_+; \ull{\FF}_2)$ is Poincar\'e dual to \[ 1 \in \Hmr_{\Gmr}^{0}(\PP(\uprho)_+; \ull{\FF}_2)\]
 whose restriction is $1 \in \Hmr_{\Gmr}^{0}( \RR\PP^{|\Gmr|-1}_+; \FF_2 )$. Since Poincar\'e duality isomorphism (PD) commutes with restriction 
\[ 
\begin{tikzcd}
\Hmr^{\Gmr}_{ \uprho -1}(\PP(\uprho)_+; \ull{\FF}_2) \rar["\upiota_{\sfe \ast}"]  \dar[ "\mr{PD}"'] & \Hmr_{|\Gmr| -1}(\RR\PP^{|\Gmr| -1}; \ull{\FF}_2) \dar["\mr{PD}"]  \\
\Hmr_{\Gmr}^{0}(\PP(\uprho)_+; \ull{\FF}_2) \rar["\cong", "\upiota_{\sfe \ast}"'] & \Hmr^{0}(\RR\PP^{|\Gmr| -1}; \ull{\FF}_2), 
\end{tikzcd}
\]
it follows that \eqref{eqn:resPrho} is also an isomorphism.  
\end{proof}
\begin{rmk} The element $\sfc \in \Hmr^{\Gmr}_{ \uprho -1}(\PP(\uprho)_+; \ull{\FF}_2)$ in the proof of the Lemma above maps to ${\bf b}_{|\Gmr|-1} \in \Hmr_{|\Gmr|-1}^\Gmr(\RR\PP^{|\Gmr| -1}; \FF_2)$ (in the notation of \Cref{ex:classicalEulerianSteen}) under restriction. 
\end{rmk}
 \begin{lem} \label{lem:vanish2}
Suppose $r$ is an integer. Then 
\[
\FF_2\{ \sfb_{\lambda} : \lambda \in \operatorname{Irr}_1(\Gmr) \} \subset \Hmr^{\Gmr}_{0 }(\PP(\uprho)_+; \ull{\FF}_2) 
\]
and $\Hmr^{\Gmr}_{r \uprho }(\PP(\uprho)_+; \ull{\FF}_2)=0 $ for $r \neq 0$. 
 \end{lem}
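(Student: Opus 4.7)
The statement has two independent parts: (i) the injectivity of the map $\FF_2\{\sfb_\lambda\} \to \Hmr^{\Gmr}_0(\PP(\uprho)_+; \ull{\FF}_2)$, and (ii) the vanishing of $\Hmr^{\Gmr}_{r\uprho}(\PP(\uprho)_+; \ull{\FF}_2)$ for every integer $r \neq 0$. My plan is to treat these two parts separately. Since part (ii) is structurally parallel to the argument used in \Cref{lem:vanish}, I would dispatch it first, then return to the independence claim.

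For the vanishing in degree $r\uprho$ with $r \neq 0$, I would split by the sign of $r$. When $r < 0$, the suspension isomorphism by $|r|(\uprho - 1)$ identifies $\Hmr^{\Gmr}_{r\uprho}(\PP(\uprho)_+; \ull{\FF}_2)$ with $\Hmr^{\Gmr}_{r}(\Sigma^{|r|(\uprho - 1)} \PP(\uprho)_+; \ull{\FF}_2)$, which vanishes because reduced Bredon homology of a based $\Gmr$-space is trivial in negative integer degrees. When $r \geq 1$, I would invoke Poincar\'e duality (\Cref{lem:poincareDuality}) to obtain
\[
\Hmr^{\Gmr}_{r\uprho}(\PP(\uprho)_+; \ull{\FF}_2) \cong \Hmr_{\Gmr}^{(1-r)\uprho - 1}(\PP(\uprho)_+; \ull{\FF}_2),
\]
and then desuspend by $(r-1)(\uprho - 1)$ to arrive at $\Hmr_{\Gmr}^{-r}(\Sigma^{(r-1)(\uprho-1)}\PP(\uprho)_+; \ull{\FF}_2)$, which vanishes by the analogous triviality of reduced Bredon cohomology in negative integer degrees. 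This disposes of (ii) verbatim along the lines of the earlier lemma.

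For the linear independence of the classes $\sfb_\lambda$, I would use the modified $\Gmr$-geometric fixed-point functor $\td{\varphi}^\Gmr$ of \eqref{eqn:modgeofix}. Since $\Wmr(\Gmr)$ is trivial and $\Phi^\Gmr \Sigma^\infty_\Gmr \PP(\uprho)_+ \simeq \Sigma^\infty \PP(\uprho)^\Gmr_+$, the target is non-equivariant mod $2$ homology, and \Cref{lem:GFixedPointsProjLensSpace} gives
\[
\Hmr_0\bigl(\PP(\uprho)^{\Gmr}_+; \FF_2 \bigr) \;\cong\; \bigoplus_{\lambda \in \operatorname{Irr}_1(\Gmr)} \Hmr_0\bigl(\PP(\uprho_\lambda)_+; \FF_2\bigr) \;\cong\; \bigoplus_{\lambda \in \operatorname{Irr}_1(\Gmr)} \FF_2.
\]
By construction $b_\lambda$ factors as $\Smr^0 \to \PP(\uprho_\lambda)_+ \hookrightarrow \PP(\uprho)_+$, and the second map targets precisely the $\lambda$-indexed component of the fixed-point decomposition. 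Hence $\td{\varphi}^\Gmr(\sfb_\lambda)$ is the basis vector indexed by $\lambda$, and these are evidently linearly independent in the target, forcing the $\sfb_\lambda$'s to be linearly independent in the source.

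I do not expect any serious obstacle. Both parts amount to bookkeeping with tools already established in the paper, and the mildest subtlety will be verifying that the Postnikov comparison $\uppi\colon \Phi^\Gmr \Hmr\ull{\FF}_2 \to \Hmr\FF_2$ does not annihilate the geometric fixed-point contribution of $\sfb_\lambda$; this is immediate because the Hurewicz image of a point always survives to $\Hmr_0$, as already noted in \Cref{rmk:nonzeroHurb} and \Cref{rmk:diflambda}.
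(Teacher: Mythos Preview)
Your proposal is correct and follows essentially the same route as the paper. The paper dispatches the vanishing for $r\neq 0$ by referring to the argument of \Cref{lem:vanish} (which you have spelled out in detail), and obtains the linear independence of the $\sfb_\lambda$ by citing \Cref{rmk:diflambda}, whose content is exactly the modified geometric fixed-point computation you describe.
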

 \begin{proof} An argument similar to  \Cref{lem:vanish} establishes $\Hmr^{\Gmr}_{r\uprho}(\PP(\uprho)_+; \ull{\FF}_2) \cong 0$ for all integers $r \neq 0$. From \Cref{rmk:diflambda}, we conclude that  \[ \FF_2\{ \sfb_{\lambda} : \lambda \in \operatorname{Irr}_1(\Gmr) \}  \subset \Hmr^{\Gmr}_0(\PP(\uprho)_+; \ull{\FF}_2). \]
 The above inclusion may not be an isomorphism. This is a consequence of  \Cref{lem:shortExactSeqZeroHomology} and \Cref{ex:nontrivTR}. 
 \end{proof}

\begin{prop}   \label{prop:bperm} The element $\oll{\sfb}_{k \uprho,  \lambda}$ in \eqref{eqn:mmod2SS} is a nonzero permanent cycle for all $k \in \NN$. 
\end{prop}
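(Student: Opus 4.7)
My plan is to show that $\oll{\sfb}_{k\uprho, \lambda}$ is nonzero on the $\Emr^1$-page and that every differential $d_r$ for $r \geq 1$ vanishes on this class. Under the Thom isomorphism of \Cref{cor:ProjThomiso}, $\Emr^1_{k, k\uprho} \cong \Hmr_0^\Gmr(\PP(\uprho)_+; \ull{\FF}_2)$, and $\oll{\sfb}_{k\uprho, \lambda}$ corresponds to $\sfb_\lambda$, which is nonzero by \Cref{rmk:nonzeroHurb}. For the differentials, the target of $d_r: \Emr^r_{k, k\uprho} \to \Emr^r_{k-r, k\uprho - 1}$ is a subquotient of $\Emr^1_{k-r, k\uprho - 1} \cong \Hmr_{r\uprho - 1}^\Gmr(\PP(\uprho)_+; \ull{\FF}_2)$; by \Cref{lem:vanish} this group vanishes for $r \geq 2$, so only the $d_1$ differential requires attention, and its target is a copy of $\FF_2$.

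To show $d_1(\oll{\sfb}_{k\uprho, \lambda}) = 0$, I would invoke naturality of the spectral sequence with respect to the restriction functor $\upiota_{\sfe}: \Sp_\Gmr \to \Sp$. The equivariant filtration $\PP(k\uprho) \subset \Bmr_\Gmr \Sigma_2$ restricts to the skeletal subfiltration $\RR\PP^{k|\Gmr|-1} \subset \RR\PP^\infty$, producing a compatible map from \eqref{eqn:mmod2SS} to the classical Atiyah--Hirzebruch spectral sequence computing $\Hmr_*(\RR\PP_+^\infty; \FF_2)$ with respect to this coarser filtration. By the second statement in \Cref{lem:vanish}, the restriction map on the $d_1$ target, namely $\Hmr_{\uprho - 1}^\Gmr(\PP(\uprho)_+; \ull{\FF}_2) \to \Hmr_{|\Gmr|-1}(\RR\PP^{|\Gmr|-1}; \FF_2)$, is an isomorphism, so it is enough to verify the vanishing on the classical side.

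In the classical spectral sequence, the nonequivariant analogue of \Cref{cor:ProjThomiso} gives $\Emr^1_{k, k|\Gmr|} \cong \Hmr_0((\RR\PP^{|\Gmr|-1})_+; \FF_2) \cong \FF_2$. Because the reduced homology of $(\RR\PP^{|\Gmr|-1})_+$ is concentrated in degrees $0, \dots, |\Gmr|-1$, no other filtration contributes to total degree $k|\Gmr|$ on the $\Emr^1$-page. Since the abutment $\Hmr_{k|\Gmr|}(\RR\PP_+^\infty; \FF_2) \cong \FF_2$ is one-dimensional, the projection $\Emr^1_{k, k|\Gmr|} \twoheadrightarrow \Emr^\infty_{k, k|\Gmr|}$ must be an isomorphism; in particular no differential exits $\Emr^1_{k, k|\Gmr|}$ in the classical spectral sequence. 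By naturality and injectivity (indeed bijectivity) of the restriction map on the $d_1$ target, $d_1(\oll{\sfb}_{k\uprho, \lambda}) = 0$.

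The computation itself is not difficult once the vanishing lemmas are in hand. The main subtlety is bookkeeping: one must check carefully that the restriction of the equivariant filtration by $\PP(k\uprho)$ is precisely the coarse skeletal filtration of $\RR\PP^\infty$ by $\RR\PP^{k|\Gmr|-1}$, that both filtrations produce spectral sequences compatible with the Thom isomorphism identifications of \Cref{cor:ProjThomiso}, and that the resulting map of spectral sequences commutes with the $d_1$ differential on both sides.
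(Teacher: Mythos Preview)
Your argument that all differentials emanating from $\oll{\sfb}_{k\uprho,\lambda}$ vanish is correct, and your route for $d_1$ differs from the paper's. You use the restriction map of spectral sequences together with the injectivity statement in \Cref{lem:vanish}; the paper instead argues by contradiction, observing that a nontrivial $d_1$ (in the truncated spectral sequence converging to the homology of $\PP((k+r)\uprho)$) would, via \Cref{lem:poincareDuality}, force $\Hmr^{k\uprho}_\Gmr(\PP((k+r)\uprho)_+;\ull{\FF}_2)=0$ and hence the $\Hmr\ull{\FF}_2$-Euler class of $k\td{\upgamma}_{\Gmr,2}^{(k+r)}$ to vanish---impossible after restriction to $\sfe$. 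Your approach is more direct; the paper's Euler-class contradiction has the advantage that the same technique immediately yields \Cref{prop:cperm}.

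There is, however, a gap. In this paper ``nonzero permanent cycle'' means nonzero on the $\Emr^\infty$-page: immediately afterward $\oll{\sfb}_{k\uprho,\lambda}$ is used to \emph{detect} a class ${\bf b}_{k\uprho,\lambda}\in \Hmr^\Gmr_{k\uprho}((\Bmr_\Gmr\Sigma_2)_+;\ull{\FF}_2)$. You have shown only that no differential exits the class; you have not excluded the possibility that it is hit by some $d_r$ from higher filtration. The fix is already implicit in your setup: the restriction map of spectral sequences sends $\oll{\sfb}_{k\uprho,\lambda}$ to a class that survives as nonzero in the (collapsing) classical spectral sequence, and a boundary must map to a boundary, so $\oll{\sfb}_{k\uprho,\lambda}$ cannot be a boundary either. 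The paper spells out exactly this step to finish.
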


\begin{proof} By \Cref{lem:vanish} 
\[ 
d_r(\oll{\sfb}_{k \uprho, \lambda}) \in \Emr^{r}_{k- r, k \uprho-1}\cong \Hmr_{k \uprho -1}^{\Gmr}(\Sigma^{(k-r) \uprho}\PP(\uprho)_+; \ull{\FF}_2)  
\]
is trivial for $r \geq 2$. Thus, to establish ${\sfb}_{k \uprho, \lambda}$ as a permanent cycle we must show that it is a $d_1$-cycle. 

 By \Cref{lem:vanish}, there is a unique $\FF_2$-generator 
\begin{equation} \label{eqn:c}
\oll{\sfc}_{k \uprho -1} \in \Emr^1_{k-1, k \uprho -1} 
\end{equation}
which can be a potential target of a $d_1$-differential on $\oll{\sfb}_{k \uprho, \lambda}$. In other words, 
\[d_1(\oll{\sfb}_{k \uprho, \lambda}) = t \oll{\sfc}_{k \uprho -1} \]
where $t \in \{ 0,1 \}$. We will now show that $t$ must equal $0$.  

In contrary, suppose this differential is nontrivial, i.e. $t =1$, it would imply a nontrivial differential in the spectral sequence 
\begin{equation} \label{eqn:truncmod2SS}
 \begin{tikzcd}
\Emr^1_{\ast,\Vmr} := \overset{k + r}{\underset{n =0 }{\bigoplus}} \  \Hmr_{\Vmr}^{\Gmr}(\Sigma^{n \uprho}\PP(\uprho)_+;\ull{\FF}_2)\ar[Rightarrow, r] &\Hmr_{\Vmr}^{\Gmr}(\PP((k+r) \uprho)_+;\ull{\FF}_2)
\end{tikzcd}
\end{equation}
for all $k \geq 1$, and consequently,
 \[ \Hmr^{k \uprho}_{\Gmr}(\PP((k+r) \uprho)_+;\ull{\FF}_2) \cong \Hmr_{k \uprho -1}^{\Gmr}(\PP((k+r) \uprho)_+;\ull{\FF}_2) \cong   0\] 
 by   Poincare duality (see \Cref{lem:poincareDuality}). It would follow that  $\Hmr\ull{\FF}_2$-Euler class of $k \tilde{\upgamma}_{\Gmr, 2}^{(k+r)}$ and hence its restrictions,  must be zero. However, this is a contradiction as the restriction of $k \tilde{\upgamma}_{\Gmr, 2}^{(k+r)} $ 
\[ \upiota_{{\sf e }} \left( k \tilde{\upgamma}_{\Gmr, 2}^{(k+r)} \right)  = k |\Gmr| \tilde{\upgamma}_{{\sf e}, 2}^{(k+r)}  \]
is the $k |\Gmr|$-fold sum of the tautological bundle over $\RR\PP^{( k+r) |\Gmr| -1}$, whose $\Hmr\FF_2$-Euler class is nontrivial. 

To show that $\oll{\sfb}_{k \uprho, \uplambda}$ is nonzero in the $\Emr_\infty$-page, we  consider the map of spectral sequences 
\begin{equation} \label{eqn:resSS2}
\begin{tikzcd}
 \underset{n \in \NN}{\bigoplus} \  \Hmr_{\star}^{\Gmr}(\Sigma^{n \uprho}\PP(\uprho)_+;\ull{\FF}_2)\ar[Rightarrow, r] \dar["\upiota_{\sfe *}"] &\Hmr_{\star}^{\Gmr}(\Bmr_{\Gmr}\Sigma_{2+};\ull{\FF}_2) \dar["\upiota_{\sfe *}"] \\
  \underset{n \in \NN}{\bigoplus} \  \Hmr_{\ast}(\Sigma^{n \uprho}\RR\PP^{ |\Gmr| -1}_+;\ull{\FF}_2)\ar[Rightarrow, r] &\Hmr_{\ast}(\RR\PP^\infty_+;\FF_2)
\end{tikzcd}
\end{equation}
induced by the restriction to trivial group. Note that the bottom spectral sequence collapses at the $\Emr_1$-page. Since, the image of $\oll{\sfb}_{n \uprho, \lambda}$ under restriction is a nonzero permanent cycle (follows from \Cref{rmk:nonzeroHurb}), it follows that 
 $\oll{\sfb}_{n \uprho, \lambda}$ cannot be a target of a differential, thus nonzero in the $\Emr_\infty$-page. 
\end{proof}
\begin{prop} \label{prop:cperm}  The elements $\oll{\sfc}_{k \uprho -1}$ defined in \eqref{eqn:c} is nonzero permanent cycle for all $k \geq 0$ in the spectral sequence \eqref{eqn:mmod2SS}. 
\end{prop}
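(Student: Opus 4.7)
The strategy is to combine a degree-vanishing argument for higher differentials with the construction of an actual homology class on $(\Bmr_\Gmr\Sigma_2)_+$ whose leading term in the associated graded is $\oll{\sfc}_{k\uprho-1}$, mirroring the approach used for $\oll{\sfb}_{k\uprho,\lambda}$ in \Cref{prop:bperm}.

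For $r\geq 2$, the differential $d_r(\oll{\sfc}_{k\uprho-1})$ lies in the $\Emr^1$-group $\Hmr^\Gmr_{k\uprho-2}(\Sigma^{(k-1-r)\uprho}\PP(\uprho)_+;\ull{\FF}_2)$, which after the suspension isomorphism becomes $\Hmr^\Gmr_{(r+1)\uprho-2}(\PP(\uprho)_+;\ull{\FF}_2)$. Applying the Poincar\'e duality of \Cref{lem:poincareDuality} identifies this with $\Hmr^{1-r\uprho}_\Gmr(\PP(\uprho)_+;\ull{\FF}_2)$, and rewriting $1-r\uprho = (1-r) - r(\uprho-1)$ repackages it as $\Hmr^{1-r}_\Gmr(\Sigma^{r(\uprho-1)}\PP(\uprho)_+;\ull{\FF}_2)$, which vanishes for $r\ge 2$ by triviality of negative-integer-graded Bredon cohomology.

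For the $d_1$-differential and the nontriviality in $\Emr^\infty$, I will produce a single lift from the target of the spectral sequence. Let $i:\PP(k\uprho)\hookrightarrow \Bmr_\Gmr\Sigma_2$ be the inclusion and let $[\PP(k\uprho)]\in \Hmr^\Gmr_{k\uprho-1}(\PP(k\uprho)_+;\ull{\FF}_2)$ denote the Poincar\'e dual of $1\in \Hmr^0_\Gmr(\PP(k\uprho)_+;\ull{\FF}_2)$ supplied by \Cref{lem:poincareDuality}. Define $\sfc_k:=i_*[\PP(k\uprho)]$. Restriction to the trivial subgroup sends $\sfc_k$ to the image of the fundamental class of $\RR\PP^{k|\Gmr|-1}$ in $\Hmr_*(\RR\PP^\infty_+;\FF_2)$, which is the nonzero class ${\bf b}_{k|\Gmr|-1}$, so $\sfc_k\neq 0$. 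By construction $\sfc_k$ sits in the spectral sequence filtration $F_{k-1}$, while the next lower piece $F_{k-2}$ vanishes at this homological degree because the same Poincar\'e-duality-plus-suspension trick yields
\[
\Hmr^\Gmr_{k\uprho-1}(\PP((k-1)\uprho)_+;\ull{\FF}_2) \cong \Hmr^{-\uprho}_\Gmr(\PP((k-1)\uprho)_+;\ull{\FF}_2) \cong \Hmr^{-1}_\Gmr(\Sigma^{\uprho-1}\PP((k-1)\uprho)_+;\ull{\FF}_2)=0.
\]
The edge homomorphism $F_{k-1}/F_{k-2}\hookrightarrow \Emr^\infty_{k-1,k\uprho-1}$ therefore carries the nonzero class $[\sfc_k]$ to a nonzero element, and since $\Emr^1_{k-1,k\uprho-1}\cong \FF_2$ this forces $\oll{\sfc}_{k\uprho-1}$ to be a nonzero permanent cycle.

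The only delicate point is the vanishing of $\Hmr^\Gmr_{k\uprho-1}(\PP((k-1)\uprho)_+;\ull{\FF}_2)$, which is not immediate from a dimension count in the $\RO(\Gmr)$-graded setting but falls out once one absorbs $\uprho-1$ into a suspension to reduce to negative-integer-graded Bredon cohomology; everything else is a bookkeeping exercise on the spectral sequence filtration together with the restriction-to-$\{\sfe\}$ naturality already exploited in \Cref{prop:bperm}.
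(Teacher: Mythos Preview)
Your argument is correct. The paper's proof proceeds instead by contradiction: assuming $\oll{\sfc}_{k\uprho-1}$ either supports or receives a differential, it passes to the truncated spectral sequence converging to $\Hmr^\Gmr_\star(\PP((k+r)\uprho)_+;\ull{\FF}_2)$ and concludes that the group in degree $k\uprho-1$ vanishes; Poincar\'e duality then forces a cohomology group containing a nonzero Euler class to vanish, exactly as in the $d_1$ portion of \Cref{prop:bperm}.

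Your route is more constructive. By pushing forward the Poincar\'e dual of $1$ on $\PP(k\uprho)$ you exhibit the actual class in $\Hmr^\Gmr_{k\uprho-1}((\Bmr_\Gmr\Sigma_2)_+;\ull{\FF}_2)$ whose associated-graded representative is $\oll{\sfc}_{k\uprho-1}$; this is precisely the element the paper later names ${\bf c}_{k\uprho-1}$ in \Cref{notn:newEuler1}, so your argument identifies it explicitly rather than abstractly. Two remarks: first, your step 2 alone already suffices, since once $E^\infty_{k-1,k\uprho-1}\neq 0$ and $E^1_{k-1,k\uprho-1}\cong\FF_2$ the generator can neither support nor receive any differential, making your step 1 correct but redundant. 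Second, the phrase ``edge homomorphism $F_{k-1}/F_{k-2}\hookrightarrow E^\infty$'' is slightly off: in a convergent spectral sequence this is an equality, which is in fact what your argument uses.
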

\begin{proof} If $\oll{\sfc}_{k \uprho -1}$ supports or is a target of a differential in \eqref{eqn:mmod2SS} then the same will hold in the spectral sequence  \eqref{eqn:truncmod2SS} for some $r \gg 0$. Then, by  Poincar\'e duality, i.e. \Cref{lem:poincareDuality}, 
\[ \Hmr^{r \uprho}_{\Gmr}(\PP((k+r) \uprho)_+;\ull{\FF}_2) \cong \Hmr_{k \uprho -1}^{\Gmr}(\PP((k+r) \uprho)_+;\ull{\FF}_2) \cong   0\] 
which contradicts the fact that $\Hmr\ull{\FF}_2$-Euler class of $k \tilde{\upgamma}_{\Gmr, 2}^{(k+r)}$ is nonzero. 
\end{proof}

By \Cref{lem:vanish} and \Cref{lem:vanish2},  the nonzero permanent cycles $\oll{\sfb}_{k \uprho, \uplambda}$ and $\oll{\sfc}_{k \uprho -1}$   determine unique elements in $\Hmr_{\star}^{\Gmr}(\Bmr_{\Gmr}\Sigma_2; \ull{\FF}_2)$ with no indeterminacies. 
\begin{notn} \label{notn:newEuler1} When $\Gmr$ is nontrivial, let ${\bf b}_{k \uprho, \uplambda} \in  \Hmr^{\Gmr}_{k\uprho}((\Bmr_\Gmr \Sigma_2)_+; \ull{\FF}_2)$ denote the element detected by $\oll{\sfb}_{k \uprho, \uplambda}$  in \eqref{eqn:mmod2SS}. Likewise, we let ${\bf c}_{k \uprho-1} \in  \Hmr^{\Gmr}_{k\uprho-1}(\Bmr_\Gmr \Sigma_2)_+; \ull{\FF}_2)$ denote the element detected by $\oll{\sfc}_{k \uprho-1}$ of \eqref{eqn:c}. Using these elements,  we define  the sequences 
\[ \upbeta_{\lambda,(2)} := ({\bf  b}_{0, \uplambda}, {\bf b}_{\uprho, \uplambda},  {\bf b}_{2 \uprho, \uplambda}, \dots )  \]
\[ \upzeta_{(2)} := (0, {\bf c}_{\uprho-1},  {\bf c}_{2 \uprho -1}, \dots ) \]
which are candidates for  $\uprho$-stable $\Hmr\ull{\FF}_2$-Eulerian sequences.
\end{notn}

\begin{thm} \label{thm:zetaEulerian} The sequence $\upzeta_{(2)}$ is a $\uprho$-stable $\Hmr\ull{\FF}_2$-Eulerian sequence. 
\end{thm}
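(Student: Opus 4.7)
The plan is to verify the three conditions of \Cref{defn:Eulerian} for $\upzeta_{(2)} = (0, {\bf c}_{\uprho-1}, {\bf c}_{2\uprho-1}, \dots)$. Two of them are essentially immediate: each degree $|{\bf c}_{k\uprho-1}| = k\uprho - 1$ is non-virtual in $\RO(\Gmr, \uprho)$ for $k \geq 1$, since in the basis of irreducible $\Gmr$-representations $\uprho_\Gmr - 1$ has coefficient $0$ on the trivial summand and positive coefficient on each nontrivial summand; and the condition $\sfx_0 \frown \td{\bf e}_\uprho = 0$ is automatic because the zeroth entry of $\upzeta_{(2)}$ is $0$. The substantive content is the Eulerian relation ${\bf c}_{(k+1)\uprho-1} \frown \td{\bf e}_\uprho = {\bf c}_{k\uprho-1}$ for all $k \geq 1$.

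First I would show that $\Hmr^\Gmr_{k\uprho-1}((\Bmr_\Gmr\Sigma_2)_+;\ull{\FF}_2) \cong \FF_2$ with generator ${\bf c}_{k\uprho-1}$. Reading off the $\Emr^1$-page of \eqref{eqn:mmod2SS} in total degree $k\uprho-1$, the entries take the form $\Emr^1_{n, k\uprho-1} \cong \Hmr^\Gmr_{(k-n)\uprho-1}(\PP(\uprho)_+;\ull{\FF}_2)$, which by \Cref{lem:vanish} vanishes unless $k-n = 1$, in which case it equals $\FF_2$. By \Cref{prop:cperm}, the surviving class is a nonzero permanent cycle, so the target group is one-dimensional as claimed. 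Consequently the cap product ${\bf c}_{(k+1)\uprho-1} \frown \td{\bf e}_\uprho$ is either $0$ or ${\bf c}_{k\uprho-1}$.

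To rule out zero, I would apply the restriction $\upiota_{\sfe *}$ to the trivial subgroup and use its compatibility with the cap product. The restriction of $\td{\upgamma}_{\Gmr, 2}$ to $\sfe$ is the $|\Gmr|$-fold Whitney sum $|\Gmr|\,\td{\upgamma}_{\sfe, 2}$ of the tautological line bundle over $\RR\PP^\infty$, so $\upiota^{*}_{\sfe}\td{\bf e}_\uprho = e_1^{|\Gmr|}$, where $e_1$ is the classical Euler class. Mapping the spectral sequence \eqref{eqn:mmod2SS} to the classical AHSS for $\RR\PP^\infty$ via \eqref{eqn:resSS2}, and using the fact that the filtration $\PP(k\uprho) \hookrightarrow \PP((k+1)\uprho)$ restricts to $\RR\PP^{k|\Gmr|-1} \hookrightarrow \RR\PP^{(k+1)|\Gmr|-1}$ together with the isomorphism of restrictions in \Cref{lem:vanish}, one identifies $\upiota_{\sfe *}{\bf c}_{m\uprho-1} = {\bf b}_{m|\Gmr|-1}$ for each $m \geq 1$. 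The classical Eulerian identity ${\bf b}_{n+1} \frown e_1 = {\bf b}_n$ (cf.\ \Cref{ex:classicalEulerianSteen}) iterated $|\Gmr|$ times gives
\[
\upiota_{\sfe *}\!\left({\bf c}_{(k+1)\uprho-1} \frown \td{\bf e}_\uprho\right) \;=\; {\bf b}_{(k+1)|\Gmr|-1} \frown e_1^{|\Gmr|} \;=\; {\bf b}_{k|\Gmr|-1} \;\neq\; 0,
\]
so ${\bf c}_{(k+1)\uprho-1} \frown \td{\bf e}_\uprho \neq 0$, hence equals ${\bf c}_{k\uprho-1}$ by uniqueness.

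The main obstacle is justifying the identification $\upiota_{\sfe *}{\bf c}_{m\uprho-1} = {\bf b}_{m|\Gmr|-1}$ rigorously; while morally clear, it requires tracking the Thom isomorphisms of \Cref{cor:ProjThomiso} both equivariantly and non-equivariantly through the restriction map, and checking that the associated-graded generator of $\Hmr^\Gmr_{\uprho-1}(\PP(\uprho)_+;\ull{\FF}_2)$ restricts to the associated-graded generator of $\Hmr_{|\Gmr|-1}(\RR\PP^{|\Gmr|-1};\FF_2)$. This is precisely the content of the restriction isomorphism \eqref{eqn:resPrho} in \Cref{lem:vanish}, so once that naturality is in hand the rest of the argument is a diagram chase.
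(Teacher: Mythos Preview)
Your proposal is correct and follows essentially the same route as the paper's proof: both establish that $\Hmr^\Gmr_{k\uprho-1}((\Bmr_\Gmr\Sigma_2)_+;\ull{\FF}_2)\cong\FF_2$ via the spectral sequence \eqref{eqn:mmod2SS} and \Cref{lem:vanish}, then verify the Eulerian cap-product relation by restricting to the trivial subgroup, where it becomes the classical identity ${\bf b}_{(k+1)|\Gmr|-1}\frown e_1^{|\Gmr|}={\bf b}_{k|\Gmr|-1}$. One minor omission: you should also note that ${\bf c}_{\uprho-1}\frown\td{\bf e}_\uprho=0$ (the case $\sfx_1\frown\td{\bf e}_\uprho=\sfx_0$), which is immediate since the target group $\Hmr^\Gmr_{-1}$ vanishes; the paper handles this in its first line.
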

\begin{proof} We must verify that $\upzeta_{(2)}$ satisfies the condition of \Cref{defn:Eulerian}. Firstly note,  
\[ {\bf c}_{\uprho} \frown \td{\bf e}_{\Gmr, 2} = 0 \]
as $\mr{H}_{-1}^\Gmr (\Bmr_{\Gmr} \Sigma_2; \ull{\FF}_2) \cong 0$.  Now notice that 
\[ \upiota_{{\sf e} \ast}(\td{\bf e}_{\Gmr, 2}) =  \td{\bf e}_{{\sf e}, 2}^{|\Gmr|}\]
as the underlying nonequivariant bundle of  $\td{\upgamma}_{\Gmr, 2}$ is the $|\Gmr|$-fold sum of the tautological line bundle $\td{\upgamma}_{{\sf e}, 2}$ over $\Bmr \Sigma_2$. It follows from the arguments in the proof of \Cref{prop:bperm} that 
\[ \Hmr^{\Gmr}_{k \uprho -1}(\Bmr_\Gmr \Sigma_2; \ull{\FF}_2) \cong \FF_2  \]
generated by ${\bf c}_{k \uprho -1}$. Further, the  diagram in \eqref{eqn:resSS2} implies that the restriction map 
\[ 
\begin{tikzcd}
\upiota_{{\sf e}\ast}: \Hmr^{\Gmr}_{k \uprho -1}(\Bmr_{\Gmr} \Sigma_2; \ull{\FF}_2) \rar & \Hmr_{k |\Gmr| -1} (\Bmr\Sigma_2; \FF_2) 
\end{tikzcd}
\]
is an isomorphism for all $k \geq 1$ sending ${\bf c}_{k \uprho -1}$ to ${\bf b}_{k |\Gmr| -1}$ following the notations of \Cref{ex:classicalEulerianSteen}. Since, ${\bf b}_{(k+1) |\Gmr| -1} \frown \td{\bf e}_{{\sf e}, 2}^{|\Gmr|} = {\bf b}_{k \uprho -1}$, we conclude  that 
\[  {\bf c}_{(k+1) \uprho -1} \frown \td{\bf e}_{\Gmr, 2} = {\bf c}_{k \uprho -1}   \]
for all $k \geq 1$. Thus, $\upzeta_{(2)}$ is a $\uprho$-stable $\Hmr\ull{\FF}_2$-Eulerian sequences. 
\end{proof}
\begin{notn}  Let  $\Sq^{k \uprho_\Gmr +1} $ to denote the genuine stable cohomology operation $\Sfrak^{\upzeta_{(2)}[k]}$ corresponding to the  $k$-shift of $\upzeta_{(2)}$. 
\end{notn}
\begin{thm} The genuine stable cohomology operation 
$\Sq_\Gmr^{k \uprho  +1}$  is a nonzero element in $ [ \Hmr\ull{\FF}_2, \Hmr\ull{\FF}_2  ]_{\Gmr}^{k \uprho +1}$
 as its restriction  
\[ \upiota_{{\sf e} \ast} (\Sq^{k \uprho_\Gmr  +1} )  = \Sq^{k |\Gmr| + 1}\]
is the classical $(k |\Gmr| + 1)$-th classical mod $2$ Steenrod squaring operation. 
\end{thm}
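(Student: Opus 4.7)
The plan is to reduce the claim to a direct identification of Eulerian sequences and then invoke the restriction-compatibility result \Cref{thm:opsRestrAndGeoFixedPts}, together with the flexibility afforded by \Cref{rmk:equivEuler}. Since $\Sq_\Gmr^{k\uprho+1} = \Sfrak^{\upzeta_{(2)}[k]}$ by definition, applying \Cref{thm:opsRestrAndGeoFixedPts}(1) yields
\[
\upiota_{\sfe *}\bigl( \Sq_\Gmr^{k\uprho+1}(-)\bigr) = \Sfrak^{\upiota_{\sfe}(\upzeta_{(2)}[k])}\bigl(\upiota_{\sfe *}(-)\bigr),
\]
so the task is to identify the underlying nonequivariant Eulerian sequence $\upiota_{\sfe}(\upzeta_{(2)}[k])$ with one already known to generate a classical Steenrod square.

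Next I would compute $\upiota_{\sfe}(\upzeta_{(2)}[k])$ termwise. Recall from \Cref{rmk:shift} that shifting prepends $k$ zeros, so the $i$-th entry of $\upzeta_{(2)}[k]$ is $0$ for $0\leq i\leq k$ and equals ${\bf c}_{(i-k)\uprho-1}$ for $i\geq k+1$. The key ingredient is then the restriction formula established inside the proof of \Cref{thm:zetaEulerian}, namely $\upiota_{\sfe *}({\bf c}_{j\uprho-1}) = {\bf b}_{j|\Gmr|-1}$ (in the notation of \Cref{ex:classicalEulerianSteen}) for every $j\geq 1$. Applying this entrywise gives
\[
\upiota_{\sfe}(\upzeta_{(2)}[k]) = (\underbrace{0,\dots,0}_{k+1},\, {\bf b}_{|\Gmr|-1},\, {\bf b}_{2|\Gmr|-1},\, {\bf b}_{3|\Gmr|-1},\, \dots ).
\]

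The third step is to recognize this restricted sequence as an $|\Gmr|$-extraction of a classical shift. Spelling out the definition of $\mathfrak{t}_{|\Gmr|}$ in \Cref{rmk:equivEuler} applied to $\upbeta[k|\Gmr|+1]$, one checks that the $i$-th entry of $\mathfrak{t}_{|\Gmr|}(\upbeta[k|\Gmr|+1])$ vanishes precisely when $i\leq k$ and otherwise equals ${\bf b}_{(i-k)|\Gmr|-1}$, matching the formula above. Thus
\[
\upiota_{\sfe}(\upzeta_{(2)}[k]) = \mathfrak{t}_{|\Gmr|}\bigl(\upbeta[k|\Gmr|+1]\bigr),
\]
now viewed as a $\uprho_{\sfe} = |\Gmr|\RR$-stable $\Hmr\FF_2$-Eulerian sequence (which is consistent, since the Euler class $\upiota_{\sfe *}\td{\bf e}_{\Gmr,2}$ equals ${\bf e}_1^{|\Gmr|}$, the $\Hmr\FF_2$-Euler class of $|\Gmr|\td{\upgamma}_{\sfe,2}$).

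Finally I would invoke the equality $\Sfrak^{\upchi} = \Sfrak^{\mathfrak{t}_k\upchi}$ from \Cref{rmk:equivEuler} together with \Cref{ex:classicalEulerianSteen} to conclude
\[
\upiota_{\sfe *}\bigl(\Sq_\Gmr^{k\uprho+1}\bigr) = \Sfrak^{\mathfrak{t}_{|\Gmr|}(\upbeta[k|\Gmr|+1])} = \Sfrak^{\upbeta[k|\Gmr|+1]} = \Sq^{k|\Gmr|+1}.
\]
Nonvanishing is then immediate: the classical operation $\Sq^{k|\Gmr|+1}$ is a nonzero element of the classical Steenrod algebra, so $\Sq_\Gmr^{k\uprho+1}$ cannot vanish. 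The main conceptual hurdle is simply the reindexing needed to match $\upiota_{\sfe}(\upzeta_{(2)}[k])$ with $\mathfrak{t}_{|\Gmr|}(\upbeta[k|\Gmr|+1])$; every other ingredient has already been established in the previous sections.
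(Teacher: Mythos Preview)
Your proposal is correct and follows essentially the same approach as the paper: both identify $\upiota_{\sfe}(\upzeta_{(2)}[k]) = \mathfrak{t}_{|\Gmr|}(\upbeta[k|\Gmr|+1])$ using the restriction formula $\upiota_{\sfe *}({\bf c}_{j\uprho-1}) = {\bf b}_{j|\Gmr|-1}$ from the proof of \Cref{thm:zetaEulerian}, and then appeal to \Cref{rmk:equivEuler} and \Cref{ex:classicalEulerianSteen}. Your version is simply more detailed about the reindexing and makes the invocation of \Cref{thm:opsRestrAndGeoFixedPts} explicit.
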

 \begin{proof} Since $\upiota_{{\sf e} \ast}( {\bf c}_{k \uprho -1}) = {\bf b}_{k |\Gmr| -1}$ (see proof of \Cref{thm:zetaEulerian}), we get 
  \[ \upiota_{{\sf e} \ast} (\upzeta_{(2)}[k]) = \mathfrak{t}_{|\Gmr|}(\upbeta_1[k |\Gmr| +1])\] 
  and the result follows from 
   \Cref{ex:classicalEulerianSteen} and \Cref{rmk:equivEuler}. 
 \end{proof}
 In fact, we can make a much more general statement. First notice that, for all subgroups $\Kmr \subset \Gmr$, all the restriction maps involved in 
 \[ 
 \begin{tikzcd}
 \upiota_{\sfe \ast}: \Hmr^{\Gmr}_{k \uprho_\Gmr -1}(\Bmr_{\Gmr} \Sigma_2; \ull{\FF}_2) \rar["\upiota_{\Kmr \ast}"] & \Hmr^{\Kmr}_{k |\Gmr/\Kmr|\uprho_\Kmr -1}(\Bmr_{\Kmr} \Sigma_2; \ull{\FF}_2) \rar["\upiota_{\sfe\ast}^\Kmr"] & \Hmr_{k |\Gmr| -1} (\Bmr\Sigma_2; \FF_2) 
 \end{tikzcd}
 \] 
 are isomorphisms. Consequently, $\upiota_{\Kmr \ast}({\bf c}_{k \uprho_\Gmr} -1) = {\bf c}_{k |\Gmr/\Kmr| \uprho_{\Kmr} -1}$ and we get: 
 \begin{thm} \label{thm:resSq}  Suppose $\Kmr \subset \Gmr$. Then 
\[ \upiota_{\Kmr*}(\Sq^{k \uprho_\Gmr +1}(x))  = \Sq^{k |\Gmr/\Kmr|  \uprho_\Kmr + 1}( \upiota_{\Kmr \ast}(x)) \] 
for any $\Hmr\ull{\FF}_2$-cohomology class $x$. 
\end{thm}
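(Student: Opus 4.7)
The plan is to combine Theorem~\ref{thm:opsRestrAndGeoFixedPts}(1), which asserts that restriction of a Steenrod operation corresponds to restriction of the underlying Eulerian sequence, with Remark~\ref{rmk:equivEuler}, which tells us that the operation $\Sfrak^{\upchi}$ only depends on the ``thinned'' sequence $\mathfrak{t}_k(\upchi)$. The key input is the identification, stated in the paragraph immediately preceding the theorem, that
\[
\upiota_{\Kmr\ast}({\bf c}_{k\uprho_\Gmr - 1}) = {\bf c}_{k|\Gmr/\Kmr|\uprho_\Kmr - 1}
\]
for all $k \geq 1$. This follows because both classes detect the corresponding generator $\mathbf{b}_{k|\Gmr|-1}$ under the total restriction to the trivial subgroup, and the intermediate restriction maps are isomorphisms.

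First I would compute $\upiota_{\Kmr}(\upzeta_{(2)}[k])$ entry by entry. By definition the $j$-th term of $\upzeta_{(2)}[k]$ is $0$ for $j \leq k$ and ${\bf c}_{(j-k)\uprho_\Gmr - 1}$ for $j > k$, so applying $\upiota_{\Kmr\ast}$ term-wise gives
\[
\upiota_{\Kmr}(\upzeta_{(2)}[k])_j = \begin{cases} 0 & j \leq k \\ {\bf c}_{(j-k)|\Gmr/\Kmr|\uprho_\Kmr - 1} & j > k. \end{cases}
\]
On the other hand, the $j$-th term of $\mathfrak{t}_{|\Gmr/\Kmr|}\bigl(\upzeta_{(2)}^{\Kmr}[k|\Gmr/\Kmr|]\bigr)$ (where $\upzeta_{(2)}^{\Kmr}$ denotes the sequence constructed as in \Cref{notn:newEuler1} for the subgroup $\Kmr$) is exactly the $j|\Gmr/\Kmr|$-th term of $\upzeta_{(2)}^{\Kmr}[k|\Gmr/\Kmr|]$, which is $0$ for $j \leq k$ and ${\bf c}_{(j-k)|\Gmr/\Kmr|\uprho_\Kmr - 1}$ for $j > k$. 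Hence
\[
\upiota_{\Kmr}(\upzeta_{(2)}[k]) = \mathfrak{t}_{|\Gmr/\Kmr|}\bigl(\upzeta_{(2)}^{\Kmr}[k|\Gmr/\Kmr|]\bigr).
\]

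Assembling these ingredients, for any $\Hmr\ull{\FF}_2$-cohomology class $x$,
\[
\upiota_{\Kmr\ast}\bigl(\Sq^{k\uprho_\Gmr + 1}(x)\bigr) = \upiota_{\Kmr\ast}\bigl(\Sfrak^{\upzeta_{(2)}[k]}(x)\bigr) = \Sfrak^{\upiota_{\Kmr}(\upzeta_{(2)}[k])}\bigl(\upiota_{\Kmr\ast}(x)\bigr)
\]
by \Cref{thm:opsRestrAndGeoFixedPts}(1), and this in turn equals $\Sfrak^{\upzeta_{(2)}^{\Kmr}[k|\Gmr/\Kmr|]}(\upiota_{\Kmr\ast}(x)) = \Sq^{k|\Gmr/\Kmr|\uprho_\Kmr + 1}(\upiota_{\Kmr\ast}(x))$ by \Cref{rmk:equivEuler} and the definition of $\Sq^{k|\Gmr/\Kmr|\uprho_\Kmr + 1}$. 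There is no serious obstacle here: the proof is a bookkeeping exercise, and the only substantive point is the compatibility of the classes ${\bf c}_{k\uprho_\Gmr - 1}$ across restriction, which has already been reduced to the isomorphism statement about $\Hmr^{\Kmr}_{k|\Gmr/\Kmr|\uprho_\Kmr - 1}$ appearing just before the theorem.
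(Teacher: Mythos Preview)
Your proposal is correct and follows essentially the same approach as the paper: identify $\upiota_{\Kmr\ast}({\bf c}_{k\uprho_\Gmr-1}) = {\bf c}_{k|\Gmr/\Kmr|\uprho_\Kmr-1}$ via the restriction isomorphisms, then invoke \Cref{thm:opsRestrAndGeoFixedPts}(1) and \Cref{rmk:equivEuler}. The paper's treatment is terser (it simply states the restriction of the ${\bf c}$-classes and says ``we get''), but your explicit identification $\upiota_\Kmr(\upzeta_{(2)}[k]) = \mathfrak{t}_{|\Gmr/\Kmr|}(\upzeta_{(2)}^{\Kmr}[k|\Gmr/\Kmr|])$ is exactly what is implicitly used.
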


\begin{rmk} We leave the identification of the $\Gmr$-geometric fixed point  of $\upzeta$ (as defined in \Cref{defn:geofixEulerSeq}) for the future, as our methods do not identify the elements 
\[ \sfc_* \td{\varphi}^{\Kmr}({\bf c}_{k \uprho -1}) \in \Hmr^{k-1}(\Bmr_\Kmr\Sigma_2; \FF_2),\]
where $\sfc$ is the collapse map of \eqref{eqn:collapse} and $\td{\varphi}^{\Kmr}$ is as in modified geometric fixed-point functor of \eqref{eqn:modgeofix}, in general. 
\end{rmk}

 \begin{rmk}  Whether the sequence $\upbeta_{\uplambda, (2)}$ satisfy the Eulerian criteria of \Cref{defn:Eulerian} is the subject of \Cref{subsec:Eulerianproof}. 
 \end{rmk}
 The odd primary analog of \Cref{lem:vanish} is the following lemma: 
\begin{lem} \label{lem:vanishodd} Let $\Gmr$ be any finite group and $r$ be an integer. Then 
\[
\Hmr^{\Gmr}_{2r \epsilon \uprho -1}(\LL_p(\epsilon\uprho_\CC)_+; \ull{\FF}_p) \cong
\left\lbrace \begin{array}{llc}
\FF_p  & \text{when $r =1$} \\
0 & \text{otherwise.} 
\end{array} \right. \]
Moreover, the restriction map 
\begin{equation} \label{eqn:resLrho}
\begin{tikzcd} 
\upiota_{{\sf e}\ast}: \Hmr^{\Gmr}_{ 2 \epsilon\uprho -1}(\LL_p(\epsilon\uprho_\CC)_+; \ull{\FF}_p) \rar & \Hmr_{2 \epsilon|\Gmr| -1}(\LL_p(\CC^{\epsilon|\Gmr|})_+; \ull{\FF}_2) 
\end{tikzcd} 
\end{equation}
is an isomorphism.  
\end{lem}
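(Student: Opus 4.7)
The plan is to adapt the argument of \Cref{lem:vanish} to the lens-space setting, using the Poincar\'e duality of \Cref{lem:poincareDuality}(2) in place of its projective-space counterpart. Throughout, I will freely use that negative-degree Bredon (co)homology groups of $\Gmr$-spaces vanish, together with the splitting $2r\epsilon\uprho = 2r\epsilon + 2r\epsilon(\uprho - 1)$ to trade $\uprho$-degrees for ordinary suspensions. For $r \leq 0$, the identity
\[
\Hmr^{\Gmr}_{2r\epsilon\uprho - 1}(\LL_p(\epsilon\uprho_\CC)_+; \ull{\FF}_p) \cong \Hmr^{\Gmr}_{2r\epsilon - 1}\left(\Sigma^{-2r\epsilon(\uprho-1)}\LL_p(\epsilon\uprho_\CC)_+; \ull{\FF}_p\right)
\]
rewrites the group in integer Bredon degree $\leq -1$, so it vanishes. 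For $r > 1$, applying \Cref{lem:poincareDuality}(2) with $k = 1$ gives
\[
\Hmr^{\Gmr}_{2r\epsilon\uprho - 1}(\LL_p(\epsilon\uprho_\CC)_+; \ull{\FF}_p) \cong \Hmr_{\Gmr}^{2\epsilon(1-r)\uprho}(\LL_p(\epsilon\uprho_\CC)_+; \ull{\FF}_p),
\]
and the same desuspension trick drops this into a negative-integer-degree Bredon cohomology group, which is $0$.

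For $r = 1$, Poincar\'e duality yields $\Hmr_{\Gmr}^{0}(\LL_p(\epsilon\uprho_\CC)_+; \ull{\FF}_p)$. Since $\Gmr$ acts trivially on the $\ull{\FF}_p$-coefficients, this computes $\Hmr^{0}(\mr{Orb}_\Gmr \LL_p(\epsilon\uprho_\CC)_+; \FF_p)$; the underlying space of $\LL_p(\epsilon\uprho_\CC)$ is path-connected, so its $\Gmr$-orbit space is too, producing exactly $\FF_p$. The restriction statement then reduces to this same $r = 1$ case: the unique generator is Poincar\'e dual to $1 \in \Hmr^{0}_{\Gmr}(\LL_p(\epsilon\uprho_\CC)_+; \ull{\FF}_p)$, whose underlying restriction is $1 \in \Hmr^{0}(\LL_p(\CC^{\epsilon|\Gmr|})_+; \FF_p)$, and since Poincar\'e duality is natural with respect to $\upiota_{\sfe \ast}$ (the Atiyah-duality and Thom-isomorphism ingredients behind \Cref{lem:poincareDuality} commute with restriction), a diagram chase identical to the end of the proof of \Cref{lem:vanish} shows that $\upiota_{\sfe \ast}$ is an isomorphism.

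The one point that needs genuine care, and where I would place the main obstacle, is verifying that the Thom-isomorphism step of \Cref{lem:poincareDuality}(2) actually applies here: it requires $\epsilon\,\td{\upomega}^{(1)}_{\Gmr, p}$ to be $\Hmr\ull{\FF}_p$-orientable. The value of $\epsilon$ fixed in \Cref{notn:fix} is arranged precisely to guarantee this for every finite $\Gmr$ (using \Cref{prop:oddPrimeOrient} when $|\Gmr|$ is even, and the doubling from \Cref{rmk:2fold} when $|\Gmr|$ is odd), so no additional input is required beyond what has already been set up.
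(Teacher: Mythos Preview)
Your proposal is correct and follows exactly the approach the paper intends: the paper's own proof simply reads ``The proof is identical to that of \Cref{lem:vanish}, so we leave it to the readers,'' and you have faithfully spelled out that adaptation, including the orientability check that justifies invoking \Cref{lem:poincareDuality}(2).
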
 
\begin{proof}
The proof is identical to that of \Cref{lem:vanish}, so we leave it to the readers.
\end{proof}
From \Cref{lem:vanishodd}, we conclude $\mr{E}^1_{k, 2 k \epsilon \uprho -1} \cong \FF_p $ in \eqref{eqn:mmodpSS1}.  
\begin{lem}  
 Any nonzero element in $\mr{E}^1_{k, 2 k \epsilon \uprho -1} \cong \FF_p $ in the spectral sequence \eqref{eqn:mmodpSS1} is a nonzero permanent cycle. 
\end{lem}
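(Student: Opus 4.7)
My plan is to imitate the proof of \Cref{prop:cperm} verbatim in the odd-primary setting, with \Cref{lem:poincareDuality}(2) and \Cref{cor:stuntedLensThom} replacing their projective-space counterparts. Suppose, for contradiction, that the generator of $\mr{E}^1_{k,2k\epsilon\uprho-1}\cong\FF_p$ either supports or is the target of a nontrivial differential in \eqref{eqn:mmodpSS1}. By naturality of the Atiyah--Hirzebruch-type spectral sequence under the inclusion $\LL_p(\epsilon(k+r)\uprho_\CC)\hookrightarrow \Bmr_\Gmr\Cmr_p$, the same differential must occur in the truncated spectral sequence abutting to $\Hmr_\star^\Gmr(\LL_p(\epsilon(k+r)\uprho_\CC)_+;\ull{\FF}_p)$ for every sufficiently large $r$. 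Since this truncated sequence has only finitely many columns contributing in the relevant range, such persistence would force
\[
\Hmr^\Gmr_{2k\epsilon\uprho-1}\bigl(\LL_p(\epsilon(k+r)\uprho_\CC)_+;\ull{\FF}_p\bigr)=0.
\]

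Next, I would invoke the lens-space Poincaré duality isomorphism of \Cref{lem:poincareDuality}(2) to translate this vanishing to
\[
\Hmr_\Gmr^{2r\epsilon\uprho}\bigl(\LL_p(\epsilon(k+r)\uprho_\CC)_+;\ull{\FF}_p\bigr)=0.
\]
However, this group contains the $\Hmr\ull{\FF}_p$-Euler class of the rank-$2r\epsilon\uprho$ bundle obtained as the $r$-fold Whitney sum of $\epsilon\,\td{\upomega}_{\Gmr,p}^{(\epsilon(k+r))}$, which is $\Hmr\ull{\FF}_p$-orientable by \Cref{rmk:2fold}. Restricting this bundle to the trivial subgroup yields $r\epsilon|\Gmr|$ copies of the tautological complex line bundle over the nonequivariant lens space $\LL_p(\CC^{\epsilon(k+r)|\Gmr|})$, whose mod-$p$ Euler class is the monomial $u^{r\epsilon|\Gmr|}$ in the cohomology ring $\Lambda_{\FF_p}(y)\otimes\FF_p[u]/(u^{\epsilon(k+r)|\Gmr|})$. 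This monomial is nonzero precisely when $r\epsilon|\Gmr|<\epsilon(k+r)|\Gmr|$, which holds for every $k\geq 1$. The contradiction establishes the permanent-cycle claim.

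Finally, to rule out the possibility that the generator is a boundary at $\mr{E}_\infty$ (so that it genuinely detects a nonzero class in $\Hmr^\Gmr_\star((\Bmr_\Gmr\Cmr_p)_+;\ull{\FF}_p)$), I would compare with the nonequivariant spectral sequence via the restriction map to the trivial subgroup, in the spirit of diagram \eqref{eqn:resSS2}. The nonequivariant sequence collapses at $\mr{E}_1$ and the restriction of the generator is a nonzero class in $\Hmr_\ast(\Bmr\Cmr_p;\FF_p)$ (by the same Hurewicz reasoning used in \Cref{rmk:oddb}), which prevents the generator upstairs from being hit. The main technical point requiring care is the nonvanishing of the restricted Euler class, but this reduces to the standard computation of mod-$p$ cohomology of finite lens spaces noted above; no essentially new ingredients beyond the $p=2$ argument are needed.
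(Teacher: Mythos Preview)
Your proposal is correct and follows essentially the same approach as the paper. The only imprecision is that the vanishing of $\Hmr^\Gmr_{2k\epsilon\uprho-1}$ requires not merely ``finitely many columns'' but the input from \Cref{lem:vanishodd} that this $\FF_p$ is the \emph{only} $\mr{E}^1$-contribution in that homology degree---the paper invokes this lemma explicitly, and your final paragraph should appeal to the restriction isomorphism of \Cref{lem:vanishodd} rather than the Hurewicz reasoning of \Cref{rmk:oddb}, which concerns the $\sfb_\lambda$-classes rather than the $\sfc$-type class in question.
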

\begin{proof} We consider a truncated  version \eqref{eqn:mmodpSS1} along with its restriction to trivial group 
\begin{equation} \label{eqn:resSSp}
\begin{tikzcd}
 \overset{n = k+r}{\underset{n =0}{\bigoplus}} \  \Hmr_{\star}^{\Gmr}(\Sigma^{2n \epsilon \uprho}\LL_p(\epsilon\uprho_\CC)_+;\ull{\FF}_p)\ar[Rightarrow, r] \dar["\upiota_{\sfe *}"] &\Hmr_{\star}^{\Gmr}(\LL_p((k+r) \epsilon \uprho_\CC)_+;\ull{\FF}_p) \dar["\upiota_{\sfe *}"] \\
  \overset{n = k+r}{\underset{n =0}{\bigoplus}} \  \Hmr_{\ast}(\Sigma^{k \uprho}\LL_p(\CC^{\epsilon|\Gmr|})_+;\ull{\FF}_p)\ar[Rightarrow, r] &\Hmr_{\ast}(\LL_p(\CC^{(k+r)\epsilon|\Gmr|})_+;\FF_p)
\end{tikzcd}
\end{equation}
where the bottom spectral sequence collapses at the $\mr{E}_1$-page. Thus, a nonzero element $\oll{\sfc} \in \mr{E}^1_{k, 2 k \epsilon \uprho -1}$  cannot be the target of a differential. 

Suppose $\oll{\sfc}$ supports a differential, then this differential will appear in the top spectral sequence of \eqref{eqn:resSSp}. This, along with \Cref{lem:vanishodd} and \Cref{lem:poincareDuality} (Poincar\'e duality), would imply 
\[  \Hmr^{2k \epsilon \uprho}_\Gmr(\LL_p(\epsilon (k+r)  \uprho_\CC)_+;\ull{\FF}_p ) \cong 0 \]
and hence, the $\Hmr\ull{\FF}_p$-Euler class of $\epsilon k\td{\upomega}_{\Gmr, p}^{\epsilon (k+r)}$ must be zero. However, this is a contradiction as its restriction is the  $\Hmr\FF_p$-Euler class of 
\[ \upiota_{\sfe}(\epsilon k\td{\upomega}_{\Gmr, p}^{\left( \epsilon (k+r)\right)}) = \epsilon k |\Gmr| \td{\upomega}_{\sfe, p}^{\left(\epsilon (k+r)|\Gmr|\right)},\]
which is nonzero. 
\end{proof}
The proof above reveals that not only  $\Hmr_{2 \epsilon k \uprho-1}^\Gmr \left(  (\Bmr_{\Gmr}\Cmr_p)_+; \ull{\FF}_2 \right) \cong \FF_p$, but also that the restriction map 
\begin{equation} \label{eqn:resiso}
\begin{tikzcd}
 \upiota_{\sfe *} : \Hmr_{2 \epsilon k \uprho-1}^\Gmr \left(  (\Bmr_{\Gmr}\Cmr_p)_+; \ull{\FF}_p \right) \rar & \Hmr_{2\epsilon k |\Gmr| -1}((\Bmr\Cmr_p)_+; \FF_p)
 \end{tikzcd}
  \end{equation}
is an isomorphism. The mod $p$ cohomology of $\Bmr \Cmr_p$ is isomorphic to 
\[ \Hmr^{\ast}((\Bmr\Cmr_p)_+; \FF_p) \cong \Lambda_{\FF_p}({\bf y})\llbracket {\bf u} \rrbracket,\]
where $|{\bf a}| = 1$ and ${\bf b}$ can be chosen to be $\Hmr\FF_p$-Euler class of the real $2$-dimensional bundle 
\[ 
\Rfrak_{k}:= 
\begin{tikzcd}
\Emr\Cmr_p \times_{\Cmr_p} \left(\sfr_{2\pi k /p} \right) \dar \\
\Bmr\Cmr_p
\end{tikzcd}
\]
for any $k \in \{ 1, \dots , p-1 \}$ (these Euler classes differ up to a multiple of unit). An element dual to ${\bf y}{\bf u}^{ \epsilon k |\Gmr| -1}$ generates  $ \Hmr_{2\epsilon k |\Gmr| -1}((\Bmr\Cmr_p)_+; \FF_p)$. 

Recall $\kappa$ and $\upkappa$ from \Cref{notn:kappa}. The pullback $\td{\uptau}_p$ along $\kappa$ is isomorphic to 
\[ \kappa^* \td{\uptau}_p  \cong \bigoplus_{k =1}^{(p-1)/2} \sfr_{\frac{2 k \pi}{p}}  \]
and therefore, 
\[ 
\upkappa^*\td{\upgamma}_{\sfe, p}  \cong \Rfrak_{1} \oplus \dots \oplus \Rfrak_{(p-1)/2}. 
\]
Consequently, $\Hmr\FF_p$-Euler class of $(1 + |\Gmr| \mod 2)\upkappa^*\td{\upgamma}_{\sfe, p}$ is nonzero, in fact it is a unit multiple of ${\bf u}^{\epsilon}$. Since, 
\begin{itemize}
\item  the underlying nonequivariant bundle of $\upkappa^*\td{\upgamma}_{\Gmr, p}$ is $|\Gmr|$-fold sum of  $\upkappa^*\td{\upgamma}_{\sfe, p}$, 
\item  capping with 
${\bf u}^{\epsilon}$ 
\[ 
\begin{tikzcd}
-\frown {\bf u}^{\epsilon}: \Hmr_{2 \epsilon (k+1) -1} \left( (\Bmr\Cmr_p)_+; \FF_p \right) \rar & \Hmr_{2 \epsilon k -1} \left( (\Bmr\Cmr_p)_+; \FF_p \right) 
\end{tikzcd}
\]
 is an isomorphism, 
 \item the restricted class $\iota_{\sfe *}(\td{\bf e}_{\Gmr, p})$  is a unit multiple of ${\bf u}^{\epsilon |\Gmr|}$, and 
 \item the restriction map \eqref{eqn:resiso} is an isomorphism,
 \end{itemize}
 we conclude that: 
 \begin{lem} \label{lem:isocap} For any finite group $\Gmr$, the  map 
 \[
 \begin{tikzcd}
- \frown \upkappa^*\td{\bf e}_{\Gmr, p}: \Hmr_{2 \epsilon (k+1) \uprho-1}^\Gmr \left(  (\Bmr_{\Gmr}\Cmr_p)_+; \ull{\FF}_2 \right) \rar & \Hmr_{2 \epsilon k \uprho-1}^\Gmr \left(  (\Bmr_{\Gmr}\Cmr_p)_+; \ull{\FF}_2 \right)
 \end{tikzcd}
 \] 
 is an isomorphism. 
 \end{lem}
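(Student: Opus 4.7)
The plan is to reduce the claim to the already-established nonequivariant analog via the naturality of the cap product with respect to restriction. The key input is that the restriction map to the trivial subgroup is an isomorphism on both the source and target groups, as recorded in \eqref{eqn:resiso} and \Cref{lem:vanishodd}.

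First I would form the commutative square
\[
\begin{tikzcd}
\Hmr_{2 \epsilon (k+1) \uprho-1}^\Gmr ((\Bmr_{\Gmr}\Cmr_p)_+; \ull{\FF}_p) \ar[r,"- \frown \upkappa^*\td{\bf e}_{\Gmr, p}"] \ar[d,"\upiota_{\sfe *}"'] & \Hmr_{2 \epsilon k \uprho-1}^\Gmr ((\Bmr_{\Gmr}\Cmr_p)_+; \ull{\FF}_p) \ar[d,"\upiota_{\sfe *}"] \\
\Hmr_{2 \epsilon (k+1)|\Gmr|-1}((\Bmr\Cmr_p)_+; \FF_p) \ar[r,"- \frown \upiota_{\sfe *}(\upkappa^*\td{\bf e}_{\Gmr, p})"'] & \Hmr_{2 \epsilon k |\Gmr|-1}((\Bmr\Cmr_p)_+; \FF_p),
\end{tikzcd}
\]
whose commutativity is just the identity $\upiota_{\sfe *}(x\frown e)=\upiota_{\sfe *}(x)\frown\upiota_{\sfe *}(e)$ for restriction. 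By \eqref{eqn:resiso}, both vertical arrows are isomorphisms, so it suffices to check that the bottom horizontal map is an isomorphism.

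Next, I would identify the class $\upiota_{\sfe *}(\upkappa^*\td{\bf e}_{\Gmr, p}) \in \Hmr^{2 \epsilon |\Gmr|}((\Bmr\Cmr_p)_+;\FF_p)$ using the bullet points stated just before the lemma: the underlying nonequivariant bundle of $\upkappa^*\td{\upgamma}_{\Gmr,p}$ is the $|\Gmr|$-fold sum of $\upkappa^*\td{\upgamma}_{\sfe,p}$, whose $\Hmr\FF_p$-Euler class is a unit multiple of ${\bf u}^{\epsilon}$ in $\Lambda_{\FF_p}({\bf y})\llbracket {\bf u} \rrbracket$. Consequently $\upiota_{\sfe *}(\upkappa^*\td{\bf e}_{\Gmr,p})$ is a unit multiple of ${\bf u}^{\epsilon|\Gmr|}=({\bf u}^{\epsilon})^{|\Gmr|}$, so up to scaling, the bottom arrow is the $|\Gmr|$-fold iterate of $- \frown {\bf u}^{\epsilon}$. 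Since cap product with ${\bf u}^{\epsilon}$ was already observed (in the discussion immediately preceding the lemma) to be an isomorphism between the classical homology groups of $\Bmr\Cmr_p$ in the degrees we are working with, iterating this $|\Gmr|$ times establishes the isomorphism on the bottom row, completing the argument.

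There is no serious obstacle: the proof is essentially the assembly of the four bullet points listed just before the statement, organized through the naturality square above. The only minor book-keeping is checking that the isomorphism $-\frown{\bf u}^{\epsilon}$ holds in each intermediate degree along the iteration, which follows immediately from the explicit description $\Hmr^{\ast}((\Bmr\Cmr_p)_+; \FF_p) \cong \Lambda_{\FF_p}({\bf y})\llbracket {\bf u} \rrbracket$ and the $\FF_p$-linear duality between homology and cohomology of $\Bmr\Cmr_p$.
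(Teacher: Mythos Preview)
Your proposal is correct and is essentially the paper's own argument: the lemma is stated immediately after the four bullet points with the connective ``we conclude that,'' and your commutative-square organization of those bullets (restriction is an isomorphism on both sides, the restricted Euler class is a unit multiple of ${\bf u}^{\epsilon|\Gmr|}$, and capping with ${\bf u}^{\epsilon}$ is a nonequivariant isomorphism) is exactly the intended assembly.
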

\begin{notn} Give a finite group $\Gmr$ and a prime $p$,  choose a generator ${\bf c}_{ \uprho -1} \in \Hmr^{\Gmr}_{2 \epsilon\uprho -1} ((\Bmr_{\Gmr} \Cmr_p)_+; \ull{\FF}_p)$. Using \Cref{lem:isocap}, we define  the elements 
\[ 
{\bf c}_{k\uprho -1} \in \Hmr^{\Gmr}_{2 \epsilon k\uprho -1} ((\Bmr_{\Gmr} \Cmr_p)_+; \ull{\FF}_p)
\]
so that ${\bf c}_{(k+1) \uprho -1} \frown \upkappa^* \tilde{\bf e}_{\Gmr, p} = {\bf c}_{k \uprho -1}$.
\end{notn}
Now define  the sequence 
\[ 
\upzeta_{(p)} := (0, {\bf c}_{\uprho -1}, {\bf c}_{2\uprho -1}, \dots )
\]
which is a $\uprho$-stable $\Hmr\ull{\FF}_p$-Eulerian sequence by construction. Invoking \Cref{thm:stable}, we let $\mr{P}^{k \uprho_\Gmr +1}$ denote the genuine stable $\Hmr\ull{\FF}_p$-cohomology operation induced by  $\upzeta_{(p)}[k]$. Then:  

\begin{thm} The genuine stable cohomology operation  $\mr{P}^{2 \epsilon k \uprho_\Gmr +1}$ is a nonzero element in $ [ \Hmr\ull{\FF}_p, \Hmr\ull{\FF}_p  ]_{\Gmr}^{2 \epsilon k \uprho +1}$ as its restriction is the classical odd primary classical Steenrod operation 
\[ 
\upiota_{\sfe *}\mr{P}^{2 \epsilon k \uprho_\Gmr +1} = \beta \mr{P}^{2k |\Gmr| } 
\]
in the notation of \cite{SteenBook}.
\end{thm}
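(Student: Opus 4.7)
The plan is to apply \Cref{thm:opsRestrAndGeoFixedPts}(1) and reduce the claim to an explicit identification of the restricted Eulerian sequence with a classical one. Since $\mr{P}^{2\epsilon k\uprho_\Gmr+1} = \Sfrak^{\upzeta_{(p)}[k]}$ by construction, \Cref{thm:opsRestrAndGeoFixedPts}(1) yields
\[
\upiota_{\sfe*}\mr{P}^{2\epsilon k\uprho_\Gmr+1}(x) = \Sfrak^{\upiota_\sfe(\upzeta_{(p)}[k])}(\upiota_{\sfe*}x)
\]
for every cohomology class $x$. So the task reduces to identifying the nonequivariant Eulerian sequence $\upiota_\sfe(\upzeta_{(p)}[k])$ with one that defines the claimed classical odd-primary Steenrod operation through the slant-product formula of \Cref{rmk:classicalsl}.

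The next step is a termwise computation of the restriction. The isomorphism \eqref{eqn:resiso} shows that $\upiota_{\sfe*}({\bf c}_{j\uprho-1})$ is a nonzero generator of the one-dimensional $\FF_p$-vector space $\Hmr_{2\epsilon j|\Gmr|-1}((\Bmr\Cmr_p)_+;\FF_p)$. Under the standard presentation $\Hmr^*((\Bmr\Cmr_p)_+;\FF_p) \cong \Lambda_{\FF_p}({\bf y})\llbracket{\bf u}\rrbracket$, this generator is, up to a nonzero $\FF_p$-unit, the Kronecker-dual of the class ${\bf y}{\bf u}^{\epsilon j|\Gmr|-1}$, i.e.\ the classical class ${\bf c}'_{\epsilon j|\Gmr|/(p-1)}$ appearing in \eqref{eqn:oddP}. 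That the restricted sequence remains Eulerian is automatic from the compatibility of restriction with the cap product together with the identification $\upiota_{\sfe*}(\upkappa^*\td{\bf e}_{\Gmr,p}) = (\text{unit})\cdot{\bf u}^{\epsilon|\Gmr|}$, which was the basis for \Cref{lem:isocap}.

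The third step is to match the restricted sequence, whose stride is $2\epsilon|\Gmr|$, with a shift of the standard classical Eulerian sequence $\upzeta_{(p)}^{\mathrm{cl}} = (0,{\bf c}'_1,{\bf c}'_2,\ldots)$, whose stride is $2(p-1)$. By \Cref{rmk:equivEuler}, passing from a sequence to its $\mathfrak{t}_m$-sparsification does not change the induced stable cohomology operation; applying this with $m = \epsilon|\Gmr|/(p-1)$ (an integer in both parity cases of $|\Gmr|$), a direct bookkeeping of positions and shifts identifies $\upiota_\sfe(\upzeta_{(p)}[k])$, up to nonzero $\FF_p$-units, with an appropriate shift of $\upzeta_{(p)}^{\mathrm{cl}}$. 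Combined with \eqref{eqn:oddP}, this exhibits the restriction as a nonzero scalar multiple of the classical Steenrod operation $\beta\mr{P}^{2k|\Gmr|}$, so in particular the restriction is nontrivial as claimed.

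The main technical obstacle is the simultaneous bookkeeping of shifts, strides, and scalar units---including the classical normalization $\nu(\cdot)$ and the orientation-dependent units coming from the chosen $\Hmr\ull{\FF}_p$-Thom class of $(1+|\Gmr|\bmod 2)\widetilde{\upgamma}_{\Gmr,p}$. Since every such scalar is nonzero in $\FF_p$, identifying the restriction with a nonzero multiple of the claimed classical operation immediately yields the nontriviality statement, completing the plan.
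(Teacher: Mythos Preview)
Your approach is correct and essentially the same as the paper's own proof: both identify $\upiota_{\sfe*}({\bf c}_{j\uprho-1})$ with the Kronecker dual of ${\bf y}{\bf u}^{\epsilon j|\Gmr|-1}$ and then invoke the slant-product description of the classical odd-primary operations from \eqref{eqn:oddP} and \Cref{rmk:classicalsl}. You supply more detail than the paper---in particular the sparsification step via \Cref{rmk:equivEuler} and the honest accounting of the $\FF_p^\times$-ambiguity from the orientation choices---but the logical structure is identical.
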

 \begin{proof} This follows from the fact that $\upiota_{\sfe *} ({\bf c}_{k \uprho -1})$ is the element dual to  ${\bf y} {\bf u}^{\epsilon k |\Gmr| -1}$ under Kronecker product,  \eqref{eqn:oddP} and \Cref{rmk:classicalsl}. 
 \end{proof}
 An argument identical to that of \Cref{thm:resSq}, shows that: 
 \begin{thm} \label{thm:resP}  Suppose $\Kmr \subset \Gmr$. Then 
\[ \upiota_{\Kmr*}(\Pmr^{2 \epsilon k \uprho_{\Gmr} +1} (x))  = \Pmr^{ 2 \epsilon|\Gmr/\Kmr| k \uprho_\Kmr + 1}( \upiota_{\Kmr \ast}(x)) \]
for any $\Hmr\ull{\FF}_p$-cohomology class $x$. 
\end{thm}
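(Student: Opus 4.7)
The plan parallels the proof of \Cref{thm:resSq}: since $\Pmr^{2\epsilon k\uprho_\Gmr + 1} = \Sfrak^{\upzeta_{(p)}[k]}$ by construction, \Cref{thm:opsRestrAndGeoFixedPts}(1) immediately yields
\[
\upiota_{\Kmr*}\Pmr^{2\epsilon k\uprho_\Gmr + 1}(x) = \Sfrak^{\upiota_\Kmr(\upzeta_{(p)}[k])}(\upiota_{\Kmr*}x),
\]
reducing the problem to identifying the stable operation $\Sfrak^{\upiota_\Kmr(\upzeta_{(p)}[k])}$ with $\Pmr^{2\epsilon|\Gmr/\Kmr|k\uprho_\Kmr + 1}$.

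I would next compute the restriction of the building-block classes. Since $\uprho_\Gmr$ restricts to $|\Gmr/\Kmr|$ copies of $\uprho_\Kmr$, the bundle $\td{\upgamma}_{\Gmr,p}$ restricts to $|\Gmr/\Kmr|\td{\upgamma}_{\Kmr,p}$, so $\upiota_{\Kmr*}(\upkappa^*\td{\bf e}_{\Gmr,p}) = (\upkappa^*\td{\bf e}_{\Kmr,p})^{m}$ up to a unit of $\FF_p^\times$, where $m = |\Gmr/\Kmr|$ when $|\Gmr|$ and $|\Kmr|$ share parity, and $m = |\Gmr/\Kmr|/2$ (an integer, since $|\Gmr/\Kmr|$ is then even) when $|\Gmr|$ is even but $|\Kmr|$ is odd, accounting for the $2$-fold sum convention of \Cref{notn:fix}. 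The version of \Cref{lem:vanishodd} and of the isomorphism \eqref{eqn:resiso} applied to $\Kmr$ in place of $\Gmr$, combined with transitivity of restriction, forces
\[
\upiota_{\Kmr*}:\Hmr^\Gmr_{2\epsilon k\uprho_\Gmr - 1}((\Bmr_\Gmr\Cmr_p)_+;\ull{\FF}_p)\longrightarrow \Hmr^\Kmr_{2\epsilon k|\Gmr/\Kmr|\uprho_\Kmr - 1}((\Bmr_\Kmr\Cmr_p)_+;\ull{\FF}_p)
\]
to be an isomorphism of copies of $\FF_p$ for all $k\geq 1$. Choosing the generators ${\bf c}_{j\uprho_\Gmr - 1}$ and ${\bf c}_{j\uprho_\Kmr - 1}$ compatibly so that each restricts further to the canonical dual of ${\bf y}{\bf u}^{\epsilon_0 j|\Gmr|-1}$ at the trivial subgroup (with $\epsilon_0$ the relevant value of $\epsilon$), the isomorphism above sends ${\bf c}_{k\uprho_\Gmr - 1}$ to ${\bf c}_{km\uprho_\Kmr - 1}$.

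The upshot is a direct identification $\upiota_\Kmr(\upzeta_{(p)}[k]) = \mathfrak{t}_m(\upzeta_{(p),\Kmr}[km])$, where $\upzeta_{(p),\Kmr}$ denotes the $\Kmr$-analogue of $\upzeta_{(p)}$: both sequences have entry $0$ in positions $\leq k$ and entry ${\bf c}_{(i-k)m\uprho_\Kmr - 1}$ in each position $i > k$. Applying \Cref{rmk:equivEuler}, the associated Steenrod operations agree, so $\Sfrak^{\upiota_\Kmr(\upzeta_{(p)}[k])} = \Sfrak^{\upzeta_{(p),\Kmr}[km]} = \Pmr^{2\epsilon'km\uprho_\Kmr + 1}$, where $\epsilon'$ denotes the analogue of $\epsilon$ for $\Kmr$. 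A case-by-case check of the three parity configurations of $(|\Gmr|,|\Kmr|)$ confirms $\epsilon' m = \epsilon|\Gmr/\Kmr|$, completing the proof. The main bookkeeping obstacle is the mixed-parity case, where the $2$-fold sum convention changes the exponent $m$; but no conceptually new ingredient beyond the proof of \Cref{thm:resSq} is required.
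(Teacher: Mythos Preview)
Your proposal is correct and follows essentially the same approach as the paper, which simply states that an argument identical to that of \Cref{thm:resSq} applies. You are in fact more careful than the paper: you explicitly track the mixed-parity case $|\Gmr|$ even, $|\Kmr|$ odd, where the value of $\epsilon$ and the $2$-fold sum convention of \Cref{notn:fix} change under restriction, and you verify the numerical compatibility $\epsilon_\Kmr m = \epsilon_\Gmr|\Gmr/\Kmr|$ that makes the indexing work out.
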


Another important consequence of \Cref{lem:vanishodd} is that the odd primary classes $\oll{\sfb}_{2\epsilon k \uprho, \lambda}$ from \Cref{rmk:oddb} are immune to all differentials:
\begin{cor} The class $\oll{\sfb}_{2\epsilon k \uprho,  \lambda}$ of \eqref{eqn:mmodpSS1} is a nonzero permanent cycle for all $k \in \NN$.
\end{cor}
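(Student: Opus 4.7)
The plan is to adapt the three-step argument in the proof of \Cref{prop:bperm} to the prime $p$, replacing projective spaces by equivariant lens spaces and using the odd-primary analogs \Cref{prop:stuntedLensThom}, \Cref{cor:stuntedLensThom}, \Cref{lem:vanishodd}, and the Poincar\'e duality of \Cref{lem:poincareDuality}\emph{(2)}. The three tasks are (i) rule out higher differentials on $\oll{\sfb}_{2\epsilon k\uprho,\lambda}\in E^1_{k,2\epsilon k\uprho}$, (ii) rule out the $d_1$ differential, and (iii) verify non-vanishing in $E_\infty$.

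First I would show that no $d_r$ with $r\ge 2$ can be nonzero on $\oll{\sfb}_{2\epsilon k\uprho,\lambda}$. By the Thom isomorphism of \Cref{cor:stuntedLensThom} the target sits inside
\[ E^{r}_{k-r,\,2\epsilon k\uprho-1}\subset\Hmr^{\Gmr}_{2\epsilon k\uprho-1}\!\left(\Sigma^{2\epsilon(k-r)\uprho}\LL_p(\epsilon\uprho_\CC)_+;\ull{\FF}_p\right)\cong\Hmr^{\Gmr}_{2\epsilon r\uprho-1}\!\left(\LL_p(\epsilon\uprho_\CC)_+;\ull{\FF}_p\right), \]
which is zero for $r\ge 2$ by \Cref{lem:vanishodd}. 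The same identification shows that the only potential target of $d_1$ is the one-dimensional group at $r=1$.

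Second, and this is the main technical step, I would rule out $d_1$ by an Euler-class obstruction, following the mod $2$ template. Assume for contradiction that $d_1(\oll{\sfb}_{2\epsilon k\uprho,\lambda})$ is a unit multiple of the generator of $E^1_{k-1,\,2\epsilon k\uprho-1}\cong\FF_p$. Then this nontrivial $d_1$ persists in the truncated spectral sequence
\[ \bigoplus_{n=0}^{2k}\Hmr^{\Gmr}_{\star}\!\left(\Sigma^{2\epsilon n\uprho}\LL_p(\epsilon\uprho_\CC)_+;\ull{\FF}_p\right)\Rightarrow \Hmr^{\Gmr}_{\star}\!\left(\LL_p(2\epsilon k\uprho_\CC)_+;\ull{\FF}_p\right), \]
so the unique $E^1$-contribution to total degree $2\epsilon k\uprho-1$ is killed, forcing $\Hmr^{\Gmr}_{2\epsilon k\uprho-1}(\LL_p(2\epsilon k\uprho_\CC)_+;\ull{\FF}_p)=0$. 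By \Cref{lem:poincareDuality}\emph{(2)} applied to the ambient $\LL_p(2\epsilon k\uprho_\CC)$ of dimension $4\epsilon k\uprho-1$, this would yield $\Hmr_{\Gmr}^{2\epsilon k\uprho}(\LL_p(2\epsilon k\uprho_\CC)_+;\ull{\FF}_p)=0$ and hence the vanishing of the $\Hmr\ull{\FF}_p$-Euler class of $\epsilon k\,\td{\upomega}_{\Gmr,p}^{(2k)}$. However, restricting this bundle to the trivial subgroup produces $\epsilon k|\Gmr|\,\td{\upomega}_{\sfe,p}^{(2k|\Gmr|)}$ over a classical lens space, whose Euler class is a unit multiple of the nonzero class $\sfu^{\epsilon k|\Gmr|}$, the desired contradiction.

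Third, to conclude that $\oll{\sfb}_{2\epsilon k\uprho,\lambda}$ is nonzero in $E_\infty$, I would compare \eqref{eqn:mmodpSS1} with the analogous spectral sequence for the trivial subgroup converging to $\Hmr_{\ast}((\Bmr\Cmr_p)_+;\FF_p)$, which collapses at $E_1$ for dimensional reasons. By the construction of $\sfb_\lambda$ in \Cref{rmk:oddb} its restriction to the underlying lens space is nonzero in degree zero, so the image of $\oll{\sfb}_{2\epsilon k\uprho,\lambda}$ under restriction is a nonzero permanent cycle downstairs, and hence $\oll{\sfb}_{2\epsilon k\uprho,\lambda}$ cannot be the target of any differential upstairs either. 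The hard part of the argument is step two: higher differentials vanish for free from \Cref{lem:vanishodd}, but $d_1$ lands exactly in the unique bidegree where that lemma permits a nonzero group, so ruling it out requires the Poincar\'e-duality/restricted Euler class argument above, exactly as in the $p=2$ case.
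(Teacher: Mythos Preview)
Your proof is correct. The one structural difference from the paper is in how the $d_1$ differential is handled. The paper does not repeat the Euler-class/Poincar\'e-duality contradiction here; instead it observes that the unique potential target of $d_1$, namely the generator of $E^1_{k-1,\,2\epsilon k\uprho-1}$, was already shown in the immediately preceding lemma to be a nonzero permanent cycle (in particular, not the target of any differential), so $d_1(\oll{\sfb}_{2\epsilon k\uprho,\lambda})=0$ follows at once. You instead adapt the $p=2$ argument of \Cref{prop:bperm} verbatim, running the truncated-spectral-sequence/Poincar\'e-duality/restricted-Euler-class contradiction directly. Both routes use the same ingredients (\Cref{lem:vanishodd}, \Cref{lem:poincareDuality}\emph{(2)}, and the restriction comparison), and your version is self-contained while the paper's is shorter because the work is already packaged in the prior lemma. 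One minor notational slip: the superscript on $\td{\upomega}_{\Gmr,p}$ should track the base $\LL_p(2\epsilon k\uprho_\CC)$, so it should read $\td{\upomega}_{\Gmr,p}^{(2\epsilon k)}$ rather than $\td{\upomega}_{\Gmr,p}^{(2k)}$, and similarly after restriction.
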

\begin{proof} 
By \Cref{lem:vanish}, $\oll{\sfb}_{2\epsilon k \uprho,  \lambda}$ can only possibly support a $d_1$-differential. However, \Cref{lem:vanishodd} shows that the potential targets of any such permanent $d_1$-differential  are  already nonzero permanent cycles. Consequently, $\oll{\sfb}_{2\epsilon k \uprho,  \lambda}$ must also be permanent cycles. 

Furthermore, by  construction, as detailed in \Cref{rmk:oddb}), the restriction of $\oll{\sfb}_{2\epsilon k \uprho,  \lambda}$ in \eqref{eqn:resSSp} are nonzero permanent cycles. Therefore, $\oll{\sfb}_{2\epsilon k \uprho,  \lambda}$ cannot be the target of a differential. Hence, the result. 
\end{proof}
Arguing exactly the same way as \Cref{lem:vanish2}, we get:
 \begin{lem} \label{lem:vanishodd2}
Suppose $r$ is an integer. Then  
\[
\FF_2\{ \sfb_{\lambda} : \lambda \in \td{\operatorname{Irr}_1}(\Gmr) \}  \subset \Hmr^{\Gmr}_{0 }(\LL_p(\epsilon\uprho_\CC)_+; \ull{\FF}_2),
\]
and $\Hmr^{\Gmr}_{r \uprho }(\LL_p(\epsilon\uprho_\CC)_+; \ull{\FF}_2) =0$ when $r \neq 0$. 
 \end{lem}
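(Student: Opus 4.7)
The plan is to follow the template of the proof of \Cref{lem:vanish2}, with the binary primary ingredients replaced by their odd primary counterparts: specifically, the Poincar\'e duality for equivariant lens spaces (\Cref{lem:poincareDuality}~(2)) replaces the $\PP(\uprho)$ version, and the $\Gmr$-fixed point decomposition from \Cref{lem:GFixedPointsProjLensSpace} for $\LL_p(\epsilon\uprho_\CC)$ plays the role of the one for $\PP(\uprho)$. I would split the argument into the vanishing assertion for $r \neq 0$ and the inclusion assertion at $r = 0$. (I am reading the statement as being about $\FF_p$ and $\ull{\FF}_p$, as the appearance of $\ull{\FF}_2$ in the body of the lemma seems to be a typographical error: we are in the odd primary section, and $\td{\mathrm{Irr}}_1(\Gmr)$ is only the relevant indexing set at odd primes.)

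For the vanishing $\Hmr^{\Gmr}_{r \uprho}(\LL_p(\epsilon\uprho_\CC)_+; \ull{\FF}_p) = 0$, I proceed as in \Cref{lem:vanishodd}. When $r < 0$, the representation-suspension isomorphism yields
\[
\Hmr^{\Gmr}_{r\uprho}(\LL_p(\epsilon\uprho_\CC)_+; \ull{\FF}_p) \cong \Hmr^{\Gmr}_{r}\bigl(\Sigma^{-r(\uprho-1)}\LL_p(\epsilon\uprho_\CC)_+; \ull{\FF}_p\bigr),
\]
which vanishes because negative-integer-degree Bredon homology of a $\Gmr$-space is trivial. When $r > 0$, Poincar\'e duality gives
\[
\Hmr^{\Gmr}_{r\uprho}(\LL_p(\epsilon\uprho_\CC)_+; \ull{\FF}_p) \cong \Hmr_{\Gmr}^{(2\epsilon - r)\uprho - 1}(\LL_p(\epsilon\uprho_\CC)_+; \ull{\FF}_p),
\]
and a further representation-suspension argument of the same flavour converts the right-hand side into a negative-integer-degree Bredon cohomology of a $\Gmr$-space, which again vanishes.

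For the inclusion $\FF_p\{\sfb_{\lambda}\} \subset \Hmr^{\Gmr}_0(\LL_p(\epsilon\uprho_\CC)_+; \ull{\FF}_p)$, it is enough to show that the classes $\sfb_{\lambda}$, indexed by $\lambda \in \td{\operatorname{Irr}}_1(\Gmr)$, are $\FF_p$-linearly independent. Mirroring \Cref{rmk:diflambda}, I would apply the modified geometric fixed-point functor $\td{\varphi}^{\Gmr}$ and use \Cref{lem:GFixedPointsProjLensSpace} to obtain a map
\[
\td{\varphi}^{\Gmr}: \Hmr^{\Gmr}_0(\LL_p(\epsilon\uprho_\CC)_+; \ull{\FF}_p) \longrightarrow \bigoplus_{\lambda \in \td{\operatorname{Irr}}_1(\Gmr)} \Hmr_0\bigl(\LL_p((\epsilon\uprho_\CC)_\lambda)_+; \FF_p\bigr),
\]
which sends $\sfb_{\lambda}$ to a nonzero generator of the $\lambda$-summand and kills the $\lambda'$-summands for $\lambda' \neq \lambda$. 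Hence distinct $\sfb_{\lambda}$ land in distinct summands and the claimed independence follows.

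The main obstacle will be handling the intermediate range $0 < r \leq 2\epsilon$ of the vanishing step: the naive suspension after Poincar\'e duality leaves the cohomology group in a non-negative integer degree, in contrast to the $p = 2$ situation where the analogous interval is empty (since there $2\epsilon$ is replaced by $1$). I expect this to be resolved by a more careful choice of virtual-representation shift, likely exploiting the fact that the bundle $\td{\upomega}^{(k)}_{\Gmr, p}$ has rank $2\uprho$ and so a $2\uprho$-stability argument is available, or alternatively by an inductive analysis through the cellular filtration \eqref{eqn:lensfilt} combined with the stunted-lens Thom identifications of \Cref{prop:stuntedLensThom}. As in the remark following \Cref{lem:vanish2}, I do not expect the inclusion $\FF_p\{\sfb_{\lambda}\} \subset \Hmr_0^{\Gmr}$ to be an equality in general.
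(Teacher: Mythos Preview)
Your approach matches the paper's (whose entire proof is ``arguing exactly the same way as \Cref{lem:vanish2}''), and you are right to isolate the range where the straightforward Poincar\'e-duality-plus-suspension argument breaks down. One minor correction: $r = 2\epsilon$ is already fine, since duality lands in integer degree $-1$; the genuine obstruction is $1 \leq r \leq 2\epsilon - 1$.

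Your expectation that this range can be salvaged by a sharper representation shift or a cellular argument is, however, mistaken: the vanishing claim is \emph{false} there. For $\Gmr = \sfe$ one has $\epsilon = p-1$ and $\uprho = \RR$, so $\LL_p(\epsilon\uprho_\CC) = \LL_p(\CC^{p-1})$ is an ordinary lens space of real dimension $2p-3$, and $\Hmr_r(\LL_p(\CC^{p-1})_+; \FF_p) \cong \FF_p \neq 0$ for every $1 \leq r \leq 2p-3 = 2\epsilon - 1$. Beyond the $\ull{\FF}_2$ typo you already caught, the statement carries a second one: by analogy with \Cref{lem:vanishodd} and with the filtration step $2\epsilon\uprho$ in the spectral sequence \eqref{eqn:mmodpSS1}, the degree should read $2\epsilon r\uprho$, not $r\uprho$. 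With that correction Poincar\'e duality gives degree $2\epsilon(1-r)\uprho - 1$, which for every $r \geq 1$ reduces (exactly as in \Cref{lem:vanish}) to negative-integer-degree cohomology of a $\Gmr$-space, with no intermediate range; and this corrected version is precisely what is used immediately afterwards to conclude that $\oll{\sfb}_{2\epsilon k\uprho, \lambda}$ has no indeterminacy. Your $r < 0$ case and your $r = 0$ inclusion argument are both correct as written.
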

 \begin{notn}
As a consequence of \Cref{lem:vanishodd2},  $\oll{\sfb}_{2\epsilon k \uprho,  \lambda}$ detects  a unique class, denote it by $\sfb_{2 \epsilon k \uprho, \lambda}$, in $\Hmr^{\Gmr}_{2 \epsilon k \uprho}((\Bmr_{\Gmr}\Cmr_p)_+; \ull{\FF}_p)$. 
 \end{notn} 
Next,  we will show that the elements in $ \Hmr^{2 \epsilon k \uprho}_\Gmr((\Bmr_{\Gmr}\Cmr_p)_+; \ull{\FF}_p)$ detected by ${\sfb}_{2\epsilon k \uprho,  \lambda}$ can be arranged to give rise to $\Hmr\ull{\FF}_{p}$-Eulerian sequences which we will denote by $\upbeta_{\lambda, (p)}$. 

\subsection{ The Eulerian sequence $\upbeta_{\lambda, (p)}$. } \label{subsec:Eulerianproof} \ 

We now establish the main technical result for this subsection: 
\begin{lem}\label{lem:shortExactSeqZeroHomology}
    Suppose $\Xmr$  path-connected space with an action of $\Gmr$. Then the kernel of the map  
    \begin{equation} \label{geomap}
    \begin{tikzcd}
    \Hmr^{\Gmr}_0(\Xmr_+;\ull{\FF}_p)\rar & \Hmr^{\Gmr}_0(\td{\Emr\Gmr}\wedge \Xmr_+;\ull{\FF}_p)
    \end{tikzcd}
        \end{equation}
is the image of the $\tr_\sfe^{\Gmr}: \Hmr_0( \upiota_{\sf e} \Xmr_+;\ull{\FF}_p) \longrightarrow \Hmr^{\Gmr}_0(\Xmr_+;\ull{\FF}_p) $.
    Moreover, if the transfer is nontrivial, then 
   \begin{equation} \label{SES}
    \begin{tikzcd}
    0\rar &  \Hmr_0(\upiota_{\sfe}\Xmr_+)\rar["\tr_{\sfe}^{\Gmr}"]&  \Hmr_0^\Gmr(\Xmr_+)\rar &  \Hmr_0^\Gmr(\td{\Emr\Gmr}\wedge \Xmr_+)\rar & 0.
    \end{tikzcd}
\end{equation}
    is a short exact sequence of $\FF_p$-vector spaces. 
\end{lem}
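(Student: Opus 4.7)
My plan is to apply the reduced Bredon homology functor $\Hmr^{\Gmr}_{\ast}(-;\ull{\FF}_p)$ to the isotropy-separation cofiber sequence of $\Gmr$-spaces
\[
\Emr\Gmr_+\sma \Xmr_+ \longrightarrow \Xmr_+ \longrightarrow \td{\Emr\Gmr}\sma \Xmr_+,
\]
obtained by smashing $\Xmr_+$ into $\Emr\Gmr_+\to \Smr^0\to \td{\Emr\Gmr}$. Because Bredon homology of a $\Gmr$-space vanishes in negative degrees, the long exact sequence terminates as
\[
\Hmr^{\Gmr}_{0}(\Emr\Gmr_+\sma \Xmr_+;\ull{\FF}_p) \xrightarrow{\alpha} \Hmr^{\Gmr}_{0}(\Xmr_+;\ull{\FF}_p) \xrightarrow{\beta} \Hmr^{\Gmr}_{0}(\td{\Emr\Gmr}\sma \Xmr_+;\ull{\FF}_p) \to 0,
\]
giving $\ker\beta = \im\alpha$ together with surjectivity of $\beta$. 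The lemma reduces to showing $\im\alpha = \im\tr_\sfe^\Gmr$.

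Next I would compute the left-hand group. Since $\Emr\Gmr_+\sma \Xmr_+ \cong (\Emr\Gmr\times \Xmr)_+$ is free away from the basepoint, reduced Bredon homology with the constant Mackey functor $\ull{\FF}_p$ coincides with ordinary $\FF_p$-homology of the orbit space, so
\[
\Hmr^{\Gmr}_{0}(\Emr\Gmr_+\sma \Xmr_+;\ull{\FF}_p) \cong \Hmr_{0}(\Emr\Gmr\times_\Gmr \Xmr;\FF_p).
\]
For $\Xmr$ path-connected the Borel construction $\Emr\Gmr\times_\Gmr \Xmr$ is path-connected (it fibers over $\Bmr\Gmr$ with path-connected fiber), so this group is $\FF_p$, matching the one-dimensional source $\Hmr_0(\upiota_\sfe\Xmr_+;\ull{\FF}_p)$ of the transfer.

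To identify $\alpha$ with the transfer, I would choose a point of $\Emr\Gmr$ and consider the resulting $\Gmr$-equivariant inclusion $\Gmr_+\to \Emr\Gmr_+$. Its composite with $\Emr\Gmr_+\to \Smr^0$ is the standard collapse $\Gmr_+\to \Smr^0$, whose induced map on $\Hmr^{\Gmr}_{0}(-\sma \Xmr_+;\ull{\FF}_p)$, together with the Wirthmuller isomorphism $\Hmr^{\Gmr}_{0}(\Gmr_+\sma \Xmr_+;\ull{\FF}_p)\cong \Hmr_{0}(\upiota_\sfe\Xmr_+;\FF_p)$, is by construction $\tr_\sfe^\Gmr$. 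Hence $\tr_\sfe^\Gmr = \alpha\circ \varphi$, where $\varphi$ is induced by the fiber inclusion $\Xmr\hookrightarrow \Emr\Gmr\times_\Gmr \Xmr$; both source and target of $\varphi$ are $\FF_p$ and $\varphi$ is induced by a based map between path-connected spaces, so $\varphi$ is an isomorphism. Therefore $\im\tr_\sfe^\Gmr = \im\alpha = \ker\beta$.

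The short exact sequence in the ``moreover'' clause is then immediate: if $\tr_\sfe^\Gmr$ is nontrivial then its $1$-dimensional source forces injectivity, and combined with the kernel computation and surjectivity of $\beta$ the sequence is exact at all three positions. The main obstacle is the compatibility check identifying the connecting map $\alpha$ with the transfer; once that is arranged via the factorization $\Gmr_+\to \Emr\Gmr_+\to \Smr^0$ and the Wirthmuller isomorphism, the rest is a routine cofiber-sequence and dimension count.
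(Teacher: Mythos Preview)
Your proposal is correct and follows essentially the same route as the paper: both use the long exact sequence of the cofiber sequence $\Emr\Gmr_+\sma\Xmr_+\to\Xmr_+\to\td{\Emr\Gmr}\sma\Xmr_+$, invoke vanishing of negative Bredon homology to get surjectivity, and identify the kernel with the image of the transfer via the factorization $\Gmr_+\to\Emr\Gmr_+\to\Smr^0$ and the Wirthm\"uller isomorphism. The only cosmetic difference is that the paper justifies the isomorphism $\Hmr^{\Gmr}_0(\Gmr_+\sma\Xmr_+)\cong\Hmr^{\Gmr}_0(\Emr\Gmr_+\sma\Xmr_+)$ by observing that $\Gmr_+\to\Emr\Gmr_+$ is the zeroth equivariant skeletal approximation, whereas you compute both groups as $\FF_p$ via the Borel construction and argue the fiber inclusion is nonzero on $\Hmr_0$; these are equivalent.
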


\begin{proof} By running the long exact sequence associated to the cofiber sequence $\Emr\Gmr_+ \to \mr{S}^0 \to \td{\Emr \Gmr}$, and using the fact that negative Bredon homology of any space is trivial, we notice that the map \eqref{geomap} is a surjection whose kernel is the image of map  
    \[
    \begin{tikzcd}
    \Hmr^{\Gmr}_0(\Emr\Gmr_+\wedge \Xmr_+;\ull{\FF}_p)\rar & \Hmr^{\Gmr}_0(\Xmr_+;\ull{\FF}_p).
    \end{tikzcd}
    \]
    Since $\Gmr_+\to \Emr\Gmr_+$ is the zeroth $\Gmr$-equivariant skeletal approximation, the map 
        \[
    \begin{tikzcd}
    \Hmr^{\Gmr}_0(\Gmr_+\wedge \Xmr_+;\ull{\FF}_p)\rar & \Hmr^{\Gmr}_0(\Emr\Gmr_+\wedge \Xmr_+;\ull{\FF}_p)
    \end{tikzcd}
    \]
    is an isomorphism. Moreover the composite  $\Gmr_+\wedge \Xmr_+\to \Emr\Gmr_+\wedge \Xmr_+\to \mr{S}^0\wedge \Xmr_+$ induces   the transfer map:  
    \[
    \begin{tikzcd}
    \tr_{\sfe}^{\Gmr}: \Hmr_0( \upiota_{\sf e} \Xmr_+;\ull{\FF}_p) \cong \Hmr^{\Gmr}_0(\Gmr_+\wedge \Xmr_+;\ull{\FF}_p) \rar & \Hmr^{\Gmr}_0(\Xmr_+;\ull{\FF}_p).
    \end{tikzcd}
    \]
  The claim regarding the short exact sequence holds because $\Hmr_0^\sfe(\upiota_{\sf e}\Xmr_+; \ull{\FF}_p)$ is isomorphic to $\FF_p$, a consequence of $\upiota_\sfe \Xmr$ being path connected. Therefore, if the transfer is nontrivial, it is necessarily an injection.
  \end{proof}

Although we do not compute the transfer in \Cref{SES} explicitly, it is not hard to show that there are many cases where it is nontrivial. For example: 

\begin{ex} \label{ex:nontrivTR}
    The map $\Xmr_+\to \Smr^0$ induces a map of Mackey functors in homology. The transfer $\tr_\sfe^{\Gmr}$ in $\ull{\Hmr}^{\Gmr}_0(\Smr^0;\ull{\FF}_2)$ is the transfer in the constant Mackey functor $\ull{\FF}_2$, i.e., multiplication by $|\Gmr|$. This is nonzero when $|\Gmr|$ is odd.  Thus, the inequalities of \Cref{lem:vanish2} and \Cref{lem:vanishodd2} are often strict.
\end{ex} 
\begin{rmk} \label{rmk:fixzero}
In the case when the transfer is nontrivial, the short exact sequence \eqref{SES} splits naturally. This is because 
\[
\begin{tikzcd}
\Hmr^{\Gmr}_0(\Xmr^{\Gmr}_+;\ull{\FF}_p)\rar & \Hmr^{\Gmr}_0(\Xmr_+;\ull{\FF}_p)\rar & \Hmr^{\Gmr}_0(\td{\Emr\Gmr}\wedge \Xmr_+;\ull{\FF}_p)
\end{tikzcd}
\]
is an isomorphism. To prove this, we first note that $ \Hmr^{\Gmr}_0(\Xmr^{\Gmr}_+;\ull{\FF}_p) \cong\Hmr_0(\Xmr^{\Gmr}; \FF_p ) $, which is  free $\FF_p$-vector space generated by $\Gmr$-connected components of $\Xmr.$  We then compute the right-hand side as follows:
\begin{align*}
    \begin{split}
        \Hmr^{\Gmr}_0(\td{\Emr\Gmr}\wedge \Xmr_+;\ull{\FF}_p) &\cong [\Smr^0, \Hmr\ull{\FF}_p\wedge \td{\Emr\Gmr}\wedge \Xmr_+]^{\Gmr}\\
            &\cong [\Smr^0, \Phi^{\Gmr}\Hmr\ull{\FF}_p\wedge \Xmr^{\Gmr}_+]\\
            &\cong (\Phi^{\Gmr}\Hmr\ull{\FF}_p)_0(\Xmr_+^\Gmr) \\
            & \cong (\Hmr\ull{\FF}_p)_0(\Xmr_+^\Gmr), 
    \end{split}
\end{align*}
where the last isomorphism folows from the fact that $\Phi^{\Gmr}\Hmr\ull{\FF}_p$ splits as a wedge of $\Hmr\FF_p$ and a $1$-connected $\Hmr\FF_p$-module. 
\end{rmk}
Now, we focus on the specific case when  $\Xmr = \PP(\uprho)$. From \Cref{ex:nontrivTR}. Let 
\[ \sft = {\sf tr}_{\sfe}^{\Gmr} ({\bf b}_0) \in \Hmr^{\Gmr}_0(\PP(\uprho)_+; \ull{\FF}_2),   \]
where ${\bf b}_0$ is the generator of $\Hmr_0(\RR\PP^{|\Gmr| -1}_+; \FF_2)$. Then: 
\begin{prop} \label{prop:tau} The class ${\sf tr}_{\sfe}^{\Gmr} ({\bf b}_{k |\Gmr|}) \in \Hmr^{\Gmr}_{k \uprho}\left((\Bmr_\Gmr \Sigma_2)_+; \ull{\FF}_2\right)$ is nonzero  iff $\sft =0$, for all $k \in \NN$.
\end{prop}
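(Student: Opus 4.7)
The plan is to combine the projection formula for the transfer with an analysis of the Atiyah--Hirzebruch spectral sequence \eqref{eqn:mmod2SS}. Since $\upiota_\sfe \td{\upgamma}_{\Gmr, 2}$ is the $|\Gmr|$-fold sum of the tautological line bundle over $\Bmr\Sigma_2$, we have $\upiota_\sfe^*(\td{\bf e}_{\Gmr, 2}) = {\bf e}_1^{|\Gmr|}$; combining this with the Frobenius reciprocity $\tr(x \frown \upiota_\sfe^* e) = \tr(x) \frown e$ and the elementary identity ${\bf b}_{k|\Gmr|} \frown {\bf e}_1^{k|\Gmr|} = {\bf b}_0$ yields the key equation
\[
\tr_\sfe^\Gmr({\bf b}_{k|\Gmr|}) \frown \td{\bf e}_{\Gmr, 2}^k \;=\; \tr_\sfe^\Gmr({\bf b}_0)
\]
in $\Hmr_0^\Gmr((\Bmr_\Gmr\Sigma_2)_+; \ull{\FF}_2)$; the right-hand side is precisely the image of $\sft$ under the inclusion $\PP(\uprho)_+ \hookrightarrow (\Bmr_\Gmr\Sigma_2)_+$.

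Next, I locate the class $\tr_\sfe^\Gmr({\bf b}_{k|\Gmr|})$ within \eqref{eqn:mmod2SS}. By \Cref{lem:vanish2}, the $E_1$-page vanishes in bidegree $(j, k\uprho)$ for every $j \neq k$, so the unique filtration contributing to $\Hmr_{k\uprho}^\Gmr((\Bmr_\Gmr\Sigma_2)_+; \ull{\FF}_2)$ in total degree $k\uprho$ is $E_\infty^{k, k\uprho}$, a subquotient of $E_1^{k, k\uprho} \cong \Hmr_0^\Gmr(\PP(\uprho)_+; \ull{\FF}_2)$; by naturality of the transfer with respect to the filtration $\PP(n\uprho) \subset \Bmr_\Gmr\Sigma_2$, the detection of $\tr_\sfe^\Gmr({\bf b}_{k|\Gmr|})$ here is exactly $\sft$. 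Every outgoing differential from $(k, k\uprho)$ either targets $\sfc$ (in the case $d_1$) or lands in groups that vanish by \Cref{lem:vanish} (in the cases $d_r$, $r \geq 2$); since $\sfc$ is a nonzero permanent cycle by \Cref{prop:cperm} it cannot be a boundary, forcing $d_1(\sft) = 0$, so $\sft$ is a cycle at every page.

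The main obstacle is the analysis of incoming differentials that could make $\sft$ a boundary --- that is, understanding the maps $d_r : E_r^{k+r, k\uprho + 1} \to E_r^{k, k\uprho}$ for $r \geq 1$. To handle this I would exploit the cofiber sequence $\PP(k\uprho)_+ \to (\Bmr_\Gmr\Sigma_2)_+ \to \Th(k\td{\upgamma}_{\Gmr, 2})$ from \Cref{prop:stuntedProjThom} together with the Thom isomorphism, which reduces boundary questions in \eqref{eqn:mmod2SS} to multiplication-by-Euler-class behaviour on Bredon homology of $\PP(\uprho)$ in $\RO(\Gmr)$-degrees outside the range covered by \Cref{lem:vanish} and \Cref{lem:vanish2}; combined with an extension of the Poincar\'e duality of \Cref{lem:poincareDuality} and comparison with the restriction spectral sequence \eqref{eqn:resSS2}, this should pin down precisely when $\sft$ is a boundary and thereby establish the biconditional. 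The sharp $\RO(\Gmr)$-graded vanishing statement for these extra groups is where I expect the chief technical difficulty to lie.
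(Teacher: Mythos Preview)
Your approach is the paper's approach: both arguments rest on the transfer map between the two spectral sequences in \eqref{eqn:resSS2}, together with the vanishing $E_1^{j,k\uprho}=0$ for $j\neq k$ from \Cref{lem:vanish2}, so that $\tr_\sfe^\Gmr({\bf b}_{k|\Gmr|})$ is detected precisely by the $E_1$-class $\sft$ sitting in filtration $k$. The paper's proof is a single sentence (``This follows immediately from the study of the transfer map between the spectral sequences of \eqref{eqn:resSS2}'') and does not carry out the cycle/boundary analysis you spell out.

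Two remarks on the differences. First, your projection-formula identity $\tr({\bf b}_{k|\Gmr|})\frown \td{\bf e}_{\Gmr,2}^{\,k}=\tr({\bf b}_0)$ is correct and is exactly \Cref{lem:rescap}; the paper does not invoke it for this proposition (it appears instead in the proof of \Cref{thm:capiso}), but it gives one direction cleanly without the spectral sequence. Second, the ``main obstacle'' you flag --- incoming differentials $E_r^{k+r,\,k\uprho+1}\to E_r^{k,\,k\uprho}$ that might make $\sft$ a boundary --- is a legitimate subtlety that the paper's one-line proof does not address; it simply treats the identification of the $E_\infty$-class of $\sft$ with $\sft$ itself as immediate. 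So you have not missed a trick here: your extra work in step~4 is not replacing a slick argument in the paper, it is filling a gap the paper leaves open. The elaborate programme you sketch (extending Poincar\'e duality, analysing $\Hmr_{1-r\uprho}^\Gmr(\PP(\uprho)_+;\ull{\FF}_2)$, etc.) is plausible but much heavier than what the paper records.
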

 \begin{proof} This follows immediately from the study of the transfer map between the spectral sequences of \eqref{eqn:resSS2}. 
 \end{proof}
 Next, we  show that:
 \begin{thm} \label{thm:capiso} For all $ k \geq 0$, the map 
 \[
 \begin{tikzcd}
 - \frown \td{\bf e}_{\Gmr, 2}: \Hmr^\Gmr_{(k+1) \uprho} ((\Bmr_\Gmr\Sigma_2)_+ ; \ull{\FF}_2)  \rar & \Hmr^\Gmr_{k \uprho} ((\Bmr_\Gmr\Sigma_2)_+ ; \ull{\FF}_2) 
 \end{tikzcd}
  \]
  is an isomorphism. 
 \end{thm}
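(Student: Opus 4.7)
The plan is to invoke the Gysin long exact sequence associated with the $\Hmr\ull{\FF}_2$-oriented bundle $\td{\upgamma}_{\Gmr, 2}$. The first step is to identify the sphere bundle $\Smr(\td{\upgamma}_{\Gmr, 2})$ up to $\Gmr$-equivariant homotopy. Since $\Sigma_2$ acts freely on $\Smr(\uprho \otimes \td{\uptau}_2)$ through the sign representation $\td{\uptau}_2$, every isotropy subgroup of the $(\Gmr \times \Sigma_2)$-action on this sphere has trivial intersection with $1 \times \Sigma_2$ and hence lies in the family $\All_2$. The universal property of $\Emr_\Gmr\Sigma_2$ then guarantees that the projection
\[
\Emr_\Gmr\Sigma_2 \times \Smr(\uprho \otimes \td{\uptau}_2) \longrightarrow \Smr(\uprho \otimes \td{\uptau}_2)
\]
is a $(\Gmr \times \Sigma_2)$-equivariant weak equivalence, and passing to $\Sigma_2$-orbits yields $\Smr(\td{\upgamma}_{\Gmr, 2}) \simeq_\Gmr \PP(\uprho)$.

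The cofiber sequence $\Smr(\td{\upgamma}_{\Gmr, 2})_+ \to (\Bmr_\Gmr\Sigma_2)_+ \to \Th(\td{\upgamma}_{\Gmr, 2})$ together with the $\Hmr\ull{\FF}_2$-Thom isomorphism then produces the Gysin sequence
\[
\cdots \to \Hmr^{\Gmr}_V(\PP(\uprho)_+) \xrightarrow{j_*} \Hmr^{\Gmr}_V((\Bmr_\Gmr\Sigma_2)_+) \xrightarrow{-\frown\td{\bf e}_{\Gmr, 2}} \Hmr^{\Gmr}_{V - \uprho}((\Bmr_\Gmr\Sigma_2)_+) \xrightarrow{\partial} \Hmr^{\Gmr}_{V - 1}(\PP(\uprho)_+) \to \cdots.
\]
Setting $V = (k + 1)\uprho$ with $k \geq 1$, \Cref{lem:vanish2} kills the source of $j_*$ and \Cref{lem:vanish} kills the target of $\partial$, so the cap product is immediately an isomorphism.

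For the remaining case $k = 0$, the left vanishing persists but $\Hmr^\Gmr_{\uprho - 1}(\PP(\uprho)_+) \cong \FF_2 \cdot \sfc$ is nonzero. Extending one step further in the Gysin sequence and using $\Hmr^\Gmr_{-1}((\Bmr_\Gmr\Sigma_2)_+; \ull{\FF}_2) = 0$, one obtains the five-term exact sequence
\[
0 \to \Hmr^\Gmr_\uprho((\Bmr_\Gmr\Sigma_2)_+) \xrightarrow{-\frown\td{\bf e}_{\Gmr, 2}} \Hmr^\Gmr_0((\Bmr_\Gmr\Sigma_2)_+) \xrightarrow{\partial} \FF_2 \cdot \sfc \xrightarrow{j_*} \FF_2 \cdot {\bf c}_{\uprho - 1} \to 0,
\]
where the target of $j_*$ has been identified with $\FF_2$ using the computation in the proof of \Cref{thm:zetaEulerian}. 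Exactness at the rightmost term forces $j_*$ to be surjective, hence an isomorphism; exactness at $\FF_2 \cdot \sfc$ then forces $\partial = 0$, and so the cap with $\td{\bf e}_{\Gmr, 2}$ is surjective as well as injective, completing the proof. The only conceptual obstacle is the sphere bundle identification in the first paragraph; the remainder is a diagram chase using vanishing lemmas already established in the paper.
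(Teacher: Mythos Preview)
Your proof is correct and takes a genuinely different route from the paper's. The paper argues via isotropy separation: it computes the $\Gmr$-fixed points of $\td{\upgamma}_{\Gmr, 2}$, applies the modified geometric fixed-point functor $\td{\varphi}^\Gmr$, and uses \Cref{lem:shortExactSeqZeroHomology}, \Cref{rmk:fixzero}, \Cref{prop:tau}, and \Cref{lem:rescap} to fit $\Hmr^\Gmr_{k\uprho}((\Bmr_\Gmr\Sigma_2)_+; \ull{\FF}_2)$ into a short exact sequence with nonequivariant groups on either side, running a five-lemma argument separately in the cases $\sft = 0$ and $\sft \neq 0$. Your Gysin-sequence approach is more direct: once the sphere bundle is identified with $\PP(\uprho)$, the vanishing in \Cref{lem:vanish} and \Cref{lem:vanish2} plus the computation of $\Hmr^\Gmr_{\uprho-1}((\Bmr_\Gmr\Sigma_2)_+; \ull{\FF}_2)$ from \Cref{thm:zetaEulerian} dispatch every case, and no transfer or fixed-point machinery is needed. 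On the other hand, the paper's argument transfers verbatim to the odd-primary statement \Cref{thm:capisop}, whereas extending your method there would require identifying the sphere bundle of the higher-rank bundle $(1 + |\Gmr|\bmod 2)\,\upkappa^*\td{\upgamma}_{\Gmr, p}$ over $\Bmr_\Gmr\Cmr_p$, which is less clean.
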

\begin{proof}
It follows from the theory of equivariant vector bundle (see \cite[Lemma 2.18]{BZ}) that  $\Gmr$-fixed point $\td{\upgamma}_{\Gmr, 2}$ is
\[  (\td{\upgamma}_{\Gmr, 2})^\Gmr \cong \bigsqcup_{\lambda \in \mr{Irr}_1(\Gmr)}  \td{\upgamma}_{\sfe, 2}. \]
Thus, the modified $\Gmr$-geometric functor sends the $\Hmr\ull{\FF}_2$-Euler class $\td{\bf e}_{\Gmr, 2}$ to 
\[ \td{\varphi}^{\Gmr}(\td{\bf e}_{\Gmr, 2}) = \sum_{\uplambda} \td{\bf e}_{\uplambda},  \]
where $\td{\bf e}_{\uplambda} $ is an Euler class of $ \td{\upgamma}_{\sfe, 2}$ over the component corresponding to $\uplambda$. Easy to see that 
\[ 
\begin{tikzcd}
- \frown \td{\varphi}^{\Gmr}(\td{\bf e}_{\Gmr, 2}): \bigoplus_\uplambda \Hmr_{k+1}( (\Bmr\Sigma_2)_+; \FF_2) \rar & \bigoplus_\uplambda \Hmr_{k}( (\Bmr\Sigma_2)_+; \FF_2) 
\end{tikzcd}
\]  
is an isomorphism for all $k \geq 0$.  Since $\td{\varphi}^{\Gmr}$ satisfies the general formula
\[ \td{\varphi}^{\Gmr}( b \frown e ) = \td{\varphi}^{\Gmr}(b) \frown \td{\varphi}^{\Gmr}(e), \]
we have a commutative diagram 
\[ 
\begin{tikzcd}
\Hmr^\Gmr_{(k+1) \uprho} ((\Bmr_\Gmr\Sigma_2)_+ ; \ull{\FF}_2) \rar[ "\td{\varphi}^\Gmr"] \dar["- \frown \td{\bf e}_{\Gmr, 2} "']  & \Hmr_{(k+1)} ((\Bmr\Sigma_2) ; \FF) \dar["- \frown \td{\varphi}^{\Gmr}(\td{\bf e}_{\Gmr, 2})" ] \\
\Hmr^\Gmr_{k \uprho} ((\Bmr_\Gmr\Sigma_2)_+ ; \ull{\FF}_2) \rar["\td{\varphi}^\Gmr"] & \Hmr_{k} ((\Bmr\Sigma_2) ; \FF),
\end{tikzcd}
\]
where the right vertical arrow is an isomorphism for all $k \geq 0$.  When $\sft =0 $, \Cref{lem:shortExactSeqZeroHomology} and \Cref{rmk:fixzero} implies that the horizontal arrows in the diagram above are also isomorphisms. Consequently, the left vertical arrow, which is capping with $\td{\bf e}_{\Gmr, 2}$ is an isomorphism for all $k \geq 0$, as desired.   

When $\sft \neq 0$, we consider the diagram 
\[ 
\begin{tikzcd}
\Hmr_{(k+1)|\Gmr|}((\Bmr\Sigma_2)_+; \FF_2) \rar[hook, "{\sf tr}_{\sf e}^{\Gmr}"] \dar["- \frown \td{\bf e}_{{\sf e}, 2}^{|\Gmr|}"'] & \Hmr^\Gmr_{(k+1) \uprho} ((\Bmr_\Gmr\Sigma_2)_+ ; \ull{\FF}_2) \rar[two heads,  "\td{\varphi}^\Gmr"] \dar["- \frown \td{\bf e}_{\Gmr, 2}"']  & \bigoplus_{\uplambda} \Hmr_{(k+1)} ((\Bmr\Sigma_2) ; \FF) \dar["- \frown \td{\varphi}^{\Gmr}(\td{\bf e}_{\Gmr, 2})" ] \\
\Hmr_{k|\Gmr|}((\Bmr\Sigma_2)_+; \FF_2)\rar[hook, "{\sf tr}_{\sf e}^{\Gmr}"']  & \Hmr^\Gmr_{k \uprho} ((\Bmr_\Gmr\Sigma_2)_+ ; \ull{\FF}_2) \rar[two heads, "\td{\varphi}^\Gmr"'] & \bigoplus_{\uplambda} \Hmr_{k} ((\Bmr\Sigma_2) ; \FF)
\end{tikzcd}
\]
where the rows are short exact sequences (follows from \Cref{lem:shortExactSeqZeroHomology} and \Cref{prop:tau}) and  the left square commutes because of \Cref{lem:rescap}. Since  the left  and the  right vertical arrows are isomorphism, it follows that the middle vertical arrow is also an isomorphism as desired. 
\end{proof}
A nearly identical argument yields the odd primary analog of the above result:
\begin{thm} \label{thm:capisop} For all $ k \geq 0$, the map 
 \[
 \begin{tikzcd}
 - \frown \upkappa^*\td{\bf e}_{\Gmr, p}: \Hmr^\Gmr_{2\epsilon (k+1) \uprho} ((\Bmr_\Gmr\Cmr_p)_+ ; \ull{\FF}_p)  \rar & \Hmr^\Gmr_{2\epsilon k \uprho} ((\Bmr_\Gmr\Cmr_p)_+ ; \ull{\FF}_p) 
 \end{tikzcd}
  \]
  is an isomorphism. 
 \end{thm}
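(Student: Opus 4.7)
The proof will closely parallel that of \Cref{thm:capiso}, with modifications appropriate for the odd prime case and for lens spaces in place of projective spaces. The first step is to analyze the geometric fixed points of the bundle $\upkappa^*\td{\upgamma}_{\Gmr, p}$. By \Cref{lem:GFixedPointsProjLensSpace}, the base decomposes as $\LL_p(\epsilon\uprho_\CC)^\Gmr \cong \coprod_{[\lambda] \in \td{\mathrm{Irr}}_1(\Gmr)} \LL_p((\epsilon\uprho_\CC)_\lambda)$, and an equivariant bundle argument analogous to \cite[Lemma 2.18]{BZ} shows that the bundle restricts on each fixed-point component to a copy of the underlying nonequivariant bundle $\upkappa^*\td{\upgamma}_{\sfe, p}$. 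Consequently,
\[\td{\varphi}^\Gmr(\upkappa^*\td{\bf e}_{\Gmr, p}) = \sum_{[\lambda] \in \td{\mathrm{Irr}}_1(\Gmr)} \upkappa^*\td{\bf e}_\lambda,\]
where each summand is a unit multiple of ${\bf u}^\epsilon \in \Hmr^{2\epsilon}(\Bmr\Cmr_p; \FF_p)$ on its component.

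The next step is to verify the nonequivariant input: capping with ${\bf u}^\epsilon$ gives an isomorphism $\Hmr_{2\epsilon(k+1)}(\Bmr\Cmr_p; \FF_p) \xrightarrow{\cong} \Hmr_{2\epsilon k}(\Bmr\Cmr_p; \FF_p)$, since both sides are one-dimensional $\FF_p$-vector spaces spanned by the duals of ${\bf u}^{\epsilon(k+1)}$ and ${\bf u}^{\epsilon k}$ respectively. Using the naturality identity $\td{\varphi}^\Gmr(x \frown e) = \td{\varphi}^\Gmr(x) \frown \td{\varphi}^\Gmr(e)$, I will form a commutative square whose left vertical arrow is the map in question and whose right vertical arrow is this nonequivariant isomorphism.

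I would then follow the two-case strategy of \Cref{thm:capiso}. Let $\sft_p := \tr_\sfe^\Gmr({\bf b}_0) \in \Hmr^\Gmr_0(\LL_p(\epsilon\uprho_\CC)_+; \ull{\FF}_p)$. When $\sft_p = 0$, the odd-prime analog of \Cref{lem:shortExactSeqZeroHomology} together with \Cref{rmk:fixzero} shows that the horizontal $\td{\varphi}^\Gmr$ arrows are themselves isomorphisms, and commutativity of the square immediately yields the claim. When $\sft_p \neq 0$, extend the square to a diagram of short exact sequences with transfer $\tr_\sfe^\Gmr$ on the left and the $\td{\varphi}^\Gmr$ surjection on the right; both left and right vertical arrows are isomorphisms (since $\upkappa^*\td{\bf e}_{\sfe, p}^{|\Gmr|}$ is a unit multiple of ${\bf u}^{\epsilon|\Gmr|}$), and the five lemma finishes the argument.

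The principal obstacle is upgrading the short exact sequence of \Cref{lem:shortExactSeqZeroHomology}, which is established only in degree zero, to the $\RO(\Gmr)$-graded degree $2\epsilon k \uprho$. This will require the vanishing of $\Hmr^\Gmr_{2\epsilon k \uprho - 1}(\widetilde{\Emr\Gmr} \wedge (\Bmr_\Gmr\Cmr_p)_+; \ull{\FF}_p)$ together with injectivity of the transfer in the relevant degree, both of which I expect to deduce from a modified geometric fixed-point computation reducing to a direct sum of nonequivariant homology groups of $\Bmr\Cmr_p$ (which vanish in odd homological degree at $p$-torsion for $\bf u$-power reasons), combined with the arguments of \Cref{lem:vanishodd2} and the spectral sequence \eqref{eqn:mmodpSS1}.
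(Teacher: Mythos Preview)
Your outline through the two-case analysis matches the paper's approach exactly; the paper simply says ``a nearly identical argument'' to \Cref{thm:capiso}, and you have correctly spelled out what that entails.

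Your last paragraph, however, misdiagnoses the resolution of the obstacle you identify. The vanishing of $\Hmr^\Gmr_{2\epsilon k\uprho - 1}(\td{\Emr\Gmr} \wedge (\Bmr_\Gmr\Cmr_p)_+; \ull{\FF}_p)$ does not hold in general: after passing to geometric fixed points this becomes $(\Phi^\Gmr\Hmr\ull{\FF}_p)_{2\epsilon k - 1}$ of a disjoint union of copies of $\Bmr\Cmr_p$, and since $\Phi^\Gmr\Hmr\ull{\FF}_p$ has nontrivial homotopy in every nonnegative degree there is no reason for this to vanish. (Your parenthetical that $\Hmr_*(\Bmr\Cmr_p; \FF_p)$ vanishes in odd degree is simply false---it is $\FF_p$ in every nonnegative degree.) The route implicit in the proof of \Cref{thm:capiso} is instead through the spectral sequence \eqref{eqn:mmodpSS1}: by \Cref{lem:vanishodd2} the $\Emr^1$-page in $\RO(\Gmr)$-degree $2\epsilon k\uprho$ is concentrated in a single filtration and equals $\Hmr^\Gmr_0(\LL_p(\epsilon\uprho_\CC)_+; \ull{\FF}_p)$, and the permanence of the $\oll{\sfc}$ classes rules out differentials into or out of this group. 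Thus $\Hmr^\Gmr_{2\epsilon k\uprho}((\Bmr_\Gmr\Cmr_p)_+; \ull{\FF}_p) \cong \Hmr^\Gmr_0(\LL_p(\epsilon\uprho_\CC)_+; \ull{\FF}_p)$, and the degree-zero \Cref{lem:shortExactSeqZeroHomology} now applies directly. The injectivity of the transfer in the $\sft_p \neq 0$ case follows from the odd-prime analog of \Cref{prop:tau}, deduced from the transfer map between the spectral sequences in \eqref{eqn:resSSp} rather than from a direct isotropy-separation argument.
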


\begin{lem} \label{lem:rescap} Suppose $\Xmr$ is a $\Gmr$-space,  $t \in \Hmr^{\star}_\Gmr(\Xmr_+; \ull{\FF}_p)$ a $\RO(\Gmr)$-graded cohomology class, and  $x \in \Hmr_*(\upiota_{\sfe}\Xmr_+; \FF_p)$ a homology class of the underlying nonequivariant space. Then:
\[ {\sf tr}_{\sfe}^{\Gmr} (x) \frown t = {\sf tr}_{\sfe}^{\Gmr}\left( x \frown \res_{\sfe}(t) \right) ). \]
\end{lem}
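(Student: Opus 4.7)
The identity is a projection formula for the transfer viewed as a pushforward, so the plan is to recast both sides as homology classes on $\Xmr_+$ obtained from a single equivariant map $\Smr^{|x|} \to \Xmr_+ \sma \Hmr\ull{\FF}_p$ by following two sides of a commutative square, and then to invoke the naturality of the cap product.

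The first step is to unwind the definition of the transfer from the proof of \Cref{lem:shortExactSeqZeroHomology}. Writing $\alpha: \Gmr_+ \sma \Xmr_+ \to \Xmr_+$ for the action map of the $\Gmr$-space $\Xmr$, I would note that $\tr_{\sfe}^{\Gmr}$ is the composite
\[
\begin{tikzcd}
\Hmr_\ast(\upiota_{\sfe}\Xmr_+;\FF_p) \rar["\cong"] & \Hmr^{\Gmr}_{\ast}(\Gmr_+ \sma \Xmr_+; \ull{\FF}_p) \rar["\alpha_{\ast}"] & \Hmr^{\Gmr}_{\ast}(\Xmr_+; \ull{\FF}_p),
\end{tikzcd}
\]
where the first isomorphism is the coinduction--restriction adjunction for finite $\Gmr$. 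Under exactly the same adjunction applied to cohomology, $\alpha^{\ast}(t) \in \Hmr^{\star}_{\Gmr}(\Gmr_+ \sma \Xmr_+; \ull{\FF}_p)$ is the image of $\res_{\sfe}(t) \in \Hmr^{\ast}(\upiota_{\sfe}\Xmr_+; \FF_p)$ under the corresponding isomorphism. Thus, under the isomorphism $\Hmr^{\Gmr}_{\ast}(\Gmr_+ \sma \Xmr_+; \ull{\FF}_p) \cong \Hmr_{\ast}(\upiota_{\sfe}\Xmr_+; \FF_p)$, the cap product on the right hand side of the lemma can be rewritten as $x \frown \alpha^{\ast}(t)$.

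The second step is the standard projection formula for the map $\alpha$ of pointed $\Gmr$-spaces: for any homology class $y \in \Rmr^{\Gmr}_{\Wmr}(\Gmr_+ \sma \Xmr_+)$ and cohomology class $t \in \Rmr^{\Vmr}_{\Gmr}(\Xmr_+)$,
\[
\alpha_{\ast}(y \frown \alpha^{\ast}(t)) \;=\; \alpha_{\ast}(y) \frown t.
\]
This identity holds for any map of pointed $\Gmr$-spaces and follows by chasing the commutative square
\[
\begin{tikzcd}
\Gmr_+ \sma \Xmr_+ \rar["\alpha"] \dar["\Delta"'] & \Xmr_+ \dar["\Delta"] \\
(\Gmr_+ \sma \Xmr_+) \sma (\Gmr_+ \sma \Xmr_+) \rar["\alpha \sma \alpha"'] & \Xmr_+ \sma \Xmr_+
\end{tikzcd}
\]
smashed with $1_{\Hmr\ull{\FF}_p}$, together with the defining composite of the cap product from \Cref{notn:cap}; the factor $\alpha^{\ast}(t)$ on the source side and $t$ on the target side become identified after applying $\alpha \sma 1$ on the second smash factor. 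Specializing $y$ to the image of $x$ under the coinduction isomorphism then yields the stated identity.

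The main (and essentially only) obstacle is the bookkeeping in the second step: one has to expand both cap products using the definition in \Cref{notn:cap} and match the two composites of six arrows that result, using naturality of the diagonal with respect to $\alpha$ and the fact that the multiplication $\upmu$ of $\Hmr\ull{\FF}_p$ is untouched by $\alpha$. No homotopical input beyond naturality is required.
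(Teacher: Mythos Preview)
Your proposal is correct and takes essentially the same approach as the paper: both identify the transfer with the pushforward along the map $\Gmr_+ \sma \Xmr_+ \to \Xmr_+$ (you use the action map $\alpha$, the paper uses the projection induced by the quotient $\uppi: \Gmr \to \Gmr/\Gmr$; these agree under the shearing isomorphism), identify the restriction with the corresponding pullback, and then invoke the projection formula $f_*(y \frown f^*(t)) = f_*(y) \frown t$ for the cap product. The paper simply asserts this last formula as a ``natural relation'', whereas you spell out the diagram chase; otherwise the arguments coincide.
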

\begin{proof} We first observe that  the quotient map $\uppi: \Gmr \longrightarrow \Gmr/\Gmr \cong {\sf e}$ induces the transfer 
\[ \begin{tikzcd}
 {\sf tr}_{\sfe}^{\Gmr} = \uppi_*: \Hmr_{|\Vmr|}(\upiota_{\sfe} \Xmr_+; \FF_p) \cong \Hmr_\Vmr^{\Gmr}(\Gmr_+ \sma  \Xmr_+; \ull{\FF}_p) \rar & \Hmr_\Vmr^{\Gmr}( \Xmr_+; \ull{\FF}_p)
\end{tikzcd}
 \]
 in homology,  and the restriction map 
 \[ 
 \begin{tikzcd}
 \res_{\sfe} = \uppi^*: \Hmr^{\Wmr}(\Xmr_+; \ull{\FF}_p)  \rar &  \Hmr^\Wmr_{\Gmr}(\Gmr_+ \sma  \Xmr_+; \ull{\FF}_p)  \cong \Hmr^{|\Wmr|}_{\Gmr}( \Xmr_+; \FF_p)
\end{tikzcd}
 \]
 in cohomology,  for any $\Vmr, \Wmr  \in \RO(\Gmr)$,. Then the result follows from the natural relation 
 \[ 
 \uppi_*(x) \frown t = \uppi_*(x \frown \uppi^*(t))
\]
 satisfied by the cap product. 
\end{proof}
Using  the isomorphisms of \Cref{thm:capiso} and \Cref{thm:capisop}  we define the $\Hmr\ull{\FF}_p$-Eulerian sequence
 \[ 
 \upbeta_{\lambda, (p)} := (\hat{\sfb}_{0, \lambda},\hat{\sfb}_{1, \lambda}, \dots  )
 \]
 where $\hat{\sfb}_{0, \lambda} = \sfb_{0, \lambda}$  and use induction to define $\hat{\sfb}_{k}$ such that 
 \[  \hat{\sfb}_{k+1, \lambda} \frown \upkappa^*\td{\bf e}_{\Gmr, p} =  \hat{\sfb}_{k, \lambda} \]
 where $\upkappa$ is the map defined in \Cref{notn:kappa} for $p$ odd and identity  when $p =2$.  
 \begin{rmk} The elements $\hat{\sfb}_{k, \lambda}$ may differ from the element of $\sfb_{k \uprho, \lambda}$ up to an element in the  image of the transfer map $\tr_{\sfe}^\Gmr$. 
 \end{rmk}
\begin{notn}
 Using \Cref{main1}, we obtain a  genuine stable $\Hmr\ull{\FF}_p$-cohomology operation corresponding to $\upbeta_{\lambda, (p)}$ and its $k$-shifts. When $p=2$, we set  \[ \Sq^{k \uprho_\Gmr}_{\lambda} := \Sfrak^{\upbeta_{\lambda, (2)}[k]} .\]
 Further, when $\lambda = 1 \in \mr{Irr}_1(\Gmr)$, we simply use the notation $\Sq^{k \uprho_\Gmr}$. When $p$ is odd, we set 
 \[ \Pmr^{2 \epsilon k \uprho_\Gmr}_{\lambda} := \Sfrak^{\upbeta_{\lambda, (p)}[k]} \]
 and drop the subscript when $\lambda = 1 \in \td{\mr{Irr}}_1(\Gmr)$. 
\end{notn}
\begin{proof}[{\bf Proof of \Cref{main3}}]
Since restriction of the class $\sfb_{0, \lambda}$ to a subgroup $\Kmr$ is 
\[ \sfb_{0, \upiota_\Kmr\lambda} \in \Hmr_{0}^{\Kmr}((\Bmr_\Kmr\Cmr_p)_+ ; \ull{\FF}_p), \]
and restriction of the Euler class $\td{\bf e}_{\Gmr, p}$ to $\Kmr$ is  $\td{\bf e}_{\Kmr, p}^{|\Gmr/\Kmr|}$, we conclude that  
\[ \upiota_{\Kmr} ( \upbeta_{\lambda, (p)}) =  \mathfrak{t}_{|\Gmr/\Kmr|}\upbeta_{\upiota_{\Kmr}\lambda, (p)}.\]
This,  combined with \Cref{thm:opsRestrAndGeoFixedPts}, \Cref{thm:resSq} and \Cref{thm:resP} completes the proof. 
\end{proof}

\begin{proof}[{\bf Proof of \Cref{main4}}]
First, assume that $\Kmr$ is a normal subgroup of $\Gmr$. In this case, the Weyl group is $\Wmr(\Kmr)= \Gmr/\Kmr$.   
We observe that the modified $\Kmr$-geometric fixed-point of the class $\sfb_{0, \lambda}$  is 
\[ \sfb_{0, \lambda^\Kmr} \in \Hmr_{0}^{\Kmr}((\Bmr_\Kmr\Cmr_p)_+ ; \ull{\FF}_p), \]
where  we set $\sfb_{0, \lambda^\Kmr} =0 $ when $\lambda^\Kmr = {\bf 0}$. Furthermore, note  $\sfc_*\td{\varphi}^\Kmr(\td{\bf e}_{\Gmr, p}) = \td{\bf e}_{\Gmr/\Kmr, p}$. Therefore,  using \Cref{thm:capisop} and the commutativity of the diagram 
\[ 
\begin{tikzcd}
 \Hmr^\Gmr_{2\epsilon (k+1) \uprho_\Gmr} ((\Bmr_\Gmr\Cmr_p)_+ ; \ull{\FF}_p) \dar["\sfc_*\td{\varphi}^\Kmr(-)"] \ar[rr, "- \frown \upkappa^*\td{\bf e}_{\Gmr, p}", "\cong"'] && \Hmr^\Gmr_{2\epsilon k \uprho_\Gmr} ((\Bmr_\Gmr\Cmr_p)_+ ; \ull{\FF}_p) \dar["\sfc_*\td{\varphi}^\Kmr(-)"]  \\
 \Hmr^\Gmr_{2\epsilon (k+1) \uprho_{\Wmr(\Kmr)}} ((\Bmr_{\Wmr(\Kmr)}\Cmr_p)_+ ; \ull{\FF}_p)  \ar[rr, "- \frown \upkappa^*\td{\bf e}_{\Wmr(\Kmr), p}", "\cong"'] && \Hmr^\Gmr_{2\epsilon k \uprho_\Gmr} ((\Bmr_{\Wmr(\Kmr)}\Cmr_p)_+ ; \ull{\FF}_p) 
\end{tikzcd}
\]
 we conclude that  
\[ \varphi^{\Kmr} ( \upbeta_{\lambda, (p)}) =  \upbeta_{\lambda^\Kmr, (p)}.\]
Thus, the result follows from \Cref{thm:modgeoES}. 

When $\Kmr$ is not a normal subgroup, then we first restrict to the normalizer subgroup $\Nmr(\Kmr)$ before calculating modified $\Kmr$-geometric fixed-points. In this case, the arguments above, combined with \Cref{main3}, yields the stated results. 
\end{proof}

\bibliographystyle{amsalpha}
\bibliography{GSteenroD.bib}
\end{document}
